\theoremstyle{plain}
\newtheorem{thm}{Theorem}[section]
\newtheorem{thmIntr}{Theorem}
\newaliascnt{propIntr}{thmIntr}
\newtheorem{propIntr}[propIntr]{Proposition}
\newaliascnt{lem}{thm}
\newtheorem{lem}[lem]{Lemma}
\newaliascnt{cor}{thm}
\newtheorem{cor}[cor]{Corol\-lary}
\newaliascnt{prop}{thm}
\newtheorem{prop}[prop]{Proposition}
\theoremstyle{definition}
\newaliascnt{rem}{thm}
\newtheorem{rem}[rem]{Remark}
\newaliascnt{defn}{thm}
\newtheorem{defn}[defn]{Definition}
\newaliascnt{ex}{thm}
\newtheorem{ex}[ex]{Ex\-ample}
\numberwithin{equation}{section}
\def\bK{\ensuremath{\mathbb{K}}}
\def\bN{\ensuremath{\mathbb{N}}}
\def\bQ{\ensuremath{\mathbb{Q}}}
\def\bR{\ensuremath{\mathbb{R}}}
\def\bZ{\ensuremath{\mathbb{Z}}}
\def\bC{\ensuremath{\mathbb{C}}}
\def\cA{\ensuremath{\mathcal{A}}}
\def\cC{\ensuremath{\mathcal{C}}}
\def\cF{\ensuremath{\mathcal{F}}}
\def\cH{\ensuremath{\mathcal{H}}}
\def\cI{\ensuremath{\mathcal{I}}}
\def\cO{\ensuremath{\mathcal{O}}}
\def\cP{\ensuremath{\mathcal{P}}}
\def\cT{\ensuremath{\mathcal{T}}}
\def\wbeta{\ensuremath{\widetilde{\beta}}}
\def\wF{\ensuremath{\widetilde{F}}}
\def\Db{\mathop{\mathrm{D}^{\mathrm{b}}}\nolimits}
\DeclareMathOperator{\Pic}{Pic}
\DeclareMathOperator{\Ext}{Ext}
\DeclareMathOperator{\Hom}{Hom}
\DeclareMathOperator{\sHom}{\mathcal{H}\emph{om}}
\DeclareMathOperator{\NS}{NS}
\DeclareMathOperator{\rk}{rk}
\DeclareMathOperator{\ch}{ch}
\DeclareMathOperator{\Coh}{Coh}
\DeclareMathOperator{\Hilb}{Hilb}
\DeclareMathOperator{\odisc}{\overline{\Delta}}
\DeclareMathOperator{\Stab}{Stab}
\def\blank{\underline{\hphantom{A}}}
\DeclareMathOperator{\Char}{char}
\newcommand{\hyp}[1]{\ensuremath{H}_{#1}}
\def\rf{\ensuremath{\operatorname{chd}}}
\definecolor{applegreen}{rgb}{0.55, 0.71, 0.0}
\newcommand{\set}[1]{\left\{#1\right\}}
\title[Chern degree functions]{Chern degree functions}
\author[M.~Lahoz and A.~Rojas]{Mart\'{\i} Lahoz and Andr\'es Rojas}
\address{M.L.: Departament de Matem\`atiques i Inform\`atica,
Universitat de Barcelona, Gran Via de les Corts Catalanes, 585, 08007 Barcelona, Spain}
\email{marti.lahoz@ub.edu}
\urladdr{\url{http://www.ub.edu/geomap/lahoz/}}
\address{A.R.: BGSMath -- Departament de Matem\`atiques i Inform\`atica,
Universitat de Barcelona, Gran Via de les Corts Catalanes, 585, 08007 Barcelona, Spain}
\curraddr{Mathematisches Institut, Universität Bonn, Endenicher
Allee 60, 53115 Bonn, Germany}
\email{arojas@math.uni-bonn.de} 
\begin{document}

\begin{abstract}
We introduce Chern degree functions for complexes of coherent sheaves on a polarized surface, which encode information given by stability conditions on the boundary of the $(\alpha,\beta)$-plane.
We prove that these functions extend to continuous real valued functions and we study their differentiability in terms of stability.
For abelian surfaces, Chern degree functions coincide with the cohomological rank functions defined by Jiang--Pareschi.
We illustrate in some geometrical situations a general strategy to compute these functions.
\end{abstract}

\keywords{Bridgeland stability conditions, Harder-Narasimhan filtrations, cohomological rank functions, abelian surfaces, surfaces with finite Albanese morphism}
\subjclass[2010]{
14F08, 
14J60, 
18G80, 
14J25, 
14K05, 
14C20, 
14D25
}

\thanks{M.~L.~was supported by a Ram\'on y Cajal fellowship and partially by the Spanish MINECO research project PID2019-104047GB-I00.
A.~R.~was partially supported by the Spanish MINECO grants MDM-2014-0445, RYC-2015-19175, and PID2019-104047GB-I00.}

\maketitle

\setcounter{tocdepth}{1}
\tableofcontents

\section{Introduction}

In the context of irregular varieties, Barja, Pardini and Stoppino \cite{BPS:LinearSystems} introduced the \emph{continuous rank function} associated to a line bundle, a continuous function defined on a line in the space of $\bR$-divisor classes with similar properties to those of the volume function.
Shortly after, in \cite{JP} Jiang and Pareschi generalized this notion to the \emph{cohomological rank functions} $h^i_{F,L}$ associated to a coherent sheaf (or more generally, a bounded complex of coherent sheaves) $F$ on a polarized abelian variety $(A,L)$.

These functions have received several applications.
For instance, they have been used to prove new Clifford–Severi inequalities, i.e.~geographical lower bounds of the volume of a line bundle and characterize the polarized varieties where the bound is attained (\cite{BPS:Clifford-Severi,Jiang:Severi}).
They have been also applied to the study of syzygies of abelian varieties (see \cite{Caucci,Jiang:Syzygies,Ito3} and references therein).

Given $x\in\bQ$, Jiang and Pareschi use the multiplication maps on $A$ to make sense of the (generic) $i$-th cohomological rank of $F$ twisted with the rational polarization $xL$.
This defines the number $h^i_{F,L}(x)$.
Their two main results about the general structure of these functions, a priori only defined over the rational numbers, can be summarized as follows:

\vspace{-.2cm}
\begin{enumerate}[{\rm (1)}]
\item \cite[Corollary~2.6]{JP} Every $x_0\in\bQ$ admits a left (resp.~right) neighborhood where the function $h^i_{F,L}$ is given by an explicit polynomial $P^-$ (resp.~$P^+$) depending on $x_0$.

\item \cite[Theorem~3.2]{JP} The functions extend to continuous real functions of real variable.
\end{enumerate}

These results are proved via an extensive use of the Fourier-Mukai transform on the abelian variety, which also justifies the need to extend the definition of the cohomological rank functions to arbitrary objects in the derived category of coherent sheaves.

In the case of elliptic curves, it is well known that the Harder-Narasimhan filtration of a coherent sheaf $F$ provides a precise description of its cohomological rank functions (see \autoref{CRFelliptic} for details). 
Nevertheless for higher-dimensional abelian varieties only a few concrete examples of functions are known, and a general structure is far from being understood.
For instance, it is not known whether cohomological rank functions are always piecewise polynomial (\cite[Remark~2.8]{JP}).

In the present paper we investigate the relation between the functions and stability in the case of surfaces, from a twofold perspective: not only that of obtaining a better understanding of cohomological rank functions on abelian surfaces, but also that of proposing similar invariants on arbitrary (i.e.~not necessarily abelian) polarized surfaces.
This can be done in a unified way, by means of the \emph{Chern degree functions} that we attach to objects in the derived category $\Db(X)$ of any smooth polarized surface $(X,L)$. 
These functions are our main object of study, and their definition relies on a certain set of (weak) stability conditions on $\Db(X)$.

More precisely, let $(X,L)$ be a smooth polarized surface.
Since the seminal work of Bridgeland \cite{Bridgeland:K3} (for K3 surfaces), then extended by Arcara--Bertram \cite{Aaron-Daniele} to arbitrary smooth surfaces, one can consider a certain Bridgeland stability condition $\sigma_{\alpha,\beta}=(Z_{\alpha,\beta},\Coh^\beta(X))$ for every $(\alpha,\beta)\in\bR_{>0}\times\bR$, where $\Coh^\beta(X)$ is a heart of a bounded t-structure on $\Db(X)$.
In the corresponding \textit{$(\alpha,\beta)$-plane} of stability conditions, the wall-crossing behaviour is very well understood.
A similar construction holds in positive characteristic as well, thanks to work of Koseki \cite{Koseki} (even though the cases we will consider were already covered by the work of Langer \cite{Langer}).

When $\alpha=0$, one cannot ensure in general that $\sigma_{0,\beta}$ is a Bridgeland stability condition.
Roughly speaking, this depends on whether the classical Bogomolov inequality for slope-semistable sheaves of slope $\beta$ is sharp or not; in case of non-sharpness, the construction of Bridgeland stability conditions can be extended to a larger region containing $\sigma_{0,\beta}$ in its interior, as explained in the recent work \cite[section~3]{FLZ}.

Regardless of whether $\sigma_{0,\beta}$ is a Bridgeland stability condition or not, for every $\beta\in\bQ$ one can at least ensure the existence of Harder-Narasimhan filtrations with respect to the tilt slope $\nu_{0,\beta}$ induced by $\sigma_{0,\beta}$: this defines a  \textit{weak stability condition}.
For instance, in the literature $\sigma_{0,0}$ has also been called \emph{Brill-Noether stability condition} (see \cite[Definition~2.11]{LiInv}).
The Chern degree functions may be thought of as a way of classifying the information provided by these (weak) stability conditions.

Essentially, to every object $E\in\Db(X)$ we associate nonnegative \emph{Chern degree functions}
\[
\rf^k_{E,L}:\bQ\to\bQ_{\geq0}
\]
for every $k\in\bZ$, related by the identity $\sum (-1)^k\cdot \rf^k_{E,L}(x)=\ch_2^{-x}(E)$, where $\ch_2^t$ is the second  twisted Chern character defined by $\ch_2^{t}=\ch_2-tL\cdot\ch_1+\frac{t^2}2L^2\cdot\operatorname{rk}$.
For instance, if $F\in\Coh^\beta(X)$ only $\rf^0_{F,L}$ and $\rf^1_{F,L}$ are nonzero at $-\beta$, and 
\[\rf^0_{F,L}(-\beta)=\ch_2^{\beta}(\wF),\]
where $\wF$ is the maximal subobject $\wF\subset F$ with the property that any quotient of $\wF$ has positive tilt slope (see \autoref{sec:Chern_deg} for the general definition).

Our first main result about these functions is:

\begin{thmIntr}\label{main}
Let $(X,L)$ be a smooth polarized projective surface over an algebraically closed field $\bK$ of arbitrary characteristic.
If $\Char \bK>0$, assume that $X$ is neither of general type nor quasi-elliptic with $\kappa(X)=1$.

Then, for every object $E\in\Db(X)$ and $k\in\bZ$ the following hold:
\begin{enumerate}[{\rm (1)}]
 \item\label{item:main1} Every rational number $x_0\in\bQ$ admits a left (resp.~right) neighborhood where the function $\rf^k_{E,L}$ is given by an explicit polynomial $P^-$ (resp.~$P^+$) depending on $x_0$, satisfying $P^-(x_0)=\rf^k_{E,L}(x_0)=P^+(x_0)$.
 \item\label{item:main2} The function $\rf^k_{E,L}$ extends to a continuous real function of real variable.
\end{enumerate}
\end{thmIntr}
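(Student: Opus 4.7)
The plan is to exploit the wall-and-chamber decomposition in the $(\alpha,\beta)$-plane of (weak) stability conditions, together with the fact that the twisted second Chern character $\ch_2^t=\ch_2-tL\cdot\ch_1+\frac{t^2}{2}L^2\cdot\rk$ is a quadratic polynomial in $t$. Near any rational $x_0=-\beta_0$, the value $\rf^k_{E,L}(x_0)$ should be controlled by the Harder-Narasimhan filtrations with respect to $\sigma_{0,\beta_0}$ of the cohomology objects of $E$ in the heart $\Coh^{\beta_0}(X)$; wall-crossing then describes how these filtrations deform as $x$ moves slightly to either side of $x_0$.

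For part (\ref{item:main1}), fix $x_0\in\bQ$ and set $\beta_0=-x_0$. The first step is to extend the formula $\rf^0_{F,L}(-\beta)=\ch_2^\beta(\wF)$ to arbitrary complexes and arbitrary $k$: concretely, I would write $\rf^k_{E,L}(x_0)$ as an appropriate alternating sum of $\ch_2^{\beta_0}$ of certain distinguished subobjects (maximal subobjects all of whose quotients have positive tilt slope) of each cohomology object of $E$ in $\Coh^{\beta_0}(X)$. Next, using local finiteness of walls in a small half-disc $\{(\alpha,\beta):\alpha\geq 0\}$ around $(0,\beta_0)$---available from the Bridgeland/Arcara--Bertram construction, extended in positive characteristic by Koseki and Langer under the stated hypotheses on $X$---I would argue that for $\beta$ slightly greater than $\beta_0$, the HN factors of the relevant cohomology objects with respect to $\sigma_{0,\beta}$ are obtained as a controlled refinement of those at $\beta_0$, with combinatorics constant on a one-sided neighborhood. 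Substituting into the formula above produces a polynomial $P^+$ in $\beta$ (of degree at most two) equal to $\rf^k_{E,L}(-\beta)$ there. The symmetric argument on the other side produces $P^-$, and the equality $P^-(x_0)=P^+(x_0)=\rf^k_{E,L}(x_0)$ holds because at the endpoint $\beta=\beta_0$ the HN factors added or removed have tilt slope zero, hence $\ch_2^{\beta_0}$ equal to zero, so the two polynomials agree.

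For part (\ref{item:main2}), continuity at each rational point is immediate from part (\ref{item:main1}). To construct a continuous extension to all of $\bR$, I would use that the walls meeting the boundary line $\alpha=0$ do so at a locally finite set of $\beta$-values. This divides $\bR$ into open chambers on each of which $\rf^k_{E,L}$, a priori only defined on $\bQ$, agrees with a single polynomial; extending each polynomial to the closure of its chamber and matching across wall points via part (\ref{item:main1}) gives the desired continuous real-valued extension.

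The main obstacle will be the analysis at the boundary $\alpha=0$. Even though wall-crossing in the interior $\alpha>0$ is classical, $\sigma_{0,\beta}$ is only a weak stability condition in general, so HN filtrations at the boundary must be shown to arise as limits from the interior in a combinatorially controlled way. This is exactly where the hypothesis on $\Char\bK$ enters, since it guarantees the Bogomolov-type inequalities needed to apply the boundary extension of stability from \cite{FLZ}. A secondary subtlety will be to handle all cohomology objects of $E$ in $\Coh^\beta(X)$ uniformly, rather than only a single sheaf in the heart.
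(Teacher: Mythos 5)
Your outline of part~(\ref{item:main1}) points in the right direction (one-sided analysis via HN filtrations at $\sigma_{0,\beta}$ for $\beta$ near $\beta_0$, plus duality for the other side), but it glosses over the steps that carry essentially all the difficulty. First, ``local finiteness of walls in a small half-disc around $(0,\beta_0)$'' is not available: $\sigma_{0,\beta_0}$ is in general only a \emph{weak} stability condition (for abelian surfaces it fails to be a Bridgeland stability condition at \emph{every} rational $\beta$), so walls may accumulate at the boundary and one cannot invoke local finiteness there. One must first prove that HN filtrations with respect to $\sigma_{\alpha,\beta_0}$ stabilize as $\alpha\to 0^+$ (the Bridgeland limit filtration), which requires a genuine argument bounding $\frac{L^2\cdot\ch_0}{L\cdot\ch_1^{\beta_0}}$ on potential destabilizers via the discriminant. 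Second, the HN filtration at $\sigma_{0,\beta}$ for $\beta>\beta_0$ is \emph{not} a straightforward refinement of the one at $\beta_0$: objects in $\ker(Z_{0,\beta_0})$ have tilt slope $+\infty$ at $\beta_0$ but tilt slope tending to $0$ as $\beta\to\beta_0^+$, which forces the introduction of core subobjects and a nontrivial re-gluing of the one-sided filtrations. Third, and most importantly, your argument says nothing about HN factors with $\nu_{0,\beta_0}=0$ and $L\cdot\ch_1^{\beta_0}>0$; for these no weak limit filtration is produced, and one needs a separate vanishing statement ($\rf^0$ of such a semistable factor vanishes for all $\beta>\beta_0$ with the factor in the heart), proved by a descending induction on the discriminant using the support property. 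Without this, the claimed constancy of ``combinatorics on a one-sided neighborhood'' is unjustified.

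Part~(\ref{item:main2}) as you propose it does not work. You assert that the walls meet the line $\alpha=0$ in a locally finite set of $\beta$-values, so that $\bR$ decomposes into chambers on each of which the function is a single polynomial. That claim is precisely the (open) statement that the Chern degree functions are piecewise polynomial with non-accumulating critical points; the paper explicitly states it does not know how to deduce this from part~(\ref{item:main1}), and it cannot follow from boundary local finiteness because no such finiteness is known when $\sigma_{0,\beta}$ degenerates. The correct route, which sidesteps this entirely, is the one of Jiang--Pareschi: use the local polynomial expressions to show that $\rf^0_{F,L}$ has one-sided derivatives at every rational point that are non-negative and uniformly bounded above by $L\cdot\ch_1^{\beta}(F)$ (they equal $L\cdot\ch_1^{\beta}(G)$ for a subobject $G$ of $F$ in the heart), and then extend by integration. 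A smaller inaccuracy: the characteristic hypothesis is needed to have the $(\alpha,\beta)$-plane of Bridgeland stability conditions at all (via the classical Bogomolov inequality), not to invoke the boundary extension of Fu--Li--Zhao, which plays no role in this proof.
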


Let us specify that, even in the case where $F\in\Coh^{\beta_0}(X)$, where $\rf^0_{F,L}(-\beta_0)=\ch_2^{\beta_0}(\wF)$,
the polynomial $\ch_2^{-x}(\wF)$ does not necessarily provide a local polynomial expression of $\rf^0_{F,L}(x)$ for $x$ in a neighborhood of $-\beta_0$.
For instance, the polynomial expression in a left neighborhood of $-\beta_0$ is given by $\ch_2^{-x}(G)$, where $G\subset \wF$ is a certain subobject satisfying $Z_{0,\beta_0}(\wF/G)=0$ and appearing in the Harder-Narasimhan filtration of $F$ with respect to $\sigma_{0,\beta}$ for all small enough $\beta>\beta_0$.

In other words, to control the local polynomial expressions one is led to determine the Harder-Narasimhan filtrations of $F$ with respect to $\sigma_{0,\beta}$ as $\beta$ tends to $\beta_0$;
we call such filtrations the \emph{weak limit filtrations} of $F$ at $\beta_0$ (see \autoref{weakLimitHN} for a precise definition).
Their study requires a good understanding of \emph{Bridgeland limit filtrations}, namely Harder-Narasimhan filtrations with respect to the (honest) Bridgeland stability conditions $\sigma_{\alpha,\beta}$ as $\alpha$ tends to $0$ (see \autoref{BridLimitHN}).

These auxiliary notions may be of independent interest to study the boundary of the stability manifold (see also \autoref{variant});
as a tool for \autoref{main}.\eqref{item:main1}, we prove their existence for objects without Harder-Narasimhan factors of vanishing tilt slope.

\begin{thmIntr}\label{existenceIntr}
Let $\beta_0\in \bQ$ and $F\in\Coh^{\beta_0}(X)$ be an object having no Harder-Narasimhan factor with respect to $\sigma_{0,\beta_0}$ of vanishing $\nu_{0,\beta_0}$.
Then,
\begin{enumerate}[{\rm (1)}]
\item\label{exist:Br} $F$ admits a Bridgeland limit Harder-Narasimhan filtration at $\beta_0$.
\item\label{exist:Wk} $F$ admits a weak limit Harder-Narasimhan filtration at $\beta_0$.
\end{enumerate}
\end{thmIntr}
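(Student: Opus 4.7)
Write the $\sigma_{0,\beta_0}$-HN filtration of $F$ as
$0 = F_0 \subset F_1 \subset \cdots \subset F_n = F$
with semistable factors $A_i := F_i/F_{i-1}$ of nonzero slopes $\mu_i := \nu_{0,\beta_0}(A_i)$. My plan relies on the expansion
\[
\nu_{\alpha,\beta_0}(E) \;=\; \nu_{0,\beta_0}(E) \;-\; \frac{\alpha^{2}\, L^{2}\, \rk(E)}{2\, L\cdot\ch_{1}^{\beta_{0}}(E)},
\]
valid for $E \in \Coh^{\beta_0}(X)$ with $L\cdot\ch_{1}^{\beta_0}(E)>0$, which realizes $\nu_{\alpha,\beta_0}$ as an $O(\alpha^{2})$-perturbation of $\nu_{0,\beta_0}$ on a class of objects whose $L\cdot\ch_{1}^{\beta_0}$ is uniformly bounded. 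I will first establish (1) by refining $\{F_i\}$ into the $\sigma_{\alpha,\beta_0}$-HN filtration for all small enough $\alpha>0$, and then (2) either by an analogous boundary argument or by taking a limit of (1).

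\textbf{Part (1).} The first step is to show that, for $\alpha$ small enough, any $\sigma_{\alpha,\beta_0}$-HN factor of $F$ is contained in some $A_i$. The expansion above shows that two subobjects whose $\nu_{0,\beta_0}$-slopes differ by a definite amount preserve their $\sigma_{\alpha,\beta_0}$-order, so the $\sigma_{\alpha,\beta_0}$-HN filtration refines $\{F_i\}$; this reduces the problem to the case $F = A_i$ with $A_i$ being $\sigma_{0,\beta_0}$-semistable of nonzero slope. In that situation, any subobject $G \subsetneq A_i$ in $\Coh^{\beta_0}(X)$ that destabilizes $A_i$ for $\sigma_{\alpha,\beta_0}$ with $\alpha$ arbitrarily small must satisfy $\nu_{0,\beta_0}(G) = \mu_i$, i.e.\ $\ch_{2}^{\beta_0}(G) = \mu_i\cdot L\cdot\ch_{1}^{\beta_0}(G)$. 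Combining the bounds $0 \leq L\cdot\ch_{1}^{\beta_0}(G) \leq L\cdot\ch_{1}^{\beta_0}(A_i)$ with $\mu_i \neq 0$ and the Bogomolov inequality for tilt-semistable objects forces $\rk(G)$ into a finite range, hence $\ch(G)$ into a finite set of numerical classes. Local finiteness of walls in the $(\alpha,\beta)$-plane \cite{Aaron-Daniele} then implies that $A_i$ crosses only finitely many walls on the ray $\{(\alpha,\beta_0) : \alpha > 0\}$, so its $\sigma_{\alpha,\beta_0}$-HN filtration stabilizes on some interval $(0,\alpha_0)$. Concatenating the resulting refinements of the $A_i$ yields the desired Bridgeland limit filtration of $F$.

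\textbf{Part (2) and main obstacle.} For the weak limit filtration the plan is to run an analogous argument along the boundary $\alpha=0$: the expansion $\nu_{0,\beta}(E) = \nu_{0,\beta_0}(E) + O(\beta-\beta_0)$ for $E$ of bounded Chern character lets the same Bogomolov-based bound apply, forcing the potential $\sigma_{0,\beta}$-destabilizers of each $A_i$ (as $\beta\to\beta_0$ from a given side) to lie in a finite numerical set and thus produce only finitely many boundary walls. Alternatively, one can deduce (2) from (1) by taking the Bridgeland limit filtrations from part (1) at $\beta$'s in a one-sided neighborhood of $\beta_0$ and matching them to $\sigma_{0,\beta}$-HN filtrations via continuity of the central charge. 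The main obstacle in both parts is precisely to prevent accumulation of walls at the boundary point $(0,\beta_0)$, and this is where the hypothesis on nonvanishing $\mu_i$ is essential: were some $\mu_i=0$, then $\ch_{2}^{\beta_0}(G)=0$ would admit infinitely many solutions under the relevant rank-and-degree constraints, the Bogomolov argument would become inconclusive, and walls could genuinely accumulate at the boundary.
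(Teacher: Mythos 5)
Your part (1) is essentially sound but takes a different route from the paper's. You bound the numerical invariants of all potential destabilizers of a semistable factor $A_i$ near $\alpha=0$ and conclude that only finitely many walls can meet the ray $\beta=\beta_0$; the paper (\autoref{SF}, \autoref{maxDestObj}) instead extracts the limiting maximal destabilizer directly, as a subobject of $S_{A_i}$ minimizing $\frac{L^2\cdot\ch_0}{L\cdot\ch_1^{\beta_0}}$, and assembles the limit filtration inductively. Both arguments rest on the same two inputs (discreteness of $\nu_{0,\beta_0}$ on subobjects of $A_i$ and $\odisc\geq0$ for semistable destabilizers), and your reduction to $\nu_{0,\beta_0}(G)=\mu_i$ is exactly the paper's. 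Two points need repair. First, the Bogomolov inequality bounds $\rk(G)$ from only one side, the side depending on the sign of $\mu_i$ (the paper avoids half of this by dualizing to reduce to $\mu_i<0$); the bound from the other side must come from the destabilization inequality $\frac{\rk(G)}{L\cdot\ch_1^{\beta_0}(G)}<\frac{\rk(A_i)}{L\cdot\ch_1^{\beta_0}(A_i)}$ or from an argument on quotients, not from Bogomolov, and for $\mu_i>0$ these all point the same way, so your ``finite range'' claim is not yet justified there. Second, your opening claim that the $\sigma_{\alpha,\beta_0}$-filtration refines $\{F_i\}$ uniformly for small $\alpha$ is not immediate, because the $O(\alpha^2)$ correction is not uniform over subobjects of unbounded rank; gluing bottom-up from the semistable factors, as in the paper, avoids this.

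The genuine gap is in part (2), which you present as an analogue or a corollary of part (1); in the paper it is by far the harder half. Two phenomena break the analogy. The heart $\Coh^\beta(X)$ changes with $\beta$, so destabilizers of $A_i$ in $\Coh^\beta(X)$ for $\beta>\beta_0$ need not be subobjects in $\Coh^{\beta_0}(X)$, and your Bogomolov-plus-finiteness count is not set up to see them. More seriously, $Z_{0,\beta_0}$ is in general only a \emph{weak} stability function: objects $Q=S[1]\in\ker(Z_{0,\beta_0})$ as in \autoref{Giesekerkernel} have $\nu_{0,\beta_0}(Q)=+\infty$ but $\lim_{\beta\to\beta_0^+}\nu_{0,\beta}(Q)=0$, so a $\sigma_{0,\beta_0}$-stable object of positive slope can fail to be $\sigma_{0,\beta}$-semistable for \emph{every} $\beta>\beta_0$ (\autoref{kernel}). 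This slope discontinuity is exactly what obstructs the naive gluing of the one-factor statements: the right weak limit factors coming from different $\sigma_{0,\beta_0}$-HN factors of $F$ do not approach their limits in the correct order, and the paper needs the core filtration $G_1\subset\ldots\subset G_s$ and the HN-polygon induction of \autoref{HNbeta3} to fix this. Your alternative route --- matching Bridgeland limit filtrations at nearby $\beta$ to $\sigma_{0,\beta}$-filtrations ``by continuity'' --- does not address it either: the problem is not continuity of the central charge but that the $\sigma_{0,\beta}$-HN filtration can keep changing as $\beta\to\beta_0^+$. As written, your part (2) does not go through on any surface with $\ker(Z_{0,\beta_0})\neq0$, e.g.\ abelian surfaces, which are the paper's main case of interest.
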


The extension of the Chern degree functions to continuous real functions is obtained by integration after using the local polynomial expression to bound the derivative, following the approach of \cite{JP} for the cohomological rank functions. 

Once the continuity is settled, it is natural to study the rational \emph{critical points} (i.e.~rational points where the Chern degree functions are not of class $\cC^\infty$).
In particular we obtain the following result characterizing the rational points where the functions are not $\cC^1$.
This result is a corollary of our characterization of the critical points in terms of stability (see \autoref{critical}, as well as the beginning of section~\ref{sec:critical} for a discussion when $F\not\in \Coh^{\beta}(X)$).
\begin{propIntr}\label{mainCritical}
Let $\beta\in \bQ$.
If $F\in\Coh^{\beta}(X)$, then the Chern degree functions of $F$ are not differentiable at $-\beta$ if and only if $F$ has a Harder-Narasimhan factor (with respect to $\sigma_{0,\beta}$) of vanishing tilt slope.
\end{propIntr}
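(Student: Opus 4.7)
The plan is to derive the statement from Theorem A.(1) by comparing the two one-sided polynomial expressions of $\rf^k_{F,L}$ at $x_0=-\beta$. Since $\rf^k_{F,L}$ coincides with polynomials $P^-,P^+$ on a left, respectively right, neighborhood of $-\beta$, the function is automatically $C^\infty$ away from $-\beta$, and differentiability at $-\beta$ amounts to $(P^+)'(-\beta)=(P^-)'(-\beta)$. Only the indices $k=0,1$ need to be examined: for $F\in\Coh^\beta(X)$ all other $\rf^k_{F,L}$ vanish near $-\beta$, and since $\rf^0_{F,L}-\rf^1_{F,L}=\ch_2^{-x}(F)$ is a globally smooth polynomial, $\rf^0_{F,L}$ and $\rf^1_{F,L}$ have the same points of non-differentiability. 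It is therefore enough to handle $\rf^0_{F,L}$.

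The heart of the argument is to identify $P^\pm$ in terms of the HN filtration of $F$ with respect to $\sigma_{0,\beta}$. Decompose that filtration as $G^-\subset G^+\subset F$, where $G^-$ collects the HN factors of strictly positive tilt slope, $G^+/G^-$ those of vanishing tilt slope, and $F/G^+$ those of negative tilt slope. Using the existence of weak limit HN filtrations (Theorem B) together with the fact that $\frac{d}{d\beta'}\ch_2^{\beta'}=-L\cdot\ch_1^{\beta'}<0$ on any object of finite tilt slope, the vanishing-slope factors at $\beta$ should acquire strictly positive slope for $\beta'<\beta$ and strictly negative slope for $\beta'>\beta$, and be correspondingly absorbed into the positive-slope piece of the HN filtration on the right of $-\beta$ while dropping out on the left. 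I therefore expect
\[
P^+(x)=\ch_2^{-x}(G^+),\qquad P^-(x)=\ch_2^{-x}(G^-),
\]
up to possible corrections by subobjects on which both $L\cdot\ch_1^\beta$ and $\ch_2^\beta$ vanish, which by their very definition cannot affect the subsequent derivative computation.

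A direct differentiation yields $\frac{d}{dx}\ch_2^{-x}(G)\big|_{x=-\beta}=L\cdot\ch_1^\beta(G)$, so that
\[
(P^+)'(-\beta)-(P^-)'(-\beta)=L\cdot\ch_1^\beta(G^+/G^-).
\]
Every vanishing-slope HN factor lies in $\Coh^\beta(X)$ with finite tilt slope, and hence has $L\cdot\ch_1^\beta>0$; the right-hand side therefore vanishes if and only if $G^+=G^-$, i.e.~if and only if $F$ admits no HN factor of vanishing tilt slope. The main obstacle will be the identification in the preceding step: the text warns that $P^-(x)$ is only of the form $\ch_2^{-x}(G)$ for some $G\subset\widetilde{F}$ with $Z_{0,\beta}(\widetilde{F}/G)=0$, and an analogous correction may appear for $P^+$. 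Making the above identification rigorous will therefore require the full machinery of Bridgeland and weak limit HN filtrations from Theorem B, together with an argument that these central-charge-zero corrections occur symmetrically on both sides of $-\beta$ and cancel out in the derivative comparison.
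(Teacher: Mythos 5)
Your proposal is correct and follows essentially the same route as the paper's \autoref{critical}: identify the one-sided polynomial expressions via the core/weak-limit filtrations, observe that the corrections by objects in $\ker(Z_{0,\beta})$ kill both the value and the first derivative at $-\beta$, and read off the jump of the derivative as $L\cdot\ch_1^{\beta}$ of the vanishing-slope HN factor, which is strictly positive exactly when such a factor exists. One small imprecision: when $F$ has an infinite-slope HN factor with $\cH^{-1}(F_1)\neq 0$, the function $\rf^{-1}_{F,L}$ does \emph{not} vanish on a right neighborhood of $-\beta$; however its right-hand polynomial is $-\ch_2^{x}$ of an object in $\ker(Z_{0,-\beta})$, so it is $\cC^1$ with zero derivative at $-\beta$ and your comparison is unaffected — exactly as in part (1) of the paper's \autoref{critical}.
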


We must point out that the Chern degree functions seem to be especially nontrivial for polarized surfaces on which $\sigma_{0,\beta}$ fails to be a Bridgeland stability condition at every  $\beta\in\bQ$.
Indeed, in those cases we do not know how to conclude from \autoref{main} that the Chern degree functions are piecewise polynomial, as one would expect.
Among these polarized surfaces, it is well-known that one finds abelian surfaces.
As we explain in \autoref{exGiesekerkernel}, this is also the case for surfaces with finite Albanese map (and polarization pulled back from the Albanese variety), which in particular covers polarized irregular surfaces with Picard number 1\footnote{This provides new examples $(X,L)$ where the Le Potier function $\Phi_{X,L}$ is explicitly known, which may be of independent interest since allows to describe their Bridgeland stability manifold (cf. \cite[Section 3]{FLZ}).}.
In such cases of irregular (non-abelian) surfaces, the relation of the Chern degree functions to the original continuous rank functions, or more generally to the cohomological rank functions of the push-forwarded object via the Albanese map, remains mysterious.

On the opposite side, for regular surfaces such as K3 surfaces, the Chern degree functions are clearly piecewise polynomial, since their stability manifold is certainly larger than the $(\alpha,\beta)$-plane. 
This indicates that these functions are probably not the correct notion to deal with regular surfaces; in \autoref{variant} we propose a variant (explicitly for K3 surfaces of Picard number 1).

While for arbitrary polarized surfaces we plan to study in future work these functions, in the case of abelian surfaces the Chern degree functions recover the cohomological rank functions of Jiang and Pareschi.
This establishes a natural analogue with the structure of cohomological rank functions on elliptic curves:

\begin{thmIntr}\label{mainCRF} 
Let $(X,L)$ be a a polarized abelian surface.
Then, the Chern degree function $\rf^k_{E,L}$ equals the cohomological rank function $h^k_{E,L}$ for every object $E\in\Db(X)$ and $k\in\bZ$.
\end{thmIntr}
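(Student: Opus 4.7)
The strategy is to reduce the identity, via continuity, to a dense subset of rational points, and then match the explicit local polynomial expressions on both sides using the Fourier--Mukai transform.

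First I would invoke continuity. \autoref{main} and \cite[Theorem~3.2]{JP} assert that both $\rf^k_{E,L}$ and $h^k_{E,L}$ extend to continuous real functions, so it is enough to prove equality on $\bQ$. Moreover, since both families admit one-sided polynomial expressions $P^\pm$ in a neighborhood of every rational point (\autoref{main}.\eqref{item:main1} for $\rf^k$, \cite[Corollary~2.6]{JP} for $h^k$), any identification on a dense subset of $\bR$ extends to the whole line.

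Next I would exploit the isogeny-covariance shared by both families to reduce to integer arguments. Jiang--Pareschi define $h^k_{E,L}(a/b)$ in terms of $h^k_{\mu_b^* E, L}(ab)$, where $\mu_b:X\to X$ is multiplication by $b$, using that $\mu_b^*L\equiv L^{b^2}$. The analogous behaviour for $\rf^k$ should follow from the fact that $\mu_b$ is étale: it preserves the heart $\Coh^\beta(X)$ and respects the tilt slope up to rescaling, so it transports a weak limit Harder--Narasimhan filtration at $\beta_0$ to one at $\beta_0/b$. Comparing Chern characters then reduces the theorem to proving $\rf^k_{F,L}(0) = h^k_{F,L}(0)$ for all $F\in\Db(X)$ and all $k$.

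For this base case I would invoke the Fourier--Mukai transform $\FM:\Db(X)\to\Db(\widehat{X})$ with Poincaré kernel, which underlies the Jiang--Pareschi formalism. A classical result of Bridgeland--Maciocia identifies $\FM$ (up to shift and rotation) with an equivalence exchanging $\sigma_{0,0}$-stability on $X$ with a stability condition on $\widehat{X}$, sending $\sigma_{0,0}$-semistable objects of positive (resp.~zero, resp.~negative) tilt slope to $\mathrm{WIT}_0$ (resp.~$\mathrm{WIT}_1$, resp.~$\mathrm{WIT}_2$) coherent sheaves. Combined with \autoref{existenceIntr}, this realises $\wF\subset F$ (the subobject computing $\rf^0_{F,L}(0)=\ch_2(\wF)$) as the $\mathrm{WIT}_0$-part of $F$; since $\td_X=1$ on an abelian surface, Riemann--Roch gives $\chi(\wF)=\ch_2(\wF)$, matching the generic $h^0$ of the $\mathrm{WIT}_0$-part, i.e.~$h^0_{F,L}(0)$. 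The symmetric $\mathrm{WIT}_2$ argument handles $k=2$, and $k=1$ follows from the common Euler characteristic identity $\sum_k(-1)^k\rf^k_{E,L}(x)=\ch_2^{-x}(E)=\sum_k(-1)^kh^k_{E,L}(x)$. The main obstacle I expect is the treatment of Harder--Narasimhan factors of vanishing tilt slope: on the Chern degree side they are discriminated from positive-slope factors only by the one-sided polynomial $P^+$ coming from the weak limit filtration, whereas on the cohomological rank side they correspond to $\mathrm{WIT}_1$ sheaves whose contribution must be extracted by a limiting procedure; matching them requires a careful analysis of the weak limit HN filtration at $\beta_0=0$ and of its image under $\FM$, which is precisely the structural information supplied by \autoref{existenceIntr}.
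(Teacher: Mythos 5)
Your overall reduction (continuity, restriction to rational points, splitting along the Harder--Narasimhan filtration at $\sigma_{0,\beta}$, and pinning down the single surviving function via the Euler characteristic identity) is the same skeleton as the paper's proof. The genuine gap is in your base case: the asserted dictionary ``$\sigma_{0,0}$-semistable of positive/zero/negative tilt slope $\mapsto$ $\mathrm{WIT}_0/\mathrm{WIT}_1/\mathrm{WIT}_2$'' is false (and is not a result of Bridgeland--Maciocia). Take $F=\cI_Z\otimes L$ on a principally polarized abelian surface with $Z$ a general length-$3$ subscheme: then $F\in\Coh^0(X)$ is $\sigma_{0,0}$-semistable with $\nu_{0,0}(F)=-1<0$, but $h^2(F\otimes\alpha)=\Hom(L\otimes\alpha,\cO_X)^*=0$ for all $\alpha$ while $h^1(F\otimes\alpha)=2$ generically, so $F$ is generically $\mathrm{WIT}_1$, not $\mathrm{WIT}_2$. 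Had your dictionary been correct it would force $h^2_{F,L}(0)\neq0$, contradicting $\rf^2_{F,L}(0)=0$ (which holds for any object of $\Coh^0(X)$ by definition), i.e.\ it would disprove the theorem you are trying to establish. The index $2$ is reserved for the $\cH^{-1}$-part of the two-term complex, not for negative tilt slope.

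What is actually needed --- and what the paper proves --- is the pair of generic-vanishing statements: for $F\in\Coh^\beta(X)$ one has $h^i_{F,L}(-\beta)=0$ for $i\neq 0,1$ (\autoref{heartlemma}), and for $F$ moreover $\sigma_{0,\beta}$-semistable one has $h^1_{F,L}(-\beta)=0$ when $\nu_{0,\beta}(F)\geq0$ and $h^0_{F,L}(-\beta)=0$ when $\nu_{0,\beta}(F)\leq0$ (\autoref{vanishinglemma}). These are obtained with no Fourier--Mukai transform at all: one pulls back by a separable isogeny $\mu_d$ (which preserves the tilted heart and tilt-semistability by \cite[Proposition~6.1]{BMS}) and uses Hom-vanishing between semistable objects of ordered slope, e.g.\ $\Hom(\mu_d^*F,L^{-cd}[1])=0$, together with approximation by rationals $x_n\to x$ with $\mu_{d_n}$ \'etale and \autoref{JPmain}.\eqref{enum:JP1}. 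Note also that the vanishing-slope case you flag as the ``main obstacle'' is in fact harmless: applying \autoref{vanishinglemma} with both inequalities gives $h^0_{F,L}(-\beta)=h^1_{F,L}(-\beta)=0$ there, matching $\rf^0_{F,L}(-\beta)=\rf^1_{F,L}(-\beta)=0$ since $\ch_2^\beta(F)=0$; no weak limit filtration analysis enters the proof of this theorem. If you want to salvage a Fourier--Mukai route you would have to prove the correct (generic) $\mathrm{WIT}$ indices, which amounts to reproving these two lemmas anyway.
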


Note that this shows, in particular, that the cohomological rank functions of an object at $-\beta$ split into simpler pieces, corresponding to its Harder-Narasimhan factors with respect to $\sigma_{0,\beta}$.
Conversely, knowing the cohomological rank functions of an object one can deduce constraints on its behaviour with respect to stability (the easiest case being an object that never destabilizes, see \autoref{trivial}).

This new presentation turns out to be significantly useful to understand cohomological rank functions.
For instance, \autoref{mainCritical} (and more generally \autoref{critical}) characterizes their differentiability at rational critical points, which in \cite[Proposition~4.4]{JP} was settled a sufficient condition to control the dimension of certain jump loci. 
Observe also that, thanks to \autoref{main}.\eqref{item:main2}, we obtain that cohomological rank functions extend to continuous real functions in positive characteristic as well (a case not covered by Jiang and Pareschi).

Furthermore, the stability viewpoint seems to be fruitful for the computation of particular examples, if one exploits the wall-crossing behaviour in the $(\alpha,\beta)$-plane.
We end the paper by focusing on Gieseker semistable sheaves, which are well-known to be semistable for large values of $\alpha$ (at the so-called \emph{Gieseker chamber}).

For them, we propose a method based on successive destabilizations along the $(\alpha,\beta)$-plane; in many concrete situations, this method gives a explicit description of $\rf^0_{F,L}$ as a piecewise polynomial function.
We illustrate this phenomenon with the ideal sheaves of 0-dimensional subschemes of low length on principally polarized abelian surfaces.

Another concrete example where this method works is the case of the ideal sheaf of one point, for abelian surfaces endowed with a polarization of arbitrary type. 
As explained before, such functions have recently received considerable attention as a tool to understand syzygies of abelian varieties, thanks to the results of Caucci \cite{Caucci}.
A computation of this function for abelian surfaces appears in \cite{Rojas}, where
the second author improves the bound given in \cite{Ito} for the maximal critical point in the case of arbitrarily polarized abelian surfaces.

\subsection*{Plan of the paper} 
After some preliminaries on the $(\alpha,\beta)$-plane of stability conditions given in \autoref{sec:prelim}, we define the Chern degree functions in \autoref{sec:Chern_deg} and prove some elementary properties of cohomological nature.
We also discuss how one could extend these functions to the boundary of the manifold of geometric Bridgeland stability conditions.

Section \ref{sec:LocalExpr} constitutes the technical core of the paper.
First, we introduce in \autoref{subsec:Bridgelandlimit} Bridgeland limit filtrations and prove \autoref{existenceIntr}.\eqref{exist:Br}.
In \autoref{subsec:LocalExpr2} we study the weak limit filtrations and we prove \autoref{existenceIntr}.\eqref{exist:Wk}, by means of Bridgeland limit filtrations and a systematic exploitation of wall-crossing. 
This almost gives \autoref{main}.\eqref{item:main1}; the proof is completed in \autoref{subsec:LocalExpr3} by considering the exceptional situations.

The proof of \autoref{main}.\eqref{item:main2} occupies the first part of \autoref{sec:ExtensionToR}.
The rest of \autoref{sec:ExtensionToR} addresses the characterization of rational critical points and their differentiability, which leads to \autoref{critical} (and in particular, to \autoref{mainCritical}).

In \autoref{sec:CaseCRF} we briefly review cohomological rank functions on abelian varieties, and then give a proof of \autoref{mainCRF}.
Finally, \autoref{sec:chdGieseker} deals with Chern degree functions of (twisted) Gieseker semistable sheaves, including a general discussion (\autoref{subsec:Trivialchd} and \autoref{subsec:lowDisc}) illustrated on principally polarized abelian surfaces (\autoref{subsec:CRFGiesekerPPAS}).

\subsection*{Acknowledgements.} 
We would like to thank specially Joan Carles Naranjo, who started this project with us. 
We also thank Miguel \'Angel Barja and Zhi Jiang whose questions and ideas were the starting point of this article, and Federico Caucci, Chunyi Li, Emanuele Macr\`{\i} and Quim Ortega for useful conversations and comments, and an anonymous
referee for her or his suggestions.

\section{Preliminaries on stability conditions}\label{sec:prelim}

In this section, we review the definitions and basic properties of (possibly weak) stability conditions, with a special view towards the $(\alpha,\beta)$-plane defined by a polarization on a smooth projective surface as developed in \cite{Bridgeland:Stab,Bridgeland:K3,Aaron-Daniele}.
We follow the notations of the excellent survey \cite{MS}.

\subsection{Weak and Bridgeland stability conditions}
Let $\cA$ be an abelian category with Grothen\-dieck group $K_0(\cA)$.
\begin{defn}
 A \emph{stability function} (resp.~\emph{weak stability function}) on $\cA$ is a group homomorphism $Z:K_0(\cA)\to\bC$ such that every $F\in\cA\setminus\{0\}$ satisfies
\[
\Im Z(F)\geq0,\;\;\text{and }\;\Im Z(F)=0\Longrightarrow \Re Z(F)<0 \text{ (resp.~}\Im Z(F)=0\Longrightarrow \Re Z(F)\leq0)
\]
\end{defn}

Given a (possibly weak) stability function $Z$ on $\cA$, one can define a slope for objects $F\in\cA$ as $\mu(F)=\frac{-\Re Z(F)}{\Im Z(F)}$, with the convention $\mu(F)=+\infty$ whenever $\Im Z(F)=0$.

\begin{defn}
An object $F\in\cA\setminus\{0\}$ is called \emph{(semi)stable} if, for every nonzero subobject $E\subsetneq F$, the inequality $\mu(E)<(\leq)\;\mu(F/E)$ holds.
\end{defn}

\begin{rem}
Equivalently, $F$ is semistable if and only if $\mu(E)\leq\mu(F)$ for every nonzero $E\subsetneq F$.
The same holds (with strict inequality) for stability of $F$, whenever $Z$ is a stability function.
But if $Z$ is strictly weak, then $F$ may be stable and admit subobjects with $\mu(E)=\mu(F)$; indeed, if $Z(F/E)=0$ then $\mu(E)=\mu(F)<+\infty=\mu(F/E)$ does not contradict stability for $F$.
\end{rem}

Now we give the definition of stability conditions on the bounded derived category $\Db(X)$ of a smooth projective variety $X$.
To this end, we fix a finite rank lattice $\Lambda$ together with a group epimorphism $v:K_0(X)\twoheadrightarrow\Lambda$, where $K_0(X)$ denotes the Grothendieck group of $\Db(X)$.

\begin{defn}
A \emph{Bridgeland stability condition} (resp.~\emph{weak stability condition}) on $\Db(X)$ is a pair $\sigma=(Z,\cA)$, where:
\begin{enumerate}[{\rm (1)}]
    \item $\cA$ is the heart of a bounded t-structure on $\Db(X)$, and $Z:K_0(X)\to\bC$ (\emph{central charge}) is a stability function (resp.~weak stability function) on $\cA$ factoring through $v$.
    \item Every object $F\in\cA\setminus\{0\}$ has a \emph{Harder-Narasimhan} (\emph{HN} for short) \emph{filtration}: there exists a (unique) chain of subobjects
    \[
    0=F_0\subset F_1\subset\ldots\subset F_{r-1}\subset F_r=F
    \]
    with the \emph{HN factors} $F_i/F_{i-1}$ being semistable and $\mu(F_1)>\mu(F_2/F_1)>\ldots>\mu(F/F_{r-1})$.
    \item The \emph{support property} is satisfied: Let $\Lambda_0\subset\Lambda$ be the saturation of the subgroup generated by classes $v(F)$ of objects $F\in\cA$ such that $Z(v(F))=0$.
    Then there exists a quadratic form $Q$ on $(\Lambda/\Lambda_0)\otimes\bR$, 
    such that $Q(v,v)<0$ for every nonzero $v\in(\Lambda/\Lambda_0)\otimes\bR$ with $Z(v)=0$, and $Q(v(F),v(F))\geq0$ for every semistable object $F\in\cA$.
\end{enumerate}
\end{defn}

For $(X,L)$ a polarized smooth projective variety of dimension $n$, the pair $\sigma=(Z,\Coh(X))$ (where $Z=-L^{n-1}\cdot\ch_1+iL^n\cdot\ch_0$)
defines a weak stability condition on $\Db(X)$ (the usual \emph{$\mu_L$-stability}) with respect to the epimorphism $v:K_0(X)\twoheadrightarrow\bZ^2$ given by $v(E)=(L^n\cdot\ch_0(E),L^{n-1}\cdot\ch_1(E))$.
In this case, the support property is guaranteed by the quadratic form $Q=0$.

Let $\Stab_\Lambda(X)$ be the set of Bridgeland stability conditions on $\Db(X)$ (with respect to $\Lambda$ and $v$).
The support property may be thought of as a technical condition allowing to give $\Stab_\Lambda(X)$, when endowed with a certain topology, the structure of a complex manifold.
This result is usually known as \emph{Bridgeland's deformation theorem} (\cite{Bridgeland:Stab}, see also \cite{BayerDef}).
The main consequence is that the regions where objects of a fixed class are (semi)stable behave following a wall and chamber structure.

\subsection{The $(\alpha,\beta)$-plane of a polarized surface}\label{ab-plane}
Let $X$ be a smooth projective surface over an algebraically closed field $\bK$; if $\Char\bK>0$, we will assume that $X$ is neither of general type nor quasi-elliptic with $\kappa(X)=1$.
For a fixed polarization $L\in\NS(X)$ on $X$, we review the construction of the stability conditions forming the so-called \emph{$(\alpha,\beta)$-plane of $L$}.
This is a slice of the connected component of the stability manifold $\Stab(X)$ constructed by Bridgeland (\cite{Bridgeland:K3}) in the case of K3 surfaces, generalized to arbitrary surfaces in \cite{Aaron-Daniele}.

For the rest of this section, we fix the lattice $\Lambda=\bZ^2\oplus\frac{1}{2}\bZ$ together with the surjective map $v:K_0(X)\to\Lambda$ defined by
\[
v(E)=(v_0(E),v_1(E),v_2(E))=(L^2\cdot\ch_0(E),L\cdot\ch_1(E),\ch_2(E))
\]

Given $\beta\in\bR$, consider the full subcategories
\[
\cF_{\beta}:=\{E\in\Coh(X)\mid\mu_L^+(E)\leq\beta\},\;\;
\cT_{\beta}:=\{E\in\Coh(X)\mid\mu_L^-(E)>\beta\} 
\]
of $\Coh(X)$, where $\mu_L^+$ (resp.~$\mu_L^-$) denotes the maximum (resp.~minimum) slope of a HN factor in $\mu_L$-stability.
They form a torsion pair, and thus according to \cite{HRS} their tilt
\[
\Coh^{\beta}(X):=\set{E\in\Db(X)\mid\cH^{-1}(E)\in\cF_{\beta}, \; \cH^{0}(E)\in\cT_{\beta},\; \cH^i(E)=0 \text{ for }i\neq0,-1}
\]
is the heart of a bounded t-structure on $\Db(X)$.


\begin{rem}\label{heart}
If an object $E\in\Db(X)$ satisfies $\cH^i(E)=0$ for $i\neq0,-1$, then $E\in\Coh^\beta(X)$ is equivalent to $\mu_L^+(\cH^{-1}(E))\leq\beta<\mu_L^-(\cH^0(E))$.
In particular, as a condition on $\beta\in\bR$, $E\in\Coh^\beta(X)$ is open on the right.
\end{rem}

For every $(\alpha,\beta)\in\bR_{>0}\times\bR$, we define a central charge $Z_{\alpha,\beta}:K_0(X)\to\bC$ by
\[
Z_{\alpha,\beta}(E)=-\left(\ch_2^{\beta}(E)-\frac{\alpha^2}{2}L^2\cdot\ch_0^{\beta}(E)
\right)+i\left(L\cdot\ch_1^{\beta}(E)\right)
\]
where $\ch^\beta=e^{-\beta L}\cdot\ch$ is the Chern character twisted by $\beta L$.
Note that $Z_{\alpha,\beta}$ factors through $v$.
We will denote by $\nu_{\alpha,\beta}$ the \emph{tilt slope} defined by the central charge $Z_{\alpha,\beta}$.

The main result of this part, for which we adopt the version in \cite[Theorems 6.10 and 6.13]{MS}, strongly relies on the classical Bogomolov inequality for $\mu_L$-semistable sheaves (see \cite[Theorem 1.3]{Langer} for positive characteristic):

\begin{thm}[{\cite{Bridgeland:K3,Aaron-Daniele}}]\label{alphabetaplane}
For every $(\alpha,\beta)\in\bR_{>0}\times\bR$, the pair $\sigma_{\alpha,\beta}=(\Coh^\beta(X),Z_{\alpha,\beta})$ is a Bridgeland stability condition on $\Db(X)$, satisfying the support property with respect to the quadratic form $\odisc:=(L\cdot\ch_1)^2-2(L^2\cdot\ch_0)\ch_2$.
\end{thm}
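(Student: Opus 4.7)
The plan is to verify the three defining properties of a Bridgeland stability condition in order of increasing difficulty: stability function, existence of Harder-Narasimhan filtrations, and finally the support property. The central charge $Z_{\alpha,\beta}$ manifestly factors through $v$, so no further compatibility check is needed there.

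First I would show that $Z_{\alpha,\beta}$ is a stability function on $\Coh^\beta(X)$. For nonzero $E\in\Coh^\beta(X)$, the imaginary part equals $L\cdot\ch_1^\beta(E)=L\cdot\ch_1(E)-\beta L^2\cdot\ch_0(E)$, which is nonnegative: using the exact triangle $\cH^{-1}(E)[1]\to E\to \cH^0(E)$ and the defining conditions of $\cF_\beta$ and $\cT_\beta$, both cohomology sheaves contribute nonnegatively to $L\cdot\ch_1^\beta$. When $\Im Z_{\alpha,\beta}(E)=0$, each Jordan-H\"older factor of $\cH^{-1}(E)$ and of the torsion-free part of $\cH^0(E)$ in $\mu_L$-stability is $\mu_L$-semistable of slope exactly $\beta$, while any torsion in $\cH^0(E)$ must be supported in codimension $\geq 2$. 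On each such $\mu_L$-semistable sheaf $G$ of slope $\beta$, the classical Bogomolov inequality gives $\odisc(v(G))\geq 0$, hence $\ch_2^\beta(G)\leq 0$. Combined with the strictly positive contribution of $\frac{\alpha^2}{2}L^2\cdot\ch_0^\beta(E)$ (or of $\ch_2^\beta$ on torsion pieces supported on points), this forces $\Re Z_{\alpha,\beta}(E)<0$ and yields the stability function property.

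Second, Harder-Narasimhan filtrations exist by a standard argument applying Bridgeland's general criteria: the heart $\Coh^\beta(X)$ is Noetherian, and the image of $\Im Z_{\alpha,\beta}$ on its Grothendieck group is discrete when $\beta$ is rational, which together rule out infinite chains of destabilizing subobjects. For irrational $\beta$ one deduces the HN property from the rational case together with the support property via Bridgeland's deformation theorem. Finally, for the support property with respect to $\odisc$, one first checks by a direct computation that $\odisc$ is negative on the kernel of $Z_{\alpha,\beta}$ modulo $\Lambda_0$: the two linear equations $\Re Z_{\alpha,\beta}=\Im Z_{\alpha,\beta}=0$ cut out a line in $\Lambda\otimes\bR$ on which the $\alpha^2/2$ correction produces the required negativity. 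The nonnegativity $\odisc(v(E))\geq 0$ for $\sigma_{\alpha,\beta}$-semistable $E$ is obtained by the large-volume argument: for $\alpha$ sufficiently large relative to $v(E)$, any $\sigma_{\alpha,\beta}$-semistable object is, up to shift, a twist of a $\mu_L$-semistable sheaf, on which the classical Bogomolov inequality applies; the inequality then propagates across all walls in the $(\alpha,\beta)$-plane, using that on a wall the Jordan-H\"older factors have proportional central charges and that $\odisc$ is a quadratic form compatible with the resulting numerical identities.

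The main obstacle is the last step: carrying the classical Bogomolov inequality from the large-volume chamber across the entire $(\alpha,\beta)$-plane requires a careful wall-crossing bookkeeping to ensure that the inequality is preserved by all possible semistable factorizations. Here the characteristic hypothesis enters decisively: in positive characteristic the classical Bogomolov inequality for $\mu_L$-semistable sheaves is not automatic, and its validity is guaranteed by Langer's theorem precisely when $X$ is neither of general type nor quasi-elliptic with $\kappa(X)=1$, which is why this assumption appears in the statement.
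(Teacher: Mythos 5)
The paper does not actually prove this theorem --- it is quoted from Bridgeland and Arcara--Bertram in the form of \cite[Theorems~6.10 and 6.13]{MS}, resting on the classical Bogomolov inequality (Langer's theorem in positive characteristic, whence the hypothesis you correctly identify) --- and your outline faithfully reproduces the standard argument from those references: nonnegativity of the imaginary part from the torsion pair, Bogomolov plus the $\frac{\alpha^2}{2}$-term forcing $\Re Z_{\alpha,\beta}<0$ when the imaginary part vanishes, Noetherianity and discreteness of $\Im Z_{\alpha,\beta}$ for HN filtrations at rational $\beta$ with deformation handling irrational $\beta$, and the support property established in the large-volume chamber and propagated by the quadratic-form deformation argument of \cite[Lemma~A.7]{BMS}. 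One minor imprecision, which only simplifies your case analysis: when $\Im Z_{\alpha,\beta}(E)=0$ the torsion-free part of $\cH^0(E)$ cannot have $\mu_L$-slope equal to $\beta$ (that would contradict $\cH^0(E)\in\cT_\beta$), so it must vanish outright and only a $0$-dimensional torsion piece survives.
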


\begin{rem}
In the recent work \cite{Koseki} Koseki proved a version of Bogomolov inequality in positive characteristic, for surfaces of general type and quasi-elliptic surfaces with $\kappa=1$.
This enables him to construct (a smaller region of) Bridgeland stability conditions in such cases, that we will not consider.
\end{rem}

Given a class $v\in\Lambda$, a \emph{numerical wall} for $v$ is the region of $\bR_{>0}\times\bR$ determined by an equation of the form $\nu_{\alpha,\beta}(v)=\nu_{\alpha,\beta}(w)$, where $w\in\Lambda$ is a class non-proportional to $v$.
An \emph{actual wall} for $v$ is a subset of a numerical wall, at which the set of semistable objects of class $v$ changes.
If $\sigma_{\alpha,\beta}$ lies in an actual wall for $v$, then there is a short exact sequence of semistable objects $0 \to E \to F \to Q \to 0$ in $\Coh^\beta (X)$, with $v(F)=v$, $v(E)=w$ and $\nu_{\alpha,\beta}(E)=\nu_{\alpha,\beta}(F)$ (see \cite[Proposition 3.3]{BM:localP2} or \cite[Section 5.5]{MS}).
We say that the short exact sequence defines the corresponding actual wall.


The structure of the walls in this $(\alpha,\beta)$-plane is well understood.
Items \eqref{item:numerical}--\eqref{item:allactual} of the following theorem are called \emph{Bertram's Nested Wall Theorem}, and were proved in \cite{Maciocia}.
The last item is a consequence of \cite[Lemma~A.7]{BMS}, as part of a systematic study of the support property in terms of the quadratic form.

\begin{thm} \label{structurewalls}
Let $v\in\Lambda$ be a class with $\odisc(v)\geq0$.
\begin{enumerate}[{\rm (1)}]
 \item\label{item:numerical} All numerical walls for $v$ are either semicircles centered on the $\beta$-axis or lines parallel to the $\alpha$-axis.
 \item The numerical walls defined by classes $u,w\in\Lambda$ intersect if and only if $v,u,w$ are linearly dependent.
 In such a case, the two walls are identical.
 \item If $v_0\neq0$, there is a unique vertical wall with equation $\beta=\frac{v_1}{v_0}$.
 At each side of this vertical wall, all semicircular walls are strictly nested (see \autoref{fig:vneq0}).
 \item If $v_0=0$, there is no vertical wall and all numerical walls are strictly nested semicircles (see \autoref{fig:v=0}).
 \item The curve $H_v:\nu_{\alpha,\beta}(v)=0$ intersects every semicircular wall at its top point.
 This curve is an hyperbola (if $v_0\neq0$ and $\odisc(v)>0$), a pair of lines (if $v_0\neq0$ and $\odisc(v)=0$) or a single vertical line (if $v_0=0$).
 
 \item\label{item:allactual} If a numerical wall is an actual wall at some of its points, then it is an actual wall at all of its points.
 
 \item\label{item:Disc0nodest} If an actual wall is defined by a short exact sequence $0\to E\to F\to Q\to 0$ of semistable objects (with $v(F)=v$), then $\odisc(E)+\odisc(Q)<\odisc(F)$.
 In particular, if $\odisc(v)=0$ then semistable objects $F$ with $v(F)=v$ can only be destabilized at the vertical wall.
\end{enumerate}
\end{thm}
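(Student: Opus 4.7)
The plan is to prove items (1)--(5) by direct analysis of the defining equation of a numerical wall, item (6) by tracking the same destabilizing sequence along the wall, and item (7) by a Hodge-index-type argument with the support-property quadratic form $\odisc$.

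For the first block, I would write $\nu_{\alpha,\beta}(v) = \nu_{\alpha,\beta}(w)$ as the polynomial identity
\[
\Re Z_{\alpha,\beta}(v)\cdot \Im Z_{\alpha,\beta}(w) = \Re Z_{\alpha,\beta}(w)\cdot \Im Z_{\alpha,\beta}(v),
\]
which, expanding with $\Re Z_{\alpha,\beta}(v) = -\ch_2^\beta(v)+\tfrac{\alpha^2}{2}v_0$ and $\Im Z_{\alpha,\beta}(v) = v_1-\beta v_0$, collapses to
\[
\tfrac{1}{2}(v_0 w_1 - v_1 w_0)(\alpha^2 + \beta^2) + (v_2 w_0 - w_2 v_0)\,\beta + (w_2 v_1 - v_2 w_1) = 0,
\]
with coefficients given (up to sign) by the $2\times 2$ minors of the matrix with rows $v$ and $w$. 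Item (1) is then immediate: when the leading coefficient is nonzero, completing the square exhibits a semicircle centered on the $\beta$-axis; otherwise the equation reduces to a line of constant $\beta$. For (2), at a common point of two walls the complex numbers $Z_{\alpha,\beta}(v), Z_{\alpha,\beta}(u), Z_{\alpha,\beta}(w)$ are pairwise real-proportional; since the image of $Z_{\alpha,\beta}$ in $\Lambda\otimes\bR$ has rank two, this forces $v, u, w$ linearly dependent, and substituting the relation into either wall equation shows they coincide. Item (5) is a direct check: the top $(\alpha,\beta) = (r,\beta_0)$ of a semicircle centered at $(0,\beta_0)$ solves $\nu_{\alpha,\beta}(v) = 0$, and the classification of $H_v$ as hyperbola, pair of lines, or vertical line is read off from the sign of $\odisc(v)$ and from $v_0$. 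For (3), (4), the vertical wall arises exactly from the leading-coefficient-zero case $v_0 w_1 = v_1 w_0$ with $v, w$ not proportional (which forces $\beta = v_1/v_0$); strict nesting of the semicircular walls on each side of this vertical line follows because by (2) distinct walls for $v$ cannot meet, and a one-parameter family analysis (as in \cite{Maciocia}) rules out the disjoint-but-not-nested configuration.

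For (6), fix an actual wall $W$ witnessed at a point $(\alpha_0,\beta_0)\in W$ by $0 \to E \to F \to Q \to 0$ in $\Coh^{\beta_0}(X)$ with $\sigma_{\alpha_0,\beta_0}$-semistable terms of identical tilt slope. Since $Z_{\alpha,\beta}$ factors through $v$, the slope equality $\nu_{\alpha,\beta}(E) = \nu_{\alpha,\beta}(F) = \nu_{\alpha,\beta}(Q)$ propagates to every point of $W$. Membership $E, F, Q \in \Coh^\beta(X)$ is open in $\beta$ by \autoref{heart} and extends throughout the $\beta$-range swept by $W$. Semistability of $E$ and $Q$ along $W$ is preserved because, by (2), no further actual wall for $v(E)$ or $v(Q)$ can cross $W$ transversally; coincident walls only contribute Jordan--Hölder factors of the same slope, so $F$ stays strictly semistable at every point of $W$ and the same exact sequence witnesses the wall as actual throughout.

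Item (7) is the subtlest step, and I expect it to be the main obstacle. Expanding $\odisc$ bilinearly gives
\[
\odisc(F) = \odisc(E) + \odisc(Q) + 2B(v(E), v(Q)),
\]
where $B$ is the polar form of $\odisc$, so the claim reduces to the strict positivity $B(v(E), v(Q)) > 0$. Following \cite[Lemma~A.7]{BMS}, along the wall the classes $v(E)$ and $v(Q)$ have real-proportional $Z_{\alpha,\beta}$-images yet are not proportional in $\Lambda$ (otherwise the wall equation is trivially satisfied), while $\odisc(v(E)), \odisc(v(Q)) \geq 0$ by semistability and the support property. A Hodge-index-type computation on the rank-two sublattice that $v(E)$ and $v(Q)$ span modulo $\Lambda_0$ then forces $B(v(E), v(Q)) > 0$, completing the argument. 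The last assertion is immediate: if $\odisc(v) = 0$, any destabilization along a semicircular wall would require $\odisc(E) + \odisc(Q) < 0$, impossible for semistable classes, so only the vertical wall can destabilize $F$.
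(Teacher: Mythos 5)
The paper does not actually prove \autoref{structurewalls}: items (1)--(6) are attributed to Bertram's Nested Wall Theorem as established in \cite{Maciocia}, and item (7) is stated as a consequence of \cite[Lemma~A.7]{BMS}. Your proposal is therefore a reconstruction of the cited arguments rather than an alternative to a proof in the text, and as a reconstruction it is essentially correct: the expansion of $\nu_{\alpha,\beta}(v)=\nu_{\alpha,\beta}(w)$ into $\tfrac12(v_0w_1-v_1w_0)(\alpha^2+\beta^2)+(v_2w_0-w_2v_0)\beta+(w_2v_1-v_2w_1)=0$, the linear-dependence argument for (2), the identification of $H_v$ and its passage through the top points, and the reduction of (7) to $B(v(E),v(Q))>0$ via the support-property form are exactly the standard route, and the non-proportionality needed for strictness in (7) is indeed automatic from the definition of a numerical wall.

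One step needs tightening. In item (6) you justify persistence of semistability of $E$ and $Q$ along $W$ by saying that "by (2), no further actual wall for $v(E)$ or $v(Q)$ can cross $W$ transversally"; but part (2) as stated only governs walls for the fixed class $v$, not for $v(E)$. The missing observation is that if a numerical wall for $v(E)$ defined by some class $u$ meets $W$ at a point, then at that point $Z_{\alpha,\beta}(v)$, $Z_{\alpha,\beta}(v(E))$ and $Z_{\alpha,\beta}(u)$ are pairwise real-proportional, so $v$, $v(E)$, $u$ are linearly dependent and that wall coincides with $W$ itself; hence no transversal crossing occurs and the open-and-closed argument along the connected wall $W$ goes through. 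Similarly, the strict nesting in (3)--(4) is deferred to \cite{Maciocia}; a self-contained version is that the top points of all semicircular walls lie on a single monotone branch of $H_v$, along which they are totally ordered, and pairwise disjoint semicircles centered on the $\beta$-axis with top points on such a branch must be nested. With these two clarifications your sketch matches the cited proofs.
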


\begin{figure}[H]
\definecolor{ffffff}{rgb}{1,0,0}
\definecolor{sexdts}{rgb}{0.1803921568627451,0.49019607843137253,0.19607843137254902}
\definecolor{wrwrwr}{rgb}{0.3803921568627451,0.3803921568627451,0.3803921568627451}
\begin{tikzpicture}[line cap=round,line join=round,>=triangle 45,x=1.15cm,y=1.15cm]

\draw [line width=0.5pt,color=wrwrwr] (-3,0) -- (3,0);
\draw [line width=1pt,color=wrwrwr] (0,0) -- (0,1.8);

\draw [shift={(1.05,0)},line width=1pt,color=wrwrwr] plot[domain=0:3.141592653589793,variable=\t]({1*0.32*cos(\t r)+0*0.32*sin(\t r)},{0*0.32*cos(\t r)+1*0.32*sin(\t r)});
\draw [shift={(1.2,0)},line width=1pt,color=wrwrwr] plot[domain=0:3.141592653589793,variable=\t]({1*0.66*cos(\t r)+0*0.66*sin(\t r)},{0*0.66*cos(\t r)+1*0.66*sin(\t r)});
\draw [shift={(1.4,0)},line width=1pt,color=wrwrwr] plot[domain=0:3.141592653589793,variable=\t]({1*0.98*cos(\t r)+0*0.98*sin(\t r)},{0*0.98*cos(\t r)+1*0.98*sin(\t r)});
\draw [shift={(-1.05,0)},line width=1pt,color=wrwrwr] plot[domain=0:3.141592653589793,variable=\t]({1*0.32*cos(\t r)+0*0.32*sin(\t r)},{0*0.32*cos(\t r)+1*0.32*sin(\t r)});
\draw [shift={(-1.2,0)},line width=1pt,color=wrwrwr] plot[domain=0:3.141592653589793,variable=\t]({1*0.66*cos(\t r)+0*0.66*sin(\t r)},{0*0.66*cos(\t r)+1*0.66*sin(\t r)});
\draw [shift={(-1.4,0)},line width=1pt,color=wrwrwr] plot[domain=0:3.141592653589793,variable=\t]({1*0.98*cos(\t r)+0*0.98*sin(\t r)},{0*0.98*cos(\t r)+1*0.98*sin(\t r)});

\draw[line width=1pt,dash pattern=on 4pt off 3pt,color=sexdts, smooth,samples=100,domain=0:10] 
plot[domain=0:1.37,variable=\t]({-(-exp(\t)-exp(-\t))/2},{(exp(\t)-exp(-\t))/2});

\draw[line width=1pt,dash pattern=on 4pt off 3pt,color=sexdts, smooth,samples=100,domain=0:10] 
plot[domain=0:1.37,variable=\t]({(-exp(\t)-exp(-\t))/2},{(exp(\t)-exp(-\t))/2});

\draw [line width=0.5pt,color=wrwrwr] (4.3,0) -- (8.4,0);
\draw [line width=1pt,color=wrwrwr] (6.4,0) -- (6.4,1.8);
\draw[line width=1pt,dash pattern=on 4pt off 3pt,color=sexdts, smooth,samples=100,domain=0:10] (4.6,1.8) -- (6.4,0);
\draw[line width=1pt,dash pattern=on 4pt off 3pt,color=sexdts, smooth,samples=100,domain=0:10] (6.4,0) -- (8.2,1.8);

\draw [shift={(5.45,0)},line width=1pt,color=wrwrwr] plot[domain=0:3.141592653589793,variable=\t]({0.95*cos(\t r)},{0.95*sin(\t r)});
\draw [shift={(6.1,0)},line width=1pt,color=wrwrwr] plot[domain=0:3.141592653589793,variable=\t]({0.3*cos(\t r)},{0.3*sin(\t r)});
\draw [shift={(5.8,0)},line width=1pt,color=wrwrwr] plot[domain=0:3.141592653589793,variable=\t]({0.6*cos(\t r)},{0.6*sin(\t r)});

\draw [shift={(7.2,0)},line width=1pt,color=wrwrwr] plot[domain=0:3.141592653589793,variable=\t]({0.8*cos(\t r)},{0.8*sin(\t r)});

\draw [shift={(6.9,0)},line width=1pt,color=wrwrwr] plot[domain=0:3.141592653589793,variable=\t]({0.5*cos(\t r)},{0.5*sin(\t r)});

\begin{footnotesize}
\draw[color=sexdts] (-2.32,1.82) node {$\hyp{v}$};
\draw[color=sexdts] (2.32,1.82) node {$\hyp{v}$};
\draw[color=wrwrwr] (0,2.1) node {$\beta=\frac{v_1}{v_0}$};

\draw[color=sexdts] (4.32,1.82) node {$\hyp{v}$};
\draw[color=sexdts] (8.4,1.8) node {$\hyp{v}$};
\draw[color=wrwrwr] (6.4,2.1) node {$\beta=\frac{v_1}{v_0}$};

\end{footnotesize}
\end{tikzpicture}

\caption{Numerical walls for $v$ when $v_0\neq0$}\label{fig:vneq0}

\vspace{.3cm}
\definecolor{ffffff}{rgb}{1,0,0}
\definecolor{sexdts}{rgb}{0.1803921568627451,0.49019607843137253,0.19607843137254902}
\definecolor{wrwrwr}{rgb}{0.3803921568627451,0.3803921568627451,0.3803921568627451}
\begin{tikzpicture}[line cap=round,line join=round,>=triangle 45,x=1.15cm,y=1.15cm]

\draw [line width=0.5pt,color=wrwrwr] (-2.5,0) -- (2.5,0);
\draw [line width=1pt,dash pattern=on 4pt off 3pt,color=sexdts, smooth,samples=100,domain=0:10] (0,0) -- (0,1.8);

\draw [line width=1pt,color=wrwrwr] plot[domain=0:3.141592653589793,variable=\t]({1*0.98*cos(\t r)},{1*0.98*sin(\t r)});

\draw [line width=1pt,color=wrwrwr] plot[domain=0:3.141592653589793,variable=\t]({1.3*cos(\t r)},{1.3*sin(\t r)});

\draw [line width=1pt,color=wrwrwr] plot[domain=0:3.141592653589793,variable=\t]({0.5*cos(\t r)},{0.5*sin(\t r)});

\begin{footnotesize}
\draw[color=sexdts] (0,2.1) node {$\hyp{v}:\beta=\frac{v_2}{v_1}$};
\end{footnotesize}
\end{tikzpicture}

\caption{Numerical walls for $v$ when $v_0=0$}\label{fig:v=0}
\end{figure}

\begin{rem}\label{hyperbola}
If $F\in\Coh^\beta(X)$ for some $\beta\in\bR$, then $L\cdot\ch_1^\beta(F)\geq0$ gives $\beta\leq \mu_L(F)$ (resp.~$\beta\geq\mu_L(F)$) if $\ch_0(F)>0$ (resp.~$\ch_0(F)<0$).
There is no condition on $\beta$, if $\ch_0(F)=0$.

Thus in the study of a particular object $F$, one is led to consider just one of the regions separated by the vertical wall (if exists).
Such a region is divided into two parts by a component of $H_{v(F)}$;
abusing of notation, this component will be called \emph{hyperbola of $F$} and denoted by $H_F$.
Note that at the left-hand (resp.~right-hand) side of $H_F$, $F$ has positive (resp.~negative) tilt slope.
The $\beta$-coordinate of the intersection point of $H_F$ with $\alpha=0$ will be denoted by $p_F$.
\end{rem}

The next result, originally due to Bridgeland, motivates the name of \emph{Gieseker chamber} for the chamber above the largest wall, in the case of a class with positive rank. The reader is referred to \cite{MW} or \cite[Definition~14.1]{Bridgeland:K3} for the definition of twisted Gieseker semistability.

\begin{prop}[{\cite{Bridgeland:K3}, \cite[Exercise 6.27]{MS}}]\label{Giesekerchamber}
Let $v\in\Lambda$ be a class with $\odisc(v)\geq0$ and $v_0>0$, and let $\beta<\frac{v_1}{v_0}$.
Then an object $F\in\Coh^\beta(X)$ of class $v(F)=v$ is $\sigma_{\alpha,\beta}$-semistable for every $\alpha\gg0$ if, and only if, $F$ is a twisted $(L,-\frac{1}{2}K_X)$-Gieseker semistable sheaf.
\end{prop}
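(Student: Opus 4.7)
The plan is to prove the two implications by analyzing the large-$\alpha$ asymptotics of the tilt slope $\nu_{\alpha,\beta}$ for subobjects and comparing with the Gieseker tie-breaker. Throughout, note that the hypothesis $\beta<v_1/v_0$ implies $L\cdot\ch_1^\beta(F)>0$, so that $\nu_{\alpha,\beta}(F)\to-\infty$ like $-\tfrac{\alpha^2}{2}\cdot\tfrac{L^2}{\mu_L(F)-\beta}$ as $\alpha\to\infty$.

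For the forward direction, I would argue in four refinement steps on an object $F\in\Coh^\beta(X)$ that is $\sigma_{\alpha,\beta}$-semistable for all $\alpha\gg 0$. First, $F$ must be a coherent sheaf: if $\cH^{-1}(F)\neq 0$, any $\mu_L$-semistable factor $T\subset\cH^{-1}(F)$ (which lies in $\cF_\beta$) yields a subobject $T[1]\hookrightarrow F$ in $\Coh^\beta(X)$ with $\ch_0(T[1])<0$ and $L\cdot\ch_1^\beta(T[1])\geq 0$, forcing $\nu_{\alpha,\beta}(T[1])\to +\infty$ and destabilizing $F$. Second, $F$ cannot carry $0$-dimensional torsion $T$, since such a $T$ lies in $\Coh^\beta(X)$ with $L\cdot\ch_1^\beta(T)=0$ and $\ch_2(T)>0$, giving slope $+\infty$. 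Third, $F$ must be $\mu_L$-semistable: otherwise the first $\mu_L$-HN factor $G_1\subsetneq F$ has $\mu_L(G_1)>\mu_L(F)>\beta$, so $G_1\in\cT_\beta\subset\Coh^\beta(X)$ is a subobject, and the dominant $\alpha^2$-term in $\nu_{\alpha,\beta}(G_1)$ strictly exceeds that of $\nu_{\alpha,\beta}(F)$, a contradiction.

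Fourth, I would check the Gieseker tie-breaker. For a saturated subsheaf $G\subsetneq F$ with $\mu_L(G)=\mu_L(F)$ (which is in $\Coh^\beta(X)$), the $\alpha^2$-terms of $\nu_{\alpha,\beta}(G)$ and $\nu_{\alpha,\beta}(F)$ coincide, so $\nu_{\alpha,\beta}(G)\leq\nu_{\alpha,\beta}(F)$ for $\alpha\gg 0$ reduces to $\ch_2^\beta(G)/\ch_0(G)\leq\ch_2^\beta(F)/\ch_0(F)$. Since $\mu_L(G)=\mu_L(F)$, this is equivalent to $\ch_2(G)/\ch_0(G)\leq\ch_2(F)/\ch_0(F)$. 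Because $\td_1(X)=-\tfrac12 K_X$, the twist $B=-\tfrac12 K_X$ is exactly what cancels the Todd contribution in Hirzebruch–Riemann–Roch, so this inequality matches the twisted $(L,-\tfrac12 K_X)$-Gieseker tie-breaker (after the leading $\mu_L$-term, already handled in the previous step).

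For the converse, suppose $F$ is twisted $(L,-\tfrac12 K_X)$-Gieseker semistable. By \autoref{structurewalls}\eqref{item:numerical}--\eqref{item:allactual}, the numerical walls for $v(F)$ (with $\odisc(v(F))\geq 0$) form strictly nested semicircles on each side of the vertical wall $\beta=v_1/v_0$, and because the classes of potential destabilizers are constrained by the Bogomolov inequality together with $\odisc(E)+\odisc(Q)\leq\odisc(F)$ along a wall (item~\eqref{item:Disc0nodest}), there is a largest semicircular actual wall through the region $\beta<v_1/v_0$. Above that wall the set of $\sigma_{\alpha,\beta}$-semistable objects of class $v(F)$ is constant, so it suffices to verify the property at one point with $\alpha$ arbitrarily large; there, the computation of Step~4 run backwards shows that any hypothetical destabilizing $G\hookrightarrow F$ in $\Coh^\beta(X)$ would contradict twisted Gieseker semistability (after reducing to a sheaf quotient using the long exact sequence of cohomology). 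The main obstacle is the careful bookkeeping between the twist $B=-\tfrac12 K_X$ and the Bridgeland slope's subleading term, and—in the converse direction—invoking the Nested Wall Theorem to guarantee that the "Gieseker chamber" above the largest wall is nonempty and unbounded in $\alpha$.
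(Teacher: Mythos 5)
The paper does not prove this proposition --- it is quoted from \cite{Bridgeland:K3} and \cite[Exercise~6.27]{MS} --- so there is no internal proof to compare against. Your argument is the standard one from those references, and the forward direction is essentially complete: the four reduction steps (no $\cH^{-1}$, no zero-dimensional torsion, slope semistability, then the $\ch_2/\ch_0$ tie-breaker, which indeed matches the $(L,-\tfrac12 K_X)$-twist because $e^{\frac12 K_X}\ch(F)\td(X)$ has degree-two part $\ch_2(F)+c\cdot\ch_0(F)$) are all correct. Two small points there: your asymptotic should read $\nu_{\alpha,\beta}(F)\sim-\tfrac{\alpha^2}{2(\mu_L(F)-\beta)}$ (no extra $L^2$ with the paper's normalization $\mu_L=\tfrac{L\cdot\ch_1}{L^2\cdot\ch_0}$), and in the tie-breaker step you should test the \emph{maximal Gieseker-destabilizing} subsheaf rather than an arbitrary saturated equal-slope subsheaf, since only the former is guaranteed to be $\mu_L$-semistable and hence to lie in $\cT_\beta\subset\Coh^\beta(X)$ (an arbitrary saturated subsheaf of the same slope may have a Harder--Narasimhan factor of slope $\le\beta$ and fail to be a subobject in the heart). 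One-dimensional torsion should also be excluded, but your slope-semistability step covers it.

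The converse is where your sketch is thinnest, and the gap is concrete: the existence of a \emph{largest actual wall} does not follow from the nestedness of numerical walls alone (there are infinitely many nested numerical semicircles of unbounded radius). What is needed is finiteness of the set of classes $w$ that can define an actual wall above a given point, which comes from the support property: $\odisc(w)\ge0$, $\odisc(v-w)\ge0$, $\odisc(w)+\odisc(v-w)<\odisc(v)$ (\autoref{structurewalls}.\eqref{item:Disc0nodest}, note the strict inequality), together with $0<L\cdot\ch_1^{\beta}(w)<L\cdot\ch_1^{\beta}(v)$ at a point of the wall and the discreteness of $\Lambda$. Once that is in place, the correct final step is not to ``verify the property at one point with $\alpha$ arbitrarily large'' but to observe that above all actual walls the Harder--Narasimhan filtration of $F$ is constant, so if $F$ were unstable there its first HN factor $G$ would satisfy $\nu_{\alpha,\beta}(G)>\nu_{\alpha,\beta}(F)$ for \emph{all} large $\alpha$; comparing the $\alpha^2$-coefficients and then the constant terms (after replacing $G$ by its sheaf-theoretic image in $F$, using the exact sequence $0\to\cH^{-1}(Q)\to G\to F\to\cH^0(Q)\to0$) contradicts twisted Gieseker semistability. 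With these repairs your outline assembles into a correct proof.
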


When $\alpha=0$ one cannot ensure in general that this construction gives Bridgeland stability conditions.
This is due to the fact that for $\beta\in\bQ$ the central charge $Z_{0,\beta}$ may send to $0$ certain objects of $\Coh^\beta(X)$, as we describe in the following proposition:

\begin{prop}\label{Giesekerkernel}
If $\beta\in\bQ$, then an object $F\in\Coh^\beta(X)$ satisfies:
\begin{enumerate}[{\rm (1)}]
    \item\label{item:rk0} $L\cdot\ch_1^\beta(F)=0$ if and only if the following hold:
    \begin{itemize}[\textbullet]
        \item $\cH^{-1}(F)$ is either $0$ or a $\mu_L$-semistable torsion-free sheaf with $\mu_L=\beta$.
        \item $\cH^0(F)$ is either $0$ or a sheaf supported in dimension $0$.
    \end{itemize}
    \item\label{item:KernelZ} $Z_{0,\beta}(F)=0$ if and only if $F=S[1]$, where $S$ is a twisted $(L,-\frac{1}{2}K_X)$-Gieseker semistable vector bundle with $\mu_L(S)=\beta$ and $\odisc(S)=0$.
\end{enumerate} 
\end{prop}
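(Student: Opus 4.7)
My plan is to address (1) first and then bootstrap (2) from it using a Bogomolov-inequality argument on the second Chern character. For (1), I decompose $F \in \Coh^\beta(X)$ via the distinguished triangle $\cH^{-1}(F)[1] \to F \to \cH^0(F)$, with $\cH^{-1}(F) \in \cF_\beta$ and $\cH^0(F) \in \cT_\beta$, which yields
\[
L \cdot \ch_1^\beta(F) \;=\; L \cdot \ch_1^\beta(\cH^0(F)) \;-\; L \cdot \ch_1^\beta(\cH^{-1}(F)).
\]
The first summand is non-negative, since $\cT_\beta$-membership forces the torsion part of $\cH^0(F)$ to have only 0-dimensional contributions (contributing $0$) and its torsion-free part to have $\mu_L^- > \beta$ (contributing strictly positively), so vanishing forces $\cH^0(F)$ to be zero or purely 0-dimensional. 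The second summand is non-positive: $\cF_\beta$-membership forces $\cH^{-1}(F)$ to be torsion-free with $\mu_L$-HN slopes $\leq \beta$, and vanishing forces all HN slopes to equal $\beta$, i.e.~$\cH^{-1}(F)$ is zero or $\mu_L$-semistable with $\mu_L = \beta$. This proves (1).

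For (2), observe that $Z_{0,\beta}(F) = 0$ amounts to $L \cdot \ch_1^\beta(F) = 0$ together with $\ch_2^\beta(F) = 0$, so the structural description of (1) applies. Writing $S := \cH^{-1}(F)$ and using $L \cdot \ch_1(S) = \beta L^2 \cdot \ch_0(S)$ together with $\cH^0(F)$ being 0-dimensional, one obtains
\[
\ch_2^\beta(F) \;=\; \ch_2(\cH^0(F)) \;+\; \left(\tfrac{\beta^2}{2} L^2 \cdot \ch_0(S) - \ch_2(S)\right),
\]
where the first term equals the length of $\cH^0(F)$ and the second equals $\odisc(S)/\bigl(2 L^2 \cdot \ch_0(S)\bigr) \geq 0$ by the Bogomolov inequality applied to the $\mu_L$-semistable sheaf $S$. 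Hence vanishing of $\ch_2^\beta(F)$ forces $\cH^0(F) = 0$ and $\odisc(S) = 0$.

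It remains to upgrade $S$ (torsion-free, $\mu_L$-semistable, $\odisc = 0$) to a twisted $(L, -\tfrac{1}{2} K_X)$-Gieseker semistable vector bundle. For \emph{local freeness}, I would use the short exact sequence $0 \to S \to S^{\vee\vee} \to T \to 0$ with $T$ of length $l \geq 0$, together with the identity $\odisc(S^{\vee\vee}) = \odisc(S) - 2 L^2 \cdot \ch_0(S) \cdot l$: since $S^{\vee\vee}$ is itself $\mu_L$-semistable, Bogomolov gives $\odisc(S^{\vee\vee}) \geq 0$, which combined with $\odisc(S) = 0$ forces $l = 0$, so $S = S^{\vee\vee}$ is locally free on the smooth surface. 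For \emph{twisted Gieseker-semistability}, a direct Riemann--Roch calculation shows that the $-\tfrac{1}{2} K_X$ twist absorbs the $L \cdot K_X$ correction in the reduced Hilbert polynomial, so that $\mu_L$ is the primary and $\ch_2/\ch_0$ the secondary stability parameter; then for any subsheaf $S' \subset S$ with $\mu_L(S') = \beta$, the sheaf $S'$ is automatically torsion-free and $\mu_L$-semistable, so Bogomolov applied to $S'$ yields $\ch_2(S')/\ch_0(S') \leq \beta^2/2 = \ch_2(S)/\ch_0(S)$, which is the required comparison (subsheaves with $\mu_L(S') < \beta$ are handled trivially by strict slope inequality). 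The main technical hurdle I anticipate is the Riemann--Roch bookkeeping for the $K_X$-twist, which I expect to be routine once one unwinds the definitions of twisted Gieseker stability.
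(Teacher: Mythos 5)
Your proof is correct and reaches the paper's conclusions, but the mechanism for part~(2) is genuinely different in flavor. The paper never invokes the Bogomolov inequality explicitly: each failure of the claimed structure (a nonzero $\cH^0(F)$, a non-locally-free $\cH^{-1}(F)$, or a twisted-Gieseker destabilizer of $\cH^{-1}(F)$) is shown to produce a subobject or quotient of $F$ in $\Coh^\beta(X)$ with vanishing imaginary part and positive real part of $Z_{0,\beta}$, contradicting the axiom that $Z_{0,\beta}$ is a weak stability function on $\Coh^\beta(X)$ --- which is where Bogomolov is hidden. You instead argue at the level of Chern characters: splitting $\ch_2^\beta(F)$ as the length of $\cH^0(F)$ plus $\odisc(S)/(2L^2\cdot\ch_0(S))$, using the identity $\odisc(S^{\vee\vee})=\odisc(S)-2L^2\cdot\ch_0(S)\cdot h^0(T)$ for local freeness, and noting that the $-\tfrac12K_X$-twist cancels the $\ch_1\cdot K_X$ Todd term so that twisted Gieseker comparison at equal $\mu_L$ reduces to $\ch_2/\ch_0$, which Bogomolov applied to the (automatically $\mu_L$-semistable) subsheaf controls. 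All of these steps check out, and your route makes the role of Bogomolov --- and hence of the characteristic hypotheses --- completely explicit, at the cost of the Riemann--Roch bookkeeping that the paper's axiomatic argument sidesteps. Two minor blemishes: your parenthetical that $\cT_\beta$-membership forces the torsion of $\cH^0(F)$ to be $0$-dimensional is false as stated (purely $1$-dimensional torsion sheaves lie in $\cT_\beta$), though harmless, since such a piece contributes strictly positively to $L\cdot\ch_1^\beta$ and is excluded by the vanishing anyway; and you should at least record the converse implications of both items (they are immediate computations, as the paper notes, e.g.~$\ch_2^\beta(S[1])=\odisc(S)/(2L^2\cdot\ch_0(S))=0$).
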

\begin{proof}
The first item is an easy consequence of $\cH^{-1}(F)\in\cF_\beta$ and $\cH^0(F)\in\cT_\beta$.
We explain how to check the ``only if'' part of \eqref{item:KernelZ}, the converse implication being immediate.

Note that $Z_{0,\beta}(F)=0$ implies $L\cdot\ch_1^\beta(F)=0$, so $\cH^{-1}(F)$ and $\cH^0(F)$ must be as stated in \eqref{item:rk0}.
If it were $\cH^0(F)\neq0$, then $Z_{0,\beta}(\cH^0(F))\in\bR_{<0}$; 
this would force $Z_{0,\beta}(\cH^{-1}(F)[1])\in\bR_{>0}$, which contradicts that $Z_{0,\beta}$ is a weak stability function on $\Coh^\beta(X)$.

Therefore $F=\cH^{-1}(F)[1]$.
If $\cH^{-1}(F)$ were not locally free, then $\cH^{-1}(F)^{**}/\cH^{-1}(F)$ would be a nontrivial subobject of $F$ (in $\Coh^\beta(X)$) satisfying $Z_{0,\beta}(\cH^{-1}(F)^{**}/\cH^{-1}(F))\in\bR_{<0}$, a contradiction.
The maximal destabilizing subobject of $\cH^{-1}(F)$ with respect to twisted $(L,-\frac{1}{2}K_X)$-Gieseker stability, if different from $\cH^{-1}(F)$, would give a similar contradiction.

Finally, $\odisc(\cH^{-1}(F))=0$ immediately follows from $\ch_2^{\beta}(\cH^{-1}(F))=0=L\cdot\ch_1^{\beta}(\cH^{-1}(F))$.
\end{proof}

\begin{ex}\label{exGiesekerkernel}
For $\beta\in\bQ$, the fact that $\sigma_{0,\beta}$ is a Bridgeland stability condition depends on the sharpness of the Bogomolov inequality for $\mu_L$-semistable sheaves of slope $\beta$, which is encoded by the Le Potier function
\[
\Phi_{X,L}(\beta):=\sup \set{\frac{\ch_2(E)}{L^2\cdot\ch_0(E)} \mid E \text{ is $\mu_L$-semistable with }\mu_L(E)=\beta},
\]
see \cite[Definition~3.1]{FLZ} for its extension as a real function.
This function depends on the particular geometry of the surface:
\begin{enumerate}[{\rm (1)}]
    \item If $X$ is a complex abelian surface, the vector bundles $S$ of \autoref{Giesekerkernel} are semihomogeneous.
    This is a consequence of \cite[Theorem~IV.4.7]{kobayashi} and \cite[Theorem~5.12]{yang}.
    
    \noindent Conversely, for $X$ an arbitrary abelian surface and 
    for every $\beta\in\bQ$ there exist semihomogeneous vector bundles $S$ with $\frac{\ch_1(S)}{\rk(S)}=\beta L\in\NS(X)_\bQ$ by \cite{Muk}; for such bundles $S$, one has $S[1]\in\Coh^\beta(X)$ and $Z_{0,\beta}(S[1])=0$.
    Therefore, $\sigma_{0,\beta}$ fails to be a Bridgeland stability condition for every $\beta\in\bQ$.
    
    \item More generally, let $X$ be a surface whose Albanese map $a:X\to A$ is finite onto its image, endowed with a polarization $L=a^*\widetilde{L}$ pulled back from a polarization $\widetilde{L}$ on $A$.
    Then, for every $\beta\in\bQ$ one can find objects $F\in\Coh^\beta(X)$ with $Z_{0,\beta}(F)=0$, for instance $F=a^*S[1]$ for simple semihomogeneous vector bundles $S$ with $\frac{\ch_1(S)}{\rk(S)}=\beta\widetilde{L}\in\NS(A)_\bQ$.
    
    \noindent Let us check that $a^*S[1]\in\Coh^\beta(X)$, the equality $Z_{0,\beta}(a^*S[1])=0$ being straightforward.
    Since $\mu_L(a^*S)=\beta$, we need to prove that $a^*S$ is $\mu_L$-semistable; by \cite[Lemma~3.2.2]{HuybrechtsLehn}, it suffices to check that $S|_{a(X)}$ is slope semistable with respect to $\widetilde{L}|_{a(X)}$.
    
    \noindent And indeed, according to \cite[Proposition~7.3]{Muk}, there exists an isogeny $\pi:B\to A$ and a line bundle $M$ on $B$ with $\pi^*S=M^{\oplus \rk S}$.
    This description implies that $\pi^*S|_{\pi^{-1}(a(X))}$ is slope semistable with respect to $\pi^*\widetilde{L}|_{\pi^{-1}(a(X))}$, and therefore the semistability of $S|_{a(X)}$ again follows from \cite[Lemma~3.2.2]{HuybrechtsLehn}.
    
    \noindent In particular, this shows that $\Phi_{X,L}(x)=\frac{x^2}{2}$ for every $x\in\bR$ adding an instance to \cite[Remark~3.3]{FLZ}.
    
    \item If $X$ is a simply connected complex surface and $Z_{0,\beta}(F)=0$ for $F\in\Coh^\beta(X)$, then $\beta\in\bZ$ and the vector bundle $S$ of \autoref{Giesekerkernel} satisfies $S=(L^\beta)^{\oplus \rk S}$; this follows from \cite[Theorem~IV.4.7 and Corollary~I.2.7]{kobayashi}.
    
    \noindent In terms of Le Potier function, this means that $\Phi_{X,L}(x)=\frac{x^2}{2}$ if and only if $x\in\bZ$.
\end{enumerate}

\end{ex}

In any case, $Z_{0,\beta}$ is a (possibly weak) stability function on $\Coh^\beta(X)$ for every $\beta\in\bQ$; since $\Coh^\beta(X)$ is Noetherian, 
the existence of HN filtrations with respect to the tilt slope $\nu_{0,\beta}$ is guaranteed (see \cite[Lemma~2.4]{Bridgeland:Stab} and \cite[Proposition~B.2]{BM:localP2}).
Therefore, $\sigma_{0,\beta}=(\Coh^\beta(X),Z_{0,\beta})$ is a (possibly weak) stability condition on $\Db(X)$\footnote{If $Z_{0,\beta}$ is a stability function, the support property for the Bridgeland stability condition $\sigma_{0,\beta}$ holds as explained in \cite[Remark~3.5]{FLZ}; otherwise, the quadratic form $Q=0$ guarantees the support property for the weak stability condition $\sigma_{0,\beta}$}.


We finish this section describing the behaviour of semistable objects under the derived dual.
This is certainly well-known to the experts but we include it for easy reference.
For an object $E\in\Db(X)$, we write $E^\vee=R\sHom(E,\cO_X)$.

\begin{prop}\label{dualstab}
For $\beta\in\bR$, let $F\in\Coh^\beta(X)$ be an object such that $\nu_{\alpha,\beta}^+(E)<+\infty$ for every $\alpha\geq0$ (i.e.~$F$ contains no subobject with $L\cdot\ch_1^\beta=0$).
Then:
\begin{enumerate}[{\rm (1)}]
 \item $F^\vee[1]\in\Coh^{-\beta}(X)$.
 \item\label{item:dual2} For every $\alpha\geq0$, $F$ is $\sigma_{\alpha,\beta}$-(semi)stable if and only if $F^\vee[1]$ is $\sigma_{\alpha,-\beta}$-(semi)stable.
\end{enumerate}
\end{prop}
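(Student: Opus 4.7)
The plan is the following. For part (1), I would first characterize the hypothesis explicitly: by \autoref{Giesekerkernel}.(1), a nonzero subobject $G\subset F$ in $\Coh^\beta(X)$ with $L\cdot\ch_1^\beta(G)=0$ must have $\cH^{-1}(G)$ either zero or $\mu_L$-semistable torsion-free of slope $\beta$, and $\cH^0(G)$ either zero or $0$-dimensional. The relevant natural candidates are: $(\cH^{-1}(F))^{**}/\cH^{-1}(F)$, which provides a nontrivial $0$-dimensional subobject of $\cH^{-1}(F)[1]\hookrightarrow F$ in $\Coh^\beta(X)$ whenever $\cH^{-1}(F)$ is not locally free; any $0$-dimensional torsion subsheaf of $\cH^0(F)$; and any $\mu_L$-semistable slope-$\beta$ subsheaf of $\cH^{-1}(F)$. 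The hypothesis is thus equivalent to: $\cH^{-1}(F)$ is locally free with $\mu_L^+<\beta$ (or zero), and $\cH^0(F)$ has no $0$-dimensional torsion.

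Granted this, I would compute the cohomology sheaves of $F^\vee[1]$ by applying $R\sHom(-,\cO_X)[1]$ to the truncation triangle $\cH^{-1}(F)[1]\to F\to \cH^0(F)$. The vanishings $\sExt^{\geq 1}(\cH^{-1}(F),\cO_X)=0$ (locally free) and $\sExt^2(\cH^0(F),\cO_X)=0$ (by Auslander--Buchsbaum, as $\cH^0(F)$ has no $0$-dimensional torsion on the smooth surface and so has local projective dimension at most $1$) force $F^\vee[1]$ to be concentrated in degrees $-1,0$: explicitly, $\cH^{-1}(F^\vee[1])=\sHom(\cH^0(F),\cO_X)$, and there is a short exact sequence $0\to\sExt^1(\cH^0(F),\cO_X)\to\cH^0(F^\vee[1])\to\cH^{-1}(F)^*\to 0$. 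The first piece is reflexive hence locally free with $\mu_L^+<-\beta$, and the torsion-free part of $\cH^0(F^\vee[1])$ is $\cH^{-1}(F)^*$ with $\mu_L^->-\beta$, confirming $F^\vee[1]\in\Coh^{-\beta}(X)$.

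For part (2), I would begin with the slope-negation identity: a direct computation using $\ch_i(E^\vee)=(-1)^i\ch_i(E)$ and the Chern shift yields $Z_{\alpha,-\beta}(F^\vee[1])=-\overline{Z_{\alpha,\beta}(F)}$, hence $\nu_{\alpha,-\beta}(F^\vee[1])=-\nu_{\alpha,\beta}(F)$. The crucial preliminary is the sub-claim that $F^\vee[1]$ itself has no subobject with $L\cdot\ch_1^{-\beta}=0$. Any such $G$ would, by \autoref{Giesekerkernel}.(1), inject into $\cH^{-1}(F^\vee[1])$ at the $\cH^{-1}$-level, forcing $\cH^{-1}(G)=0$ since $\mu_L^+(\cH^{-1}(F^\vee[1]))<-\beta$ by part (1); hence $G$ is $0$-dimensional. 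Dualizing the short exact sequence $0\to G\to F^\vee[1]\to F^\vee[1]/G\to 0$ and tracking cohomology sheaves of the resulting triangle forces $\cH^2((F^\vee[1]/G)^\vee[1])\cong G$, but the hyperext spectral sequence shows that $(F^\vee[1]/G)^\vee[1]$, being obtained from an object of $\Coh^{-\beta}(X)$ by $R\sHom(-,\cO_X)[1]$, has cohomology concentrated in $\{-1,0,1\}$. Hence $G=0$.

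Finally I would conclude via a Harder--Narasimhan argument. Suppose $F$ is (semi)stable and, toward contradiction, that $F^\vee[1]$ admits a nontrivial HN filtration $0\subset G_1\subset\cdots\subset G_k=F^\vee[1]$ with $k\geq 2$. The sub-claim rules out infinite-slope HN factors, so the last factor $Q:=G_k/G_{k-1}$ is semistable of minimal finite slope, strictly less than $\nu_{\alpha,-\beta}(F^\vee[1])$. Both $Q$ (semistable of finite slope, hence without infinite-slope subobject by semistability) and $G_{k-1}$ (a subobject of $F^\vee[1]$, inheriting the sub-claim) satisfy the hypothesis of part (1), so dualization yields a short exact sequence $0\to Q^\vee[1]\to F\to G_{k-1}^\vee[1]\to 0$ in $\Coh^\beta(X)$. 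Applying (semi)stability of $F$ together with the slope-negation identity gives $\nu_{\alpha,-\beta}(Q)\geq\nu_{\alpha,-\beta}(F^\vee[1])$, contradicting the minimality of $Q$'s slope. Strict stability is handled with strict inequalities, and the converse implication follows symmetrically using $F^{\vee\vee}=F$ on the smooth variety. The main obstacle I anticipate is the sub-claim on the absence of $0$-dimensional subobjects in $F^\vee[1]$; without it, infinite-slope destabilizers in the HN filtration would require a separate treatment.
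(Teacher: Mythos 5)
Your part~(2) is a workable, if roundabout, substitute for the paper's one-line argument (the paper just notes that $\blank^\vee[1]$ induces a bijection between subobjects of $F$ in $\Coh^\beta(X)$ and quotients of $F^\vee[1]$ in $\Coh^{-\beta}(X)$ and compares central charges). The real problem is in part~(1): your claimed reformulation of the hypothesis is false in its last clause. A $0$-dimensional subsheaf $T\subset\cH^0(F)$ is a subobject of the \emph{quotient} $\cH^0(F)$ of $F$, not of $F$ itself; the obstruction to lifting $T\hookrightarrow\cH^0(F)$ to a morphism $T\to F$ lies in $\Ext^2(T,\cH^{-1}(F))\cong\Hom(\cH^{-1}(F),T\otimes\omega_X)^*$, which is nonzero as soon as $\cH^{-1}(F)\neq0$. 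Concretely, take $\beta=0$, $H$ ample, and let $F$ be the extension of $\cO_p$ by $\cO_X(-H)[1]$ in $\Coh^{0}(X)$ classified by a nonzero $e\in\Ext^2(\cO_p,\cO_X(-H))$: one checks that $\Hom(T',F)=0$ for every $0$-dimensional sheaf $T'$ and that $\cO_X(-H)$ has no slope-$0$ subsheaf, so $F$ satisfies the hypothesis of the proposition, yet $\cH^0(F)=\cO_p$ is pure $0$-dimensional torsion. For this $F$ one has $F^\vee[1]\cong\cI_p\otimes\cO_X(H)$, whereas your formulas (which presuppose $\sExt^2(\cH^0(F),\cO_X)=0$) would yield $\cO_X(H)$; the discrepancy is exactly the connecting map $\cH^{-1}(F)^*\to\sExt^2(\cH^0(F),\cO_X)$ that your long exact sequence omits.

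The correct mechanism is that $\cH^1(F^\vee[1])$ is the cokernel of that connecting map, and its vanishing is forced not by the absence of $0$-dimensional torsion in $\cH^0(F)$ but by the hypothesis via the duality $\Hom(F^\vee[1],T[-1])\cong\Hom(\sExt^2(T,\cO_X),F)=0$; alternatively one cites \cite[Lemma~2.19.a]{BLMS}, as the paper does. Two of your later steps survive but need honest justifications: the bound $\mu_L^+(\cH^{-1}(F^\vee[1]))<-\beta$ holds because $\cH^{-1}(F^\vee[1])=\cH^0(F^\vee)$ is always a subsheaf of $\sHom(\cH^0(F),\cO_X)$ (local-to-global spectral sequence), not because of your short exact sequence; and your sub-claim in part~(2) is available for free, since for $0$-dimensional $T$ one has $\Hom(T,F^\vee[1])\cong\Hom(F,\sExt^2(T,\cO_X)[-1])=0$ as both objects lie in the heart $\Coh^\beta(X)$. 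With part~(1) repaired along these lines, your Harder--Narasimhan argument for part~(2) goes through.
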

\begin{proof}
The first item is the particular case for surfaces of the more general result \cite[Lemma~2.19.a]{BLMS}.
As a consequence of it, the contravariant functor $\blank^\vee[1]$ induces a bijection between subobjects of $F$ (in $\Coh^\beta(X)$) and quotients of $F^\vee[1]$ (in $\Coh^{-\beta}(X)$).
Taking into account the Chern character of the derived dual as well, item \eqref{item:dual2} is immediately checked.
\end{proof}

\begin{rem}
A combination of \autoref{Giesekerchamber} and \autoref{dualstab} describes the semistable objects in the large volume limit for classes with negative rank.
\end{rem}

\section{Chern degree functions}
\label{sec:Chern_deg}

In the next three sections, $(X,L)$ will be a fixed polarized smooth projective surface over $\bK$; in positive characteristic, we assume $X$ is neither of general type nor quasi-elliptic with $\kappa(X)=1$.
We present now the main objects of this article.

\begin{defn}\label{maindef}
Let $\beta\in\bQ$ be a rational number.
\begin{enumerate}[{\rm (1)}]
 \item \label{item:maindef1} If $F\in\Coh^\beta(X)$ is an object with HN filtration
 $0=F_0\hookrightarrow F_1\hookrightarrow\ldots\hookrightarrow F_r=F$
 with respect to $\sigma_{0,\beta}$, we define 
 \begin{align*}
 \rf^0_{F,L}(-\beta)&:=\displaystyle\sum_{\nu_{0,\beta}({F_i/F_{i-1}})\geq0}\ch_2^{\beta}({F_i/F_{i-1}})\\
 \rf^1_{F,L}(-\beta)&:=\displaystyle\sum_{\nu_{0,\beta}({F_i/F_{i-1}})<0}-\ch_2^{\beta}({F_i/F_{i-1}})
 \end{align*}
 \item \label{item:maindef2} More generally, for an arbitrary object $E\in\Db(X)$ and any integer $k\in\bZ$, we define the number $\rf^k_{E,L}(-\beta)$ using the cohomologies of $E$ with respect to $\Coh^\beta(X)$:
 \[
 \rf^k_{E,L}(-\beta):=\rf^0_{\cH_\beta^k(E),L}(-\beta)+\rf^1_{\cH_\beta^{k-1}(E),L}(-\beta)
 \]
\end{enumerate}
\end{defn}

This rule defines, given $E\in\Db(X)$ and $k\in\bZ$, a function $\rf^k_{E,L}:\bQ\to\bQ_{\geq0}$ that we will call the \emph{$k$-th Chern degree function of $E$}.
From the definition, it directly follows that
\[\displaystyle\sum_{k\in\bZ}(-1)^k\cdot \rf^k_{E,L}(x)=\ch_2^{-x}(E).\]
so the Chern degree functions are a positive alternate decomposition of the second twisted Chern character.

\begin{rem}
Thinking of $\sigma_{0,\beta}$ in terms of slicings, the definition of $\rf^k_{E,L}(-\beta)$ involves all the objects in the HN filtration of $E\in\Db(X)$ with phase in the interval $\left[\frac{1}{2}-k,\frac{3}{2}-k\right)$.
\end{rem}

\begin{ex}
To determine the Chern degree functions of $\cO_X$, observe that for every $\beta<0$ $\cO_X\in\Coh^\beta(X)$; moreover, combining \autoref{Giesekerchamber} and \autoref{structurewalls}.\eqref{item:Disc0nodest} we have that $\cO_X$ is $\sigma_{\alpha,\beta}$-semistable for every $\alpha>0$, hence for every $\alpha\geq0$.
A similar situation holds for $\cO_X[1]$ when $\beta\geq0$, so the nonzero Chern degree functions of $\cO_X$ are
\[
\rf^0_{\cO_X,L}(x)=\left\{
 \begin{array}{c l}
 0 & x\leq 0\\
 \frac{L^2}{2}x^2 & x\geq 0\\
 \end{array}
 \right.
\qquad\text{ and }\qquad \rf^2_{\cO_X,L}(x)=\left\{
 \begin{array}{c l}
 \frac{L^2}{2}x^2 & x\leq 0\\
 0 & x\geq 0.\\
 \end{array}
 \right.
\]
\end{ex}

\vspace{1mm}

Given $F\in\Coh^{\beta}(X)$, we can also define $\rf^0_{F,L}(-\beta)$ in terms of a unique subobject.
Namely, if 
\[
0=F_0\hookrightarrow F_1 \hookrightarrow \ldots \hookrightarrow F_{s-1} \hookrightarrow F_s \hookrightarrow F_{s+1} \hookrightarrow \ldots \hookrightarrow F_r=F
\]
is the HN filtration of $F$ with respect to $\sigma_{0,\beta_0}$, with the inequalities
\[
\nu_{0,\beta_0}(F_1)>\ldots>\nu_{0,\beta_0}(F_s/F_{s-1})>0\geq\nu_{0,\beta_0}(F_{s+1}/F_s)>\ldots>\nu_{0,\beta_0}(F/F_{r-1}),
\]
then observe that $\rf^0_{F,L}(-\beta)=\ch_2^\beta(F_s)$.
\begin{defn}
The index $s=s(F)$ is called the \emph{switching index of $F$}.
\end{defn}

Note that $\nu_{0,\beta}^-(F)>0$ is equivalent to $F_s=F$, and $\nu_{0,\beta}^+(F)\leq0$ is equivalent to $F_s=0$.

These functions satisfy the following properties, analogous to those that are natural from the viewpoint of cohomology:

\begin{prop}\label{rk:BasicProp}
If $E\in\Db(X)$ and $x\in\bQ$, the following properties hold:
\begin{enumerate}[{\rm (1)}]
\item\label{enum:SerreVan} (Serre vanishing) If $E\in\Coh(X)$, then for $x\gg0$ one has $\rf^k_{E,L}(x)=0$ for every $k\neq0$.
 \item \label{enum:SerreDua}(Serre duality) We have $\rf^k_{E,L}(x)=\rf^{2-k}_{E^\vee,L}(-x)$ for every $k\in\bZ$.
\end{enumerate}
\end{prop}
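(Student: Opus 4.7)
The plan is to prove the two items separately, using different tools: item (1) via a wall-crossing argument in the $(\alpha,\beta)$-plane, and item (2) via the derived-dual correspondence of \autoref{dualstab}.

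For (1), the first step is to observe that for $\beta\ll 0$ every coherent sheaf $E$ lies in $\cT_\beta$ and hence in $\Coh^\beta(X)$; this already forces $\rf^k_{E,L}(-\beta)=0$ for $k\neq 0,1$ directly from \autoref{maindef}.\eqref{item:maindef2}. The remaining and nontrivial task is to show $\rf^1_{E,L}(-\beta)=0$, equivalently $\nu^-_{0,\beta}(E)\geq 0$. For this I would consider the twisted $(L,-\tfrac{1}{2}K_X)$-Gieseker HN filtration of $E$. By \autoref{Giesekerchamber} each of its factors $G$ is $\sigma_{\alpha,\beta}$-semistable throughout its Gieseker chamber, and by \autoref{structurewalls} the walls for $v(G)$ on the side $\beta<\mu_L(G)$ form a nested family of semicircles with bounded left $\beta$-endpoints, so for $\beta\ll 0$ the line $\{\beta\}\times\bR_{>0}$ avoids all walls for $G$ and hence $G$ remains $\sigma_{\alpha,\beta}$-semistable for every $\alpha>0$. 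Passing to the limit $\alpha\to 0^+$, which is legitimate since $Z_{0,\beta}(G)\neq 0$ for $\beta\ll 0$, one obtains that $G$ is $\sigma_{0,\beta}$-semistable. A direct computation of $\ch_2^\beta(G)=\ch_2(G)-\beta L\cdot\ch_1(G)+\tfrac{\beta^2}{2}L^2\cdot\ch_0(G)$, whose dominant term is quadratic in $\beta$ with positive sign for positive rank, yields $\nu_{0,\beta}(G)\geq 0$ for $\beta\ll 0$ (torsion factors are handled in the same spirit). The conclusion $\nu^-_{0,\beta}(E)\geq 0$ then follows from the standard fact that a filtration with semistable factors of slope $\geq c$ has minimum HN slope $\geq c$, applied with $c=0$.

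For (2), using \autoref{maindef}.\eqref{item:maindef2} I would reduce to proving, for each $F\in\Coh^\beta(X)$, the two identities $\rf^0_{F,L}(-\beta)=\rf^2_{F^\vee,L}(\beta)$ and $\rf^1_{F,L}(-\beta)=\rf^1_{F^\vee,L}(\beta)$. Decompose $F$ by a short exact sequence $0\to F^0\to F\to F'\to 0$ in $\Coh^\beta(X)$, where $F^0$ is the maximal subobject satisfying $L\cdot\ch_1^\beta(F^0)=0$. Since $F'$ has no such subobject, \autoref{dualstab} applies: $F'{}^\vee[1]\in\Coh^{-\beta}(X)$ and its HN filtration at $\sigma_{0,-\beta}$ is the image of the HN filtration of $F'$ at $\sigma_{0,\beta}$ under the exact contravariant functor $\blank^\vee[1]$. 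A direct computation shows that $\nu_{0,-\beta}(G^\vee[1])=-\nu_{0,\beta}(G)$ and $\ch_2^{-\beta}(G^\vee[1])=-\ch_2^\beta(G)$ for any HN factor $G$ of $F'$, so comparing the defining sums of \autoref{maindef}.\eqref{item:maindef1} yields the two identities on the $F'$-part. For the $F^0$-part, \autoref{Giesekerkernel} identifies its HN factors as either $0$-dimensional torsion or shifts $S[1]$ of twisted Gieseker semistable vector bundles of slope $\beta$; the latter have $\ch_2^\beta=0$ and contribute trivially to both sides, whereas the former appear only in $\cH^2_{-\beta}(F^\vee)$ on the dual side, matched by the Serre dual of $0$-dimensional sheaves (which preserves length).

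The main technical subtlety is the treatment of $F^0$ in (2): \autoref{dualstab} does not apply, the derived dual does not preserve the heart $\Coh^{-\beta}(X)$, and $F^0{}^\vee$ sits in cohomological degree $2$ rather than degree $1$. Unpacking the distinguished triangle $F'{}^\vee\to F^\vee\to F^0{}^\vee$ and tracking how contributions split between $\cH^1_{-\beta}(F^\vee)$ and $\cH^2_{-\beta}(F^\vee)$ is the delicate bookkeeping step. In (1), a further minor issue is that the threshold $\beta\ll 0$ past which every Gieseker-HN factor of $E$ lies outside all its walls must be uniform across factors, which is ensured by the finiteness of the Gieseker-HN filtration.
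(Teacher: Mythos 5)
Your item (1) is correct and is essentially the paper's own argument: membership in $\Coh^\beta(X)$ for $\beta\ll0$ kills $\rf^k$ for $k\neq0,1$, and the twisted Gieseker HN filtration becomes the $\nu_{0,\beta}$-HN filtration for $\beta\ll0$ with all factors of non-negative tilt slope, killing $\rf^1$. Your item (2) also follows the paper's route — the decomposition $0\to F^0\to F\to F'\to 0$ with $F^0$ the maximal subobject of slope $\nu_{0,\beta}=+\infty$ is exactly the paper's $E_1\subset E$, and the treatment of $F'$ via \autoref{dualstab} is the same.

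However, there is a genuine error in your handling of the $F^0$-part of item (2). You claim that the factors of $F^0$ of the form $S[1]$, with $S$ a semistable bundle of slope $\beta$, satisfy $\ch_2^\beta(S[1])=0$ and hence ``contribute trivially to both sides.'' This conflates \autoref{Giesekerkernel}.\eqref{item:rk0} with \autoref{Giesekerkernel}.\eqref{item:KernelZ}: the subobject $F^0$ is characterized by $\Im Z_{0,\beta}(F^0)=L\cdot\ch_1^\beta(F^0)=0$, not by $Z_{0,\beta}(F^0)=0$. For a $\mu_L$-semistable torsion-free sheaf $S$ of slope $\beta$ one has $\ch_2^\beta(S)=-\odisc(S)/(2L^2\cdot\ch_0(S))\leq0$, with equality only when $\odisc(S)=0$; so $\ch_2^\beta(S[1])=\odisc(S)/(2L^2\cdot\ch_0(S))$ is strictly positive whenever $\odisc(S)>0$, and $F^0$ contributes the (generally nonzero) amount $\ch_2^\beta(F^0)=\mathrm{length}(\cH^0(F^0))+\odisc(\cH^{-1}(F^0))/(2L^2\cdot\ch_0(\cH^{-1}(F^0)))$ to $\rf^0_{F,L}(-\beta)$, of which your argument only accounts for the length term. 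The identity you want is still true, but for a different reason, which is how the paper closes this step: $(F^0)^\vee[2]$ lies in $\Coh^{-\beta}(X)$, is $\nu_{0,-\beta}$-semistable of slope $+\infty$, and satisfies $\ch_2^{-\beta}\left((F^0)^\vee[2]\right)=\ch_2^\beta(F^0)$, so as the top cohomology $\cH^1_{-\beta}(F^\vee[1])$ it contributes exactly the same quantity to $\rf^2_{F^\vee,L}(\beta)$. With that correction the bookkeeping you outline goes through.
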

\begin{proof}
To prove the first item we note that any sheaf $E$ satisfies $E\in\Coh^\beta(X)$ for every $\beta<\mu_-(E)$;
in particular, $\rf^k_{E,L}(x)=0$ for every $x>-\mu_-(E)$ and $k\neq0,1$.
Thus it only remains to show vanishing for $\rf^1_{E,L}$.
 
To this end, we consider the HN filtration of $E$ with respect to twisted $(L,-\frac{1}{2}K_X)$-Gieseker stability (if $E$ is not itself torsion-free, we create this filtration using the torsion filtration).
By \autoref{Giesekerchamber}, this is the HN filtration of $E\in\Coh^\beta(X)$ with respect to $\nu_{0,\beta}$-stability, for $\beta\ll0$.
Therefore, for $\beta\ll0$ all the HN factors of $E$ have positive slope $\nu_{0,\beta}$, which proves that $\rf^1_{E,L}(x)=0$ for $x\gg0$.
 
For the second item, we will check the equality assuming that $E\in\Coh^{-x}(X)$; the general statement follows from considering the cohomologies of $E$ with respect to the heart $\Coh^{-x}(X)$.
 
For simplicity, we write $\beta=-x$.
Let
\[
0=E_0\hookrightarrow E_1 \hookrightarrow \ldots \hookrightarrow E_{r-1} \hookrightarrow E_r=E
\]
be the HN filtration of $E$ with respect to $\nu_{0,\beta}$, where the first HN factor $E_1$ is assumed to have slope $\nu_{0,\beta}=+\infty$ (if this does not occur, simply write $E_1=0$ and let the rest of the filtration be that of $E$).
By definition:
\[
 \rf^0_{E,L}(-\beta)=\ch_2^\beta(E_1)+\rf^0_{E/E_1,L}(-\beta), \;\;\;\;\rf^1_{E,L}(-\beta)=\rf^1_{E/E_1,L}(-\beta)
\]
Moreover, we have a triangle $0\to (E/E_1)^\vee[1]\to E^\vee[1]\to E_1^\vee[1]\to0$ in $\Db(X)$, where:
 
\begin{itemize}[\textbullet]
  \item $(E/E_1)^\vee[1]\in\Coh^{-\beta}(X)$, having $(E/E_{r-1})^\vee[1]$,\ldots,$(E_2/E_1)^\vee[1]$ as HN factors with respect to $\nu_{0,-\beta}$.
  This is a consequence of \autoref{dualstab}.
  
  \item It is not difficult to check that $E_1^\vee[2]\in\Coh^{-\beta}(X)$ and it is $\nu_{0,-\beta}$-semistable (with slope $+\infty$).
\end{itemize}
 
It follows that $E^\vee[1]$ has two cohomologies with respect to $\Coh^{-\beta}(X)$, namely
\[
 \cH^0_{-\beta}(E^\vee[1])=(E/E_1)^\vee[1], \;\;\;\cH^1_{-\beta}(E^\vee[1])=E_1^\vee[2]
\]
 whose HN factors are known in terms of those of $E$.
 This gives the desired relations
 \[
 \rf^0_{E,L}(-\beta)=\rf^1_{E^\vee[1],L}(\beta), \;\;\;\rf^1_{E,L}(-\beta)=\rf^0_{E^\vee[1],L}(\beta).\qedhere
 \]
\end{proof}

The following technical lemma, which will be useful later on, is also natural from the same cohomological viewpoint:

\begin{lem}\label{sesequence}
Let $0\to E\to F\to Q\to 0$ be a short exact sequence in $\Coh^\beta(X)$ ($\beta\in\bQ$).
If $\rf^0_{Q,L}(-\beta)=0$ and $Q$ has no subobject $\widetilde{Q}\subset Q$ in $\Coh^\beta(X)$ with $\widetilde{Q}\in\ker(Z_{0,\beta})$, then the equality $\rf^0_{E,L}(-\beta)=\rf^0_{F,L}(-\beta)$ holds.
\end{lem}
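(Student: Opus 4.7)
The plan is to prove the stronger statement that $E$ and $F$ share the same switching subobject in their HN filtrations with respect to $\sigma_{0,\beta}$. Writing $E_s \subset E$ and $F_s \subset F$ for these, the definition of $\rf^0$ gives $\rf^0_{E,L}(-\beta) = \ch_2^\beta(E_s)$ and $\rf^0_{F,L}(-\beta) = \ch_2^\beta(F_s)$, so it suffices to verify that $E_s = F_s$ as subobjects of $F$.

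First I would unpack what the two hypotheses say about the HN factors of $Q$: together they imply $\nu_{0,\beta}^+(Q) \leq 0$. Any HN factor with finite positive tilt slope, or with slope $+\infty$ and nonzero $Z_{0,\beta}$, has $\ch_2^\beta > 0$ and would contribute strictly positively to $\rf^0_{Q,L}(-\beta)$, violating the first hypothesis. An HN factor in $\ker Z_{0,\beta}$ would necessarily appear at the top of the filtration (since $+\infty$ is the maximal slope) and would therefore produce a subobject of $Q$ lying in $\ker Z_{0,\beta}$, violating the second hypothesis.

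Next I would show $F_s \subset E$ by considering the image $G \subset Q$ of the composition $F_s \hookrightarrow F \twoheadrightarrow Q$. As a quotient of $F_s$ and a subobject of $Q$, the standard inequalities $\nu^-(\text{quotient}) \geq \nu^-(\text{object})$ and $\nu^+(\text{subobject}) \leq \nu^+(\text{object})$ force $\nu_{0,\beta}^-(G) \geq \nu_{0,\beta}^-(F_s) > 0$ and $\nu_{0,\beta}^+(G) \leq \nu_{0,\beta}^+(Q) \leq 0$. Since $\nu^- \leq \nu^+$ for any nonzero object, one gets $G = 0$, whence $F_s \subset E$; maximality of the switching subobject of $E$ then gives $F_s \subset E_s$. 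The reverse inclusion is symmetric: $E_s/F_s$ embeds into $F/F_s$, whose HN factors all have $\nu_{0,\beta} \leq 0$ by construction, while as a quotient of $E_s$ it satisfies $\nu_{0,\beta}^- > 0$; the same sign contradiction forces $E_s/F_s = 0$.

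The main delicate point is that $\sigma_{0,\beta}$ is only a weak stability condition, so objects with $Z_{0,\beta} = 0$ (formally of tilt slope $+\infty$) coexist with the honest semistable objects. The role of the second hypothesis is precisely to forbid such factors inside $Q$, ensuring that $\nu_{0,\beta}^+(Q) \leq 0$ is a genuine inequality rather than the vacuous $\leq +\infty$; without it the sign contradiction at the heart of the argument would break down.
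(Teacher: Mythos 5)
Your proof is correct and follows essentially the same route as the paper's: both arguments deduce from the two hypotheses that every subobject of $Q$ has $\nu_{0,\beta}\leq 0$, conclude that the composition $F_s\to F\to Q$ vanishes so that $F_s\subset E$, and then identify $F_s$ as the switching subobject of $E$. Your version merely spells out in more detail (via the standard $\nu^{\pm}$ inequalities for subobjects and quotients) the step the paper summarizes with ``it turns out that $F_s\subset E$ is the part of the HN filtration of $E$ corresponding to HN factors of positive slope.''
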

\begin{proof}
Let $0=F_0\hookrightarrow F_1\hookrightarrow\ldots\hookrightarrow F_s\hookrightarrow F_{s+1}\hookrightarrow\ldots\hookrightarrow F$ be the HN filtration of $F$ with respect to $\sigma_{0,\beta}$, so that $\nu_{0,\beta}(F_s/F_{s-1})>0\geq\nu_{0,\beta}(F_{s+1}/F_s)$.
That is, $\rf^0_{F,L}(-\beta)=\ch_2^\beta(F_s)$.

By $\rf^0_{Q,L}(-\beta)=0$ and our extra assumption on $\ker(Z_{0,\beta})$, we know that every subobject of $Q$ has $\nu_{0,\beta_0}\leq0$.
This implies that the morphism $F_s\to F\to Q$ must be $0$, and thus $F_s\subset E$.

It turns out that $F_s\subset E$  is the part of the HN filtration of $E$ corresponding to HN factors of positive slope.
Therefore, $\rf^0_{E,L}(-\beta)=\ch_2^\beta(F_s)=\rf^0_{F,L}(-\beta)$.
\end{proof}

\begin{rem}\label{variant}
In view of the results in \cite[Section~3]{FLZ} extending the construction of (geometric) Bridgeland stability conditions to a region enlarging the $(\alpha,\beta)$-plane, it would be interesting to consider functions defined via weak stability conditions on the ``boundary'' of this bigger region.
More precisely, Fu--Li--Zhao construct a Bridgeland stability condition
\[
\widetilde{\sigma}_{a,\beta}=\left(\Coh^\beta(X),\widetilde{Z}_{a,\beta}=(-\ch_2^\beta+aL^2\cdot\ch_0)+i(L\cdot\ch_1^\beta)\right)
\]
for every $(a,\beta)\in\bR^2$ with $a>\Phi_{X,L}(\beta)-\frac{\beta^2}{2}$, where $\Phi_{X,L}(\beta)$ is the Le Potier function (see \cite[Definition~3.1]{FLZ} and \autoref{exGiesekerkernel}).
In particular, the Bridgeland stability conditions in the $(\alpha,\beta)$-plane are recovered as $\widetilde{\sigma}_{\frac{\alpha^2}{2},\beta}=\sigma_{\alpha,\beta}$ for every $\alpha>0$.

The function $f(x):=\Phi_{X,L}(x)-\frac{x^2}{2}$ being upper-semicontinuous, its discontinuities form a meagre set.
Since the complement of a meagre set is dense thanks to the Baire category theorem, it turns out that the points where $f$ is continuous form a dense subset $A_{X,L}$ of $\bR$.

Henceforth, one could define functions on $A_{X,L}\cap\bQ$ via the HN filtrations with respect to the weak stability conditions $\set{\widetilde{\sigma}_{f(\beta),\beta}\mid \beta\in A_{X,L}\cap\bQ}$.
Clearly, in the cases where $f\equiv0$ (e.g. surfaces with finite Albanese map, as seen in \autoref{exGiesekerkernel}) this is nothing but our Chern degree functions.
In general, to extend these functions to the whole $\bR$, one could try to follow the same approach of \autoref{sec:LocalExpr};
however, while we expect that the existence of \emph{Bridgeland limit filtrations} (i.e.~ \autoref{existenceIntr}.\eqref{exist:Br}) could be proven following similar arguments, the existence of \emph{weak limit filtrations} (i.e.~ \autoref{existenceIntr}.\eqref{exist:Wk}) seems a much more obscure problem.

Consider for instance a polarized K3 surface $(X,L)$ with $\Pic(X)=\bZ\cdot L$ and $L^2=2e$.
In that case, the existence of spherical objects shows that 
\[
A_{X,L}\cap\bQ=\bQ\setminus\set{\frac{c}{r}\in\bQ:\;r|e(c^2+1)}
\]
and $f\equiv-\frac{1}{2e}$ on this subset.
For $\beta\in A_{X,L}\cap\bQ$, one can consider the central charge of $\widetilde{\sigma}_{-\frac{1}{2e},\beta}$ given by 
$
\widetilde{Z}_{-\frac{1}{2e},\beta}=-v_2^\beta+i(L\cdot v_1^\beta)
$
for $(v_0^\beta,v_1^\beta,v_2^\beta)=v\cdot e^{-\beta L}$ the twisted Mukai vector.
Accordingly, if $F\in\Coh^\beta(X)$ is an object with HN filtration $0=F_0\hookrightarrow F_1\hookrightarrow\ldots\hookrightarrow F_r=F$ with respect to $\sigma_{-\frac{1}{2e},\beta}$, we may define for example
\begin{align*}
\operatorname{vdeg}^0_{F,L}(-\beta)&:=\displaystyle\sum_{\nu_{-\frac{1}{2e},\beta}({F_i/F_{i-1}})\geq0}v_2^{\beta}({F_i/F_{i-1}}).
\end{align*}
In this particular case we could call such functions \emph{Mukai degree functions}.

Observe that in general one cannot expect to extend the Mukai degree functions to continuous functions in the whole $\bR$ mimicking \autoref{main}.\eqref{item:main2}, since discontinuities may arise at certain points of $\bQ\setminus A_{X,L}$, as one easily sees with the function of $\cO_X$.

In any case, it would be interesting to know if, fixed an object of $\Db(X)$, there are finitely many such discontinuities and the Mukai degree functions encode information of geometrical or cohomological type.
\end{rem}


\section{Local expressions for the Chern degree functions}
\label{sec:LocalExpr}

This section is devoted to prove that, in a neighborhood of every rational number, the Chern degree functions are piecewise polynomial. 
The result is analoguous to \cite[Corollary~2.6]{JP}, where it follows for cohomological rank functions from a transformation formula with respect to the Fourier-Mukai transform.
In our case, the proof follows a completely different path, by describing the behavior of HN filtrations around weak stability conditions $\sigma_{0,\beta_0}$ ($\beta_0\in\bQ$).
This description may be of independent interest, especially when $\sigma_{0,\beta_0}$ lies in the boundary of the (geometric) stability manifold (see \autoref{exGiesekerkernel} and \autoref{variant}).

Along this section, we keep fixed a rational number $\beta_0=\frac{a}{b}$ with $a$ and $b$ coprime integers ($b>0$).

\subsection{Bridgeland limit HN filtrations}\label{subsec:Bridgelandlimit}
Our first goal is to control HN filtrations with respect to the Bridgeland stability conditions $\sigma_{\alpha,\beta_0}$, for small values of $\alpha>0$.
In particular, we want to understand whether these filtrations remain constant:

\begin{defn}\label{BridLimitHN}
Given $F\in \Coh^{\beta_0}(X)$, if there exists $\alpha_0>0$ such that $F$ has the same HN filtration with respect to all the Bridgeland stability conditions $\sigma_{\alpha,\beta_0}$ with $\alpha\in(0,\alpha_0)$, we will call this HN filtration the \emph{Bridgeland limit HN filtration} of $F$ at $\beta_0$.
\end{defn}

In case the Bridgeland limit HN filtration exists, the HN filtration of $F$ at $\sigma_{0,\beta_0}$ can be recovered by identifying those limit HN factors with the same slope at $\alpha=0$.

The first result of this section is that Bridgeland limit HN filtrations exist for $\sigma_{0,\beta_0}$-semistable objects with nonzero tilt slope:

\begin{prop}\label{HNalpha2}
Any $\sigma_{0,\beta_0}$-semistable object $F\in\Coh^{\beta_0}(X)$ with $\nu_{0,\beta_0}(F)\neq0$ admits a Bridgeland limit HN filtration at $\beta_0$.
\end{prop}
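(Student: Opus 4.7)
The plan is to show that walls for $v(F)$ in the $(\alpha,\beta)$-plane do not accumulate at the boundary point $(0,\beta_0)$ along the vertical line $\beta=\beta_0$. Once this is established, there will be some $\alpha_0>0$ below which no wall for $F$ meets this line, and hence the HN filtration of $F$ with respect to $\sigma_{\alpha,\beta_0}$ must be constant for $\alpha\in(0,\alpha_0)$; this is precisely the condition in \autoref{BridLimitHN}.

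I would argue by contradiction: suppose an infinite sequence of semicircular walls $W_i$ for $v(F)$ intersects the line $\beta=\beta_0$ at points with $\alpha$-coordinates $\alpha_i\to 0^+$. Writing $(0,c_i)$ and $r_i$ for the center and radius of $W_i$, the intersection condition reads $\alpha_i^2+(\beta_0-c_i)^2=r_i^2$, and by item~(5) of \autoref{structurewalls} the top point $(r_i,c_i)$ of $W_i$ lies on the curve $H_F:\nu_{\alpha,\beta}(F)=0$. The case of the vertical wall does not affect accumulation since it produces at most one wall intersecting $\beta=\beta_0$.

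Next, I would invoke the local finiteness of walls for $v(F)$ inside the open region $\{\alpha>0\}$, a standard consequence of the support property (with the Bogomolov form $\odisc$) together with the discriminant bound in \autoref{structurewalls}.(7), which forces the walls $W_i$ to have no accumulation point in the interior. Consequently the tops $(r_i,c_i)$ can accumulate only on the boundary $\{\alpha=0\}$, so $r_i\to 0$; combined with $|\beta_0-c_i|\leq r_i$ this gives $c_i\to\beta_0$, hence $(r_i,c_i)\to(0,\beta_0)$. Since $H_F$ is the closed zero locus of a polynomial in $(\alpha,\beta)$, it follows that $(0,\beta_0)\in H_F$, i.e.\ $\ch_2^{\beta_0}(F)=0$. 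But this is exactly the condition $\nu_{0,\beta_0}(F)=0$, contradicting the hypothesis. Hence only finitely many walls for $F$ can meet $\beta=\beta_0$ in a neighborhood of $\alpha=0$, and picking $\alpha_0>0$ strictly below the smallest such wall (or any positive number, if no such walls exist), the HN filtration of $F$ with respect to $\sigma_{\alpha,\beta_0}$ is constant for $\alpha\in(0,\alpha_0)$, yielding the Bridgeland limit HN filtration.

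The main obstacle will be the careful justification of the local finiteness of walls in the present setting, since $\sigma_{0,\beta_0}$ itself may fail to satisfy the support property (cf.\ \autoref{exGiesekerkernel}). The crucial point is that local finiteness is only needed in the open region $\{\alpha>0\}$, where each $\sigma_{\alpha,\beta_0}$ is an honest Bridgeland stability condition for which \autoref{structurewalls}.(7) bounds $\odisc(E)\leq\odisc(F)$ for any destabilizing $E$; the hypothesis $\nu_{0,\beta_0}(F)\neq 0$ is then what keeps $(0,\beta_0)$ off the curve $H_F$ and prevents any wall accumulation from sliding down to the boundary.
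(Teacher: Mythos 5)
Your wall-accumulation argument correctly shows that actual walls \emph{for the class} $v(F)$ cannot accumulate at $(0,\beta_0)$ along $\beta=\beta_0$ when $\nu_{0,\beta_0}(F)\neq 0$ (the dichotomy ``tops accumulate in the interior, contradicting local finiteness'' versus ``radii shrink to $0$, forcing $(0,\beta_0)\in H_F$'' is sound). But the final step does not follow: the absence of actual walls for $v(F)$ on the segment $\{\beta=\beta_0,\ 0<\alpha<\alpha_0\}$ only says that the set of \emph{semistable objects of class $v(F)$} is constant there. It does not pin down the HN filtration of $F$. If $F$ is non-semistable for small $\alpha>0$ (which is exactly the interesting case, since $\sigma_{0,\beta_0}$-semistability is a weaker condition than $\sigma_{\alpha,\beta_0}$-semistability), its HN filtration can change at walls for the classes of its HN factors, and these are in general not walls for $v(F)$. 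To rule out accumulation of \emph{those} walls at $(0,\beta_0)$ you would need to know that the classes of the potential HN factors range over a finite set as $\alpha\to 0^+$ --- and this is precisely what fails a priori at the degenerate boundary point: $L\cdot\ch_1^{\beta_0}$ and $\odisc$ of a destabilizer are bounded, but its rank $\ch_0$ need not be. Establishing this boundedness is the actual content of the paper's proof (\autoref{SF} and \autoref{maxDestObj}): one first reduces by duality to $\nu_{0,\beta_0}(F)<0$, and then the inequality $\odisc\geq 0$ combined with $\nu_{0,\beta_0}<0$ and the discreteness of slopes (\autoref{v0discrete}) bounds $\frac{L^2\cdot\ch_0}{L\cdot\ch_1^{\beta_0}}$ on the relevant subobjects, which yields an explicit $\alpha_0$ below which a carefully chosen subobject stays semistable. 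Your closing paragraph identifies the right worry (degeneration of the support property at $\alpha=0$) but dismisses it too quickly: local finiteness on compact subsets of $\{\alpha>0\}$ does not propagate to the boundary point for the infinitely many classes that could appear in the filtration.

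A second, smaller problem: you invoke \autoref{structurewalls}.(5) and (7) for $v(F)$, but that theorem assumes $\odisc(v)\geq 0$, and at this stage of the paper one does not yet know that a $\sigma_{0,\beta_0}$-semistable object has nonnegative discriminant --- that is \autoref{disc2}.\eqref{enum:DeltaPos}, which is deduced \emph{from} \autoref{HNalpha2}. If $\odisc(F)<0$, then $F$ is non-semistable at every $\sigma_{\alpha,\beta_0}$ with $\alpha>0$, there are no actual walls for $v(F)$ at all on the ray, and your argument gives no information about the filtration; the paper instead derives a contradiction from the emptiness of $S_F$ in the proof of \autoref{SF}.\eqref{enum:SFNonEmpty}. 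So the approach as written has a genuine gap and would need to be supplemented by essentially the quantitative bounds of \autoref{SF} and \autoref{maxDestObj}.
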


The case $L\cdot\ch_1^{\beta_0}(F)=0$ being trivial (in this case $F$ is semistable along the whole line $\beta=\beta_0$), we will assume that $L\cdot\ch_1^{\beta_0}(F)>0$ (i.e.~ $\nu_{0,\beta_0}(F)\neq+\infty$). 
Thanks to the duality functor $\blank^\vee[1]$ and \autoref{dualstab}, we may restrict ourselves to the case $\nu_{0,\beta_0}(F)<0$.

The proof is then based on two lemmas:

\begin{lem}\label{SF}
Let $F\in\Coh^{\beta_0}(X)$ be $\sigma_{0,\beta_0}$-semistable, with $L\cdot\ch_1^{\beta_0}(F)>0$ and $\nu_{0,\beta_0}(F)<0$.
\begin{enumerate}[{\rm (1)}]
 \item\label{enum:SFNonEmpty} The set of subobjects
 \begin{equation*}\label{defn:SF}
 S_F=\set{E\in\Coh^{\beta_0}(X):E\subseteq F,\;\nu_{0,\beta_0}(E)=\nu_{0,\beta_0}(F),\;\odisc(E)\geq0}
 \end{equation*}
 with the same slope and non-negative discriminant is non-empty.
 
 \item\label{enum:BoundedBelow} The expression $\frac{L^2\cdot\ch_0}{L\cdot\ch_1^{\beta_0}}$ is bounded from below on $S_F$.
\end{enumerate}
\end{lem}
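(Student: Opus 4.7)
My plan is to manufacture a member of $S_F$ in part (1) from the first step of the Harder--Narasimhan filtration of $F$ with respect to a nearby honest Bridgeland stability, and to derive (2) directly from the Bogomolov form. For (1), if $\odisc(F)\geq 0$ then $F\in S_F$ and there is nothing to prove, so I assume $\odisc(F)<0$. By the support property in \autoref{alphabetaplane}, $F$ is then not $\sigma_{\alpha,\beta_0}$-semistable for any $\alpha>0$, so I may pick $E_\alpha\subsetneq F$ the first term of the HN filtration of $F$ at $\sigma_{\alpha,\beta_0}$; this $E_\alpha$ is $\sigma_{\alpha,\beta_0}$-semistable (hence $\odisc(E_\alpha)\geq 0$) and satisfies $\nu_{\alpha,\beta_0}(E_\alpha)>\nu_{\alpha,\beta_0}(F)$. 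Writing $C:=\nu_{0,\beta_0}(F)<0$ and $c_\alpha:=\nu_{0,\beta_0}(E_\alpha)$, the $\sigma_{0,\beta_0}$-semistability of $F$ forces $L\cdot\ch_1^{\beta_0}(E_\alpha)>0$ (otherwise the infinite slope destabilizes $F$) and $c_\alpha\leq C$. The goal is to show $c_\alpha=C$ for some small $\alpha>0$, so that $E_\alpha\in S_F$.

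Substituting $\ch_2^{\beta_0}(E_\alpha)=c_\alpha\, L\cdot\ch_1^{\beta_0}(E_\alpha)$ into $\odisc(E_\alpha)\geq 0$ and dividing by $L\cdot\ch_1^{\beta_0}(E_\alpha)>0$, a short case analysis on the sign of $L^2\cdot\ch_0(E_\alpha)$ gives
\[
r_\alpha:=\frac{L^2\cdot\ch_0(E_\alpha)}{L\cdot\ch_1^{\beta_0}(E_\alpha)}\;\geq\;\frac{1}{2c_\alpha},
\]
a negative but finite lower bound. Simultaneously, $\nu_{\alpha,\beta_0}(E_\alpha)>\nu_{\alpha,\beta_0}(F)$ rewrites as $c_\alpha-C>\tfrac{\alpha^2}{2}(r_\alpha-R)$ with $R:=\tfrac{L^2\cdot\ch_0(F)}{L\cdot\ch_1^{\beta_0}(F)}$. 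If $c_\alpha<C$ held for all small $\alpha$, the containment $E_\alpha\subseteq F$ in $\Coh^{\beta_0}(X)$ would confine $L\cdot\ch_1^{\beta_0}(E_\alpha)$ to $(0,L\cdot\ch_1^{\beta_0}(F)]\cap\tfrac{1}{b}\bZ$ (a finite set), while $\ch_2^{\beta_0}(E_\alpha)\in\tfrac{1}{2b^2}\bZ$; the slopes $c_\alpha$ would therefore lie in a fixed discrete subset of $\bQ$, and some $\delta_0>0$ would satisfy $c_\alpha\leq C-\delta_0$ uniformly. Combining the two inequalities then forces $r_\alpha<R-\tfrac{2\delta_0}{\alpha^2}\to -\infty$, contradicting $r_\alpha\geq\tfrac{1}{2(C-\delta_0)}$. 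Hence $c_\alpha=C$ for some small $\alpha>0$, and $E_\alpha\in S_F$.

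For (2), any $E\in S_F$ satisfies $\ch_2^{\beta_0}(E)=C\cdot L\cdot\ch_1^{\beta_0}(E)$, so $\odisc(E)\geq 0$ reduces, after dividing by $L\cdot\ch_1^{\beta_0}(E)>0$, to $L\cdot\ch_1^{\beta_0}(E)\geq 2C\cdot L^2\cdot\ch_0(E)$; the same trichotomy on the sign of $L^2\cdot\ch_0(E)$ yields $\frac{L^2\cdot\ch_0(E)}{L\cdot\ch_1^{\beta_0}(E)}\geq\frac{1}{2C}$, the desired bound. The main obstacle of the whole proof is the discreteness step in (1): extracting a uniform positive gap $\delta_0$ between $C$ and the admissible values $c_\alpha<C$ relies on both the lattice structure of $\Lambda$ and the heart-theoretic bound $L\cdot\ch_1^{\beta_0}(E_\alpha)\leq L\cdot\ch_1^{\beta_0}(F)$ coming from $E_\alpha\subseteq F$ in $\Coh^{\beta_0}(X)$; once this gap is secured, the limit $\alpha\to 0^+$ closes the argument cleanly.
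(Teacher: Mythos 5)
Your proposal is correct and follows essentially the same route as the paper's proof: for (1) the paper likewise assumes $S_F$ empty (forcing $\odisc(F)<0$), takes the maximal destabilizing subobject $G_n$ at $\sigma_{1/n,\beta_0}$, uses the discreteness of the possible slopes $\nu_{0,\beta_0}$ of subobjects (your uniform gap $\delta_0$ is the paper's $1/k_F$) together with $\nu_{1/n,\beta_0}(G_n)>\nu_{1/n,\beta_0}(F)$ to force $\frac{L^2\cdot\ch_0(G_n)}{L\cdot\ch_1^{\beta_0}(G_n)}\to-\infty$, and contradicts this with the Bogomolov bound $\odisc(G_n)\geq0$. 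Part (2) is the same one-line computation with the discriminant in both versions.
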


Before proving this first lemma, we note the following:

\begin{rem}\label{v0discrete}
By the assumption $\beta_0=\frac{a}{b}$, every $F\in\Coh^{\beta_0}(X)$ with $\nu_{0,\beta_0}^+(F)<+\infty$ satisfies
\[
\set{L\cdot\ch_1^{\beta_0}(G):G\subset F\text{ in }\Coh^{\beta_0}(X)}\subseteq \frac{1}{b}\cdot\bZ_{>0}, \;\;
 \set{\nu_{0,\beta_0}(G):G\subset F\text{ in }\Coh^{\beta_0}(X)}\subseteq \frac{1}{k_F}\cdot\bZ
\]
where $k_F=2b\left(bL\cdot\ch_1^{\beta_0}(F)\right)!\in\bZ_{>0}$ (it depends only on $F$).
Indeed, since $2b^2\ch_2^{\beta_0}(G)\in \bZ$ and $bL\cdot\ch_1^{\beta_0}(G)\in (0,bL\cdot\ch_1^{\beta_0}(F)]\cap\bZ$, then
$\nu_{0,\beta_0}(G)=\frac{2b^2\ch_2^{\beta_0}(G)}{2b\left(bL\cdot\ch_1^{\beta_0}(G)\right)}\in \frac{1}{k_F}\cdot\bZ$.
\end{rem}

\begin{proof}[Proof of \autoref{SF}]
To prove \eqref{enum:SFNonEmpty}, assume that the set $S_F$ is empty; in particular $\odisc(F)<0$, so $F$ is nonsemistable for every Bridgeland stability condition $\sigma_{\alpha,\beta_0}$ with $\alpha>0$.
If $n\in\bZ_{>0}$, let $G_n$ be the maximal destabilizing subobject of $F$ with respect to $\sigma_{\frac{1}{n},\beta_0}$.

Observe that $\nu_{0,\beta_0}(G_n)\leq\nu_{0,\beta_0}(F)$, by the $\sigma_{0,\beta_0}$-semistability of $F$; the emptiness of $S_F$ guarantees a strict inequality.
Therefore, $\nu_{0,\beta_0}(G_n)\leq\nu_{0,\beta_0}(F)-\frac{1}{k_F}$ (by \autoref{v0discrete}).
Combining with $\nu_{\frac{1}{n},\beta_0}(G_n)>\nu_{\frac{1}{n},\beta_0}(F)$, we get
\[
\nu_{0,\beta_0}(F)-\frac{1}{2n^2}\cdot\frac{L^2\cdot\ch_0(F)}{L\cdot\ch_1^{\beta_0}(F)}=\nu_{\frac{1}{n},\beta_0}(F)<\nu_{\frac{1}{n},\beta_0}(G_n)\leq\nu_{0,\beta_0}(F)-\frac{1}{k_F}-\frac{1}{2n^2}\cdot\frac{L^2\cdot\ch_0(G_n)}{L\cdot\ch_1^{\beta_0}(G_n)}
\]
which gives
\[
-\frac{L^2\cdot\ch_0(G_n)}{L\cdot\ch_1^{\beta_0}(G_n)}>\frac{2}{k_F}n^2-\frac{L^2\cdot\ch_0(F)}{L\cdot\ch_1^{\beta_0}(F)} \;\;\Longrightarrow \;\; -\frac{L^2\cdot\ch_0(G_n)}{L\cdot\ch_1^{\beta_0}(G_n)}\to+\infty \text{ as }n\to\infty
\]

But on the other hand, for every $n$ we have $L\cdot\ch_1^{\beta_0}(G_n)>0$ and hence
\begin{align*}
0\leq\odisc(G_n)
&=\left(L\cdot\ch_1^{\beta_0}(G_n)\right)^2\left(1-2\frac{L^2\cdot\ch_0(G_n)}{L\cdot\ch_1^{\beta_0}(G_n)}\cdot\nu_{0,\beta_0}(G_n)\right),
\end{align*}
which yields
\[
0\leq1-2\frac{L^2\cdot\ch_0(G_n)}{L\cdot\ch_1^{\beta_0}(G_n)}\cdot\nu_{0,\beta_0}(G_n)<1-2\frac{L^2\cdot\ch_0(G_n)}{L\cdot\ch_1^{\beta_0}(G_n)}\cdot\nu_{0,\beta_0}(F)
\]
for every $n$ such that $-\frac{L^2\cdot\ch_0(G_n)}{L\cdot\ch_1^{\beta_0}(G_n)}>0$.
Since $\nu_{0,\beta_0}(F)<0$, this contradicts the limit above and concludes the proof of \eqref{enum:SFNonEmpty}.

To prove \eqref{enum:BoundedBelow}, let $E\subseteq F$ be a subobject with $\nu_{0,\beta_0}(E)=\nu_{0,\beta_0}(F)$ and $\odisc(E)\geq0$.
Then
\[
0\leq\odisc(E)=\left(L\cdot\ch_1^{\beta_0}(E)\right)^2\left(1-2\frac{L^2\cdot\ch_0(E)}{L\cdot\ch_1^{\beta_0}(E)}\cdot\nu_{0,\beta_0}(F)\right)
\]
implies, under the assumption $\nu_{0,\beta_0}(F)<0$, that
\[
\frac{L^2\cdot\ch_0(E)}{L\cdot\ch_1^{\beta_0}(E)}\geq\frac{1}{2\nu_{0,\beta_0}(F)}\qedhere
\]
\end{proof}


The elements $E\in S_F$ satisfy $\frac{L^2\cdot\ch_0(E)}{L\cdot\ch_1^{\beta_0}(E)}\in \frac{1}{(bL\cdot\ch_1^{\beta_0}(F))!}\bZ$.
Hence by \autoref{SF}.\eqref{enum:BoundedBelow}, we can consider an element $E\in S_F$ with minimum $\frac{L^2\cdot\ch_0}{L\cdot\ch_1^{\beta_0}}$ among the objects of $S_F$;
by noetherianity of $\Coh^{\beta_0}(X)$, we may assume as well that $E$ is maximal with this property.

\begin{rem}
A priori, it is not obvious that $E$ must be unique.
This will follow from the proof of \autoref{HNalpha2}, where we will see that $E$ is the first step in the Bridgeland limit HN filtration of $F$ (recall that HN filtrations are unique). 
\end{rem}

\begin{lem}\label{maxDestObj}
Under the assumptions of \autoref{SF}, let $E\in S_F$ be an element with minimum $\frac{L^2\cdot\ch_0}{L\cdot\ch_1^{\beta_0}}$ among the objects of $S_F$ and which is maximal with this property.
Then, there exists $\alpha_0>0$ such that $E$ is $\sigma_{\alpha,\beta_0}$-semistable for every $\alpha\in[0,\alpha_0)$.
\end{lem}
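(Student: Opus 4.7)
The plan is to argue by contradiction, showing that if no such $\alpha_0$ existed, then the maximally destabilizing subobject along a sequence $\alpha_n\to 0^+$ would either violate Bogomolov or contradict the minimality of $E$.

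First I would observe that $E$ itself is $\sigma_{0,\beta_0}$-semistable: since $E\in S_F$ has $\nu_{0,\beta_0}(E)=\nu_{0,\beta_0}(F)$, any subobject $E'\subseteq E$ in $\Coh^{\beta_0}(X)$ is also a subobject of $F$, so its tilt slope is bounded above by $\nu_{0,\beta_0}(F)=\nu_{0,\beta_0}(E)$. Now suppose for contradiction there is a sequence $\alpha_n\to 0^+$ along which $E$ is $\sigma_{\alpha_n,\beta_0}$-unstable, and let $G_n\subsetneq E$ denote the maximal destabilizing subobject in $\Coh^{\beta_0}(X)$; being $\sigma_{\alpha_n,\beta_0}$-semistable, $G_n$ satisfies $\odisc(G_n)\geq 0$ by \autoref{alphabetaplane}. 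A preliminary check is that $L\cdot\ch_1^{\beta_0}(G_n)>0$: otherwise the weak stability function condition would force $\ch_2^{\beta_0}(G_n)\geq 0$, so $G_n\subseteq F$ would have tilt slope $+\infty$, destabilizing $F$ against the finite negative slope $\nu_{0,\beta_0}(F)$. Thus $\nu_{0,\beta_0}(G_n)$ is finite, and $\nu_{0,\beta_0}(G_n)\leq\nu_{0,\beta_0}(E)<0$ by semistability of $E$.

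Now I would split into two cases. If $\nu_{0,\beta_0}(G_n)<\nu_{0,\beta_0}(E)$, the discreteness recalled in \autoref{v0discrete} gives a gap $\nu_{0,\beta_0}(E)-\nu_{0,\beta_0}(G_n)\geq\frac{1}{k_F}$, and rearranging $\nu_{\alpha_n,\beta_0}(G_n)>\nu_{\alpha_n,\beta_0}(E)$ yields
\[
\frac{L^2\cdot\ch_0(E)}{L\cdot\ch_1^{\beta_0}(E)}-\frac{L^2\cdot\ch_0(G_n)}{L\cdot\ch_1^{\beta_0}(G_n)}>\frac{2}{k_F\alpha_n^2}.
\]
Hence $\frac{L^2\cdot\ch_0(G_n)}{L\cdot\ch_1^{\beta_0}(G_n)}\to-\infty$, which combined with the Bogomolov rewriting
\[
0\leq \odisc(G_n)=\left(L\cdot\ch_1^{\beta_0}(G_n)\right)^2\left(1-2\frac{L^2\cdot\ch_0(G_n)}{L\cdot\ch_1^{\beta_0}(G_n)}\nu_{0,\beta_0}(G_n)\right)
\]
and the bound $\nu_{0,\beta_0}(G_n)\leq\nu_{0,\beta_0}(F)<0$ (bounded away from $0$) yields a contradiction, exactly as in the proof of \autoref{SF}. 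If instead $\nu_{0,\beta_0}(G_n)=\nu_{0,\beta_0}(E)$, the destabilizing inequality simplifies to $\frac{L^2\cdot\ch_0(G_n)}{L\cdot\ch_1^{\beta_0}(G_n)}<\frac{L^2\cdot\ch_0(E)}{L\cdot\ch_1^{\beta_0}(E)}$, and since $G_n\subseteq F$ has slope $\nu_{0,\beta_0}(F)$ and nonnegative discriminant we get $G_n\in S_F$, contradicting the minimality of $E$.

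The main delicate step is the preliminary verification that $L\cdot\ch_1^{\beta_0}(G_n)>0$, where one must carefully exploit the weak (not Bridgeland) stability function property at $\sigma_{0,\beta_0}$ together with the finiteness and negativity of $\nu_{0,\beta_0}(F)$; the remaining dichotomy is then dispatched either by Bogomolov combined with slope discreteness (Case~A) or by the minimality hypothesis (Case~B). Note that the maximality of $E$ among minimizers is not used here: it presumably enters later to establish uniqueness of the first step of the Bridgeland limit HN filtration.
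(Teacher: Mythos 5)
Your proof is correct and follows essentially the same path as the paper's: both hinge on the fact that a destabilizer $G$ at small $\alpha>0$ is itself $\sigma_{\alpha,\beta_0}$-semistable (hence $\odisc(G)\geq 0$), that equality of tilt slopes at $\alpha=0$ would put $G$ in $S_F$ with a smaller ratio $\frac{L^2\cdot\ch_0}{L\cdot\ch_1^{\beta_0}}$ and contradict minimality, and that a strict slope gap combined with the discreteness of \autoref{v0discrete} and the Bogomolov rewriting forces $\alpha$ away from $0$. The only cosmetic difference is that you phrase this as a contradiction along a sequence $\alpha_n\to 0^+$ (rerunning the limit argument of \autoref{SF}), whereas the paper extracts from the same inequalities an explicit lower bound on $\alpha$ depending only on $E$; your closing observation that maximality of $E$ is not needed here but only later for identifying the first limit HN step is also accurate.
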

\begin{proof}
The statement is clear for $\alpha=0$: $\sigma_{0,\beta_0}$-semistability of $E$ (actually of any element in $S_F$) trivially follows from that of $F$.

Now consider $\alpha>0$ such that $E$ is not $\sigma_{\alpha,\beta_0}$-semistable, and let $G\subsetneq E$ be a maximal destabilizing subobject.
Note that $0<L\cdot\ch_1^{\beta_0}(G)$ (since $E$ is $\sigma_{0,\beta_0}$-semistable) and $L\cdot\ch_1^{\beta_0}(G)<L\cdot\ch_1^{\beta_0}(E)$.

Moreover, $\nu_{0,\beta_0}(G)<\nu_{0,\beta_0}(E)$.
Indeed, an equality would imply (since $\nu_{\alpha,\beta_0}(G)>\nu_{\alpha,\beta_0}(E)$) that $G$ has smaller $\frac{L^2\cdot\ch_0}{L\cdot\ch_1^{\beta_0}}$ than $E$, contradicting our hypothesis on $E$.
Therefore
\[
\ch_2^{\beta_0}(G)<\ch_2^{\beta_0}(E)\cdot\frac{L\cdot\ch_1^{\beta_0}(G)}{L\cdot\ch_1^{\beta_0}(E)}<0
\]
(note that $\ch_2^{\beta_0}(E)<0$ because $\nu_{0,\beta_0}(E)=\nu_{0,\beta_0}(F)<0$). 

Now, since $\odisc(G)\geq0$ and $\ch_2^{\beta_0}(G)<0$, we have $\left(L\cdot\ch_1^{\beta_0}(G)\right)^2\geq2\left(L^2\cdot\ch_0(G)\right)\ch_2^{\beta_0}(G)$, so
\begin{equation*}
1\geq\frac{2L^2\cdot\ch_0(G)}{L\cdot\ch_1^{\beta_0}(G)}\cdot\nu_{0,\beta_0}(G)
\end{equation*}
and 
\begin{equation*}
\frac{-L^2\cdot\ch_0(G)}{L\cdot\ch_1^{\beta_0}(G)}\leq\frac{-1}{2\nu_{0,\beta_0}(G)}<\frac{-1}{2\nu_{0,\beta_0}(E)}
 <-\frac{(L\cdot\ch_1^{\beta_0}(E))^2}{\frac{2}{b}\ch_2^{\beta_0}(E)},
\end{equation*}
where in the last inequality we have used that $L\cdot\ch_1^{\beta_0}(E)>\frac{1}{b}$.
Therefore,
\begin{align*}
 \nu_{0,\beta_0}(E)-\frac{\alpha^2L^2\cdot\ch_0(E)}{2L\cdot\ch_1^{\beta_0}(E)}&=\nu_{\alpha,\beta_0}(E)<\nu_{\alpha,\beta_0}(G)=\nu_{0,\beta_0}(G)-\frac{\alpha^2L^2\cdot\ch_0(G)}{2L\cdot\ch_1^{\beta_0}(G)}\\
 &<\nu_{0,\beta_0}(G)-\frac{\alpha^2(L\cdot\ch_1^{\beta_0}(E))^2}{\frac{4}{b}\ch_2^{\beta_0}(E)}\leq\nu_{0,\beta_0}(E)-\frac{1}{k_E}-\frac{\alpha^2(L\cdot\ch_1^{\beta_0}(E))^2}{\frac{4}{b}\ch_2^{\beta_0}(E)},
\end{align*}
which gives the inequality
\[
\frac{\alpha^2}{2}\left(\frac{(L\cdot\ch_1^{\beta_0}(E))^2}{-\frac{2}{b}\ch_2^{\beta_0}(E)}+\frac{L^2\cdot\ch_0(E)}{L\cdot\ch_1^{\beta_0}(E)}\right)>\frac{1}{k_E}
\]

Using $\odisc(E)\geq0$, one can check that the factor multiplying $\frac{\alpha^2}{2}$ is positive.
Since this factor and $k_E$ only depend on $E$, this yields a lower bound for those $\alpha$ for which $E$ is not $\sigma_{\alpha,\beta_0}$-semistable.
\end{proof}

\begin{proof}[Proof of \autoref{HNalpha2}]
Consider the subobject $E\subseteq F$ defined below the proof of \autoref{SF}. 
It will be the first step of the Bridgeland limit filtration of $F$ at $\beta_0$. 
Indeed, an inductive process (applied to $F/E$) yields a chain of subobjects of $F$, which is finite by the noetherianity of $\Coh^{\beta_0}(X)$.

This chain is a HN filtration for $F$, valid for all $\sigma_{\alpha,\beta_0}$ with $\alpha>0$ small enough.
Indeed, on the one hand the semistability of the HN factors is guaranteed by \autoref{maxDestObj}, and on the other hand the inequalities of tilt slopes follow from the properties imposed to the subobjects taken at each step.\qedhere
\end{proof}

As a first consequence of \autoref{HNalpha2}, we obtain that $\sigma_{0,\beta_0}$-(semi)stability keeps some properties from Bridgeland stability:

\begin{cor}\label{disc2}
Let $F\in\Coh^{\beta_0}(X)$ be an object.
\begin{enumerate}[{\rm (1)}]
 \item\label{enum:DeltaPos} If $F$ is $\sigma_{0,\beta_0}$-semistable, then $\odisc(F)\geq0$.
 \item\label{enum:OpenStab} (Openness of stability) If $F$ is $\sigma_{0,\beta_0}$-stable with $\nu_{0,\beta_0}(F)\neq0$, then there exists $\alpha_0>0$ such that $F$ is $\sigma_{\alpha,\beta_0}$-stable for every $\alpha\in[0,\alpha_0)$.
 \item\label{enum:RegionSSt} If $F$ is $\sigma_{0,\beta_0}$-semistable with $\nu_{0,\beta_0}(F)\neq0$, there exists a region of Bridgeland stability conditions in the $(\alpha,\beta)$-plane for which $F$ is semistable.
\end{enumerate}
\end{cor}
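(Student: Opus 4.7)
All three items will follow from \autoref{HNalpha2} combined with the support property of the honest Bridgeland stability conditions $\sigma_{\alpha,\beta_0}$ with $\alpha>0$. By \autoref{dualstab} I reduce to the case $\nu_{0,\beta_0}(F)<0$; the slopes $\nu_{0,\beta_0}(F)\in\{0,+\infty\}$ occur only in (1) and are handled directly. Namely, $\nu_{0,\beta_0}(F)=0$ forces $\ch_2^{\beta_0}(F)=0$ and hence $\odisc(F)=(L\cdot\ch_1^{\beta_0}(F))^2\geq 0$, while $\nu_{0,\beta_0}(F)=+\infty$ falls under \autoref{Giesekerkernel}.\eqref{item:rk0}, from which the classical Bogomolov inequality together with the shift-invariance of $\odisc$ yields $\odisc(F)\geq 0$.

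For the main case of (1) where $\nu_{0,\beta_0}(F)<0$, I apply \autoref{HNalpha2} to obtain a Bridgeland limit HN filtration $0=F_0\subset\cdots\subset F_r=F$. Each factor $F_i/F_{i-1}$ is $\sigma_{\alpha,\beta_0}$-semistable for $\alpha\in(0,\alpha_0)$, so $\odisc(F_i/F_{i-1})\geq 0$ by \autoref{alphabetaplane}. The $\sigma_{0,\beta_0}$-semistability of $F$ forces every factor to share the tilt slope $\nu:=\nu_{0,\beta_0}(F)$: indeed $F_1\in S_F$ has slope $\nu$ by construction, $F/F_1$ is again $\sigma_{0,\beta_0}$-semistable of slope $\nu$, and the argument iterates. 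Substituting $\ch_2^{\beta_0}(F_i/F_{i-1})=-\nu\cdot L\cdot\ch_1^{\beta_0}(F_i/F_{i-1})$ and using $L\cdot\ch_1^{\beta_0}(F_i/F_{i-1})>0$, the discriminant inequality rewrites as the linear bound
\[
L\cdot\ch_1^{\beta_0}(F_i/F_{i-1})\geq -2\nu\,L^2\cdot\ch_0(F_i/F_{i-1});
\]
summing over $i$ and multiplying by $L\cdot\ch_1^{\beta_0}(F)>0$ gives $\odisc(F)\geq 0$.

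For (2), $\sigma_{0,\beta_0}$-stability forces the Bridgeland limit HN filtration of $F$ to be trivial, since any proper $F_1\in S_F$ would satisfy $\nu_{0,\beta_0}(F_1)=\nu_{0,\beta_0}(F/F_1)=\nu$ and contradict strict stability (the equality $L\cdot\ch_1^{\beta_0}(F/F_1)>0$ holds because $\nu$ is finite). Hence $F$ is $\sigma_{\alpha,\beta_0}$-semistable for $\alpha\in(0,\alpha_0)$. To upgrade to stability, suppose $F$ were strictly $\sigma_{\alpha_n,\beta_0}$-semistable for some $\alpha_n\downarrow 0$; the classes $v(G_n)$ of equal-slope destabilizers $G_n\subsetneq F$ are confined to finitely many values by the support property together with the discreteness of \autoref{v0discrete}, so passing to a constant subsequence produces a fixed proper $G\subset F$ with $\nu_{0,\beta_0}(G)=\nu_{0,\beta_0}(F)$, contradicting stability at $\sigma_{0,\beta_0}$.

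Item (3) follows from the same dichotomy: if the Bridgeland limit HN filtration is trivial, $F$ is $\sigma_{\alpha,\beta_0}$-semistable on an open neighborhood of $(0,\beta_0)$; otherwise, the relation $\nu_{\alpha,\beta}(F_1)=\nu_{\alpha,\beta}(F)$ cuts out a semicircular numerical wall for $v(F)$ through $(0,\beta_0)$ (by \autoref{structurewalls}), along which $F$ is strictly $\sigma_{\alpha,\beta}$-semistable as an extension whose constituents share the common tilt slope, yielding a one-dimensional region of semistability. The principal obstacle lies in the boundedness step of (2): ruling out sequences of distinct destabilizers accumulating at $(0,\beta_0)$ requires a careful interplay of the support property with the discreteness provided by \autoref{v0discrete}.
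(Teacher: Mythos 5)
Your treatment of items (1) and (3) follows essentially the paper's route: construct the Bridgeland limit HN filtration via \autoref{HNalpha2}, apply the support property to each limit factor, and recombine. Your recombination in (1) is an elementary hand computation for collinear central charges where the paper invokes \cite[Lemma~A.7]{BMS}; both work (note the sign convention is $\ch_2^{\beta_0}(G)=\nu\cdot L\cdot\ch_1^{\beta_0}(G)$, not $-\nu\cdot L\cdot\ch_1^{\beta_0}(G)$, but your version is internally consistent so the conclusion is unaffected). Item (3) compresses the paper's nested-wall argument but the idea is the same.

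Item (2), however, has a genuine gap. You assert that $\sigma_{0,\beta_0}$-stability forces $S_F\setminus\{F\}=\emptyset$, because a proper $F_1\in S_F$ would give $\nu_{0,\beta_0}(F_1)=\nu_{0,\beta_0}(F/F_1)$, and you justify $L\cdot\ch_1^{\beta_0}(F/F_1)>0$ by the finiteness of $\nu_{0,\beta_0}(F)$. That justification is false: finiteness of $\nu_{0,\beta_0}(F)$ controls $L\cdot\ch_1^{\beta_0}(F)$, not $L\cdot\ch_1^{\beta_0}(F/F_1)$. Since $Z_{0,\beta_0}$ is only a \emph{weak} stability function, a $\sigma_{0,\beta_0}$-stable object may perfectly well admit a proper subobject $E$ with $\nu_{0,\beta_0}(E)=\nu_{0,\beta_0}(F)$, namely exactly when $Z_{0,\beta_0}(F/E)=0$ (see the remark following the definition of semistability and \autoref{Giesekerkernel}); on abelian surfaces this is the typical situation, as $\ker(Z_{0,\beta_0})$ is nonzero for every $\beta_0\in\bQ$. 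Handling precisely this case is the whole content of the paper's proof of (2): for $E\in S_F\setminus\{F\}$ stability forces $Z_{0,\beta_0}(F/E)=0$, whence $\ch_0(F/E)<0$ and $\nu_{\alpha,\beta_0}(E)<\nu_{\alpha,\beta_0}(F)<+\infty=\nu_{\alpha,\beta_0}(F/E)$ for all $\alpha>0$, so such subobjects never destabilize. Your "upgrade to stability" step inherits the same defect: the limiting proper subobject $G$ with $\nu_{0,\beta_0}(G)=\nu_{0,\beta_0}(F)$ does \emph{not} contradict $\sigma_{0,\beta_0}$-stability when $Z_{0,\beta_0}(F/G)=0$, so no contradiction is reached (and the claimed finiteness of the classes $v(G_n)$ is itself only sketched). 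To salvage your scheme you would need to observe that equal slope at $\sigma_{\alpha_n,\beta_0}$ for infinitely many $\alpha_n$ forces $v(G)$ proportional to $v(F)$, which does rule out $Z_{0,\beta_0}(F/G)=0$ for $G$ proper; as written, the argument does not close.
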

\begin{proof}
Note that property \eqref{enum:DeltaPos} is trivially satisfied when $\ch_2^{\beta_0}(F)=0$.
If $\ch_2^{\beta_0}(F)\neq0$, according to \autoref{HNalpha2} the $\sigma_{0,\beta_0}$-semistable object $F$ has a Bridgeland limit HN filtration
\[
0=F_0\hookrightarrow F_1 \hookrightarrow \ldots \hookrightarrow F_{r-1}\hookrightarrow F_r=F
\]
valid for all $\sigma_{\alpha,\beta_0}$ with sufficiently small $\alpha>0$.
Of course each HN factor has $\odisc(F_k/F_{k-1})\geq 0$, and by construction of the filtration the equalities $\nu_{0,\beta_0}(F_k/F_{k-1})=\nu_{0,\beta_0}(F)$ hold.
Hence, the numbers $Z_{0,\beta_0}(F_k/F_{k-1})$ are in a ray of the complex plane and \cite[Lemma~A.7]{BMS} gives $\odisc(F)\geq\odisc(F_{r-1})\geq\ldots\geq\odisc(F_1)\geq 0$.

To prove \eqref{enum:OpenStab}, note that the result is trivial when $L\cdot\ch_1^{\beta_0}(F)=0$ (in this case, the $\sigma_{0,\beta_0}$-stability of $F$ is equivalent to $F$ being a simple object of $\Coh^{\beta_0}(X)$).

If $F$ is $\sigma_{0,\beta_0}$-stable with $L\cdot\ch_1^{\beta_0}(F)>0$, no subobject $E\in S_F\setminus\set{F}$ (notation as in \autoref{SF}) destabilizes $F$ for $\alpha>0$.
Indeed, since $F$ is $\sigma_{0,\beta_0}$-stable we must have $Z_{0,\beta_0}(F/E)=0$, so
\[
\nu_{\alpha,\beta_0}(E)<\nu_{\alpha,\beta_0}(F)<+\infty=\nu_{\alpha,\beta_0}(F/E)
\]
for every $\alpha>0$. 

By the construction of \autoref{HNalpha2}, all the potential $\sigma_{\alpha,\beta_0}$ destabilizers of $F$ (for $\alpha$ sufficiently small) are in $S_F\setminus\set{F}$; hence the strict inequalities actually imply that $F$ is $\sigma_{\alpha,\beta_0}$-stable (for $\alpha$ sufficiently small), which proves \eqref{enum:OpenStab}.

In \eqref{enum:RegionSSt}, there is nothing to prove if $F$ is $\sigma_{\alpha,\beta_0}$-semistable for small values of $\alpha>0$ (in particular, this covers the case where $L\cdot\ch_1^{\beta_0}(F)=0$).

For the rest of proof, we assume without loss of generality that $\nu_{0,\beta_0}(F)>0$ thanks to the duality functor (recall \autoref{dualstab}).
If $F_1,F_2/F_1,\ldots,F/F_{r-1}$ denote the factors of the Bridgeland limit HN filtration of $F$ at $\beta_0$, by construction we have:
\[
\nu_{0,\beta_0}(F_1)=\nu_{0,\beta_0}(F_2/F_1)=\ldots=\nu_{0,\beta_0}(F/F_{r-1})=\nu_{0,\beta_0}(F).
\]
Thus by Bertram's Nested Wall \autoref{structurewalls}, each factor $F_i/F_{i-1}$ defines the same (numerical) wall $W$ for $F$:  this wall is a semicircle whose left intersection point with the line $\alpha=0$ is $(0,\beta_0)$.

Since the factors $F_1,F_2/F_1,\ldots,F/F_{r-1}$ are $\sigma_{\alpha,\beta_0}$-semistable when $\alpha\geq0$ is small enough, it turns out that they are Bridgeland semistable along the wall $W$.
Hence $F$ is also semistable along $W$, since it is a (succesive) extension of semistable objects with the same slope.
\end{proof}

Now we are ready to improve \autoref{HNalpha2}, showing the existence of Bridgeland limit HN filtrations for objects without HN factors of vanishing tilt slope.
\begin{proof}[Proof of \autoref{existenceIntr}.\eqref{exist:Br}]
The result follows from induction on the length of the HN filtration of $F$ with respect to $\sigma_{0,\beta_0}$; the initial case is nothing but \autoref{HNalpha2}.

If $0=F_0\hookrightarrow F_1 \hookrightarrow \ldots \hookrightarrow F_{r-1}\hookrightarrow F_r=F$ is the HN filtration of $F$ with respect to $\sigma_{0,\beta_0}$, by induction hypothesis we may assume that both $F_{r-1}$ and $F/F_{r-1}$ admit a Bridgeland limit HN filtration at $\beta_0$.

Then, we can glue these filtrations to form that of $F$.
Indeed, if $F_{r-1}/A$ and $B/F_{r-1}$ respectively denote the last and the first limit HN factors of $F_{r-1}$ and $F/F_{r-1}$, the inequality
\[
\nu_{0,\beta_0}(F_{r-1}/A)=\nu_{0,\beta_0}(F_{r-1}/F_{r-2})>\nu_{0,\beta_0}(F/F_{r-1})=\nu_{0,\beta_0}(B/F_{r-1})
\]
guarantees this gluing, since we can take $\alpha_0$ small enough so that $\nu_{\alpha,\beta_0}(F_{r-1}/A)>\nu_{\alpha,\beta_0}(B/F_{r-1})$ for every $\alpha\in(0,\alpha_0)$.
\end{proof}

\subsection{Weak limit HN filtrations}\label{subsec:LocalExpr2}

Even if we are interested in semistability at the line $\alpha=0$, Bridgeland limit HN filtrations allow us to work in the $(\alpha,\beta)$-plane of Bridgeland stability conditions, where the wall-crossing phenomenon is well understood.

Following this strategy, already used in the proof of \autoref{disc2}.\eqref{enum:RegionSSt}, now we want to study HN filtrations at $\sigma_{0,\beta}$ for (rational) values of $\beta$ close to $\beta_0$:

\begin{defn}\label{weakLimitHN}
Given $F\in \Coh^{\beta_0}(X)$, if there exists $\epsilon>0$ such that $F$ has the same HN filtration with respect to all the weak stability conditions $\sigma_{0,\beta}$ with $\beta\in(\beta_0,\beta_0+\epsilon)\cap\bQ$, we will call this HN filtration the \emph{right weak limit HN filtration} of $F$ at $\beta_0$.
Its factors will be called \emph{right weak limit HN factors}.

Analogously we define the \emph{left weak limit HN filtration} of $F$ at $\beta_0$.
\end{defn}

\begin{lem}\label{HNbeta}
If $F\in\Coh^{\beta_0}(X)$ is an object with $\nu_{0,\beta_0}^+(F)<0$, then $F$ admits a right weak limit HN filtration at $\beta_0$ all of whose right weak limit factors have tilt slope $\nu_{0,\beta_0}<0$.
\end{lem}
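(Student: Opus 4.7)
The plan is to establish that the Bridgeland limit HN filtration of $F$ at $\beta_0$ is \emph{also} the HN filtration of $F$ with respect to $\sigma_{0,\beta}$ for every rational $\beta$ sufficiently close to $\beta_0$ from the right. Since $\nu_{0,\beta_0}^+(F)<0$ forbids HN factors at $\sigma_{0,\beta_0}$ with vanishing tilt slope, \autoref{existenceIntr}.\eqref{exist:Br} furnishes a filtration
\[
0=F_0\subset F_1\subset\cdots\subset F_r=F,
\]
whose factors $G_i=F_i/F_{i-1}$ are $\sigma_{\alpha,\beta_0}$-semistable for $\alpha\in(0,\alpha_0)$; each satisfies $L\cdot\ch_1^{\beta_0}(G_i)>0$, $\nu_{0,\beta_0}(G_i)<0$ and (by \autoref{disc2}.\eqref{enum:DeltaPos}) $\odisc(G_i)\geq 0$.

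I would first dispatch the easy conditions: \autoref{heart} guarantees $F,G_i\in\Coh^\beta(X)$ for $\beta\in[\beta_0,\beta_0+\epsilon_1)$ with some $\epsilon_1>0$, and continuity of $\nu_{0,\beta}(G_i)$ in $\beta$ (ensured because the denominator $L\cdot\ch_1^\beta(G_i)$ remains positive near $\beta_0$) preserves $\nu_{0,\beta}(G_i)<0$. For the strict decrease $\nu_{0,\beta}(G_1)>\cdots>\nu_{0,\beta}(G_r)$, when $\nu_{0,\beta_0}(G_i)\neq\nu_{0,\beta_0}(G_{i+1})$ continuity is enough, whereas when $\nu_{0,\beta_0}(G_i)=\nu_{0,\beta_0}(G_{i+1})=:c$ the locus of equality is by \autoref{structurewalls}.\eqref{item:numerical} a semicircle through $(0,\beta_0)$. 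A direct computation of its other intersection $(0,\beta_0')$ with the $\beta$-axis, simplifying the wall equation via $\ch_2^{\beta_0}(G_j)=c\cdot L\cdot\ch_1^{\beta_0}(G_j)$, yields the relation $\beta_0'=\beta_0+2c$; in particular the assumption $c<0$ forces $\beta_0'<\beta_0$, so the semicircle stays in $\beta\leq\beta_0$ and the strict inequality $\nu_{0,\beta}(G_i)>\nu_{0,\beta}(G_{i+1})$ is preserved on a right-neighborhood of $\beta_0$.

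The core of the argument is then the $\sigma_{0,\beta}$-semistability of each $G_i$. By \autoref{disc2}.\eqref{enum:RegionSSt}, $G_i$ remains Bridgeland-semistable on an open region $U_i\subset\bR_{>0}\times\bR$ accumulating at $(0,\beta_0)$. My strategy is to exhibit, for each rational $\beta\in(\beta_0,\beta_0+\epsilon)$, a sequence $\alpha_n\to 0^+$ with $(\alpha_n,\beta)\in U_i$. Granting this, if $H\subset G_i$ were a $\sigma_{0,\beta}$-destabilizer, then continuity of $\nu_{\alpha_n,\beta}(H)$ and $\nu_{\alpha_n,\beta}(G_i)$ in $\alpha$ would contradict the $\sigma_{\alpha_n,\beta}$-semistability of $G_i$ for large $n$; the only boundary case $L\cdot\ch_1^\beta(H)=0$ is excluded since $\sigma_{\alpha_n,\beta}$ is a strict Bridgeland stability condition and rules out nonzero subobjects of a semistable object with $\nu=+\infty$.

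The hard part will be constructing this sequence $\alpha_n$, i.e.~ruling out an accumulation of actual walls for $G_i$ at $(0,\beta_0)$ from within the half-plane $\beta>\beta_0$. I would combine the strictly nested structure of walls (\autoref{structurewalls}) with the strict discriminant drop across each destabilization (\autoref{structurewalls}.\eqref{item:Disc0nodest}) and the bound $\odisc(G_i)\geq 0$ to argue that the numerical classes of potential destabilizers lie in a finite set, so that only finitely many actual walls for $G_i$ cross the relevant region. Shrinking $\epsilon$ and choosing a suitable $\alpha_0'>0$ (and intersecting over $i=1,\ldots,r$) would then carve out a rectangular region $(0,\alpha_0')\times[\beta_0,\beta_0+\epsilon)$ contained in each $U_i$, providing the sequence $\alpha_n$ and completing the proof.
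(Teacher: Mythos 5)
Your overall architecture coincides with the paper's: both proofs take the Bridgeland limit HN filtration supplied by \autoref{existenceIntr}.\eqref{exist:Br} and show that it persists as the $\sigma_{0,\beta}$-HN filtration for $\beta$ in a right neighborhood of $\beta_0$. Your treatment of the ordering of slopes (the computation that the equality locus of two factors with common slope $c<0$ at $\beta_0$ is a semicircle with intercepts $\beta_0$ and $\beta_0+2c<\beta_0$, hence confined to $\{\beta\leq\beta_0\}$) is correct and in fact more explicit than the paper, which simply asserts that the inequalities are preserved after shrinking.

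The gap is in what you yourself flag as the hard part. First, a small overreach: \autoref{disc2}.\eqref{enum:RegionSSt} (per its proof) only gives semistability of $G_i$ along a one-dimensional semicircle through $(0,\beta_0)$, which for factors of negative slope lies entirely in $\{\beta\leq\beta_0\}$; it does not provide an open region $U_i$ meeting $\{\beta>\beta_0\}$, so the existence of your rectangle is genuinely all of the content. More seriously, the justification you propose for it does not go through as stated: the discriminant drop of \autoref{structurewalls}.\eqref{item:Disc0nodest} together with $0\leq\odisc(E)\leq\odisc(G_i)$ and $0<L\cdot\ch_1^\beta(E)<L\cdot\ch_1^\beta(G_i)$ does \emph{not} confine the classes of potential destabilizers to a finite set, because it leaves $L^2\cdot\ch_0(E)$ (and hence $\ch_2(E)$) unbounded — the discriminant only controls their product. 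This is not a formality: at the point $(0,p_{G_i})$ where $\nu_{0,\beta}(G_i)=0$, actual walls really can accumulate with destabilizers of unbounded rank. What rescues the argument here is precisely the hypothesis $\nu_{0,\beta_0}(G_i)=c<0$: any wall relevant for $\beta>\beta_0$ must be a semicircle of radius bounded below by roughly $|c|$, so the positivity $L\cdot\ch_1^\beta(E)>0$ holds over a $\beta$-interval of definite length, and this is what bounds $L^2\cdot\ch_0(E)$. That boundedness is a genuine lemma (compare \autoref{SF}.\eqref{enum:BoundedBelow}, which the paper has to prove by a separate argument in a closely related situation), and it is the missing idea in your sketch. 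The paper's own proof of \autoref{HNbeta} sidesteps wall-counting entirely: since each $G_i$ is already semistable on the segment $\{(\alpha,\beta_0):0<\alpha<\alpha_0\}$ and along the semicircle through $(0,\beta_0)$, every wall along which $G_i$ could destabilize must enclose a segment of the line $\beta=\beta_0$ in its interior, hence also encloses $(0,\beta)$ for $\beta$ slightly larger than $\beta_0$; no finiteness statement is needed. Either route works, but yours requires the boundedness-of-rank input to be supplied.
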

\begin{proof}
Denote by
\[
0=F_0\hookrightarrow F_1\hookrightarrow\ldots\hookrightarrow F_{r-1} \hookrightarrow F_r=F
\]
the Bridgeland limit HN filtration of $F$ constructed in \autoref{existenceIntr}.\eqref{exist:Br}.
We will see that this is the desired right weak limit HN filtration for $F$.
Recall that when $\alpha=0$ (possibly) some of the HN factors get identified, if they have the same slope.

By assumption $\nu_{0,\beta_0}(F_i/F_{i-1})<0$ for every $i$, so the point $(0,\beta_0)$ lies on the right-hand side of the hyperbolas $\hyp{i}$ of the objects $F_i/F_{i-1}$.
In particular, if we study the locus in the $(\alpha,\beta)$-plane where these HN factors $F_i/F_{i-1}$ become non-semistable, we find that any such wall contains a segment of the line $\beta=\beta_0$ in its interior (see \autoref{fig3Hyp}).
\begin{figure}[ht]
\definecolor{rvwvcq}{rgb}{0.08235294117647059,0.396078431372549,0.7529411764705882}
\definecolor{dbwrru}{rgb}{0.8588235294117647,0.3803921568627451,0.0784313725490196}
\definecolor{sexdts}{rgb}{0.1803921568627451,0.49019607843137253,0.19607843137254902}
\definecolor{wrwrwr}{rgb}{0.3803921568627451,0.3803921568627451,0.3803921568627451}
\begin{tikzpicture}[line cap=round,line join=round,>=triangle 45,x=.8cm,y=.8cm]
\clip(-5,-.01) rectangle (5,5);
\draw [line width=1pt,color=wrwrwr,domain=-5:4] plot(\x,{(-0-0*\x)/-1});
\draw [shift={(0.4162301458038069,0)},line width=1pt,color=sexdts] plot[domain=0:3.141592653589793,variable=\t]({1*2.607348606082409*cos(\t r)+0*2.607348606082409*sin(\t r)},{0*2.607348606082409*cos(\t r)+1*2.607348606082409*sin(\t r)});
\draw [shift={(0.921772299877103,0)},line width=1pt,color=dbwrru] plot[domain=0:3.141592653589793,variable=\t]({1*2.569246438920924*cos(\t r)+0*2.569246438920924*sin(\t r)},{0*2.569246438920924*cos(\t r)+1*2.569246438920924*sin(\t r)});
\draw [shift={(2.3176191520859395,0)},line width=1pt,color=rvwvcq] plot[domain=0:3.141592653589793,variable=\t]({1*0.8511021754003596*cos(\t r)+0*0.8511021754003596*sin(\t r)},{0*0.8511021754003596*cos(\t r)+1*0.8511021754003596*sin(\t r)});
\draw[line width=1pt,dash pattern=on 4pt off 4pt,color=dbwrru, smooth,samples=100,domain=0:10] plot[domain=0:1.35,variable=\t] ({4.48-2.44149*cosh(\t)},{2.42337*sinh(\t)});
\draw[line width=1pt,dash pattern=on 5pt off 5pt,color=rvwvcq, smooth,samples=100,domain=0:10] plot[domain=0:1.72,variable=\t] ({5.95-3.2101*cosh(\t)},{1.60728*sinh(\t)});
\draw[line width=1pt,dash pattern=on 5pt off 5pt,color=sexdts, smooth,samples=100,domain=0:10] plot[domain=0:1.9,variable=\t] ({3.84-1.5648*cosh(\t)},{1.33978*sinh(\t)});
\draw [line width=1pt,color=wrwrwr] (2.843259810871818,0) -- (2.843259810871818,4.4);
\begin{footnotesize}
\draw[color=sexdts] (-2.7,.3) node {$W_2$};
\draw[color=dbwrru] (-1.2,.3) node {$W_3$};
\draw[color=rvwvcq] (1.1,.3) node {$W_1$};
\draw[color=dbwrru] (.2,4.2) node {$\hyp{3}$};
\draw[color=rvwvcq] (-1.9,3.2) node {$\hyp{1}$};
\draw[color=sexdts] (-1.3,3.7) node {$\hyp{2}$};
\draw[color=wrwrwr] (3.55,3.5) node {$\beta=\beta_0$};
\end{footnotesize}
\end{tikzpicture}
\caption{One hyperbola and at most one semicircular wall for each HN factor $F_i/F_{i-1}$}
\label{fig3Hyp}
\end{figure}
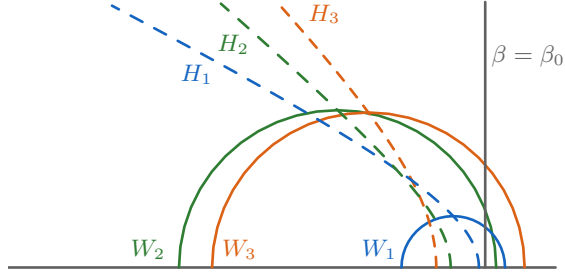

This proves that, for all $\beta$ in a sufficiently small right neighborhood of $\beta_0$, the objects $F_i/F_{i-1}$ are $\sigma_{0,\beta}$-semistable.
Shrinking if necessary this neighborhood, we have that the inequalities $\nu_{0,\beta}(F_i/F_{i-1})>\nu_{0,\beta}(F_{i+1}/F_{i})$ are preserved, no matter if these HN factors were merged in the HN filtration for $\sigma_{0,\beta_0}$.

Finally, \autoref{heart} guarantees that, possibly after another shrinking, the chain of inclusions
\[
0=F_0\hookrightarrow F_1\hookrightarrow\ldots\hookrightarrow F_{r-1} \hookrightarrow F_r=F
\]
also holds in $\Coh^\beta(X)$, for all $\beta$ in this neighborhood.
Summarizing, we have proved that this chain is the right weak limit HN filtration at $\beta_0$.
\end{proof}

When the object is semistable, the same holds for positive tilt slope:

\begin{lem}\label{HNbeta2}
If $F\in\Coh^{\beta_0}(X)$ is $\sigma_{0,\beta_0}$-semistable with $\nu_{0,\beta_0}(F)>0$, then $F$ admits a right weak limit HN filtration at $\beta_0$.
\end{lem}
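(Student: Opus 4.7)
The strategy is to adapt the argument of \autoref{HNbeta}. Apply \autoref{HNalpha2} to obtain the Bridgeland limit HN filtration
\[
0 = F_0 \subset F_1 \subset \ldots \subset F_r = F
\]
at $\beta_0$, in which each Jordan--H\"older factor $F_i/F_{i-1}$ is $\sigma_{\alpha,\beta_0}$-semistable for $\alpha\in(0,\alpha_0)$ and has the common slope $\mu=\nu_{0,\beta_0}(F)>0$. As in \autoref{HNbeta}, I would first check that each $F_i/F_{i-1}$ remains $\sigma_{0,\beta}$-semistable for $\beta$ in a sufficiently small right neighborhood of $\beta_0$: this is a wall-crossing argument combining \autoref{disc2}.\eqref{enum:RegionSSt} (semistability of each factor along the wall $W$ of $F$ through $(0,\beta_0)$) with the locally finite, nested structure of walls (\autoref{structurewalls}), which prevents any additional destabilizing wall for $F_i/F_{i-1}$ from meeting a small right neighborhood of $(0,\beta_0)$.

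The new phenomenon, absent from \autoref{HNbeta}, is that the slope order of the factors may \emph{reverse}. A first-order Taylor expansion, using $\ch_2^{\beta_0}(F_i/F_{i-1})=\mu\cdot L\cdot\ch_1^{\beta_0}(F_i/F_{i-1})$, yields
\[
\nu_{0,\beta_0+\delta}(F_i/F_{i-1}) \;=\; \mu + \Bigl(\mu\cdot\tfrac{L^2\cdot\ch_0(F_i/F_{i-1})}{L\cdot\ch_1^{\beta_0}(F_i/F_{i-1})} - 1\Bigr)\delta + O(\delta^2),
\]
and since the ratios $R_i/I_i:=L^2\ch_0/(L\ch_1^{\beta_0})$ are strictly increasing in $i$ (by construction of the Bridgeland limit in \autoref{HNalpha2}) and $\mu>0$, the slopes at $\sigma_{0,\beta_0+\delta}$ are strictly \emph{increasing} in $i$ for $\delta>0$. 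Thus the Bridgeland limit HN filtration is \emph{not} the right weak limit HN filtration.

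I expect the main obstacle to be identifying the actual right weak limit HN filtration. The claim I would establish is that $F$ is itself $\sigma_{0,\beta_0+\delta}$-semistable for all $\delta\in(0,\epsilon)$ with $\epsilon>0$ small enough, so that the right weak limit HN filtration is the trivial filtration $0\subset F$. Any hypothetical destabilizing subobject $G\subsetneq F$ in $\Coh^{\beta_0+\delta}(X)$ with $\nu_{0,\beta_0+\delta}(G)>\nu_{0,\beta_0+\delta}(F)$ must, by $\sigma_{0,\beta_0}$-semistability of $F$ and the limit $\delta\to 0^+$, satisfy $\nu_{0,\beta_0}(G)=\mu$, hence lie in the set $S_F$ of \autoref{SF}. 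Being itself $\sigma_{0,\beta_0}$-semistable of slope $\mu$, $G$ admits a Jordan--H\"older decomposition whose factors form a subset of $\{F_i/F_{i-1}\}$. A careful analysis of which such $G$ can be realized as a subobject of $F$ inside the tilted heart, combined with the weighted-average inequality $R_{F_k}/I_{F_k}\leq R_F/I_F$ for every initial piece $F_k$ of the Bridgeland filtration, then forces $\nu_{0,\beta_0+\delta}(G)\leq\nu_{0,\beta_0+\delta}(F)$, contradicting destabilization. The delicate point is controlling which configurations of Jordan--H\"older factors actually appear as subobjects of $F$ in $\Coh^{\beta_0+\delta}(X)$; this will be handled by the same type of wall analysis used earlier in the section.
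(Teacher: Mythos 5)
Your identification of the order--reversal phenomenon is correct and is indeed the heart of the difficulty, but the resolution you propose --- that $F$ is itself $\sigma_{0,\beta_0+\delta}$-semistable for all small $\delta>0$, so that the right weak limit HN filtration is the trivial one --- is false, so the final step of your argument cannot be carried out. Two separate phenomena break it. First, $F$ may admit a quotient $Q=F/F'$ with $Z_{0,\beta_0}(Q)=0$ (by \autoref{Giesekerkernel}, $Q=S[1]$ for a Gieseker semistable bundle $S$ of slope $\beta_0$ with $\odisc(S)=0$; on abelian surfaces such $Q$ exist for every rational $\beta_0$). Such a $Q$ has $\nu_{0,\beta_0}(Q)=+\infty$ but $\nu_{0,\beta}(Q)\to 0$ as $\beta\to\beta_0^+$, so the subobject $F'$ destabilizes $F$ immediately to the right of $\beta_0$; this is precisely the content of \autoref{kernel} and the discussion around \autoref{figDiscont}. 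Your slope analysis misses this destabilizer: $F'$ does lie in $S_F$, but the quotient $F/F'$ is invisible to the comparison at $\beta_0$ since its slope there is $+\infty$. Second, even with no kernel quotients, take $F=A_1\oplus A_2$ with $A_1,A_2$ $\sigma_{0,\beta_0}$-stable of the same slope $\mu>0$ but with different ratios $\frac{L^2\cdot\ch_0}{L\cdot\ch_1^{\beta_0}}$: then $F$ is $\sigma_{0,\beta_0}$-semistable, yet for $\beta>\beta_0$ the summand with the larger ratio is a subobject of strictly larger tilt slope (by your own Taylor expansion), so $F$ is not $\sigma_{0,\beta}$-semistable. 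Your proposed weighted-average inequality only controls the \emph{initial} pieces $F_k$ of the Bridgeland limit filtration, whereas the dangerous subobjects are assembled from the high-index Jordan--H\"older factors (those with large ratio), for which the inequality points the other way.

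The paper's proof takes a shorter route that avoids identifying the filtration explicitly. By \autoref{disc2}.\eqref{enum:RegionSSt}, $F$ is Bridgeland semistable along the whole semicircle $W'$ through $(0,\beta_0)$ whose top point lies on $\hyp{F}$; local finiteness of walls near the top point of $W'$, combined with the nested structure of walls (\autoref{structurewalls}), shows that the HN filtration of $F$ is constant on a small annulus just inside $W'$, and this constant filtration --- whatever it is --- is the right weak limit filtration (the case $L\cdot\ch_1^{\beta_0}(F)=0$ is handled separately with the vertical wall). Your order-reversal computation is still valuable as an explanation of why this filtration generally differs from the Bridgeland limit one, but the existence statement must come from local finiteness rather than from a semistability claim at $\beta>\beta_0$.
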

\begin{proof}
We start by assuming $L\cdot\ch_1^{\beta_0}(F)>0$, so that $\beta=\beta_0$ is not a vertical wall for $F$.
According to \autoref{disc2}.\eqref{enum:RegionSSt} and its proof, the object $F$ is Bridgeland semistable along the (uniquely determined) semicircle $W'$ satisfying: $W'$ is centered at the $\beta$-axis, its top point lies on the hyperbola $\hyp{F}$ and its left intersection point with the $\beta$-axis is $(0,\beta_0)$.
This is true regardless of whether $W'$ is a numerical wall for $F$ or not.

Let $p=(\overline{\alpha},\overline{\beta})$ denote the top point of $W'$, i.e.~ its intersection point with $\hyp{F}$.
Local finiteness for Bridgeland stability conditions (see, e.g., \cite[Proposition~3.3.(b)]{BM:localP2}) ensures us that, for some $\epsilon'>0$, the HN filtration of $F$ is constant for all the stability conditions $\sigma_{\overline{\alpha},\beta}$ with $\beta\in(\overline{\beta}-\epsilon',\overline{\beta})$.
This implies that, in a small annulus inside $W'$, the HN filtration of $F$ stays constant 
(see \autoref{fig:annulus}).

\begin{figure}[ht]
\definecolor{ffffff}{rgb}{1,1,1}
\definecolor{sexdts}{rgb}{0.1803921568627451,0.49019607843137253,0.19607843137254902}
\definecolor{wrwrwr}{rgb}{0.3803921568627451,0.3803921568627451,0.3803921568627451}
\begin{tikzpicture}[line cap=round,line join=round,>=triangle 45,x=.8cm,y=.8cm]
\clip(-4.5,-0.1) rectangle (4.,4.5);
\draw [shift={(0.4162301458038069,0)},line width=1pt,color=sexdts,fill=sexdts,fill opacity=0.17] plot[domain=0:3.141592653589793,variable=\t]({1*2.607348606082409*cos(\t r)+0*2.607348606082409*sin(\t r)},{0*2.607348606082409*cos(\t r)+1*2.607348606082409*sin(\t r)});
\draw [line width=1pt,color=wrwrwr] (-2.1911184602786022,0) -- (-2.1911184602786022,8.949943566611623);
\draw [shift={(0.4162301458038069,0)},line width=1pt,color=ffffff,fill=ffffff,fill opacity=1] plot[domain=0:3.141592653589793,variable=\t]({1*2.27546707940022*cos(\t r)+0*2.27546707940022*sin(\t r)},{0*2.27546707940022*cos(\t r)+1*2.27546707940022*sin(\t r)});
\draw [line width=1pt,color=wrwrwr,domain=-6.483358732323508:7.800475128998744] plot(\x,{(-0-0*\x)/-1});
\draw[line width=1pt,dash pattern=on 5pt off 5pt,color=sexdts, smooth,samples=100,domain=0:10] 
plot[domain=0:2.2,variable=\t] ({3.84-1.5648*cosh(\t)},{1.33978*sinh(\t)});
\filldraw[color=sexdts] (0.4162301458038069,2.607348606082409) circle (2.5pt); 
\begin{footnotesize}
\draw[color=sexdts] (2.76,1.7) node {$W'$};
\draw[color=sexdts] (-0.5,4) node {$\hyp{F}$};
\draw[color=sexdts] (0.47,3) node {$p$};
\draw[color=wrwrwr] (-3.,4.) node {$\beta=\beta_0$};
\end{footnotesize}
\end{tikzpicture}
\caption{}
\label{fig:annulus}
\end{figure}

This constant filtration is the right weak limit HN filtration of $F$ at $\beta_0$, as claimed.

It only remains to check the case when $L\cdot\ch_1^{\beta_0}(F)=0$.
The strategy is similar.
In this case, $F$ is Bridgeland semistable along the whole vertical wall $\beta=\beta_0$.
Local finiteness for Bridgeland stability conditions gives that, inside $\{\alpha>0,\beta\geq\beta_0\}$, the HN filtration of $F$ remains constant for all the stability conditions in a certain open neighborhood of $\beta=\beta_0$.

Using that objects with $\odisc=0$ never get destabilized, it is not difficult to check that this holds in a ``tubular'' neighborhood of the form $(0,+\infty)\times[\beta_0,\beta_0+\epsilon)$, for a certain $\epsilon>0$.
This gives the desired HN filtration of $F$ with respect to $\sigma_{0,\beta}$, when $\beta$ lies in a right neighborhood of $\beta_0$.
\end{proof}

In view of \autoref{HNbeta} and \autoref{HNbeta2}, we would like to conclude the right case of \autoref{existenceIntr}.\eqref{exist:Wk}.
Nevertheless, if our object $F$ has more than one HN factor at $\sigma_{0,\beta_0}$ with positive slope, the way of gluing their right weak limit HN filtrations is not trivial.
Roughly, this is caused because not all the weak limit HN factors in a limit HN filtration approach the same slope as $\beta\to\beta_0^+$, since $\sigma_{0,\beta_0}$ is not a proper stability condition.

In order to solve this problem, we collect some information in the following lemma:

\begin{lem}\label{kernel}
Let $F\in\Coh^{\beta_0}(X)$ be a $\sigma_{0,\beta_0}$-semistable object with $\nu_{0,\beta_0}(F)>0$.
Then:
\begin{enumerate}[{\rm (1)}]

    \item\label{enum:minimal} There exists a (possibly trivial) subobject $F'\subset F$ in $\Coh^{\beta_0}(X)$ such that $Q=F/F'\in \ker(Z_{0,\beta_0})$.
    Moreover, $F'$ can be taken minimal satisfying this property.
    We call it a \emph{core subobject} of $F$.

    \item\label{enum:discont} If $Q\neq0$, then its tilt slope is discontinuous with respect to $\beta$, i.e,
    \[
    \nu_{0,\beta_0}(Q)=+\infty, \qquad\lim_{\beta\to\beta_0^+}\nu_{0,\beta}(Q)=0.
    \]
    \item\label{enum:slopeF'} If $F'\neq0$, then the tilt slope $\nu_{0,\beta}$ of every right weak limit HN factor of $F'$ has limit $\nu_{0,\beta_0}(F')$ as $\beta\to\beta_0^+$.

    \item\label{enum:rightF} The right weak limit HN filtration of $F$ at $\beta_0$ 
    consists of the right weak limit HN filtration of $F'$ together with the quotient $Q=F/F'$.
    In particular, the core subobject $F'$ is unique.
    
\end{enumerate}
\end{lem}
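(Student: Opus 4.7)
For (1), I would parametrize the set $\mathcal{M}$ of subobjects $F'\subset F$ in $\Coh^{\beta_0}(X)$ with $F/F'\in\ker(Z_{0,\beta_0})$, which is nonempty since $F\in\mathcal{M}$. By \autoref{Giesekerkernel}, every nonzero kernel object has the form $S[1]$ with $S$ a vector bundle, so $\ch_0(F/F')\leq 0$ with equality iff $F/F'=0$; this forces $L\cdot\ch_1^{\beta_0}(F')=L\cdot\ch_1^{\beta_0}(F)$, $\ch_2^{\beta_0}(F')=\ch_2^{\beta_0}(F)$ and $\ch_0(F')\geq\ch_0(F)$, and $F'$ inherits $\sigma_{0,\beta_0}$-semistability with the same finite positive tilt slope. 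Applying \autoref{disc2}.\eqref{enum:DeltaPos} to $F'$ and expanding $\odisc(F')\geq 0$ yields $\ch_0(F')\leq (L\cdot\ch_1^{\beta_0}(F))^2/(2L^2\ch_2^{\beta_0}(F))$, so $\ch_0$ takes only finitely many integer values on $\mathcal{M}$ and attains its maximum at some $F'$. For any $F''\in\mathcal{M}$, the embedding $F/(F'\cap F'')\hookrightarrow F/F'\oplus F/F''$ together with the Serre-subcategory property of $\ker(Z_{0,\beta_0})$ gives $F'\cap F''\in\mathcal{M}$, and $F'/(F'\cap F'')\hookrightarrow F/F''$ is a kernel object; maximality of $\ch_0(F')$ then forces $F'/(F'\cap F'')=0$, so $F'\subset F''$, and $F'$ is the (unique) minimum.

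For (2), write $Q=S[1]$ as in (1). The equalities $\mu_L(S)=\beta_0$ and $\odisc(S)=0$ translate to $L\cdot c_1(S)=\beta_0 L^2\rk(S)$ and $\ch_2(S)=\tfrac{1}{2}\beta_0^2 L^2\rk(S)$, whence a direct expansion of twisted Chern characters gives
\[
L\cdot\ch_1^\beta(Q)=(\beta-\beta_0)L^2\rk(S),\qquad \ch_2^\beta(Q)=-\tfrac{1}{2}(\beta-\beta_0)^2 L^2\rk(S),
\]
so $\nu_{0,\beta}(Q)=-(\beta-\beta_0)/2\to 0$ as $\beta\to\beta_0^+$, while $\nu_{0,\beta_0}(Q)=+\infty$ since $L\cdot\ch_1^{\beta_0}(Q)=0$.

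For (3), since $F'$ is $\sigma_{0,\beta_0}$-semistable with finite positive tilt slope (by (1)), \autoref{HNbeta2} provides a right weak limit HN filtration $0=F'_0\subset\cdots\subset F'_m=F'$ whose factors $G_i$ are $\sigma_{0,\beta}$-semistable in a right neighborhood of $\beta_0$. For each $i$, the slope $\nu_{0,\beta}(G_i)$ tends to $\nu_{0,\beta_0}(G_i)$ if $G_i\notin\ker(Z_{0,\beta_0})$, and to $0$ if $G_i\in\ker(Z_{0,\beta_0})$ (by the same computation as in (2)). Grouping the $G_i$ by equal limit slopes produces a filtration of $F'$ in $\Coh^{\beta_0}(X)$, and the $\sigma_{0,\beta_0}$-semistability of $F'$ forces all non-kernel limits to coincide with $\nu_{0,\beta_0}(F')>0$; any kernel factor (limit $0$) therefore sits at the bottom of the chain, so in particular $G_m\in\ker(Z_{0,\beta_0})$. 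But then $F/F'_{m-1}$ would be an extension of the two kernel objects $G_m$ and $Q$, hence itself in $\ker(Z_{0,\beta_0})$, contradicting the minimality of $F'$. Thus no $G_i$ lies in the kernel and every $\nu_{0,\beta}(G_i)$ tends to $\nu_{0,\beta_0}(F')$.

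For (4), (3) yields $\nu_{0,\beta}(G_i)\to \nu_{0,\beta_0}(F)>0$, while (2) yields $\nu_{0,\beta}(Q)\to 0$. Since $\odisc(Q)=0$, \autoref{structurewalls}.\eqref{item:Disc0nodest} implies that $Q$ admits no non-vertical actual walls, so it is $\sigma_{\alpha,\beta}$-semistable for every $\beta>\beta_0$, and hence also $\sigma_{0,\beta}$-semistable by the closedness of semistability as $\alpha\to 0^+$. In a sufficiently small right neighborhood of $\beta_0$ (after shrinking using \autoref{heart}) the strict inequalities $\nu_{0,\beta}(G_1)>\cdots>\nu_{0,\beta}(G_m)>\nu_{0,\beta}(Q)$ then hold, so the chain $0\subset F'_1\subset\cdots\subset F'_m=F'\subset F$ is the HN filtration of $F$ at $\sigma_{0,\beta}$, and uniqueness of HN filtrations singles out $F'$ as $F'_m$. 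The main obstacles I anticipate are: in (1), turning the Bogomolov-based boundedness of $\ch_0$ together with the Serre-subcategory structure of $\ker(Z_{0,\beta_0})$ into genuine containment of subobjects; and in (3), excluding kernel factors from the middle of the weak limit HN filtration, which crucially exploits the slope discontinuity computed in (2) together with the minimality established in (1).
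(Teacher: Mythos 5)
Your proof is correct in substance and follows the same overall skeleton as the paper's (a termination argument for the existence of a minimal core in \eqref{enum:minimal}, an explicit twisted-Chern computation for \eqref{enum:discont}, an appeal to \autoref{HNbeta2} plus minimality for \eqref{enum:slopeF'}, and a gluing with the everywhere-semistable $Q$ for \eqref{enum:rightF}), but two steps are implemented genuinely differently. For \eqref{enum:minimal} the paper simply observes that along any descending chain $\ldots\subset F_2'\subset F_1'\subset F$ with kernel quotients the discriminants strictly decrease, $0\leq\ldots<\odisc(F_2')<\odisc(F_1')<\odisc(F)$, so a minimal element exists; uniqueness is only deduced later in \eqref{enum:rightF} from uniqueness of HN filtrations. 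Your route — bounding $\ch_0$ on the family via Bogomolov, maximizing it, and using that $\ker(Z_{0,\beta_0})$ is a Serre subcategory closed under intersections of subobjects — is more work but buys uniqueness of the core immediately, which is a nice structural bonus. For \eqref{enum:slopeF'} the paper argues geometrically: the weak limit factors are the HN factors in an annulus inside the semicircular wall through $(0,\beta_0)$, and minimality excludes factors with $\beta=\beta_0$ as a vertical wall, so all tilt slopes are continuous at $(0,\beta_0)$ and equal $\nu_{0,\beta_0}(F')$ there. You instead track limit slopes directly and use the extension-closure of $\ker(Z_{0,\beta_0})$ together with minimality to expel kernel factors from the bottom of the filtration; this is a legitimate alternative and arguably closer in spirit to how \eqref{enum:rightF} is then assembled.

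Three small points need patching for your \eqref{enum:slopeF'}. First, $\nu_{0,\beta_0}(F')$ need not be finite (the hypothesis allows $\nu_{0,\beta_0}(F)=+\infty$), so "finite positive tilt slope" should be weakened; \autoref{HNbeta2} still applies. Second, the claim that $\nu_{0,\beta}(G_i)\to\nu_{0,\beta_0}(G_i)$ for non-kernel factors is genuine continuity only when $L\cdot\ch_1^{\beta_0}(G_i)>0$; when $L\cdot\ch_1^{\beta_0}(G_i)=0$ but $G_i\notin\ker(Z_{0,\beta_0})$ one must check separately (using \autoref{Giesekerkernel}.\eqref{item:rk0} and $\ch_2^{\beta_0}(G_i)>0$) that the limit is still $+\infty=\nu_{0,\beta_0}(G_i)$ — the paper rules these factors out instead, via minimality. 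Third, invoking the $\sigma_{0,\beta_0}$-semistability of $F'$ against the terms $F'_i$ presupposes that these are subobjects in $\Coh^{\beta_0}(X)$ and not just in $\Coh^{\beta}(X)$ for $\beta>\beta_0$; this does hold (each $G_i$ lies in $\Coh^{\beta}(X)$ for all $\beta$ in an interval with infimum $\beta_0$, hence in $\Coh^{\beta_0}(X)$ by \autoref{heart}, and the heart is extension-closed), but it is exactly the kind of heart-change issue that should be stated.
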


\begin{proof}
The existence of $F'$ is clear, since we allow $F'=F$.
If $F'\neq F$, then by \autoref{Giesekerkernel} $Q[-1]$ is a twisted $(L,-\frac{1}{2}K_X)$-Gieseker semistable vector bundle with $\mu_L=\beta_0$ and $\odisc=0$.
Therefore we can consider $F'$ minimal satisfying $Q=F/F'\in \ker(Z_{0,\beta_0})$, because if
 \[
 \ldots\subset F_2' \subset F_1' \subset F
 \]
is a chain of subobjects with this property, then one has $0\leq\ldots<\odisc(F_2')<\odisc(F_1')<\odisc(F)$. 

Now, a simple computation shows \eqref{enum:discont}.
In order to prove \eqref{enum:slopeF'}, note that
\autoref{HNbeta2} guarantees the existence of a right weak limit HN filtration for $F'$ at $\beta_0$. 

If $L\cdot\ch_1^{\beta_0}(F')>0$, by construction this is the HN filtration in an annulus inside the wall for $F'$ passing through $(0,\beta_0)$.
It is easy to check, under the assumption of minimality on $F'$, that no right weak limit HN factor of $F'$ has $\beta=\beta_0$ as a vertical wall.
Hence every right weak limit HN factor has $L\cdot\ch_1^{\beta_0}>0$, which shows the continuity of its tilt slope in a neighborhood of $(0,\beta_0)$.

If $L\cdot\ch_1^{\beta_0}(F')=0$, then every right weak limit HN factor of $F'$ either has tilt slope $\nu_{0,\beta}=+\infty$ for every $\beta$, or has $\odisc>0$ and $\beta=\beta_0$ as a vertical wall.
In both cases, the tilt slope $\nu_{0,\beta}$ approaches $\nu_{0,\beta_0}(F')=+\infty$ as $\beta\to\beta_0^+$.
This finishes the proof of \eqref{enum:slopeF'}.

Finally, observe that $Q\in\Coh^\beta(X)$ is semistable for every  $\sigma_{\alpha,\beta}$ with $\alpha\geq0$ and $\beta\geq\beta_0$, thanks to the description provided by \autoref{Giesekerkernel}.
Thus \eqref{enum:rightF} is a consequence of  \eqref{enum:slopeF'} (all the weak limit HN factors of $F'$ approach the slope $\nu_{0,\beta_0}(F')=\nu_{0,\beta_0}(F)>0$ as $\beta\to\beta_0^+$) and \eqref{enum:discont}.
\end{proof}

Note that if the object $F'$ is non-trivial (i.e.~$F'\neq0,F$), then $F$ may be $\sigma_{0,\beta_0}$-stable and $\sigma_{0,\beta}$-nonsemistable for every $\beta\to\beta_0^+$. 
We can visualize the situation in \autoref{figDiscont}. 
\begin{figure}[ht]
\definecolor{ffffff}{rgb}{1,1,1}
\definecolor{sexdts}{rgb}{0.1803921568627451,0.49019607843137253,0.19607843137254902}
\definecolor{wrwrwr}{rgb}{0.3803921568627451,0.3803921568627451,0.3803921568627451}
\begin{tikzpicture}[line cap=round,line join=round,>=triangle 45,x=2.35cm,y=2.35cm]
\draw [line width=1pt,color=sexdts] plot[domain=0:3.141592653589793,variable=\t]({cos(\t r)},{sin(\t r)});
\draw [line width=1pt,color=wrwrwr] (-2,0) -- (2,0);
\draw [line width=1pt,color=wrwrwr] (-1,0) -- (-1,1.5);
\draw[line width=1pt,dash pattern=on 4pt off 3pt,color=sexdts, smooth,samples=100,domain=0:10] 
plot[domain=-1:0.509461,variable=\t]({\t},{\t+1});
\draw[line width=1pt,dash pattern=on 4pt off 3pt,color=sexdts, smooth,samples=100,domain=0:10] 
plot[domain=0:1.2,variable=\t]({-exp(\t)-exp(-\t)+2.83},{(exp(\t)-exp(-\t))/2});
\draw[line width=1pt,dash pattern=on 4pt off 3pt,color=sexdts, smooth,samples=100,domain=0:10] 
plot[domain=0:1.2,variable=\t]({(-exp(\t)-exp(-\t))/2+1.41421356},{(exp(\t)-exp(-\t))/2});

\begin{footnotesize}
\draw[color=sexdts] (0.85,.75) node {$W$};
\draw[color=sexdts] (0.68,1.5) node {$\hyp{Q}$};
\draw[color=sexdts] (-0.2,1.5) node {$\hyp{F}$};
\draw[color=sexdts] (-0.75,1.37) node {$\hyp{F'}$};
\draw[color=wrwrwr] (-1.31,1.15) node {$\beta=\beta_0$};
\end{footnotesize}
\end{tikzpicture}
\caption{The wall $W$ is defined by $0\to F'\to F\to Q\to 0$.
}
\label{figDiscont}
\end{figure}

\begin{prop}\label{HNbeta3}
If $F\in\Coh^{\beta_0}(X)$ is an object satisfying $\nu_{0,\beta_0}^-(F)>0$, then $F$ admits a right weak limit HN filtration at $\beta_0$.
Furthermore, all of its right weak limit HN factors satisfy
\[\lim_{\beta\to \beta_0^+} \nu_{0,\beta}\geq 0\]
with equality for at most the last weak limit HN factor.
\end{prop}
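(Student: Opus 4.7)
The plan is to prove a slight strengthening of the proposition by strong induction on the length $r$ of the HN filtration of $F$ with respect to $\sigma_{0,\beta_0}$: the stated conclusion holds, and moreover if $F$ has no nonzero quotient in $\ker(Z_{0,\beta_0})$ then all right weak limit HN factors have strictly positive limit slope. For the base case $r=1$, both parts follow from \autoref{HNbeta2} and \autoref{kernel}.\eqref{enum:rightF}: the right weak limit HN factors are those of the core subobject $F'$ (with limit slope $\nu_{0,\beta_0}(F)>0$ by \autoref{kernel}.\eqref{enum:slopeF'}) followed, only when $F'\neq F$, by $F/F'\in\ker(Z_{0,\beta_0})$ with limit slope $0$ by \autoref{kernel}.\eqref{enum:discont}.

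For the inductive step $r\geq 2$, I would first reduce to the case where $F$ has no nonzero $\ker(Z_{0,\beta_0})$-quotient. Using that $\ker(Z_{0,\beta_0})$ is closed in $\Coh^{\beta_0}(X)$ under subobjects, quotients, and extensions (the first two from \autoref{Giesekerkernel} combined with $Z_{0,\beta_0}$ being a weak stability function, the third by a direct Chern character computation), together with Noetherianity of the heart and the embedding $F/(H_1\cap H_2)\hookrightarrow F/H_1\oplus F/H_2$, I would construct the minimal subobject $F^\circ\subset F$ with $F/F^\circ\in\ker(Z_{0,\beta_0})$. Appending $F/F^\circ$ at the end of the filtration (limit slope $0$ by \autoref{kernel}.\eqref{enum:discont}) reduces the problem to $F^\circ$.

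Assume now $F=F^\circ$ has no $\ker$-quotient and take the short exact sequence $0\to F_{r-1}\to F\to G\to 0$ with $G$ semistable of slope $\mu_r>0$. Since $G$ is a quotient of $F$ it inherits the absence of $\ker$-quotients, so its right weak limit HN factors all have limit slope $\mu_r>0$ by the base case. The main obstacle is a potential internal $\ker$-quotient $R$ of $F_{r-1}$: I would replace $F_{r-1}$ by $\widetilde{F}_{r-1}=\ker(F_{r-1}\twoheadrightarrow R)$ (with $R$ taken maximal), so that $\widetilde{F}_{r-1}$ has no $\ker$-quotient and length $r-1$; by the induction hypothesis its right weak limit HN filtration has all positive limit slopes. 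The quotient $E:=F/\widetilde{F}_{r-1}$ then sits in an extension $0\to R\to E\to G\to 0$ that must be non-split, for otherwise $R$ would be a direct summand of $E$, hence a $\ker$-quotient of $F$, contradicting $F=F^\circ$.

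The remaining technical point, which I expect to be the main difficulty, is to show that $E$ is $\sigma_{0,\beta_0+\epsilon}$-semistable for $\epsilon>0$ small enough, so that $E$ contributes a single right weak limit HN factor with limit slope $\mu_r>0$ rather than splitting into separate pieces. For any subobject $E'\subset E$, one decomposes via $E'\cap R\in\ker(Z_{0,\beta_0})$ (with slope tending to $0$ from below as $\beta\to\beta_0^+$) and $E'/(E'\cap R)\hookrightarrow G$ (with slope at most $\mu_r$); a weighted-average computation then yields $\nu_{0,\beta_0+\epsilon}(E')\leq\nu_{0,\beta_0+\epsilon}(E)$, using the non-splitting of the extension to rule out a destabilizing lift $E'\cong G$. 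Once this semistability is established, concatenating the right weak limit HN filtration of $\widetilde{F}_{r-1}$ with the single factor $E$ (and performing a continuity check as in the end of the proof of \autoref{HNbeta}) yields the right weak limit HN filtration of $F^\circ$ with all factors having strictly positive limit slopes, closing the induction.
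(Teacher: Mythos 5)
Your overall architecture (induction on the length of the HN filtration at $\sigma_{0,\beta_0}$, replacing $F_{r-1}$ by a core-type subobject so that the quotient becomes an extension of the last HN factor $G$ by an object $R\in\ker(Z_{0,\beta_0})$, then gluing) matches the paper's. The gap is in your key step: you claim that $E=F/\widetilde{F}_{r-1}$ is $\sigma_{0,\beta_0+\epsilon}$-semistable for small $\epsilon>0$, hence contributes a single right weak limit HN factor. This is false in general, and the weighted-average argument does not establish it. First, the bound ``$E'/(E'\cap R)\hookrightarrow G$ has slope at most $\mu_r$'' holds at $\beta_0$ but is needed at $\beta_0+\epsilon$: a $\sigma_{0,\beta_0}$-semistable object $G$ with $\nu_{0,\beta_0}(G)>0$ can destabilize immediately to the right of $\beta_0$ (this is precisely the phenomenon behind \autoref{HNbeta2} and \autoref{kernel}, where the right weak limit HN filtration of $G$ may have several factors, all with limit slope $\mu_r$), so subobjects of $G$ can have $\nu_{0,\beta_0+\epsilon}$ strictly larger than $\nu_{0,\beta_0+\epsilon}(G)$. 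Second, even when $G$ does stay semistable, non-splitness of $0\to R\to E\to G\to 0$ only excludes a lift of all of $G$; a lift $E'\subset E$ of a proper subobject $G_1\subsetneq G$ with $E'\cap R=0$ and $\nu_{0,\beta}(G_1)=\nu_{0,\beta}(G)$ has $\nu_{0,\beta}(E')=\nu_{0,\beta}(G_1)>\nu_{0,\beta}(E)$, since $\nu_{0,\beta}(E)$ is a genuine weighted average involving $\nu_{0,\beta}(R)<0$; such an $E'$ destabilizes $E$ and is not ruled out by your argument.

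The paper never claims $E$ is $\sigma_{0,\beta}$-semistable; it only produces a right weak limit HN filtration for $E$, possibly with several factors of limit slope $\mu_r>0$ (plus at most one final factor of limit slope $0$). This is done by observing that $R$ and $G$ are Bridgeland semistable of equal slope along the semicircular wall $W$ through $(0,\beta_0)$ defined by the extension, so $E$ is semistable on $W\cap\set{\alpha>0}$, and then using local finiteness to obtain a constant HN filtration in a small annulus just inside $W$; that constant filtration is the right weak limit HN filtration of $E$. You would need to replace your semistability claim by this wall-plus-local-finiteness argument. A more minor point: you assert without justification that $\widetilde{F}_{r-1}$ has HN length at most $r-1$ at $\sigma_{0,\beta_0}$; the paper proves the analogous statement for its core filtration via an HN polygon comparison (the filtration steps have the same central charges as those of $F_{r-1}$), and some such argument is also required in your setup.
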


\begin{proof}
Let $0=F_0\hookrightarrow F_1 \hookrightarrow \ldots \hookrightarrow F_s=F$ be the HN filtration of $F$ at $\sigma_{0,\beta_0}$.
We construct inductively a chain of subobjects $G_1\subset\ldots\subset G_s$ (with $G_i\subset F_i$) as follows: $G_1=F_1'$ is the core subobject of $F_1$ (throughout this proof we use the notation of $F'$ to denote the core of $F$ as defined in \autoref{kernel}.\eqref{enum:minimal}), and for $i>1$ we define $G_i$ in such a way that $(F_i/G_{i-1})'=G_i/G_{i-1}$.

We will prove, by induction on $s$, that $F$ admits a right weak limit HN filtration whose last limit HN factor is $F/G_s$, and that every weak limit HN factor of $G_s$ has positive limit tilt slope.
Note that the case $s=1$ follows from \autoref{kernel}.

For the induction step, one could assume that the statement is true for $F_{s-1}$.
Nevertheless, it is possible that the right weak limit filtrations of $F_{s-1}$ and $F_s/F_{s-1}$ do not glue directly, since it may happen
\[
\lim_{\beta\to\beta_0^+}\nu_{0,\beta}(F_{s-1}/G_{s-1})=0<\lim_{\beta\to\beta_0^+}\nu_{0,\beta}(\text{first limit HN factor of $F_s/F_{s-1}$}).
\] 

Then, the strategy consists on replacing $F_{s-1}$ by $G_{s-1}$.
We are allowed to do this because the HN filtration of $G_{s-1}$ with respect to $\sigma_{0,\beta_0}$ has length $\leq s-1$.
Indeed, $G_{s-1}$ has the same HN polygon as $F_{s-1}$, as a consequence of the equality $Z_{0,\beta_0}(G_i)=Z_{0,\beta_0}(F_i)$ for every $i$ (see \cite[Section~3]{BayerDef} or \cite[Section 4]{MS} for the definition and the properties of the HN polygon).

Now, on the one hand by induction hypothesis the right weak limit HN filtration of $G_{s-1}$ exists, and it satisfies
 \[
 \lim_{\beta\to\beta_0^+}\nu_{0,\beta}(\text{last weak limit HN factor of $G_{s-1}$})=\nu_{0,\beta_0}(G_{s-1}/G_{s-2})
 \]
 
On the other hand, we claim that $F_s/G_{s-1}$ admits a right weak limit HN filtration; this is not obvious at all, since $F_s/G_{s-1}$ is not $\sigma_{0,\beta_0}$-semistable (it contains the subobject $F_{s-1}/G_{s-1}$ with slope $\nu_{0,\beta_0}=+\infty$) and we cannot apply directly \autoref{HNbeta2}.
 
So to see this, we consider the wall $W$ in the $(\alpha,\beta)$-plane defined by the short exact sequence of $\Coh^{\beta_0}(X)$
\[
0\to F_{s-1}/G_{s-1}\to  F_s/G_{s-1} \to F_s/F_{s-1}\to 0
\]
The left intersection point of $W$ with $\alpha=0$ is $(0,\beta_0)$; the picture is similar to that of \autoref{figDiscont}, simply replacing $\hyp{Q},\hyp{F},\hyp{F'}$ by $\hyp{F_{s-1}/G_{s-1}},\hyp{F_s/G_{s-1}},\hyp{F_s/F_{s-1}}$ (respectively).
 
Note that $F_s/G_{s-1}$ is $\sigma_{\alpha,\beta}$-semistable for all the Bridgeland stability conditions $(\alpha,\beta)\in W$ with $\alpha>0$, since it is an extension of $\sigma_{\alpha,\beta}$-semistable objects of the same slope. 
Then, as in the proof of \autoref{HNbeta2}, local finiteness ensures that the HN filtration of $F_s/G_{s-1}$ is constant for all the stability conditions in a small annulus inside $W$.
This gives the desired right weak limit HN filtration for $F_s/G_{s-1}$.

Every factor of this right weak limit HN filtration for $F_s/G_{s-1}$ satisfies
\begin{align*}
 0&<\lim_{\beta\to\beta_0^+}\nu_{0,\beta}(\text{every limit HN factor of $F_s/G_{s-1}$})=\nu_{0,\beta_0}(F_s/G_{s-1})=\nu_{0,\beta_0}(F_s/F_{s-1})\\
 &<\nu_{0,\beta_0}(F_{s-1}/F_{s-2})=\nu_{0,\beta_0}(G_{s-1}/G_{s-2})=\lim_{\beta\to\beta_0^+}\nu_{0,\beta_0}(\text{last limit HN factor of $G_{s-1}$})
\end{align*}
with the only possible exception of the last weak limit HN factor for $F_s/G_{s-1}$, which may have tilt slope of limit 0.
Using the induction hypothesis, one easily checks that this last weak limit HN factor for $F_s/G_{s-1}$ is precisely $F_s/G_s$.

This allows to glue the right weak limit filtrations of $G_{s-1}$ and $F_s/G_{s-1}$ to produce that of $F_s$, which proves the assertion.
\end{proof}

Now we are ready to prove the main existence result of this subsection, namely the existence of right weak limit HN filtration for objects without HN factors of vanishing tilt slope (see \autoref{LeftWkFiltr} for the left filtrations).
\begin{proof}[Proof of \autoref{existenceIntr}.\eqref{exist:Wk}. Right case.]
Denote by $0=F_0\hookrightarrow F_1 \hookrightarrow \ldots \hookrightarrow F_s \hookrightarrow F_{s+1} \hookrightarrow \ldots \hookrightarrow F_r=F$ the HN filtration of $F$ with respect to $\sigma_{0,\beta_0}$, so that
\[
\nu_{0,\beta_0}(F_1)>\ldots>\nu_{0,\beta_0}(F_s/F_{s-1})>0>\nu_{0,\beta_0}(F_{s+1}/F_s)>\ldots>\nu_{0,\beta_0}(F/F_{r-1})
\]

On the one hand, by \autoref{HNbeta} $F/F_s$ admits a right weak limit HN filtration.
It turns out that, if $R$ is a weak limit HN factor of $F/F_s$ (corresponding to a weak limit HN factor of $F_i/F_{i-1}$ for some $i=s+1,\ldots,r$), then
\[
\lim_{\beta\to\beta_0^+}\nu_{0,\beta}(R)=\nu_{0,\beta_0}(F_i/F_{i-1})<0
\]

On the other hand, according to \autoref{HNbeta3}, $F_s$ has a right weak limit HN filtration, with all its weak limit HN factors satisfying $\lim_{\beta\to\beta_0^+}\nu_{0,\beta}\geq0$.

A standard glueing of the right weak limit filtrations for $F_s$ and $F/F_s$ finishes the proof.
\end{proof}

\begin{rem}\label{LeftWkFiltr}
Under the same hypothesis on $F\in\Coh^{\beta_0}(X)$, the existence of a left weak limit HN filtration at $\beta_0$ follows from the preservation of stability by the derived dual (see \autoref{dualstab}).

The main difference is that for $\beta\to\beta_0^-$ the object $F$ may have two nontrivial cohomologies with respect to $\Coh^\beta(X)$, namely
\[
\cH^{-1}_\beta(F)=\cH^{-1}(F_1),\;\;\cH^0_\beta(F)=F/(\cH^{-1}(F_1)[1])
\]
where $F_1$ is a first step in the HN filtration of $F$ with respect to $\sigma_{0,\beta_0}$, satisfying $\nu_{0,\beta_0}(F_1)=+\infty$.

The left weak limit filtration of $F$ is thus obtained by applying $\blank^\vee[1]$ to the right weak limit filtration at $-\beta_0$ of $\cH^0_{-\beta_0}(F^\vee[1])=(F/F_1)^\vee[1]$ and $\cH^1_{-\beta_0}(F^\vee[1])=F_1^\vee[2]$.
\end{rem}

\subsection{Local piecewise polynomial expressions} \label{subsec:LocalExpr3}
An immediate consequence of the right weak limit HN filtrations constructed in \autoref{existenceIntr}.\eqref{exist:Wk} is:
\begin{cor}\label{polynomial1}
If $F\in\Coh^{\beta_0}(X)$ is an object with all its HN factors with respect to $\sigma_{0,\beta_0}$ having slope $\nu_{0,\beta_0}\neq0$, then there exists $\epsilon>0$ such that the functions $\rf^0_{F,L},\rf^1_{F,L}$ are (piecewise) polynomial along the interval $(-\beta_0-\epsilon,-\beta_0)$.
\end{cor}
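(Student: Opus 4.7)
The plan is to piggyback directly on the existence of right weak limit Harder--Narasimhan filtrations (\autoref{existenceIntr}.\eqref{exist:Wk}, right case), which has already been established in this subsection. Since the hypothesis of \autoref{existenceIntr} is precisely that $F$ has no HN factor at $\sigma_{0,\beta_0}$ with vanishing tilt slope, it applies to our $F$. I would invoke it to obtain an $\epsilon_0>0$ and a chain
\[
0=F_0\hookrightarrow F_1\hookrightarrow\ldots\hookrightarrow F_r=F
\]
in $\Coh^\beta(X)$ that serves as the HN filtration of $F$ with respect to $\sigma_{0,\beta}$ for every $\beta\in(\beta_0,\beta_0+\epsilon_0)\cap\bQ$.

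Next, I would apply \autoref{maindef}.\eqref{item:maindef1} along this interval to write
\[
\rf^0_{F,L}(-\beta)=\sum_{i:\,\nu_{0,\beta}(F_i/F_{i-1})\geq 0}\ch_2^\beta(F_i/F_{i-1}),\qquad \rf^1_{F,L}(-\beta)=-\sum_{i:\,\nu_{0,\beta}(F_i/F_{i-1})<0}\ch_2^\beta(F_i/F_{i-1}).
\]
Crucially, the factors $F_i/F_{i-1}$ do not depend on $\beta$, hence their (untwisted) Chern characters are fixed constants; thus each summand $\ch_2^\beta(F_i/F_{i-1})=\ch_2-\beta L\cdot\ch_1+\tfrac{\beta^2}{2}L^2\cdot\ch_0$ is a genuine quadratic polynomial in $\beta$.

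The only remaining point is to understand how the partition of the index set $\{1,\dots,r\}$ into the two sums can vary with $\beta$. This partition is controlled by the sign of $\nu_{0,\beta}(F_i/F_{i-1})$, which is (up to the sign of $L\cdot\ch_1^\beta(F_i/F_{i-1})$) the sign of the polynomial $\ch_2^\beta(F_i/F_{i-1})$. Both numerator and denominator are polynomials in $\beta$, hence each factor can change the sign of its tilt slope only finitely many times on $(\beta_0,\beta_0+\epsilon_0)$. Shrinking $\epsilon_0$ to some $\epsilon>0$ if necessary --- so as to avoid the (finitely many) zeros of these polynomials strictly inside the interval --- I would thus obtain a finite partition of $(\beta_0,\beta_0+\epsilon)$ into open subintervals on each of which the assignment of indices to $\rf^0_{F,L}$ and $\rf^1_{F,L}$ is constant, yielding a single polynomial expression. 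Substituting $x=-\beta$ gives the claimed piecewise polynomial behavior on $(-\beta_0-\epsilon,-\beta_0)$.

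There is essentially no obstacle beyond \autoref{existenceIntr}.\eqref{exist:Wk}: once the filtration is fixed, the rest is bookkeeping with polynomials. The only minor care needed is to ensure $\epsilon$ is chosen small enough so that the sign of $\nu_{0,\beta}$ of each fixed HN factor is locally constant in $\beta$, which is automatic from polynomiality of $\ch_2^\beta(F_i/F_{i-1})$ and $L\cdot\ch_1^\beta(F_i/F_{i-1})$.
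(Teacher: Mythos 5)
Your proposal is correct and follows essentially the same route as the paper: the corollary is deduced directly from the existence of the right weak limit HN filtration (\autoref{existenceIntr}.(2), right case), after which each fixed factor contributes the quadratic polynomial $\ch_2^{\beta}(F_i/F_{i-1})$ and the only remaining issue is the finitely many possible sign changes of the tilt slopes, which you handle correctly (and which the paper acknowledges in the remark following the corollary). The paper additionally records the explicit form of the polynomials via the core filtration, namely $\rf^0_{F,L}(x)=\ch_2^{-x}(G_s)$ and $\rf^1_{F,L}(x)=-\ch_2^{-x}(F/G_s)$, but this is not needed for the statement as given.
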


More explicitly, let $0=F_0\hookrightarrow F_1 \hookrightarrow \ldots \hookrightarrow F_s \hookrightarrow F_{s+1} \hookrightarrow \ldots \hookrightarrow F_r=F$ be the HN filtration of $F$ with respect to $\sigma_{0,\beta_0}$, so that
\[
\nu_{0,\beta_0}(F_1)>\ldots>\nu_{0,\beta_0}(F_s/F_{s-1})>0>\nu_{0,\beta_0}(F_{s+1}/F_s)>\ldots>\nu_{0,\beta_0}(F/F_{r-1})
\]

Consider the chain $G_1\subset\ldots\subset G_s$ in $\Coh^{\beta_0}(X)$ (with $G_i\subset F_i$) inductively defined by the rules $G_1=F_1'$ and $G_i/G_{i-1}=(F_i/G_{i-1})'$ (where $F'$ denotes the core of $F$ as defined in \autoref{kernel}.\eqref{enum:minimal}).
Then, for all (rational) $x$ in a left neighborhood of $-\beta_0$ we have
\[
\rf^0_{F,L}(x)=\ch_2^{-x}(G_s),\;\;\rf^1_{F,L}(x)=-\ch_2^{-x}(F/G_s).
\]


\begin{rem}
Along the interval where the right weak limit HN filtration of $F$ at $\beta_0$ remains constant, the functions $\rf^0_{F,L},\rf^1_{F,L}$ may still change their polynomial expression.
This happens if the last right weak limit HN factor of $G_s$ acquires tilt slope 0.
\end{rem}


Observe that the chain $G_1\subset\ldots\subset G_s$ of the proof of \autoref{polynomial1} encodes the Chern degree functions of $F$ in a left neighborhood of $-\beta_0$.
For easy reference, we will use the following terminology (that we adopt for objects $F$ with a HN factor of tilt slope 0 as well):

\begin{defn}
Let $F\in\Coh^{\beta_0}(X)$ and let $F_1,\ldots,F_s/F_{s-1}$ be the HN factors of $F$ having slope $\nu_{0,\beta_0}>0$, that is, $s=s(F)$ is the switching index of $F$.
The \emph{core filtration} of $F$ at $\beta_0$ is the chain $G_1\subset\ldots\subset G_s$ in $\Coh^{\beta_0}(X)$ (with $G_i\subset F_i$) inductively defined by $G_1=F_1'$ and $G_i/G_{i-1}=(F_i/G_{i-1})'$ where $F'$ denotes the core of $F$ as defined in \autoref{kernel}.\eqref{enum:minimal}.
\end{defn}

Now we want to conclude that for every $F\in\Coh^{\beta_0}(X)$ the function $\rf^0_{F,L}$ admits the (piecewise) polynomial expression $\ch_2^{-x}(G_s)$ in a left neighborhood of $-\beta_0$, where $G_s$ is the last object appearing in the core filtration of $F$ at $\beta_0$.
For this, we need to treat the case of HN factors with slope $\nu_{0,\beta_0}=0$; we will not find a weak limit HN filtration for them, but the following result will be enough for our purposes.

\begin{prop}\label{slope0}
If $F\in\Coh^{\beta_0}(X)$ is $\sigma_{0,\beta_0}$-semistable with $\nu_{0,\beta_0}(F)=0$, then $\rf^0_{F,L}(-\beta)=0$ for every rational number $\beta>\beta_0$ such that $F\in\Coh^\beta(X)$.
\end{prop}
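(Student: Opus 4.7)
The plan is to show that the subobject $F_s$ arising as the non-negative-tilt-slope part of the Harder--Narasimhan filtration of $F$ with respect to $\sigma_{0,\beta}$ satisfies $\ch_2^\beta(F_s)=0$, which by definition gives $\rf^0_{F,L}(-\beta)=0$. A direct computation from $\ch_2^{\beta_0}(F)=0$, $L\cdot\ch_1^{\beta_0}(F)>0$, and the constraint $L\cdot\ch_1^\beta(F)\geq 0$ coming from $F\in\Coh^\beta(X)$ first yields
\[
\ch_2^\beta(F)\leq -\tfrac{1}{2}(\beta-\beta_0)\,L\cdot\ch_1^{\beta_0}(F)<0,
\]
so in particular $\nu_{0,\beta}(F)<0$ and $F_s\subsetneq F$. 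Writing the HN filtration at $\sigma_{0,\beta}$ as $0=F_0\subset\cdots\subset F_s\subset\cdots\subset F_r=F$ and applying \autoref{sesequence} to the short exact sequence $0\to F_s\to F\to F/F_s\to 0$ in $\Coh^\beta(X)$ (noting that $F/F_s$ has all HN factors of strictly negative tilt slope, hence $\rf^0_{F/F_s,L}(-\beta)=0$ and no subobject in $\ker Z_{0,\beta}$), the desired statement reduces to $\ch_2^\beta(F_s)=0$.

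Next I would set up the cohomological comparison. A standard argument with the long exact sequence of $\Coh(X)$-cohomology shows $F_s\in\Coh^{\beta_0}(X)\cap\Coh^\beta(X)$, whence $F_s\in\Coh^t(X)$ and $L\cdot\ch_1^t(F_s)\geq 0$ for every $t\in[\beta_0,\beta]$. I then consider the morphism $F_s\to F$ in $\Coh^{\beta_0}(X)$, with image $I_s\subset F$ in this heart: since $F$ is $\sigma_{0,\beta_0}$-semistable with $\nu_{0,\beta_0}(F)=0$ and admits no subobject in $\ker Z_{0,\beta_0}$ (such a subobject would have tilt slope $+\infty$, contradicting semistability of $F$), one deduces $L\cdot\ch_1^{\beta_0}(I_s)>0$ and $\ch_2^{\beta_0}(I_s)\leq 0$.

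When the kernel $K_s=\cH^{-1}_{\beta_0}(F/F_s)$ of $F_s\to F$ in $\Coh^{\beta_0}(X)$ vanishes, this gives $\ch_2^{\beta_0}(F_s)=\ch_2^{\beta_0}(I_s)\leq 0$, and the integration formula
\[
\ch_2^\beta(F_s)-\ch_2^{\beta_0}(F_s)=-\int_{\beta_0}^\beta L\cdot\ch_1^t(F_s)\,dt\leq 0
\]
yields $\ch_2^\beta(F_s)\leq 0$; combined with $\ch_2^\beta(F_s)\geq 0$ (inherited from the non-negativity of $\ch_2^\beta$ on each HN factor of $F_s$), one obtains the required vanishing. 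The main obstacle is the general case $K_s\neq 0$: here $K_s[1]$ embeds into $F/F_s$ inside $\Coh^\beta(X)$ (provided $K_s$ is a sheaf in $\cF_\beta$, a condition one verifies from the cohomological structure of $F/F_s\in\Coh^\beta(X)$), and since $F/F_s$ has only negative tilt slopes this forces $\ch_2^\beta(K_s)>0$; to conclude, one must combine this with the bound on $\ch_2^\beta(I_s)$ coming from $\ch_2^{\beta_0}(I_s)\leq 0$ so that $\ch_2^\beta(F_s)=\ch_2^\beta(K_s)+\ch_2^\beta(I_s)=0$, and this delicate interplay between the two hearts $\Coh^{\beta_0}(X)$ and $\Coh^\beta(X)$ is where the technical heart of the argument lies.
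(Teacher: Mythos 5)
Your reduction to showing $\ch_2^{\beta}(F_s)=0$ is fine (indeed it is immediate from the definition of $\rf^0_{F,L}(-\beta)$, even without invoking \autoref{sesequence}), the preliminary computation giving $\ch_2^{\beta}(F)<0$ is correct, and the special case where $F_s\to F$ remains injective in $\Coh^{\beta_0}(X)$ is handled correctly: there $I_s=F_s$ is a subobject of the $\sigma_{0,\beta_0}$-semistable $F$ of slope $0$, so $\ch_2^{\beta_0}(F_s)\le 0$, and the integral formula transports this to $\ch_2^{\beta}(F_s)\le 0$. But the general case is a genuine gap, not a routine verification. The two facts you extract there — $\ch_2^{\beta_0}(I_s)\le 0$ and $\ch_2^{\beta}(K_s)>0$ — do not combine to give $\ch_2^{\beta}(F_s)=\ch_2^{\beta}(K_s)+\ch_2^{\beta}(I_s)\le 0$; on the contrary, the inequality $\ch_2^{\beta}(K_s)>0$ pushes in the wrong direction, and you have no control on $\ch_2^{\beta}(I_s)$ at $\beta$ (note $I_s$ need not lie in $\Coh^{t}(X)$ for $t>\beta_0$, so the integral bound used in the injective case is unavailable for $I_s$). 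There is no purely numerical way to close the argument from the data you have collected, and the sentence ``one must combine this with the bound on $\ch_2^{\beta}(I_s)$'' is precisely the missing proof.

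The paper's argument supplies exactly the two ingredients your outline lacks. First, a homological input: arguing by contradiction with a destabilizing subobject $E\subset F$ in $\Coh^{\beta}(X)$ with $\ch_2^{\beta}(E)>0$, one arranges (via the Bridgeland limit filtration at $\beta$, \autoref{existenceIntr}.(1)) that $E$ is semistable nearby; if $E$ happens to be $\sigma_{0,\beta_0}$-semistable, then $\nu_{0,\beta_0}(E)>0=\nu_{0,\beta_0}(F)$ forces $\Hom(E,F)=0$, so the image of $E\to F$ in $\Coh^{\beta_0}(X)$ vanishes and $E$ equals the kernel $\cH^{-1}_{\beta_0}(Q)$, which is a sheaf with $\mu_L^+\in(\beta_0,\beta]$ and hence cannot lie in $\Coh^{\beta}(X)$ — contradiction. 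Second, a termination mechanism for the case where $E$ is not $\sigma_{0,\beta_0}$-semistable: one replaces $E$ by the destabilizing subobject along the relevant wall, whose discriminant strictly drops by the support property (\autoref{structurewalls}.(7)), and iterates. Your proposal works with the image $I_s$, which is genuinely nonzero when $F_s$ is not semistable, so the $\Hom$-vanishing trick is not available as stated; to repair the proof you would need to pass to individual (limit-)semistable pieces and run the discriminant induction, which is essentially the paper's proof.
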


\begin{proof}
Suppose, for the sake of a contradiction, that $\wbeta>\beta_0$ is a rational number with $F\in\Coh^{\wbeta}(X)$ and $\rf^0_{F,L}(-\wbeta)\neq0$.
This means that $F$ has a subobject $E\subset F$ in $\Coh^{\wbeta}(X)$ satisfying 
$\ch_2^{\wbeta}(E)>0$.

We may assume that $E$ is the first HN factor $F_1$ of $F$ with respect to $\sigma_{0,\wbeta}$; to see this, we only need to exclude that $\ch_2^{\wbeta}(F_1)=0$.
And indeed, since $\nu_{0,\wbeta}(F_1)>0$ the equality $\ch_2^{\wbeta}(F_1)=0$ would imply that $F_1\in\ker(Z_{0,\wbeta})$;
thus by \autoref{Giesekerkernel} $\cH^{-1}(F_1)$ would be a (twisted Gieseker semistable) sheaf of slope $\wbeta$.
But then the inclusion $\cH^{-1}(F_1)\subset\cH^{-1}(F)$ would contradict the hypothesis $F\in\Coh^{\beta_0}(X)$ (recall \autoref{heart}).

Replacing $E$ by the first step of its Bridgeland limit HN filtration at $\wbeta$ (\autoref{existenceIntr}.\eqref{exist:Br}), we may also assume that $E$ is $\sigma_{\alpha,\wbeta}$-semistable for some positive values of $\alpha$.

Consider the distinguished triangle $E\to F\to Q$ in $\Db(X)$ inducing the inclusion $E\subset F$ in $\Coh^{\wbeta}(X)$.
On the one hand, note that $E\in\Coh^{\beta_0}(X)$ as well;
indeed, the inequality $\mu_L^+(\cH^{-1}(E))\leq\beta_0$ follows from $\cH^{-1}(E)\subset\cH^{-1}(F)$ (recall \autoref{heart}).
Moreover, by \autoref{hyperbola} $\nu_{0,\wbeta}(E)>0$ is equivalent to $\wbeta<p_E$; hence $\beta_0<p_E$ holds, which gives $\nu_{0,\beta_0}(E)>0$.

On the other hand, it is possible that $Q\notin\Coh^{\beta_0}(X)$, so $Q$ may have two nontrivial cohomologies with respect to the heart $\Coh^{\beta_0}(X)$, namely $\cH^{-1}_{\beta_0}(Q)$ and $\cH^0_{\beta_0}(Q)$.
In particular, $\cH^{-1}_{\beta_0}(Q)$ is a subsheaf of $\cH^{-1}(Q)$ such that $\mu_+(\cH_{\beta_0}^{-1}(Q))\in (\beta_0,\wbeta]$.

Therefore the distinguished triangle yields an exact sequence
\[
0\to \cH^{-1}_{\beta_0}(Q)\to E\to F\to \cH^0_{\beta_0}(Q)\to 0
\]
in $\Coh^{\beta_0}(X)$.
Recall that $F$ is $\sigma_{0,\beta_0}$-semistable with $\nu_{0,\beta_0}(F)=0$, and $\nu_{0,\beta_0}(E)>0$.

If $E$ is $\sigma_{0,\beta_0}$-semistable, we already have the desired contradiction.
In fact, in such a case $\Hom(E,F)=0$ and thus $\cH^{-1}_{\beta_0}(Q)=E$; but this would imply $E\notin\Coh^{\wbeta}(X)$ by our previous description of $\cH^{-1}_{\beta_0}(Q)$.

Therefore, to finish the proof it suffices to check that $E$ may be assumed to be $\sigma_{0,\beta_0}$-semistable.
This is essentially due to the support property for Bridgeland stability conditions in the $(\alpha,\beta)$-plane.

Recall that $E$ is $\sigma_{\alpha,\wbeta}$-semistable for small enough values of $\alpha>0$.
Hence, if $E$ is not $\sigma_{0,\beta_0}$-semistable, there exists a short exact sequence $E_1\hookrightarrow E\twoheadrightarrow R_1$ destabilizing $E$ along a wall $W_1$ in the $(\alpha,\beta)$-plane, whose left point $(0,\wbeta_1)$ in the $\beta$-axis satisfies $\beta_0<\wbeta_1<\wbeta$:

\begin{figure}[H]
\definecolor{ffffff}{rgb}{1,0,0}
\definecolor{sexdts}{rgb}{0.1803921568627451,0.49019607843137253,0.19607843137254902}
\definecolor{wrwrwr}{rgb}{0.3803921568627451,0.3803921568627451,0.3803921568627451}
\begin{tikzpicture}[line cap=round,line join=round,>=triangle 45,x=1.6cm,y=1.6cm]
\draw [line width=1pt,color=ffffff] plot[domain=0:3.141592653589793,variable=\t]({cos(\t r)},{sin(\t r)});
\draw [line width=1pt,color=wrwrwr] (-4,0) -- (2,0);
\draw [line width=1pt,color=wrwrwr] (-2.8,0) -- (-2.8,2.35);
\draw [line width=1pt,color=wrwrwr] (-1,0) -- (-1,2.35);
\draw [line width=1pt,color=wrwrwr] (-0.72,0) -- (-0.72,2.35);
\draw[line width=1pt,dash pattern=on 4pt off 3pt,color=sexdts, smooth,samples=100,domain=0:10] 
plot[domain=0:1.6,variable=\t]({-exp(\t)-exp(-\t)+2.83},{(exp(\t)-exp(-\t))/2});
\draw[line width=1pt,dash pattern=on 4pt off 3pt,color=sexdts, smooth,samples=100,domain=0:10] 
plot[domain=0:1.6,variable=\t]({(-exp(\t)-exp(-\t))/2+1.41421356},{(exp(\t)-exp(-\t))/2});
\draw[line width=1pt,dash pattern=on 4pt off 3pt,color=sexdts, smooth,samples=100,domain=0:10] 
plot[domain=0:1.54,variable=\t]({exp(\t)+exp(-\t)-2*1.41421356},{(exp(\t)-exp(-\t))/2});
\draw[line width=1pt,dash pattern=on 4pt off 3pt,color=sexdts, smooth,samples=100,domain=0:10] 
plot[domain=0:1.6,variable=\t]({3*(exp(\t)+exp(-\t))/4-4.3},{(exp(\t)-exp(-\t))/2});

\begin{footnotesize}
\draw[color=sexdts] (2,2) node {$\hyp{E_1}$};
\draw[color=sexdts] (-1.35,2.35) node {$\hyp{E}$};
\draw[color=sexdts] (-1.95,2.35) node {$\hyp{R_1}$};
\draw[color=sexdts] (-0.25,2.35) node {$\hyp{F}$};
\draw[color=wrwrwr] (-0.72,-0.15) node {$\wbeta$};
\draw[color=wrwrwr] (-2.8,-0.15) node {$\beta_0$};
\draw[color=wrwrwr] (-1,-0.15) node {$\wbeta_1$};
\draw[color=sexdts] (0.4,-0.15) node {$p_E$};
\draw[color=ffffff] (1,0.65) node {$W_1$};

\end{footnotesize}
\end{tikzpicture}
\caption{}
\label{fig:newdest}
\end{figure}

We can assume, without loss of generality, that $E_1$ is the first step of the Bridgeland limit HN filtration of $E$ at $\wbeta_1$.
Hence $E_1$ is $\sigma_{\alpha,\wbeta_1}$-semistable for small enough values of $\alpha>0$.

At this point, observe that by the support property (in the form of \autoref{structurewalls}.\eqref{item:Disc0nodest}) we have $\odisc(E_1)+\odisc(R_1)<\odisc(E)$ (in particular, $\odisc(E_1)<\odisc(E)$).
Moreover, we have an inclusion $E_1\subset F$ in $\Coh^{\wbeta}(X)$ (since $E_1\subset E$ holds along the whole wall $W_1$).
Let $E_1\to F\to Q_1$ be the distinguished triangle in $\Db(X)$ defining this inclusion.

Then, the same arguments as before show that $E_1\in\Coh^{\beta_0}(X)$ and give an exact sequence
 \[
 0\to \cH^{-1}_{\beta_0}(Q_1)\to E_1\to F\to \cH^0_{\beta_0}(Q_1)\to 0
 \]
in $\Coh^{\beta_0}(X)$.
Moreover, $\nu_{0,\beta_0}(E_1)>0$ since $(0,\beta_0)$ lies on the left-hand side of $\hyp{E_1}$.

If $E_1$ were $\sigma_{0,\beta_0}$-semistable, by reasoning as in the case of $E$ $\sigma_{0,\beta_0}$-semistable we would obtain a contradiction.
Otherwise, we destabilize $E_1$ along a wall $W_2$ via a short exact sequence $0\to E_2\to E_1\to R_2\to 0$ with the same properties.

This finishes the proof, since this process must stop after a finite number of destabilizations thanks to the inequalities
\[
0\leq\ldots<\odisc(E_2)<\odisc(E_1)<\odisc(E)\qedhere
\]
\end{proof}


\begin{cor}\label{polynomial2}
If $F\in\Coh^{\beta_0}(X)$, there exists $\epsilon>0$ such that the functions $\rf_{F,L}^0$ and $\rf_{F,L}^1$ are (piecewise) polynomial along the interval $(-\beta_0-\epsilon,-\beta_0)$.
\end{cor}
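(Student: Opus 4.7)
The plan is to reduce to \autoref{polynomial1} by carefully handling the possible HN factor of $F$ at $\sigma_{0,\beta_0}$ with vanishing tilt slope. Let $s=s(F)$ denote the switching index of $F$ at $\beta_0$, and let $G_1\subset\dots\subset G_s$ be its core filtration. If $F$ has no HN factor of slope zero with respect to $\sigma_{0,\beta_0}$, the statement reduces directly to \autoref{polynomial1}. Otherwise, let $H:=F_{s+1}/F_s$ denote the unique zero-slope HN factor, and let $F^-:=F/F_{s+1}$ be the strictly negative-slope part. I aim to prove that for every rational $\beta$ in a sufficiently small right neighborhood of $\beta_0$ one has $\rf^0_{F,L}(-\beta)=\ch_2^\beta(G_s)$; the analogous statement for $\rf^1_{F,L}$ will then follow from the identity $\rf^0_{F,L}(x)-\rf^1_{F,L}(x)=\ch_2^{-x}(F)$.

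The argument proceeds through the two short exact sequences
\[
0\to F_{s+1}\to F\to F^-\to 0,\qquad 0\to F_s\to F_{s+1}\to H\to 0
\]
in $\Coh^{\beta_0}(X)$, which remain exact in $\Coh^\beta(X)$ for $\beta$ slightly above $\beta_0$ by \autoref{heart}. To the first sequence I apply \autoref{sesequence}: by \autoref{existenceIntr}.\eqref{exist:Wk}, $F^-$ admits a right weak limit HN filtration at $\beta_0$ all of whose factors have strictly negative $\nu_{0,\beta}$, so $\rf^0_{F^-,L}(-\beta)=0$, and $F^-$ cannot contain a subobject in $\ker Z_{0,\beta}$ (such a subobject would carry slope $+\infty$). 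This yields $\rf^0_{F,L}(-\beta)=\rf^0_{F_{s+1},L}(-\beta)$. Moreover, \autoref{polynomial1} applied to $F_s$ (which has no zero-slope HN factor at $\beta_0$) gives $\rf^0_{F_s,L}(-\beta)=\ch_2^\beta(G_s)$, so it remains to establish $\rf^0_{F_{s+1},L}(-\beta)=\rf^0_{F_s,L}(-\beta)$; the essential input for this is \autoref{slope0}, which yields $\rf^0_{H,L}(-\beta)=0$.

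The main obstacle arises here: $H$ may possess subobjects in $\ker Z_{0,\beta}$, which by \autoref{Giesekerkernel} have the form $S[1]$ with $S$ a twisted $(L,-\tfrac12 K_X)$-Gieseker semistable vector bundle of slope $\beta$ and $\odisc(S)=0$. The presence of such subobjects prevents a direct invocation of \autoref{sesequence} on the second sequence above. The remedy is a direct analysis of the maximal $\nu_{0,\beta}$-nonnegative subobject $A\subset F_{s+1}$: one checks that $G_s\subset A$ (since the core filtration of $F_s$ at $\beta_0$ provides the positive-slope part of $F_s$ at $\beta$, via \autoref{polynomial1}), and then that $A/G_s\hookrightarrow F_{s+1}/G_s$ must satisfy $\ch_2^\beta(A/G_s)=0$. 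This final equality combines \autoref{slope0} for the positive-slope contributions coming from $H$ with the observation that $F_s/G_s\in\ker Z_{0,\beta_0}$, a shift of a Gieseker semistable bundle with vanishing discriminant, acquires strictly negative tilt slope $\nu_{0,\beta}$ for $\beta$ slightly above $\beta_0$ (concave-down quadratic behaviour of $\ch_2^\beta$, in the spirit of \autoref{exGiesekerkernel}); any potential ``excess'' of $A$ over $G_s$ is therefore forced into kernel-type pieces of $H$ with vanishing second Chern character. Hence $\ch_2^\beta(A)=\ch_2^\beta(G_s)$, which is a polynomial in $\beta$ (and hence in $x=-\beta$), concluding the proof.
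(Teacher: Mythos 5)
Your overall strategy is the paper's, up to a re-bracketing: the paper first shows $\rf^0_{F/F_s,L}(-\beta)=0$ by applying \autoref{sesequence} to $0\to F_{s+1}/F_s\to F/F_s\to F/F_{s+1}\to 0$, and then applies \autoref{sesequence} again to $0\to F_s\to F\to F/F_s\to 0$ to get $\rf^0_{F,L}(-\beta)=\rf^0_{F_s,L}(-\beta)$; you instead pass through $F_{s+1}$. Both bracketings are fine, and your first step (reducing to $\rf^0_{F_{s+1},L}(-\beta)$) is correct. The problem is your second step. The ``main obstacle'' you identify is not there: a subobject of $H=F_{s+1}/F_s$ in $\ker(Z_{0,\beta})$ for $\beta>\beta_0$ would, by \autoref{Giesekerkernel}, be of the form $S[1]$ with $S$ a $\mu_L$-semistable bundle of slope exactly $\beta$, and then $S$ would inject into $\cH^{-1}(H)$; but $H\in\Coh^{\beta_0}(X)$ forces $\mu_L^+(\cH^{-1}(H))\leq\beta_0<\beta$, a contradiction. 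This is precisely the verification the paper carries out for $F/F_{s+1}$, and it applies verbatim to $H$. So \autoref{sesequence} applies directly to $0\to F_s\to F_{s+1}\to H\to 0$ (using \autoref{slope0} for $\rf^0_{H,L}(-\beta)=0$) and yields $\rf^0_{F_{s+1},L}(-\beta)=\rf^0_{F_s,L}(-\beta)$, finishing the proof via \autoref{polynomial1}.

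Because you missed this, your replacement argument is both unnecessary and incomplete. The assertion that $\ch_2^\beta(A/G_s)=0$ because any ``excess'' of $A$ over $G_s$ is ``forced into kernel-type pieces of $H$'' is not a proof: to carry it out you would have to decompose $A/G_s$ via the extension $0\to F_s/G_s\to F_{s+1}/G_s\to H\to 0$ and control $\ch_2^\beta$ separately on the piece landing in $F_s/G_s$ (a subobject of a shifted bundle, for which $\ch_2^\beta\leq 0$ is not automatic, since such subobjects may contribute positive $0$-dimensional terms) and on the image in $H$. None of this is done, and the phrase is internally inconsistent with the fact, noted above, that $H$ has no kernel-type subobjects at $\beta>\beta_0$ at all. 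Replace the whole ``remedy'' paragraph by the one-line heart argument and a direct application of \autoref{sesequence}, and your proof matches the paper's.
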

\begin{proof}
We assume that $F$ has a HN factor with respect to $\sigma_{0,\beta_0}$ of slope $\nu_{0,\beta_0}=0$, otherwise we are in the situation of \autoref{polynomial1}.
Thus let
\[
0=F_0\hookrightarrow F_1 \hookrightarrow \ldots \hookrightarrow F_{s-1} \hookrightarrow F_s \hookrightarrow F_{s+1} \hookrightarrow \ldots \hookrightarrow F_r=F
\]
be the HN filtration of $F$ with respect to $\sigma_{0,\beta_0}$, so that $\nu_{0,\beta_0}(F_{s+1}/F_s)=0$, in particular, $s=s(F)$ is the switching index of $F$.
We claim that $\rf^0_{F/F_s,L}(-\beta)=0$ for all $\beta$ in a certain right neighborhood of $\beta_0$. 
This is a consequence of \autoref{sesequence}, when applied to the short exact sequence in $\Coh^\beta(X)$
\[
0\to F_{s+1}/F_s\to F/F_s\to F/F_{s+1}\to 0,
\]
taking into account that 
$\rf^0_{F_{s+1}/F_s,L}(-\beta)=0$ (by \autoref{slope0}) and $\rf^0_{F/F_{s+1},L}(-\beta)=0$ (by \autoref{HNbeta}).
Indeed, we can apply \autoref{sesequence} because $F/F_{s+1}$ has no subobject belonging to $\ker(Z_{0,\beta})$: otherwise, $\cH^{-1}(F/F_{s+1})$ would have a subsheaf of slope $\beta$, contradicting $F/F_{s+1}\in\Coh^{\beta_0}(X)$.

Now, for all $\beta>\beta_0$ in a certain right neighborhood of $\beta_0$, we use the vanishing $\rf^0_{F/F_s,L}(-\beta)=0$ to apply \autoref{sesequence} again, in this case with the short exact sequence $0\to F_s\to F\to F/F_s\to 0$. 
We obtain the equality $\rf^0_{F,L}(-\beta)=\rf^0_{F_s,L}(-\beta)$ in a right neighborhood of $\beta_0$. 

This proves the result for $\rf^0_{F,L}$, since $\rf^0_{F_s,L}$ is (piecewise) polynomial in (a shrinking of) this neighborhood by \autoref{polynomial1}.
The assertion for $\rf^1_{F,L}$ is simply a consequence of the relation $\rf^1_{F,L}(x)=\rf^0_{F,L}(x)-\ch_2^{-x}(F)$.
\end{proof}

\begin{proof}[Proof of \autoref{main}.\eqref{item:main1}]
The left polynomial expression for $\rf^k_{E,L}$ at $x_0$ is a consequence of \autoref{polynomial2} applied to the cohomology objects $\cH^k_{-x_0}(E)$ and $\cH^{k-1}_{-x_0}(E)$.
The right polynomial expression can be obtained from the left polynomial expression of $\rf^{2-k}_{E^\vee,L}$ at $-x_0$, thanks to the Serre duality of \autoref{rk:BasicProp}.\eqref{enum:SerreDua}.
\end{proof}

\begin{rem}\label{rem:IrratWithBrLimit}
If $\beta_0\in\bR\setminus\bQ$ is an irrational number at which $F$ admits a Bridgeland limit filtration, then the same conclusion of \autoref{polynomial2} holds.
Indeed, under this assumption $F$ admits a HN filtration
\[
0=F_0\hookrightarrow F_1 \hookrightarrow \ldots \hookrightarrow F_{s-1} \hookrightarrow F_s \hookrightarrow F_{s+1} \hookrightarrow \ldots \hookrightarrow F_r=F
\]
with respect to $\sigma_{0,\beta_0}$, so that $\nu_{0,\beta_0}(F_{s+1}/F_s)=0$ (we take $F_{s+1}=F_s$ if $F$ has no HN factor of tilt slope 0). 
Then:
\begin{enumerate}[{(1)}]
    \item The same arguments of \autoref{HNbeta} provide a right weak limit filtration for $F/F_{s+1}$.
    \item The arguments of \autoref{HNbeta2} together with induction on $s$, provide a right weak limit filtration for $F_s$.
    Indeed, $Z_{0,\beta_0}$ is a stability function on $\Coh^{\beta_0}(X)$ since $\beta_0\notin\bQ$;
    hence all the technical construction in \autoref{kernel} and \autoref{HNbeta3} can be avoided.
    In particular, the core filtration of $F$ at $\beta_0$ directly gives $F_1\subset\ldots\subset F_s$.
\end{enumerate}

Combining these right weak limit filtrations with \autoref{slope0} (also valid under the assumption $\beta_0\in\bR\setminus\bQ$) applied to $F_{s+1}/F_s$, we obtain the polynomial expression $\rf^0_{F,L}(x)=\ch_2^{-x}(F_s)$ for all $x$ in a left neighborhood of $-\beta_0$.

In all the cases we know, objects have a Bridgeland limit filtration also at irrational numbers (see for instance the examples in \autoref{sec:chdGieseker}), but we do not know how to prove this in general.
This would imply that the Chern degree functions are piecewise polynomial, which cannot be directly deduced from \autoref{main}.\eqref{item:main1}.
\end{rem}

\section{Continuity and differentiability of the functions}
\label{sec:ExtensionToR}

\subsection{Extension as continuous real functions}
In this section we extend the Chern degree functions $\rf_{F,L}^k$ to continuous functions of real variable.
A similar result first appeared in \cite[section~4]{BPS:LinearSystems} for the \emph{continuous rank functions}, and was later generalized to the study of \emph{cohomological rank functions} in \cite[section~3]{JP}. 

We essentially follow this second approach, namely: one bounds the derivative of the functions, and then argues by integration.
Whereas our arguments to express the functions around rational numbers (\autoref{sec:LocalExpr}) were much longer than the ones in \cite{JP}, the control of the derivatives for this part is easier in the stability framework.

Following the strategy of \autoref{sec:LocalExpr}, we first consider the case of objects in the hearts $\Coh^\beta(X)$:

\begin{thm}\label{extension1}
Let $F\in\Coh^\beta(X)$ for some $\beta\in\bR$.
Then, the functions $\rf_{F,L}^0$ and $\rf_{F,L}^1$ extend to continuous functions on the interval $I_F=(-\mu_-(\cH^0(F)),-\mu_+(\cH^{-1}(F))]$.
\end{thm}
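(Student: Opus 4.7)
The plan is to extend $\rf^0_{F,L}$ first; then the identity
\[
\rf^1_{F,L}(x)=\rf^0_{F,L}(x)-\ch_2^{-x}(F),
\]
valid on $I_F\cap\bQ$ (using that $F\in\Coh^{-x}(X)$ for every $x\in I_F$, so that only $\rf^0$ and $\rf^1$ are nonzero) with $\ch_2^{-x}(F)$ polynomial in $x$, immediately yields a continuous extension of $\rf^1_{F,L}$. To produce the extension of $\rf^0_{F,L}$, I will show that $\rf^0_{F,L}|_{I_F\cap\bQ}$ is Lipschitz on every compact subinterval of $I_F$; by density of $\bQ$, this gives a unique continuous extension to $I_F$.

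The pointwise input is a derivative bound on the local polynomial pieces provided by \autoref{main}.\eqref{item:main1}. Given $x_0\in I_F\cap\bQ$, set $\beta_0=-x_0$, so $F\in\Coh^{\beta_0}(X)$. By \autoref{polynomial2}, on a left neighborhood of $x_0$ one has $\rf^0_{F,L}(x)=\ch_2^{-x}(G_s)$, where $G_s\subset F$ is the last term of the core filtration of $F$ at $\beta_0$. Differentiating gives
\[
\frac{d}{dx}\ch_2^{-x}(G_s)=L\cdot\ch_1(G_s)+xL^2\ch_0(G_s)=L\cdot\ch_1^{-x}(G_s).
\]
The short exact sequence $0\to G_s\to F\to F/G_s\to 0$ in $\Coh^{\beta_0}(X)$ shows $0\leq L\cdot\ch_1^{\beta_0}(G_s)\leq L\cdot\ch_1^{\beta_0}(F)$, and by continuity of this affine function in $x$ the same bound, up to an arbitrarily small error, holds on a small neighborhood of $x_0$. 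A symmetric argument (using the right analogue of \autoref{polynomial2}, or derived duality via \autoref{rk:BasicProp}.\eqref{enum:SerreDua}) controls the right polynomial piece at $x_0$.

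Now fix a compact $[a,b]\subset I_F$ and set $C=1+\max_{x\in[a,b]}L\cdot\ch_1^{-x}(F)$, which is finite since $L\cdot\ch_1^{-x}(F)$ is affine in $x$ and hence bounded on $[a,b]$. By compactness and density of $\bQ$, cover $[a,b]$ with finitely many open intervals centered at rational points, inside each of which $\rf^0_{F,L}|_\bQ$ coincides with a (left or right) polynomial piece of derivative bounded by $C$. For any two rationals $q_1<q_2$ in $[a,b]$, select a chain $q_1=r_0<r_1<\cdots<r_m=q_2$ of rationals such that each $[r_{j-1},r_j]$ lies inside a single one of these polynomial pieces; telescoping yields
\[
|\rf^0_{F,L}(q_1)-\rf^0_{F,L}(q_2)|\leq C\,|q_1-q_2|.
\]
This Lipschitz bound gives a unique continuous extension of $\rf^0_{F,L}|_\bQ$ to $[a,b]$, and agreement of the extensions on overlapping subintervals produces a continuous function on all of $I_F$. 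The right endpoint $x_R=-\mu_+(\cH^{-1}(F))$ is reached because $F\in\Coh^{-x_R}(X)$ still holds, so \autoref{main}.\eqref{item:main1} provides a left polynomial piece at $x_R$ when $x_R\in\bQ$, and the irrational case follows by Lipschitz extension from below.

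The principal obstacle is the bookkeeping in the chaining argument: the local polynomial expressions of \autoref{main}.\eqref{item:main1} are only guaranteed at rational points, so the chain $\{r_j\}$ must pass through rationals while respecting the overlaps of adjacent polynomial pieces, and one must ensure that the derivative bound $C$ obtained in this way is truly uniform on $[a,b]$ (not an inflated product of local constants). Once this is handled, the derivative estimate itself is immediate from the explicit form $\rf^0_{F,L}=\ch_2^{-x}(G_s)$ of the polynomial pieces.
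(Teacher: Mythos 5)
Your overall strategy is the paper's: use the left/right polynomial expressions of \autoref{main}.\eqref{item:main1}, observe that each piece is $\ch_2^{-x}$ of a subobject of $F$ in $\Coh^{-x}(X)$ so that its derivative $L\cdot\ch_1^{-x}(G_s)$ is squeezed between $0$ and $L\cdot\ch_1^{-x}(F)$, and recover $\rf^1_{F,L}$ from the identity $\rf^1_{F,L}(x)=\rf^0_{F,L}(x)-\ch_2^{-x}(F)$. The only structural differences are that the paper first reduces to a bounded interval $I_F$ via Serre vanishing and Serre duality and then concludes ``by integration'' of the non-negative, uniformly bounded one-sided derivatives, whereas you work on compact subintervals (which legitimately replaces that reduction) and try to globalize by a Lipschitz chaining argument. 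Your derivative bound itself is correct.

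The gap is in the chaining step, and it is not mere bookkeeping. The intervals on which \autoref{main}.\eqref{item:main1} provides a polynomial expression are supplied only around \emph{rational} centers, with radii you do not control; their union is an open set containing $\bQ\cap[a,b]$, and such a set need not contain $[a,b]$ — the radii may shrink to $0$ as the centers approach an irrational point, which is precisely the accumulation-of-walls scenario the paper cannot exclude (this is why piecewise-polynomiality is left open). Hence $\set{U_q}_{q\in\bQ\cap[a,b]}$ is not an open cover of $[a,b]$, compactness yields no finite subcover, and the chain $q_1=r_0<\cdots<r_m=q_2$ need not exist. Nor can the conclusion be rescued abstractly from your hypotheses: a function on $\bQ\cap[a,b]$ equal to $0$ below an irrational $\xi$ and $1$ above it satisfies a uniform local Lipschitz condition (with constant $0$) at every rational, yet admits no continuous extension. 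Closing the argument requires a global input valid for \emph{all} pairs of rationals simultaneously; the cleanest one, consistent with the paper's remark that $\rf^0_{F,L}$ is non-decreasing and $\rf^1_{F,L}$ is non-increasing on $I_F$, is to establish these monotonicities globally on $I_F\cap\bQ$ and then use $\rf^0_{F,L}-\rf^1_{F,L}=\ch_2^{-x}(F)$ to sandwich every increment $\rf^0_{F,L}(q_2)-\rf^0_{F,L}(q_1)$ between $0$ and $\ch_2^{-q_2}(F)-\ch_2^{-q_1}(F)$, which gives the Lipschitz bound on $[a,b]\cap\bQ$ without any covering argument.
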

\begin{proof}
Notice that $I_F$ is (minus) the interval of \autoref{heart} delimiting where $F$ belongs to the heart; we have reversed signs for coherence with the definition of the functions.

First of all, we claim that we may restrict ourselves to the case where $I_F$ is bounded (i.e.~the numbers $\mu_+(\cH^{-1}(F))$ and $\mu_-(\cH^0(F))$ are both finite). 

This follows from the Serre vanishing of  \autoref{rk:BasicProp}.\eqref{enum:SerreVan}.
Indeed, on the one hand, if $I_F=(-\mu_-(\cH^0(F)),+\infty)$ is unbounded from the right, then $F$ is a coherent sheaf since $\cH^{-1}(F)$ is always torsion-free.
By Serre vanishing, there exists $x_0\in\bQ$ so that
 \[
 \rf^1_{F,L}(x)=0,\quad \rf^0_{F,L}(x)=\ch_2^{-x}(F)
 \]
for every rational $x\geq x_0$.
The extension of the functions is thus clear along the whole $[x_0,+\infty)$, so the problem is reduced to extend along $(-\mu_-(\cH^0(F)),x_0]$.
 
On the other hand, if $I_F=(-\infty,-\mu_+(\cH^{-1}(F))]$ is unbounded from the left, then $\cH^0(F)$ is torsion (or $0$).
This implies that the complex $F^\vee$ has at most two cohomology sheaves, which will be its cohomologies with respect to the heart $\Coh^\beta(X)$ for all $\beta\ll0$:
 \[
 \cH^1_{\beta}(F^\vee)=\cH^1(F^\vee), \quad \cH^2_{\beta}(F^\vee)=\cH^2(F^\vee).
 \]
 If $x_0\in\bQ$ is a bound ensuring Serre vanishing for the functions of the sheaves $\cH^1(F^\vee)$ and $\cH^2(F^\vee)$, by Serre duality \autoref{rk:BasicProp}.\eqref{enum:SerreDua} we have
 \[
 \rf^i_{F,L}(x)=\rf^{2-i}_{F^\vee,L}(-x)=\rf^0_{\cH^{2-i}(F^\vee),L}(-x)=\ch_2^x(\cH^{2-i}(F^\vee))
 \]
 for $i=0,1$ and every rational $x\leq-x_0$.
 Hence we only need to extend the functions along $(-x_0,-\mu_+(\cH^{-1}(F))]$.

Now take $\beta_0\in\bQ$ such that $F\in\Coh^{\beta_0}(X)$ (equivalently, $-\beta_0\in I_F\cap\bQ$), and let
\[
0=F_0\hookrightarrow F_1 \hookrightarrow \ldots \hookrightarrow F_{s-1} \hookrightarrow F_s \hookrightarrow F_{s+1} \hookrightarrow \ldots \hookrightarrow F_r=F
\]
be the HN filtration of $F$ with respect to $\sigma_{0,\beta_0}$, where $s=s(F)$ is the switching index of $F$.

Consider the subobject $G_s\subset F_s$, where $G_1\subset\ldots\subset G_s$ is the core filtration of $F$ at $\beta_0$.
Then, for all $x\in\bQ$ in a left neighborhood of $-\beta_0$ the function $\rf^0_{F,L}$ is polynomially expressed as
\[
\rf^0_{F,L}(x)=\ch_2^{-x}(G_s).\]


Therefore, 
$L\cdot\ch_1^{\beta_0}(G_s)$ is the left derivative of $\rf_{F,L}^0$ at $-\beta_0$.
Since this is the rank of $G_s$ in the heart $\Coh^{\beta_0}(X)$, it turns out that $0\leq D^-\rf_{F,L}^0(-\beta_0)\leq L\cdot\ch_1^{\beta_0}(F)$.

Summing up, the function $\rf^0_{F,L}$ has a left (and right) derivative at every $x\in I_F\cap\bQ$ (actually at every $x\in I_F\cap U$, where $U\subset\bR$ is an open subset containing $\bQ$), and they coincide almost everywhere.
Moreover, the derivatives are non-negative and bounded from above.
By integration, it follows that $\rf^0_{F,L}$ extends to a continuous function on the whole interval $I_F$.

The result for $\rf^1_{F,L}$ follows directly by defining $\rf_{F,L}^1(x)=\rf_{F,L}^0(x)-\ch_2^{-x}(F)$ for $x\in I_F$.
\end{proof}


\begin{rem}
It follows from the proof that the function $\rf^0_{F,L}$ (resp.~$\rf^1_{F,L}$) is non-decreasing (resp.~non-increasing) along the interval $I_F$.
This (a posteriori) explains \autoref{slope0}.
\end{rem}


Now we are ready to give a proof of \autoref{main}.\eqref{item:main2} in full generality:

\begin{proof}[Proof of \autoref{main}.\eqref{item:main2}]
Using the definition of $\rf_{E,L}^k$ this becomes an immediate consequence of \autoref{extension1}, provided that $\rf_{E,L}^k$ is continuous at the (finitely many) points $x=-\beta$ where the cohomologies $\cH^i_{\beta}(E)$ change.

And this continuity is guaranteed by \autoref{main}.\eqref{item:main1}, since such points of change are rational (they correspond to $\mu_L$-slopes of Harder-Narasimhan factors of cohomology sheaves of $E$).
\end{proof}

\subsection{Critical points}\label{sec:critical}
Let $\beta_0\in\bQ$.
Now we want to study when the functions $\rf^k_{F,L}$ attached to an object $F\in\Db(X)$ have a \emph{critical point} at $x=-\beta_0$, namely the functions are not of class $\cC^\infty$ at $-\beta_0$.

For simplicity, we will assume $F\in\Coh^{\beta_0}(X)$; 
our explicit analysis (see \autoref{critical} below) covers the case where the cohomologies of $F$ change at $-\beta_0$.
Still, in the general case, one also should take care of the eventual situation where $-\beta_0$ is a critical point for both $\rf^0_{\cH_{\beta_0}^k(F),L}$ and $\rf^1_{\cH_{\beta_0}^{k-1}(F),L}$, but $\rf^k_{F,L}$ is regular at $-\beta_0$.
Our approach may also detect this phenomenon, but the precise description of the general situation is tiresome and not so enlightening.

Let $0=F_0\hookrightarrow F_1\hookrightarrow\ldots\hookrightarrow F_r=F$ be the HN filtration of $F$ with respect to $\sigma_{0,\beta_0}$, satisfying
\[+\infty=\nu_{0,\beta_0}(F_1)>\ldots>\nu_{0,\beta_0}(F_s/F_{s-1})>0=\nu_{0,\beta_0}(F_{s+1}/F_s)>\ldots>\nu_{0,\beta_0}(F/F_{r-1}),
\]
that is, $s=s(F)$ is the switching index of $F$.
We write $F_1=0$ (resp.~$F_s=F_{s+1}$) if $F$ has no HN factor of tilt slope $+\infty$ (resp.~tilt slope $0$).
Then:

\begin{itemize}[\textbullet]
    \item As dictated by \autoref{heart}, $F\in\Coh^\beta(X)$ for all small enough $\beta>\beta_0$.
    This means that $F$ will only have two nonzero functions in a left neighborhood of $-\beta_0$, namely $\rf^0_{F,L}$ and $\rf^1_{F,L}$.
    
    \item For big enough $\beta<\beta_0$, $F$ may have two nontrivial cohomologies in $\Coh^\beta(X)$.
    More precisely, recall that by \autoref{Giesekerkernel} $\cH^{-1}(F_1)$ is a $\mu_L$-semistable sheaf of slope $\beta_0$, and $\cH^0(F_1)$ is a $0$-dimensional sheaf.
    Therefore, we have distinguished triangles
    \[
    \xymatrix@!C=10pt{
    0\ar[rr] && \cH^{-1}(F_1)[1]\ar[rr]\ar[dl]&&F_1\ar[dl]\ar[rr]&&F\ar[dl]\\
    &\cH^{-1}(F_1)[1]\ar[ul]&&\cH^0(F_1)\ar[ul]&&F/F_1\ar[ul]
    }
    \]

    \noindent with $\cH^{-1}(F_1),\cH^0(F_1),F/F_1\in\Coh^\beta(X)$ for $\beta<\beta_0$.
    This tells us that $\cH^{-1}_\beta(F)=\cH^{-1}(F_1)$ and $\cH^0_\beta(F)=F/(\cH^{-1}(F_1)[1])$ (where this quotient is taken in $\Coh^{\beta_0}(X)$).
    
    \noindent Moreover, note that the only nonzero Chern degree functions of $F$ in a right neighborhood of $-\beta_0$ may be $\rf^{-1}_{F,L}$, $\rf^{0}_{F,L}$ and $\rf^{1}_{F,L}$.
\end{itemize}

Now we consider the polynomial expressions for the functions in a left and a right neighborhood of $-\beta_0$, that we found in \autoref{sec:LocalExpr}.
Explicitly, there exists $\epsilon>0$ such that

\begin{equation}\label{eqn:notacioR}
\begin{split}
   \rf^{-1}_{F,L}(x)&=\left\{
    \begin{array}{c l}
     0 & -\beta_0-\epsilon\leq x\leq-\beta_0\\
     -\ch_2^x\left(\cH^{-1}(F_1)^\vee[1]/P\right) & -\beta_0\leq x\leq-\beta_0+\epsilon\\
    \end{array}
    \right.\\
   \rf^0_{F,L}(x)&=\left\{
    \begin{array}{c l}
     \ch_2^{-x}(G_s) & -\beta_0-\epsilon\leq x\leq-\beta_0\\
     \ch_2^x(P)-\ch_2^x\left((F/F_1)^\vee[1]/R_{s+1}\right)+\text{length}(\cH^0(F_1)) & -\beta_0\leq x\leq-\beta_0+\epsilon\\
    \end{array}
    \right.
\\
   \rf^1_{F,L}(x)&=\left\{
    \begin{array}{c l}
     -\ch_2^{-x}(F/G_s) & -\beta_0-\epsilon\leq x\leq-\beta_0\\
     \ch_2^x(R_{s+1}) & -\beta_0\leq x\leq-\beta_0+\epsilon\\
    \end{array}
    \right.
\end{split}
\end{equation}

where:

\begin{itemize}[\textbullet]
    \item The chain $G_1\subset\ldots\subset G_s$ (with $G_i\subset F_i$) is the core filtration of $F$ at $\beta_0$.
    
    \item $P=\left(\cH^{-1}(F_1)^\vee[1]\right)'$ is the core subobject of $\cH^{-1}(F_1)^\vee[1]$ at $-\beta_0$.
    
    \item The chain $R_{r-1}\subset\ldots\subset R_{s+1}$ in $\Coh^{-\beta_0}(X)$ (with $R_i\subset (F/F_i)^\vee[1]$) is the core filtration of $(F/F_1)^\vee[1]$ at $-\beta_0$.
    It is inductively constructed by letting $R_{r-1}=\left((F/F_{r-1})^\vee[1]\right)'$ and  $R_i/R_{i+1}=\left((F/F_i)^\vee[1]/R_{i+1}\right)'$.
\end{itemize}

A critical point for $\rf^{-1}_{F,L}$ arises whenever the object $\cH^{-1}(F_1)^\vee[1]/P$ is nonzero.
Since the regularity of $\rf^0_{F,L}$ can be deduced from that of $\rf^{-1}_{F,L}$ and $\rf^{1}_{F,L}$, we are left to compare the polynomial expressions for $\rf^1_{F,L}$.
To this end, we will use that $\ch_2^x(R_{s+1})=-\ch_2^{-x}(R_{s+1}^\vee[1])$ and we will exhibit a chain of morphisms connecting $R_{s+1}^\vee[1]$ with $F/G_s$.

By construction, there is a short exact sequence
\[
0\to R_{s+1}\to (F/F_{s+1})^\vee[1]\to Q\to 0
\]
in $\Coh^{-\beta_0}(X)$ with $Q\in\ker(Z_{0,-\beta_0})$; hence the object $Q$ is of the form $Q=S[1]$ for a $\mu_L$-semistable vector bundle $S$ with $\mu_L(S)=-\beta_0$ and $\odisc(S)=0$.
Dualizing, this yields a short exact sequence
\[
0\to S^\vee\to F/F_{s+1}\to R_{s+1}^\vee[1]\to 0
\]
in $\Coh^\beta(X)$, for big enough values of $\beta<\beta_0$.
We have obtained the following sequence of morphisms 
\[
F/G_s\to F/F_s\to F/F_{s+1}\to R_{s+1}^\vee[1]
\]
for which there exists $\epsilon'>0$ satisfying: $F/G_s\to F/F_s$ is a surjection in $\Coh^\beta(X)$ for $\beta\in[\beta_0,\beta_0+\epsilon')$, $F/F_s\to F/F_{s+1}$ is a surjection in $\Coh^\beta(X)$ for $\beta\in(\beta_0-\epsilon',\beta_0+\epsilon')$ and $F/F_{s+1}\to R_{s+1}^\vee[1]$ is a surjection in $\Coh^\beta(X)$ for $\beta\in(\beta_0-\epsilon',\beta_0)$.

With all this information, we get the following result:

\begin{prop}\label{critical} Let $F\in \Coh^{\beta_0}(X)$ and we keep the notation of \eqref{eqn:notacioR}.
\begin{enumerate}[{\rm (1)}]
    \item\label{cond1} The function $\rf^{-1}_{F,L}$ has a critical point at $x=-\beta_0$ if and only if $P\subsetneq \cH^{-1}(F_1)^\vee[1]$ in $\Coh^{-\beta_0}(X)$.
    In such a case, the function $\rf^{-1}_{F,L}$ is of class $\cC^1$ at $x=-\beta_0$.
    
    \item The function $\rf^1_{F,L}$ has a critical point at $x=-\beta_0$ if and only if $F/G_s\neq R_{s+1}^\vee[1]$. 
    This is equivalent to one of the following conditions holding:
    \begin{enumerate}[{\rm (a)}]
        \item\label{cond:a} $F_s\neq F_{s+1}$, namely $F$ has a HN factor (with respect to $\sigma_{0,\beta_0}$) of slope $\nu_{0,\beta_0}=0$.
        \item\label{cond:b} $G_s\neq F_s$
        \item\label{cond:c} $F/F_{s+1}\neq R_{s+1}^\vee[1]$
    \end{enumerate}
    Furthermore, $\rf^1_{F,L}$ is of class $\cC^1$ at $x=-\beta_0$ unless condition \eqref{cond:a} holds.
    
    \item The function $\rf^0_{F,L}$ has a critical point at $x=-\beta_0$ if and only if $\rf^{-1}_{F,L}$ or $\rf^1_{F,L}$ has a critical point.
    In such a case, $\rf^0_{F,L}$ is of class $\cC^1$ at $x=-\beta_0$ unless \eqref{cond:a} holds.
\end{enumerate}
\end{prop}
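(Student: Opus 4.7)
The strategy is to compare, for each $k\in\{-1,0,1\}$, the left and right polynomial expressions of $\rf^k_{F,L}$ at $x=-\beta_0$ given in \eqref{eqn:notacioR}. As both sides are polynomials in $x$ of degree at most two, $\rf^k_{F,L}$ is $\cC^\infty$ at $-\beta_0$ exactly when the two polynomials coincide, and it is $\cC^1$ exactly when their values and first derivatives agree at $-\beta_0$. Regularity can thus be read off from the coefficients of $1$, $L\cdot\ch_1$ and $L^2\cdot\ch_0$ in the polynomial difference.

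For item (1), the left expression is identically zero, so it suffices to analyze the right expression $-\ch_2^x(E)$, where $E:=\cH^{-1}(F_1)^\vee[1]/P$. By the minimality defining the core subobject $P$ (\autoref{kernel}), $E\in\ker(Z_{0,-\beta_0})$, so $\ch_2^{-\beta_0}(E)=0$ and $L\cdot\ch_1^{-\beta_0}(E)=0$. This forces the value and first derivative of $-\ch_2^x(E)$ at $x=-\beta_0$ to vanish, making $\rf^{-1}_{F,L}$ automatically $\cC^1$ at $-\beta_0$. The second derivative is $-L^2\cdot\ch_0(E)$; via \autoref{Giesekerkernel} one writes $E=T[1]$ for a vector bundle $T$, so this is nonzero precisely when $T\neq0$, i.e., when $P\subsetneq\cH^{-1}(F_1)^\vee[1]$.

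For item (2), the key input is the chain of surjections in $\Coh^{\beta_0}(X)$
\[F/G_s\twoheadrightarrow F/F_s\twoheadrightarrow F/F_{s+1}\twoheadrightarrow R_{s+1}^\vee[1]\]
constructed before the proposition, with successive kernels $F_s/G_s$, $F_{s+1}/F_s$ and $S^\vee$. Using $\ch_2^x(R_{s+1})=-\ch_2^{-x}(R_{s+1}^\vee[1])$, the difference of the right and left polynomial expressions equals $\ch_2^{-x}$ applied to the sum of the three kernel classes. These kernels satisfy: $F_s/G_s\in\ker(Z_{0,\beta_0})$ (both $L\cdot\ch_1^{\beta_0}$ and $\ch_2^{\beta_0}$ vanish); $F_{s+1}/F_s$ is a HN factor of tilt slope zero, so $\ch_2^{\beta_0}=0$ and $L\cdot\ch_1^{\beta_0}>0$ unless $F_{s+1}=F_s$; and $S^\vee$ has $L\cdot\ch_1^{\beta_0}=\ch_2^{\beta_0}=0$, obtained by dualizing $S[1]\in\ker(Z_{0,-\beta_0})$ (using $\mu_L(S)=-\beta_0$ and $\odisc(S)=0$). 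Consequently values match (reconfirming continuity), and the first-derivative difference reduces to a multiple of $L\cdot\ch_1^{\beta_0}(F_{s+1}/F_s)$, so $\rf^1_{F,L}$ is $\cC^1$ at $-\beta_0$ unless \eqref{cond:a} holds. For the second derivative, when \eqref{cond:a} fails the residual $L^2\cdot\ch_0$ contribution becomes $\ch_0(F_s/G_s)+\ch_0(S^\vee)$, where $F_s/G_s=T'[1]$ for a vector bundle $T'$ by iterating \autoref{Giesekerkernel} (so $\ch_0(F_s/G_s)\leq0$) and $\ch_0(S^\vee)\geq0$.

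The main obstacle is completing this second-derivative analysis to match the proposition's sharp statement, namely that polynomial equality holds exactly when $F/G_s=R_{s+1}^\vee[1]$ as bona fide objects, equivalently when \eqref{cond:a}, \eqref{cond:b} and \eqref{cond:c} all fail. Concretely, one must rule out a numerical cancellation between $\ch_0(F_s/G_s)$ and $\ch_0(S^\vee)$ when \eqref{cond:b} and \eqref{cond:c} both occur; I would approach this by exploiting the explicit structure of both kernels as shifts of vector bundles of opposite slopes $\pm\beta_0$ with $\odisc=0$, together with the interplay between the core filtration of $F_s$ on one side and the dual core filtration of $(F/F_1)^\vee[1]$ on the other, possibly combined with a wall-crossing argument inside the $(\alpha,\beta)$-plane. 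Finally, item (3) is immediate from items (1) and (2) via the identity $\rf^0_{F,L}(x)=\ch_2^{-x}(F)+\rf^{-1}_{F,L}(x)+\rf^1_{F,L}(x)$ (obtained from $\sum_k(-1)^k\rf^k_{F,L}(x)=\ch_2^{-x}(F)$): since $\ch_2^{-x}(F)$ is a global polynomial, the failure of $\cC^\infty$ (respectively $\cC^1$) for $\rf^0_{F,L}$ at $-\beta_0$ is controlled by that of $\rf^{\pm1}_{F,L}$, which yields the stated regularity.
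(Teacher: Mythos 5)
Your proposal follows essentially the same route as the paper: the paper's ``proof'' of \autoref{critical} is precisely the discussion that derives the local expressions \eqref{eqn:notacioR} and the chain of surjections $F/G_s\twoheadrightarrow F/F_s\twoheadrightarrow F/F_{s+1}\twoheadrightarrow R_{s+1}^\vee[1]$ with kernels $F_s/G_s$, $F_{s+1}/F_s$, $S^\vee$, after which the statement is read off by comparing the two quadratics order by order, exactly as you do. Your treatment of item (1) (value and first derivative of $-\ch_2^x$ vanish automatically because the cokernel of $P$ lies in $\ker(Z_{0,-\beta_0})$, and the second derivative equals $L^2\cdot\rk T$ via \autoref{Giesekerkernel}), of the $\cC^1$ criterion in item (2) (the only linear contribution to the jump is $L\cdot\ch_1^{\beta_0}(F_{s+1}/F_s)$), and of item (3) via the Euler relation $\sum_k(-1)^k\rf^k_{F,L}=\ch_2^{-x}(F)$ is the paper's argument.

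Concerning the step you flag as the ``main obstacle'': you have not missed an argument that the paper supplies. The paper asserts the equivalence ``critical point $\Leftrightarrow F/G_s\neq R_{s+1}^\vee[1]$'' directly from the same data, without explicitly ruling out the numerical cancellation $\ch_0(F_s/G_s)+\ch_0(S^\vee)=\rk S-\rk T'=0$, which could only occur when \eqref{cond:b} and \eqref{cond:c} hold simultaneously and \eqref{cond:a} fails; the remark following the proposition explicitly leaves open whether \eqref{cond:b} and \eqref{cond:c} can co-occur, which is exactly the only configuration in which such a cancellation is conceivable (if they cannot co-occur, at most one of $\rk T'$, $\rk S$ is nonzero and the equivalence is immediate, as your computation shows). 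A parallel caveat applies to item (3): the jump of $\rf^{-1}_{F,L}$ is the nonnegative quadratic $L^2\rk(T)\cdot\frac{(x+\beta_0)^2}{2}$, while the quadratic part of the jump of $\rf^1_{F,L}$ has coefficient $L^2(\rk S-\rk T')$ of a priori indeterminate sign, so the ``if'' direction there also tacitly uses non-cancellation. In short, your hesitation pinpoints the one place where the paper's own justification is implicit rather than a defect specific to your write-up; everything else matches the intended proof.
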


\begin{rem} Intuitively, we may think that the functions have critical points at $x=-\beta_0$ if and only if the hyperbola of one of its (possibly weak limit) HN factors at $\beta_0$ passes through the point $(0,\beta_0)$ of the $(\alpha,\beta)$-plane.
    
    \noindent For instance, condition \eqref{cond:b} is equivalent to $F_s$ having a left weak limit HN factor with tilt slope $\nu_{0,\beta}$ tending to 0 as $\beta\to\beta_0^+$.
\begin{enumerate}[{\rm (1)}]
    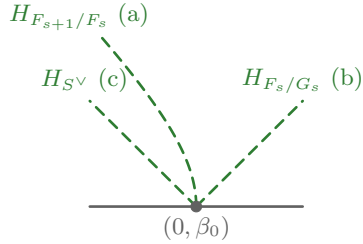
\begin{figure}[H]
\definecolor{ffffff}{rgb}{1,0,0}
\definecolor{sexdts}{rgb}{0.1803921568627451,0.49019607843137253,0.19607843137254902}
\definecolor{wrwrwr}{rgb}{0.3803921568627451,0.3803921568627451,0.3803921568627451}
\begin{tikzpicture}[line cap=round,line join=round,>=triangle 45,x=1.4cm,y=1.4cm]
\draw [line width=1pt,color=wrwrwr] (-1,0) -- (1,0);
\draw [line width=1pt,dash pattern=on 4pt off 3pt,color=sexdts] (0,0) -- (1,1);
\draw [line width=1pt,dash pattern=on 4pt off 3pt,color=sexdts] (0,0) -- (-1,1);
\draw[line width=1pt,dash pattern=on 4pt off 3pt,color=sexdts, smooth,samples=100,domain=0:10] 
plot[domain=0:1.25,variable=\t]({(-exp(\t)-exp(-\t))/2+1},{(exp(\t)-exp(-\t))/2});
\filldraw[color=wrwrwr] (0,0) circle (2pt); 

\begin{footnotesize}
\draw[color=sexdts] (-1.1,1.8) node {$\hyp{F_{s+1}/F_s}$ (a)};
\draw[color=sexdts] (1,1.2) node {$\hyp{F_s/G_s}$ (b)};
\draw[color=sexdts] (-1.05,1.2) node {$\hyp{S^\vee}$ (c)};
\draw[color=wrwrwr] (0,-0.2) node {$(0,\beta_0)$};
\end{footnotesize}
\end{tikzpicture}
\caption{Hyperbolas through $(0,\beta_0)$ producing a critical point for $\rf^0_{F,L}$ and $\rf^1_{F,L}$.}
\label{fig:hypcritical}
\end{figure}

    \item The case \eqref{cond1} is certainly exceptional, in the sense that it requires a (nontrivial) condition on one of the (finitely many) points where the cohomologies of $F$ with respect to the hearts $\Coh^\beta(X)$ change.
    
    \noindent On the other hand, whereas the case \eqref{cond:a} is naturally described in terms of $\sigma_{0,\beta_0}$-stability, condition \eqref{cond:b} (resp.~condition \eqref{cond:c}) requires the nontriviality of the core filtration of $F_s$ (resp.~$(F/F_{s+1})^\vee[1]$) at $\beta_0$ (resp.~$-\beta_0$).
    In particular, \eqref{cond:b} and \eqref{cond:c} require that $\sigma_{0,\beta_0}$ is not a Bridgeland stability condition.
    
    \item The possibilities \eqref{cond:a}, \eqref{cond:b} and \eqref{cond:c} producing critical points in the function $\rf^1_{F,L}$ may not be mutually exclusive. 
    Indeed,
    we will see below an example (\autoref{case:n=5}) where \eqref{cond:a} and \eqref{cond:c} hold simultaneously.
    It would be interesting to know whether conditions \eqref{cond:b} and \eqref{cond:c} may hold at the same time or not.
\end{enumerate}
Furthermore, the same study can be applied when $\beta_0$ is an irrational number, at which $F$ admits a Bridgeland limit HN filtration (recall \autoref{rem:IrratWithBrLimit}).

In such a case, $F\in\Coh^\beta(X)$ for some values $\beta<\beta_0$ as well, so $\rf^{-1}_{F,L}$ is identically 0 in an open neighborhood of $-\beta_0$.
Moreover, possibilities \eqref{cond:b} and \eqref{cond:c} must be excluded since $Z_{0,\beta_0}$ (resp.~$Z_{0,-\beta_0}$) is a stability function on $\Coh^{\beta_0}(X)$ (resp.~$\Coh^{-\beta_0}(X)$) when $\beta_0\notin\bQ$.
Therefore, the only possibility for a critical point is \eqref{cond:a}, which gives a point where the functions $\rf^0_{F,L}$ and $\rf^1_{F,L}$ are not differentiable.
\end{rem}

\section{The case of abelian surfaces}\label{sec:CaseCRF}

\subsection{Cohomological rank functions on abelian varieties} Let $(A,L)$ be a $g$-dimensional polarized abelian variety over $\bK$.
In their paper \cite{JP} Jiang and Pareschi attach to every $F\in\Db(A)$ and $i\in\bZ$ a \emph{cohomological rank function} $h^i_{F,L}:\bQ\to\bQ_{\geq0}$ defined as follows: given a rational number $x_0=\frac{a}{b}$ with $b>0$, then
\[
h^i_{F,L}(x_0):=\frac{1}{b^{2g}}h^i(\mu_b^*F\otimes M^{ab}\otimes\alpha)
\]
for general $\alpha\in\Pic^0(A)$, where $\mu_b:A\to A$ is the multiplication-by-$b$ isogeny and $M\in\Pic(A)$ is any ample line bundle representing the polarization $L$.

Since $\mu_b^*L=b^2L$ (hence $\mu_b^*(x_0L)=abL$) and $\deg\mu_b=b^{2g}$, the number $h^i_{F,L}(x_0)$ gives a meaning to the (hyper)cohomological rank $h^i(F\otimes L^{x_0})$ of $F$ twisted with the general representative of the fractional polarization $x_0L$.

\begin{rem}The definition in \cite{JP} is given under the assumption $\Char\bK=0$, but the same definition works in arbitrary characteristic as observed in \cite[Section~2]{Caucci}.
\end{rem}

The main results of Jiang and Pareschi about these functions can be summarized as:

\begin{thm}\label{JPmain}
Let $F\in\Db(A)$ be an object and $i\in\bZ$.
Then:
\begin{enumerate}[{\rm (1)}]
    \item\label{enum:JP1} \cite[Corollary~2.6]{JP} For every $x_0\in\bQ$, there exists a left (resp.~right) neighborhood of $x_0$ where the function $h^i_{F,L}$ is given by an explicit polynomial.
    \item\label{enum:JP2} \cite[Theorem~3.2]{JP} If $\Char\bK=0$, the function $h^i_{F,L}$ extends to a continuous function of real variable $h^i_{F,L}:\bR\to\bR_{\geq0}$.
\end{enumerate}
\end{thm}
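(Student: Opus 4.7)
Since both assertions are attributed to \cite{JP}, my plan is to follow their strategy based on the Fourier-Mukai transform $\FM:\Db(A)\to\Db(\hat{A})$ with kernel the Poincar\'e line bundle $\cP$ on $A\times\hat{A}$, combined with the multiplication isogenies $\mu_b:A\to A$ which are already built into the definition of $h^i_{F,L}(x_0)$.

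First I would prove part (1). Fix $x_0=a/b\in\bQ$ with $\gcd(a,b)=1$ and $b>0$, and let $M$ be an ample line bundle representing $L$. Since the definition of $h^i_{F,L}(x_0)$ involves $\mu_b^*F\otimes M^{ab}$, after replacing $F$ by this object (and rescaling the polarization) one reduces to studying $h^i(F'\otimes L^t\otimes\alpha)$ for rational $t$ near $0$ and $\alpha\in\Pic^0(A)$ general. The key step is a transformation formula of the type \cite[Proposition~2.3]{JP}, relating this to a cohomological rank function of $\FM(F')$ evaluated at a parameter depending on $t$ in a M\"obius-type fashion. Combined with standard Serre vanishing applied to $\FM(F')$, this yields a one-sided polynomial expression; the polynomials $P^-$ and $P^+$ may differ because the generic IT index of $\FM(F')\otimes\hat{L}^t$ can jump as $t$ crosses $0$.

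Part (2) would then follow by integration. The polynomial expressions from (1) give explicit one-sided derivatives of $h^i_{F,L}$ at every $x_0\in\bQ$, agreeing outside a discrete set; uniform bounds on these derivatives over bounded intervals (read off from the explicit form of $P^\pm$) together with the fundamental theorem of calculus extend $h^i_{F,L}$ to a continuous function on $\bR$, exactly as in the approach we adopt for \autoref{main}.\eqref{item:main2}.

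The main difficulty lies in the transformation formula of part (1): establishing it requires a careful analysis of how WIT conditions for $F\otimes L^t\otimes\alpha$ behave as $t$ crosses a rational value, so that only finitely many cohomological ranks contribute in a neighborhood of $x_0$ and their alternating sum is polynomial. The assumption $\Char\bK=0$ in part (2) arises from reliance on generic vanishing theorems; by contrast, part (1) holds in arbitrary characteristic, and this is what will enable our extension of \autoref{JPmain}.\eqref{enum:JP2} to positive characteristic via \autoref{mainCRF} together with \autoref{main}.\eqref{item:main2}.
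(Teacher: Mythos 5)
The paper does not prove this statement at all---it is quoted directly from Jiang--Pareschi with the citations [Corollary~2.6, JP] and [Theorem~3.2, JP]---so there is no internal argument to compare yours against. Your sketch of their strategy (a Fourier--Mukai transformation formula combined with Serre vanishing for the one-sided polynomial expressions, then bounding the one-sided derivatives and integrating to get continuity) is consistent with how the paper's introduction and \autoref{sec:CaseCRF} describe the methods of \cite{JP}, including the role of $\Char\bK=0$ through generic vanishing and the observation that continuity in positive characteristic is instead recovered from \autoref{main}.\eqref{item:main2} together with \autoref{mainCRF}.
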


It is expected (\cite[Remark~2.8]{JP}) that these real functions are piecewise polynomial; in other words, that their critical points do not accumulate towards an irrational number.

In the case of elliptic curves, cohomological rank functions admit a precise description in terms of $\mu_L$-stability.
The main point is that $\mu_L$-semistable coherent sheaves have \emph{trivial} functions, that is, the support of any of its functions is disjoint with the support of all the other functions.
This is certainly well known to the experts, but we include a proof since we could not find a reference.

\begin{prop}\label{CRFelliptic}
Let $(E,L)$ be an elliptic curve endowed with a polarization of degree 1. 
\begin{enumerate}[{\rm (1)}]
 \item\label{enum:elliptic1} If $F\in\Coh(E)$ is a $\mu_L$-semistable coherent sheaf, then it has trivial functions
 \[
 h^0_{F,L}(x)=\left\{
 \begin{array}{c l}
 0 & \chi_{F,L}(x)\leq 0\\
 \chi_{F,L}(x) & \chi_{F,L}(x)\geq0\\
 \end{array}
 \right.
 \;\;\;\;\;
 h^1_{F,L}(x)=\left\{
 \begin{array}{c l}
 -\chi_{F,L}(x) & \chi_{F,L}(x)\leq 0\\
 0 & \chi_{F,L}(x)\geq0\\
 \end{array}
 \right.
 \]
 where $\chi_{F,L}(x)=\rk(F)\cdot x+\deg(F)$ is the Hilbert polynomial of $F$ with respect to $L$.

 \item\label{enum:elliptic2} Let $0=F_0\hookrightarrow F_1\hookrightarrow\ldots\hookrightarrow F_r=F$ be the HN filtration of a coherent sheaf $F$.
 Then the functions of $F$ can be recovered from those of its HN factors:
 \[
 h^0_{F,L}(x)=\displaystyle\sum_{k=1}^r\left(h^0_{F_k/F_{k-1},L}(x)\right)=\displaystyle\sum_{\chi_{F_k/F_{k-1},L}(x)\geq0}\left(\chi_{F_k/F_{k-1},L}(x)\right)
 \]
 \[
 h^1_{F,L}(x)=\displaystyle\sum_{k=1}^r\left(h^1_{F_k/F_{k-1},L}(x)\right)=\displaystyle\sum_{\chi_{F_k/F_{k-1},L}(x)\leq0}\left(-\chi_{F_k/F_{k-1},L}(x)\right)
 \]

\item\label{enum:elliptic3} For any $ F\in\Db(E)$ and $i\in\bZ$, 
\[
h^i_{F,L}(x)=h^0_{\cH^i(F),L}(x)+h^1_{\cH^{i-1}(F),L}(x).
\]
\end{enumerate}

\end{prop}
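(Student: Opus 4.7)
The plan is to unwind the definition $h^i_{F,L}(a/b) = h^i(\mu_b^*F \otimes M^{ab} \otimes \alpha)/b^2$ and exploit that $\mu_b$ is an isogeny of degree $b^2$ which preserves semistability and multiplies slopes by $b^2$. Item \eqref{enum:elliptic1} then reduces to the classical dichotomy on elliptic curves: if $F$ is semistable with $\mu(F)=d/r$, then $\mu_b^*F\otimes M^{ab}$ is semistable of slope $b(b\mu(F)+a)$, whose sign agrees with that of $\chi_{F,L}(a/b) = (ar+bd)/b$. On $E$, a semistable sheaf $G$ satisfies $H^1(G)=0$ when $\mu(G)>0$ (by Serre duality applied to $G^\vee$, using $\omega_E \cong \cO_E$) and $H^0(G)=0$ when $\mu(G)<0$; when $\mu(G)=0$, a generic twist $G\otimes\alpha$ has both cohomologies vanishing since its Jordan--H\"older factors are generic degree-zero line bundles. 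Substituting into the definition and using $\chi=\deg$ on $E$ yields the claimed formulas in each of the three regions.

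For \eqref{enum:elliptic2} I would induct on the HN length $r$, using the long exact sequence attached to $0\to F_{k-1}\to F_k\to F_k/F_{k-1}\to 0$ after pullback via $\mu_b$ and twist by $M^{ab}\otimes\alpha$. Since HN slopes strictly decrease, for fixed $x_0$ the sign of the slope of each twisted HN factor $F_k/F_{k-1}\otimes L^{x_0}$ changes monotonically at most once along $k$. The inductive step splits into three cases: if $\mu(F_k/F_{k-1}\otimes L^{x_0})>0$, all previous factors have bigger positive slope so $H^1$ of $F_{k-1}$ vanishes by the inductive hypothesis together with \eqref{enum:elliptic1}; if negative, $H^0$ of the new factor vanishes directly; if zero, both cohomologies of the new factor vanish for generic $\alpha$. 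In each case the long exact sequence splits into short exact pieces, giving the additivity formula. The delicate point is ensuring a \emph{single} $\alpha$ can be chosen simultaneously for all (finitely many) slope-zero HN factors, which is achieved by intersecting the corresponding open dense loci in $\Pic^0(E)$.

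Finally, \eqref{enum:elliptic3} follows because $E$ has cohomological dimension one: $\Ext^{\geq 2}$ vanishes on coherent sheaves, so every $F\in\Db(E)$ splits non-canonically as $\bigoplus_i \cH^i(F)[-i]$, and the hypercohomology of $\mu_b^*F\otimes M^{ab}\otimes\alpha$ decomposes accordingly to produce the stated identity. The main obstacle I anticipate is the coordination of genericity conditions on $\alpha$ in the zero-slope case, together with verifying that pullback along $\mu_b$ preserves semistability in positive characteristic; the remainder of the argument is bookkeeping with long exact sequences and the additivity of $\chi$.
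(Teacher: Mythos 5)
Your proposal is correct and its overall architecture coincides with the paper's: unwind the definition via $\mu_b$, use that pullback along a (separable) isogeny preserves $\mu_L$-semistability to get the Hom-vanishings from slope comparisons, prove \eqref{enum:elliptic2} by induction on the HN length using the long exact sequence and the monotonicity of the slopes, and reduce \eqref{enum:elliptic3} to the sheaf case. There are, however, two points where you take a genuinely different route. First, at the boundary $\chi_{F,L}(x)=0$ you argue directly that a generic twist of a slope-zero semistable sheaf has no cohomology (via its Jordan--H\"older factors being degree-zero line bundles); the paper instead never touches the slope-zero case head on: it only proves the vanishings at points where the relevant inequality is strict, and then invokes the local polynomiality of [JP, Corollary~2.6] (i.e.\ \autoref{JPmain}.\eqref{enum:JP1}) to propagate the vanishing from a sequence $x_n\to x$ to the point $x$ itself. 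This same device is what lets the paper dispose of your one acknowledged loose end: in positive characteristic it chooses the approximating rationals $x_n=a_n/b_n$ with $\mu_{b_n}$ \'etale, so that semistability of $\mu_{b_n}^*F$ follows from [HuybrechtsLehn, Lemma~3.2.2], and the value at $x$ is again recovered by polynomiality. Your direct approach would instead need the (true, but not proved in your sketch) fact that on an elliptic curve even inseparable multiplication maps preserve semistability. Second, for \eqref{enum:elliptic3} you use the splitting $F\cong\bigoplus_i\cH^i(F)[-i]$ available over a hereditary category, while the paper runs the hypercohomology long exact sequence of the truncation triangle $\tau_{\leq i-1}F\to F\to\tau_{\geq i}F$; your version is slightly cleaner, the paper's avoids invoking formality. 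Your worry about coordinating genericity of $\alpha$ over several slope-zero HN factors is vacuous (strictly decreasing HN slopes leave at most one such factor at a given $x_0$), and in any case finitely many dense open conditions on $\alpha$ pose no problem.
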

\begin{proof}
Item \eqref{enum:elliptic1} is clear if $F$ is torsion, since in that case $h^0_{F,L}(x)=\chi_{F,L}(x)=\text{length}(F)$ for every $x\in\bQ$.
Hence we may assume that $F$ is a vector bundle.

Let $x=\frac{a}{b}\in\bQ$.
If $\chi_{F,L}(x)<0$ (resp.~$\chi_{F,L}(x)>0$), then we consider a non-decreasing (resp.~non-increasing) sequence $\{x_n=\frac{a_n}{b_n}\}_n\subset\bQ$ converging to $x$, such that for every $n$ the multiplication isogeny $\mu_{b_n}:E\to E$ is an étale morphism\footnote{Of course, if $b$ is not divisible by $\Char\bK$ (e.g., if $\Char\bK=0$) one can take $x_n=x$ for every $n$}.

We claim that for every degree 1 line bundle $M$ and every $n\in\bN$, one has
\[
0=\Hom(M^{-a_nb_n},\mu_{b_n}^*F)=H^0(\mu_{b_n}^*F\otimes M^{a_nb_n})
\]
\[
\text{(resp.~}0=\Hom(\mu_{b_n}^*F,M^{-a_nb_n})=\Ext^1(M^{-a_nb_n},\mu_{b_n}^*F)^*=H^1(\mu_{b_n}^*F\otimes M^{a_nb_n})^*\;)
\]

Indeed, $\mu_{b_n}^*F$ is $\mu_L$-semistable (we can apply \cite[Lemma~3.2.2]{HuybrechtsLehn}, since $\mu_{b_n}$ is a separable isogeny) as well as $M^{-a_nb_n}$.
Thus the claim follows from the inequality $\mu_L(M^{-a_nb_n})=-a_nb_n>b_n^2\mu_L(F)=\mu_L(\mu_{b_n}^*F)$ (resp.~$\mu_L(M^{-a_nb_n})=-a_nb_n<b_n^2\mu_L(F)=\mu_L(\mu_{b_n}^*F)$).

Therefore $h^0_{F,L}(x_n)=0$ (resp.~$h^1_{F,L}(x_n)=0$) for every $n$, which by \autoref{JPmain}.\eqref{enum:JP1} implies $h^0_{F,L}(x)=0$ (resp.~$h^1_{F,L}(x)=0$).
This proves \eqref{enum:elliptic1}.

Item \eqref{enum:elliptic2} follows by induction on the length $r$ of the HN filtration of $F$, the initial case being nothing but \eqref{enum:elliptic1}.
Let $x=\frac{a}{b}\in\bQ$.
For the induction step one uses the long exact sequence in cohomology associated to
\[
0\to \mu_b^*F_{r-1}\otimes M^{ab}\to \mu_b^*F_r\otimes M^{ab}\to \mu_b^*(F_r/F_{r-1})\otimes M^{ab}\to 0
\]
for every line bundle $M$ of degree 1, together with the observation that
\[
\chi_{F_k/F_{k-1},L}(x)>(<)0\Longleftrightarrow x>(<)-\mu_L(F_k/F_{k-1})
\]
for any $k\in\{1,\ldots,r\}$ and the inequalities $-\mu_L(F_1)<\ldots<-\mu_L(F_r/F_{r-1})$.

For the proof of \eqref{enum:elliptic3}, write $x=\frac{a}{b}\in\bQ$ and let $M$ be any line bundle of degree $1$.
Considering the distinguished triangle in $\Db(E)$ obtained by truncation of $\mu_b^*F\otimes M^{ab}$
\[
\mu_b^*(\tau_{\leq i-1}F)\otimes M^{ab}\to \mu_b^*F\otimes M^{ab}\to \mu_b^*(\tau_{\geq i}F)\otimes M^{ab}
\]
and the corresponding long exact sequence of hypercohomology groups
\begin{gather*}
\ldots\to\mathbb{H}^{i-1}(\mu_b^*(\tau_{\geq i}F)\otimes M^{ab})\to \mathbb{H}^{i}(\mu_b^*(\tau_{\leq i-1}F)\otimes M^{ab})\to \mathbb{H}^{i}(\mu_b^*F\otimes M^{ab})\to \\
\to\mathbb{H}^{i}(\mu_b^*(\tau_{\geq i}F)\otimes M^{ab})\to\mathbb{H}^{i+1}(\mu_b^*(\tau_{\leq i-1}F)\otimes M^{ab})\to\ldots
\end{gather*}
the result becomes a consequence of the following immediate equalities:
\begin{gather*}
\mathbb{H}^{i-1}(\mu_b^*(\tau_{\geq i}F)=0,\;\;\mathbb{H}^{i}(\mu_b^*(\tau_{\leq i-1}F)\otimes M^{ab})=H^1(\mu_b^*(\cH^{i-1}F)\otimes M^{ab}),\\
\mathbb{H}^{i}(\mu_b^*(\tau_{\geq i}F)\otimes M^{ab})=H^0(\mu_b^*(\cH^{i}F)\otimes M^{ab}),\;\;
\mathbb{H}^{i+1}(\mu_b^*(\tau_{\leq i-1}F)\otimes M^{ab})=0\qedhere
\end{gather*}
\end{proof}

\subsection{Proof of \autoref{mainCRF}} 
In this part we will prove that the Chern degree functions $\rf^k_{F,L}$ attached to any object $F\in\Db(X)$ on a polarized surface $(X,L)$ recover, in the case where $X$ is an abelian surface, the cohomological rank functions $h^k_{F,L}$ of Jiang and Pareschi.

The key point of the proof are the following two lemmas.
The first one is the analogue to the fact that a coherent sheaf on an elliptic curve only may have $h^0$ and $h^1$ as nonzero functions. 

\begin{lem}\label{heartlemma}
If $F\in\Coh^\beta(X)$ for a number $\beta\in\bQ$, then $h^i_{F,L}(-\beta)=0$ for every $i\neq0,1$.
\end{lem}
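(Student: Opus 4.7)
The plan is to reduce the claim to a vanishing statement for an object lying in the tilted heart $\Coh^0(X)$, where such vanishing is almost automatic thanks to the two-step cohomological range of objects in this heart; only one piece of the remaining computation genuinely requires the genericity of $\alpha$. Concretely, I write $\beta=a/b$ with $b$ coprime to $\Char\bK$ (so that the multiplication isogeny $\mu_b\colon X\to X$ is \'etale), and set $E:=\mu_b^*F\otimes M^{-ab}$, where $M\in\Pic(X)$ represents $L$. Since $\mu_b^*L\equiv b^2L$ in $\NS(X)$ and $\mu_b^*$ preserves $\mu_L$-semistability (by \cite[Lemma~3.2.2]{HuybrechtsLehn}), pullback multiplies $\mu_L$-slopes by $b^2$; combining this with the shift by $-ab$ coming from tensoring with $M^{-ab}$, the conditions $\cH^{-1}(F)\in\cF_\beta$ and $\cH^0(F)\in\cT_\beta$ translate into $\cH^{-1}(E)\in\cF_0$ and $\cH^0(E)\in\cT_0$, so $E\in\Coh^0(X)$. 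By the very definition of the cohomological rank functions it then suffices to show $h^i(X,E\otimes\alpha)=0$ for $i\neq 0,1$ and general $\alpha\in\Pic^0(X)$.

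The hypercohomology spectral sequence for $E\otimes\alpha$ has nonzero $E_2^{p,q}=H^p(\cH^q(E)\otimes\alpha)$ only for $p\in\{0,1,2\}$ and $q\in\{-1,0\}$. This immediately yields $h^i(E\otimes\alpha)=0$ for $i\notin\{-1,0,1,2\}$ together with the identifications
\[
H^{-1}(E\otimes\alpha)=H^0(\cH^{-1}(E)\otimes\alpha),\qquad H^{2}(E\otimes\alpha)=H^{2}(\cH^{0}(E)\otimes\alpha).
\]
For the $H^2$-term I would invoke Serre duality (using $\omega_X\cong\cO_X$), which identifies $H^2(\cH^0(E)\otimes\alpha)\cong\Hom(\cH^0(E),\alpha^{-1})^*$, and then argue that this $\Hom$ vanishes for \emph{every} $\alpha\in\Pic^0(X)$: any nonzero morphism must kill the torsion part of $\cH^0(E)\in\cT_0$, and the induced map from the torsion-free quotient to $\alpha^{-1}$ would produce a rank-$1$ torsion-free subsheaf of $\alpha^{-1}$ of slope $\mu_L\leq 0$, contradicting that every HN factor of this quotient has $\mu_L>0$.

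The $H^{-1}$-term is the genuine obstacle, and is where the genericity of $\alpha$ plays a role. Here $\cH^{-1}(E)\in\cF_0$ is torsion-free with all $\mu_L$-semistable HN factors of slope $\leq 0$; an induction along the HN filtration together with a further induction along a Jordan--H\"older filtration of each factor reduces the vanishing of $H^0(\cH^{-1}(E)\otimes\alpha)$ to checking $H^0(G\otimes\alpha)=0$ for $G$ a $\mu_L$-stable sheaf with $\mu_L(G)\leq 0$. If $\mu_L(G)<0$ the vanishing is unconditional, since any nonzero section would produce an injection $\alpha^{-1}\hookrightarrow G$ with $\mu_L(\alpha^{-1})=0>\mu_L(G)$, violating stability. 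If $\mu_L(G)=0$ and $\rk G\geq 2$, stability again excludes every nonzero map $\alpha^{-1}\to G$ for any $\alpha$, since such a map would be injective by torsion-freeness and would contradict the strict slope inequality forced on a rank-$1$ subsheaf. The remaining delicate case is $G$ a line bundle $M'$ with $L\cdot c_1(M')=0$: by ampleness of $L$ no nonzero effective divisor can pair to $0$ with $L$, so $M'\otimes\alpha$ is non-effective unless $M'\otimes\alpha\cong\cO_X$, which happens for at most one $\alpha$ per such $M'$. Excluding this finite set of bad $\alpha$'s (one per rank-$1$ Jordan--H\"older factor) then delivers the desired vanishing for general $\alpha\in\Pic^0(X)$.
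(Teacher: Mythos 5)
Your argument is essentially correct over a field of characteristic zero, but its very first step fails in the generality the paper works in. You "write $\beta=a/b$ with $b$ coprime to $\Char\bK$" — this is impossible whenever $\Char\bK=p>0$ and $p$ divides the reduced denominator of $\beta$ (e.g.\ $\beta=1/p$). In that case $\mu_b$ is inseparable, so you can no longer invoke \cite[Lemma~3.2.2]{HuybrechtsLehn} to say that pullback preserves $\mu_L$-semistability, and the reduction to $E=\mu_b^*F\otimes M^{-ab}\in\Coh^0(X)$ breaks down. Since the paper explicitly wants this statement (and \autoref{mainCRF}) in arbitrary characteristic, this is a genuine gap. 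The paper's proof avoids it by never working at the point $-\beta$ itself with a single pullback: it approximates $-\beta$ (resp.\ a point $x<-\beta$) by a monotone sequence of rationals $x_n=c_n/d_n$ with $\mu_{d_n}$ separable, proves the vanishing at each $x_n$ by a strict slope inequality between semistable sheaves, and then concludes at the limit point using the local polynomial expression of \autoref{JPmain}.(1), which is valid in any characteristic.

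Apart from this, your route genuinely differs from the paper's in how the borderline slopes are treated, and it is worth comparing. Because the paper only ever works at points where all the relevant slope inequalities are strict (this is automatic for the $\cT_\beta$-part, and is arranged for the $\cF_\beta$-part by proving vanishing at $x<-\beta$ and passing to the limit), it never needs the genericity of $\alpha$: plain $\Hom$-vanishing between semistable sheaves of distinct slopes suffices. You instead attack $x=-\beta$ directly, which forces you to confront the HN factors of $\cH^{-1}(E)$ of slope exactly $0$; your resolution — reduce by HN and saturated Jordan--H\"older filtrations to $\mu_L$-stable factors and observe that only rank-one factors with $c_1\cdot L=0$ can acquire a section, and then only for the single $\alpha$ making them trivial — is correct (the left-exactness of $H^0$ makes the filtration inductions legitimate, and ampleness of $L$ rules out nonzero effective divisors orthogonal to $L$). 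What your approach buys, in characteristic zero, is independence from \cite[Corollary~2.6]{JP}: you get the vanishing at the point itself without knowing anything about the local polynomial structure of the functions. What the paper's approach buys is uniformity in the characteristic and a shorter argument at the cost of quoting that local structure. If you patch your proof by replacing the single pullback by the paper's approximating sequences, the two arguments essentially merge.
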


\begin{proof}
Since $F$ is a complex with at most two nontrivial cohomology sheaves (namely $\cH^{-1}(F)$ and $\cH^0(F)$), it turns out that $h^i_{F,L}(-\beta)=0$ for every $i\notin\set{-1,0,1,2}$.
Moreover, we have
\[
h^{-1}_{F,L}(-\beta)=h^0_{\cH^{-1}(F),L}(-\beta),\;\;h^{2}_{F,L}(-\beta)=h^2_{\cH^0(F),L}(-\beta)
\]
so it suffices to check that $h^2_{E,L}(-\beta)=0$
whenever $E\in\cT_\beta$, and $h^0_{G,L}(-\beta)=0$ whenever $G\in\cF_\beta$.

To prove the first vanishing, we consider a non-decreasing sequence of rational numbers $\beta_n=\frac{a_n}{b_n}$ converging to $\beta$, such that $\mu_{b_n}$ is a separable isogeny for every $n$.
Let $0=E_0\hookrightarrow E_1\hookrightarrow\ldots\hookrightarrow E_r=E$ be the HN filtration of $E$ with respect to $\mu_L$-stability.
Since torsion sheaves are always $\mu_L$-semistable (they have slope $\mu_L=+\infty$), it follows from \cite[Lemma~3.2.2]{HuybrechtsLehn} that
\[
0=\mu_{b_n}^*E_0\hookrightarrow \mu_{b_n}^*E_1\hookrightarrow\ldots\hookrightarrow \mu_{b_n}^*E_r=\mu_{b_n}^*E
\]
is a HN filtration for $\mu_{b_n}^*E$ with respect to $\mu_L$-stability, for every $n$.
Observe that we have
\[
\mu_L(\mu_{b_n}^*(E_i/E_{i-1}))=b_n^2\mu_L(E_i/E_{i-1})>b_n^2\cdot\beta_n=a_nb_n
\]
for every $i\in\{1,\ldots,r\}$, thanks to the condition $E\in\cT_{\beta_n}$ inherited from $E\in\cT_{\beta}$.
Therefore
\[
0=\Hom(\mu_{b_n}^*(E_i/E_{i-1}),L^{a_nb_n})=\Ext^2(L^{a_nb_n},\mu_{b_n}^*(E_i/E_{i-1}))^*=H^2(\mu_{b_n}^*(E_i/E_{i-1})\otimes L^{-a_nb_n})^*
\]
for every $i$, and the equality $h^2_{E,L}(-\beta_n)=0$ follows from an easy induction on the length $r$ of the HN filtration.
This is enough to prove $h^2_{E,L}(-\beta)=0$, thanks to \autoref{JPmain}.\eqref{enum:JP1}.

For the second vanishing, we will check that $h^0_{G,L}(x)=0$ for every rational number $x=\frac{c}{d}$ with $x<-\beta$.
Again, this is more than enough for our purposes thanks to \autoref{JPmain}.\eqref{enum:JP1}.

We approach $x$ by a non-decreasing sequence $x_n=\frac{c_n}{d_n}$ of rational numbers such that the multiplication $\mu_{d_n}$ is étale.
As before, the HN filtration $0=G_0\hookrightarrow G_1\hookrightarrow\ldots\hookrightarrow G_r=G$ of $G$ in $\mu_L$-stability induces the HN filtration $0=\mu_{d_n}^*G_0\hookrightarrow \mu_{d_n}^*G_1\hookrightarrow\ldots\hookrightarrow \mu_{d_n}^*G_r=\mu_{d_n}^*G$
of $\mu_{d_n}^*G$, for every $n$.

The hypothesis $G\in\cF_\beta$ says that
\[
\mu_L(\mu_{d_n}^*(G_i/G_{i-1}))=d_n^2\mu_L(G_i/G_{i-1})\leq d_n^2\beta<-d_n^2x_n=-c_nd_n
\]
for every $i$, which implies
\[
0=\Hom(L^{-c_nd_n},\mu_{d_n}^*(G_i/G_{i-1}))=H^0(\mu_{d_n}^*(G_i/G_{i-1})\otimes L^{c_nd_n})
\]
The equality $h^0_{G,L}(x_n)=0$ is again obtained by induction on $r$, and then $h^0_{G,L}(x)=0$ follows. 
\end{proof}

The second lemma is the analogue of \autoref{CRFelliptic}.\eqref{enum:elliptic1}, namely that at a fixed point, at most one function is nonzero for a semistable sheaf on an elliptic curve:

\begin{lem}\label{vanishinglemma}
If $F\in\Coh^\beta(X)$ ($\beta\in\bQ$) is $\sigma_{0,\beta}$-semistable, then $h^1_{F,L}(-\beta)=0$ (resp.~$h^0_{F,L}(-\beta)=0$) if $\nu_{0,\beta}(F)\geq0$ (resp.~if $\nu_{0,\beta}(F)\leq0$).
\end{lem}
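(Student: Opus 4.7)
The plan is to reduce the statement to $\beta=0$ and then interpret the desired vanishing through the Fourier--Mukai transform on the abelian surface. Writing $\beta=a/b$ with $b>0$ and fixing a line bundle $M\in\Pic(A)$ representing $L$, I would set $F':=\mu_b^*F\otimes M^{-ab}$. A direct Chern-character computation shows that $F'\in\Coh^0(A)$ with $\nu_{0,0}(F')=b^2\,\nu_{0,\beta}(F)$ (in particular, with the same sign), and one checks that $F'$ is $\sigma_{0,0}$-semistable: $\mu_b^*$ preserves (weak) stability (using that $\mu_b$ is étale, in the spirit of \cite[Lemma~3.2.2]{HuybrechtsLehn}), and the line bundle twist is an exact autoequivalence translating the heart $\Coh^{b^2\beta}(A)\to\Coh^0(A)$ while preserving the twisted Chern character. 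By definition of the cohomological rank functions, $b^4\cdot h^1_{F,L}(-\beta)=h^1(F'\otimes\alpha)$ for general $\alpha\in\Pic^0(A)$. Moreover, the ``$\nu\leq 0$'' statement is equivalent to the ``$\nu\geq 0$'' statement applied to the derived dual $(F')^{\vee}[1]$, by combining Serre duality $h^1(F'\otimes\alpha)\cong h^1((F')^{\vee}\otimes\alpha^{-1})^{*}$ on the abelian surface with \autoref{dualstab}. Hence I may assume $\nu_{0,0}(F')\geq 0$ and aim at $H^1(F'\otimes\alpha)=0$ for general $\alpha$.

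Now let $\Phi_P\colon\Db(A)\to\Db(\hat A)$ be the Fourier--Mukai functor with Poincaré kernel. Cohomology and base change identifies $H^1(F'\otimes P_\alpha)$, for general $\alpha$, with the generic stalk rank of $\cH^1(\Phi_P F')$, so the problem becomes showing that $\cH^1(\Phi_P F')$ is a torsion sheaf on $\hat A$. When $\nu_{0,0}(F')>0$, I would apply \autoref{existenceIntr}.\eqref{exist:Br} to obtain a Bridgeland limit filtration of $F'$ whose factors are genuinely $\sigma_{\alpha_0,0}$-semistable for some $\alpha_0>0$ (and all of strictly positive tilt slope); the long exact cohomology sequence for $\Phi_P$ associated to this filtration reduces the generic vanishing of $\cH^1(\Phi_P F')$ to the analogous statement for a single HN factor. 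For a single $\sigma_{\alpha_0,0}$-semistable object of positive tilt slope, $\Phi_P$ sends it (up to torsion) to a sheaf on $\hat A$, as a manifestation of the $\widetilde{\mathrm{GL}}^{+}(2,\bR)$-rotation of the $(\alpha,\beta)$-plane induced by $\Phi_P$. The boundary cases $\nu_{0,0}(F')=0$ and $\nu_{0,0}(F')=+\infty$ (where \autoref{existenceIntr} does not apply, as HN factors of tilt slope $0$ may occur) are treated separately using the explicit description of $\ker(Z_{0,0})$ provided by \autoref{Giesekerkernel}: subobjects of $F'$ in $\ker(Z_{0,0})$ are shifts of semihomogeneous vector bundles of $\mu_L$-slope $0$, for which the relevant generic vanishing goes back to Mukai's original analysis.

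The main obstacle is the last step: turning abstract $\sigma_{\alpha_0,0}$-semistability into the concrete WIT-type statement that $\Phi_P F'$ is essentially a sheaf. This requires a careful identification of $\Phi_P$ with a rotation of Bridgeland stability conditions on $A$ and $\hat A$, together with a delicate treatment of the boundary $\nu_{0,0}(F')=0$, where $\sigma_{0,0}$ is only a weak stability condition and $F'$ may admit nontrivial subobjects in $\ker(Z_{0,0})$ that spoil the direct application of the Bridgeland limit machinery.
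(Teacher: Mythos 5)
Your setup is fine as far as it goes: the reduction $F'=\mu_b^*F\otimes M^{-ab}$, the identity $b^4\,h^1_{F,L}(-\beta)=h^1(F'\otimes\alpha)$, and the dualization reducing the $h^0$-statement to the $h^1$-statement are all correct (modulo separability of $\mu_b$ in positive characteristic, which forces one to work at nearby rational points rather than at $\beta$ itself). But the proof has a genuine gap exactly where you locate it, and that gap carries the entire content of the lemma. The claim that a $\sigma_{\alpha_0,0}$-semistable object of positive tilt slope is sent by $\Phi_{\cP}$ to a complex with torsion $\cH^1$ is a generic-vanishing statement which, in view of \autoref{mainCRF}, is essentially equivalent to the statement being proved; invoking it as ``a manifestation of the $\widetilde{\mathrm{GL}}^+(2,\bR)$-rotation'' is not a proof. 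Even granting that $\Phi_{\cP}$ maps $\sigma_{\alpha_0,0}$ to a geometric stability condition up to shift and $\widetilde{\mathrm{GL}}^+(2,\bR)$-action, one must still identify that stability condition and its heart precisely enough to control in which degrees, with respect to the \emph{standard} t-structure on $\hat{A}$, the transform of an object of given phase can have cohomology, and why the degree-$1$ piece is torsion. In addition, your treatment of the boundary case $\nu_{0,0}(F')=0$ does not work: such an object merely has $\ch_2(F')=0$ and in general neither lies in $\ker(Z_{0,0})$ nor is filtered by objects of $\ker(Z_{0,0})$, so \autoref{Giesekerkernel} says nothing about it, while \autoref{existenceIntr} is genuinely unavailable there.

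None of this machinery is needed, and the paper's proof never leaves the surface $X$ nor mentions the Fourier--Mukai transform. For a rational $x=\frac{c}{d}>-\beta$ with $\mu_d$ separable, the inequality $-cd<d^2\beta$ shows that $L^{-cd}[1]$ is a $\sigma_{0,d^2\beta}$-semistable object of negative tilt slope, while $\mu_d^*F$ is $\sigma_{0,d^2\beta}$-semistable of tilt slope $\geq 0$ (by the argument of \cite[Proposition~6.1]{BMS}). Hence $\Hom(\mu_d^*F,L^{-cd}[1])=0$, which by Serre duality on the abelian surface gives $H^1(\mu_d^*F\otimes L^{cd})=0$, i.e.\ $h^1_{F,L}(x)=0$; the value at $-\beta$ itself is then recovered from the local polynomial expression of \autoref{JPmain}.\eqref{enum:JP1}. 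Note that this argument only uses a strict slope inequality on one side and is therefore completely insensitive to the case $\nu_{0,\beta}(F)=0$, which is precisely where your approach breaks down.
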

\begin{proof}
First of all, observe that the same arguments of \cite[Proposition~6.1]{BMS} yield that $\mu_b^*F\in\Coh^{b^2\beta}(X)$ and it is a $\sigma_{0,b^2\beta}$-semistable object, for every $b\in\bZ_{>0}$ such that $\mu_b$ is a separable isogeny\footnote{The condition $\beta\in\bQ$ is required at this point, to ensure the existence of HN filtrations with respect to $\sigma_{0,\beta}$ and $\sigma_{0,d^2\beta}$.}.
Moreover, we have
\[
\nu_{0,b^2\beta}(\mu_b^*F)=b^2\nu_{0,\beta}(F)
\]
as follows from $\ch(\mu_b^*F)=(\ch_0(F),b^2\ch_1(F),b^4\ch_2(F))$.

If $\nu_{0,\beta}(F)\geq0$, we will prove that $h^1_{F,L}(x)=0$ for every $x\in\bQ$ with $x>-\beta$; then, $h^1_{F,L}(-\beta)=0$ will follow again from \autoref{JPmain}.\eqref{enum:JP1}.
To this end, we approach $x$ by a non-increasing sequence $x_n=\frac{c_n}{d_n}$ such that $\mu_{d_n}$ is separable. 

Observe that the condition $\frac{c_n}{d_n}>-\beta$ reads $-c_nd_n<d_n^2\beta$.
Therefore, $L^{-c_nd_n}[1]$ is $\sigma_{0,d_n^2\beta}$-semistable with $\nu_{0,d_n^2\beta}(L^{-c_nd_n}[1])<0\leq\nu_{0,d_n^2\beta}(\mu_{d_n}^*F)$, which gives
\[
0=\Hom(\mu_{d_n}^*F,L^{-c_nd_n}[1])=\Ext^1(\mu_{d_n}^*F,L^{-c_nd_n})=\Ext^1(L^{-c_nd_n},\mu_{d_n}^*F)^*=H^1(\mu_{d_n}^*F\otimes L^{c_nd_n})^*
\]
and thus $h^1_{F,L}(x_n)=0$.
It follows that $h^1_{F,L}(x)=0$, as desired.

If $\nu_{0,\beta}(F)\leq0$, following the same strategy it suffices to check that $h^0_{F,L}(x)=0$ for every rational $x=\frac{c}{d}$ with $x<-\beta$ and $\mu_d$ étale.
And indeed, $\frac{c}{d}<-\beta$ reads $-cd>d^2\beta$, so $L^{-cd}$ is $\sigma_{0,d^2\beta}$-semistable with $\nu_{0,d^2\beta}(L^{-cd})>0\geq\nu_{0,d^2\beta}(\mu_d^*F)$.
This implies
\[
0=\Hom(L^{-cd},\mu_d^*F)=H^0(\mu_d^*F\otimes L^{cd})
\]
and therefore $h^0_{F,L}(x)=0$.
\end{proof}

We are now ready to prove that, on abelian surfaces, cohomological rank functions are recovered by Chern degree functions:

\begin{proof}[Proof of \autoref{mainCRF}]
Given $F\in\Db(X)$ and $k\in\bZ$, we will prove the equality $\rf^k_{F,L}(-\beta)=h^k_{F,L}(-\beta)$ for every $\beta\in\bQ$.

We start with the basic case where $F\in\Coh^\beta(X)$ is $\sigma_{0,\beta}$-semistable.
Assume that $\nu_{0,\beta}(F)\geq0$
Then, on the one hand, by definition of the functions $\rf^k_{F,L}$, we have $\rf^k_{F,L}(-\beta)=0$ for every $k\neq0$ and $\rf^0_{F,L}(-\beta)=\ch_2^{\beta}(F)$.
On the other hand, \autoref{heartlemma} and \autoref{vanishinglemma} give $h^k_{F,L}(-\beta)=0$ for every $k\neq0$, and thus $h^0_{F,L}(-\beta)=\chi_{F,L}(-\beta)$.

Since the polynomial (in $x$) $\ch_2^{-x}(F)$ equals the Hilbert polynomial $\chi_{F,L}(x)$, it follows that $\rf^k_{F,L}(-\beta)=h^k_{F,L}(-\beta)$ for every $k$.

If $F\in\Coh^\beta(X)$ is $\sigma_{0,\beta}$-semistable with $\nu_{0,\beta}(F)\leq0$, then the same arguments yield $\rf^k_{F,L}(-\beta)=0=h^k_{F,L}(-\beta)$ for $k\neq1$ and $\rf^1_{F,L}(-\beta)=-\ch_2^{\beta}(F)=h^1_{F,L}(-\beta)$.

When $F\in\Coh^\beta(X)$ is an arbitrary object (not necessarily $\sigma_{0,\beta}$-semistable), the result follows by induction on the length of the HN filtration of $F$ with respect to $\sigma_{0,\beta}$, arguing similarly to the proof of \autoref{CRFelliptic}.\eqref{enum:elliptic2}.

To prove the result for a general $F\in\Db(X)$, we approach $-\beta$ by a non-decreasing sequence $-\beta_n=\frac{a_n}{b_n}$ of rational numbers such that the multiplication maps $\mu_{b_n}$ are étale.
If $\tau_{\leq k-1}^{\beta_n}$, $\tau_{\geq k}^{\beta_n}$ denote the truncation functors of the bounded t-structure defined by $\Coh^{\beta_n}(X)$, then one immediately checks (again, using \cite[Proposition~6.1.a]{BMS}) that
\[
\tau_{\leq k-1}^{b_n^2\beta_n}\circ\mu_{b_n}^*=\mu_{b_n}^*\circ\tau_{\leq k-1}^{\beta_n}, \;\;\;\tau_{\geq k}^{b_n^2\beta_n}\circ\mu_{b_n}^*=\mu_{b_n}^*\circ\tau_{\geq k}^{\beta_n}
\]
for every $n$.
Therefore, we have a distinguished triangle in $\Db(X)$
\[
\mu_{b_n}^*(\tau_{\leq k-1}^{\beta_n} F)\otimes L^{a_nb_n}\to \mu_{b_n}^*F\otimes L^{a_nb_n}\to \mu_{b_n}^*(\tau_{\geq k}^{\beta_n} F)\otimes L^{a_nb_n}
\]
Arguing with its associated long exact sequence of hypercohomology groups as in the proof of \autoref{CRFelliptic}.\eqref{enum:elliptic3} (and using that the assertion has already been proved for objects of $\Coh^{\beta_n}(X)$), we obtain $\rf^k_{F,L}(-\beta_n)=h^k_{F,L}(-\beta_n)$ for every $n$.
Then the equality $\rf^k_{F,L}(-\beta)=h^k_{F,L}(-\beta)$ is a consequence of \autoref{main}.\eqref{item:main1} and \autoref{JPmain}.\eqref{enum:JP2}.
\end{proof}

This description of cohomological rank functions on abelian surfaces establishes a clear analogy with the case of elliptic curves.
In particular, the proof shows that the cohomological rank functions of an object $F\in\Db(X)$ at $x=-\beta$ split into simpler pieces, corresponding to its HN factors with respect to $\sigma_{0,\beta}$.
The main difference is that, in the case of elliptic curves, the study via $\mu_L$-stability is actually global and proves that cohomological rank functions are piecewise polynomial, with all their critical points being rational.
In dimension $2$ the study is strictly local as we saw in \autoref{sec:LocalExpr}, which makes the situation much richer.

To finish this section, it is worth mentioning the following immediate consequence of this new presentation in terms of Chern degree functions, which is a refinement of \cite[Lemma~6.1]{JP} for the case of abelian surfaces:

\begin{cor}
If $(X,L)$ is a polarized abelian surface and $F\in\Db(X)$, any local polynomial expression of the cohomological rank function $h^k_{F,L}$ has integral coefficients.
\end{cor}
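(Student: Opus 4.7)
By \autoref{mainCRF} we have $h^k_{F,L}=\rf^k_{F,L}$, so it suffices to establish the statement for Chern degree functions. Since a polynomial is determined by its restriction to any open interval, any local polynomial expression must coincide with the canonical one produced by \autoref{main}.\eqref{item:main1}. Tracing through the explicit formulas in \autoref{polynomial1}, \autoref{polynomial2} and equation \eqref{eqn:notacioR}, every such canonical expression has, up to an additive integer (the length of a $0$-dimensional sheaf, e.g.~$\text{length}(\cH^0(F_1))$), the shape
\begin{equation*}
\pm\ch_2^{\mp x}(G)=\pm\ch_2(G)+x\,L\cdot\ch_1(G)\pm\tfrac{x^2}{2}L^2\cdot\ch_0(G),
\end{equation*}
for some object $G\in\Coh^{\beta}(X)$ coming from a core filtration, its dual, or a Gieseker-kernel piece.

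The task thus reduces to showing that each of the three coefficients lies in $\bZ$ when $X$ is an abelian surface. First, $L\cdot\ch_1(G)\in\bZ$ as an intersection number on $X$. Second, because the tangent bundle of an abelian surface is trivial one has $\td(X)=1$, so Hirzebruch--Riemann--Roch collapses to $\chi(E)=\ch_2(E)$ for every $E\in\Db(X)$; applying this to the cohomology sheaves of $G$ and taking the alternating sum gives $\ch_2(G)\in\bZ$. Third, the same specialization of Riemann--Roch yields $\chi(L)=\tfrac{1}{2}L^2\in\bZ$, whence $\tfrac{1}{2}L^2\cdot\ch_0(G)\in\bZ$ since $\ch_0(G)\in\bZ$.

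The proof is essentially bookkeeping: the substantial input is the identification $h^k_{F,L}=\rf^k_{F,L}$ together with the explicit form of the local polynomial expressions from \autoref{sec:LocalExpr}, both of which are already at our disposal. I do not foresee a genuine obstacle; what makes the refinement work is that on an abelian surface each of the numerical quantities $\ch_2$, $L\cdot\ch_1$ and $\tfrac{1}{2}L^2$ takes integer values, whereas on a general surface the integrality of $\ch_2$ and $\tfrac{1}{2}L^2$ would fail, leaving room at most for rational coefficients.
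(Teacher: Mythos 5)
Your proof is correct and follows exactly the reasoning the paper intends (the corollary is stated there as an immediate consequence, with the integrality of $\ch_2$, $L\cdot\ch_1$ and $\tfrac12 L^2$ on an abelian surface being the whole point, just as you identify). The only minor imprecision is that some of the canonical expressions in \eqref{eqn:notacioR} are sums of two terms of the form $\pm\ch_2^{\mp x}(G)$ plus an integer rather than a single one, but this does not affect the conclusion.
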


\section{Chern degree functions of Gieseker semistable sheaves}
\label{sec:chdGieseker}

In order to illustrate with examples the previous account, let us discuss briefly some properties of Chern degree functions of ($(L,-\frac{1}{2}K_X)$-twisted) Gieseker semistable sheaves.
We will mainly focus on Gieseker semistable sheaves on abelian surfaces, where these properties will become properties of the corresponding cohomological rank functions.

So we fix a polarized surface $(X,L)$ and a torsion-free Gieseker semistable sheaf $F$.
Note that $F\in\Coh^\beta(X)$ (resp.~$F[1]\in\Coh^\beta(X)$) for every $\beta<\mu_L(F)$ (resp.~$\beta\geq\mu_L(F)$), hence $\rf^2_{F,L}(x)=0$ (resp.~$\rf^0_{F,L}(x)=0$) for $x\geq-\mu_L(F)$ (resp.~$x\leq-\mu_L(F)$).

Moreover, if $\beta<\mu_L(F)$ then $F$ is $\sigma_{\alpha,\beta}$-semistable for all $\alpha\gg0$ (\autoref{Giesekerchamber}), so the problem of computing $\rf^0_{F,L}$ and $\rf^1_{F,L}$ in $(-\mu_L(F),+\infty)$ consists of studying how the trivial HN filtration of $F$ in the Gieseker chamber varies as we reach the line $\alpha=0$.

\subsection{Trivial Chern degree functions}\label{subsec:Trivialchd}
We will call a Chern degree function \emph{trivial} if its support is disjoint with the support of all the other functions attached to the same object.
In terms of stability, this reads as the simplest possible situation:

\begin{prop}\label{trivial}
The function $\rf^0_{F,L}$ is trivial if, and only if, $F$ is $\sigma_{\alpha,\beta}$-semistable for every $\alpha>0$ and $\beta<\mu_L(F)$.
In such a case,
\[
\rf^0_{F,L}(x)=\left\{
 \begin{array}{c l}
 0 & x\leq -p_F\\
 \ch_2^{-x}(F) & x\geq-p_F\\
 \end{array}
 \right.
\]
where 
$-p_F$ is the largest root of the Chern degree polynomial $\ch_2^{-x}(F)$ (recall also \autoref{hyperbola}).
\end{prop}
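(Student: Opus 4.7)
The \emph{if} direction reduces to a limit argument followed by a direct computation. Assuming $F$ is $\sigma_{\alpha,\beta}$-semistable for every $\alpha>0$ and $\beta<\mu_L(F)$, I will first deduce that $F$ is also $\sigma_{0,\beta_0}$-semistable for every rational $\beta_0<\mu_L(F)$: any destabilizer $E\subsetneq F$ in $\Coh^{\beta_0}(X)$ with $\nu_{0,\beta_0}(E)>\nu_{0,\beta_0}(F)$ would persist at $\sigma_{\alpha,\beta_0}$ for sufficiently small $\alpha>0$, since $\nu_{\alpha,\beta_0}$ differs from $\nu_{0,\beta_0}$ by a term of order $\alpha^{2}$; this contradicts the hypothesis. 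Once semistability is established, \autoref{maindef} yields $\rf^0_{F,L}(-\beta_0)=\ch_2^{\beta_0}(F)$ when $\nu_{0,\beta_0}(F)\geq 0$ (i.e.\ $\beta_0\leq p_F$) and $\rf^0_{F,L}(-\beta_0)=0$ when $\nu_{0,\beta_0}(F)\leq 0$; for $\beta_0\geq\mu_L(F)$ one has $F[1]\in\Coh^{\beta_0}(X)$, so $\rf^0_{F,L}(-\beta_0)=0$ automatically. Extending via \autoref{main}.\eqref{item:main2} produces the stated formula, from which triviality of $\rf^0_{F,L}$ is transparent.

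For the \emph{only if} direction I argue by contraposition. Suppose $F$ is not $\sigma_{\alpha_0,\beta_0}$-semistable for some $\alpha_0>0$ and $\beta_0<\mu_L(F)$. Then by \autoref{structurewalls} there is an actual semicircular wall $W$ for $F$ on the left of the vertical wall, defined by a short exact sequence $0\to E\to F\to Q\to 0$ of semistable objects along $W$; write $\beta_1\leq\beta_2$ for the endpoints of $W$ on $\alpha=0$. A direct computation, using that the top of $W$ lies on $H_F$ together with $\odisc(F)\geq 0$, gives $\beta_1\leq p_F\leq\beta_2$. For rational $\beta_0\in(\beta_1,p_F)$ the inclusion $E\hookrightarrow F$ persists as a $\sigma_{0,\beta_0}$-destabilizer, and I will show that if $\nu_{0,\beta_0}(Q)<0$ then $Q$ has an HN factor of negative tilt slope, which forces $\rf^1_{F,L}(-\beta_0)>0$; via the identity $\rf^0_{F,L}-\rf^1_{F,L}=\ch_2^{-x}(F)$ this violates the triviality value $\rf^0_{F,L}(-\beta_0)=\ch_2^{\beta_0}(F)$. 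A symmetric argument for rational $\beta_0\in(p_F,\beta_2)$ rules out $\nu_{0,\beta_0}(E)>0$. Combined with continuity of the tilt slopes, this pins down $p_E=p_Q=p_F$ and forces $(p_F,0)$ to be an endpoint of $W$.

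The principal obstacle is to exclude this final degenerate configuration, in which both hyperbolas $H_E$ and $H_Q$ pass through $(p_F,0)$. Here the slope-sign analysis no longer yields a direct contradiction and one must combine the strict discriminant inequality $\odisc(E)+\odisc(Q)<\odisc(F)$ of \autoref{structurewalls}.\eqref{item:Disc0nodest} with Gieseker semistability of $F$ to rule out the existence of an actual wall whose endpoint lies on $H_F\cap\{\alpha=0\}$. I expect this last step to require the most care, as it balances the explicit wall geometry around $(p_F,0)$ against the support property of the $(\alpha,\beta)$-plane.
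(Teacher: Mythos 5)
Your plan follows essentially the same route as the paper's proof: the \emph{if} direction by passing from Bridgeland semistability at small $\alpha>0$ to $\sigma_{0,\beta}$-semistability and reading off \autoref{maindef}, and the \emph{only if} direction by contraposition via an actual wall $W$ with defining sequence $0\to E\to F\to Q\to 0$ and a sign analysis of the tilt slopes near $(0,p_F)$. (The paper only needs one side of $p_F$: on $(p_F,p_E)$ the subobject $E$ has $\nu_{0,\beta}(E)>0$, forcing $\rf^0_{F,L}>0$, while $\ch_2^{\beta}(F)<0$ forces $\rf^1_{F,L}>0$; your two-sided version is a harmless elaboration.)

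The gap is your final paragraph: you single out the configuration $p_E=p_F=p_Q$ (equivalently, $(0,p_F)$ an endpoint of $W$) as the ``principal obstacle'' and leave it unresolved, so the proof as written is incomplete. But this configuration cannot occur, and no balancing of the discriminant inequality against the support property is needed. Take $W$ to be the outermost actual wall, so that $F$ (Gieseker semistable, hence semistable in the Gieseker chamber by \autoref{Giesekerchamber}) is semistable along $W$, and note $\odisc(F)>0$, since otherwise \autoref{structurewalls}.\eqref{item:Disc0nodest} rules out any actual semicircular wall. By \autoref{structurewalls} the top point $(c,r)$ of $W$ lies on $H_F$, so $\ch_2^{c}(F)=\tfrac{r^{2}}{2}L^{2}\ch_0(F)>0$, while the right endpoint satisfies $c+r\leq\mu_L(F)$ and a direct computation gives $\ch_2^{c+r}(F)=rL^{2}\ch_0(F)\bigl(c+r-\mu_L(F)\bigr)\leq 0$, the boundary case being covered by $\ch_2^{\mu_L(F)}(F)=-\odisc(F)/\bigl(2L^{2}\ch_0(F)\bigr)<0$. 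Since $\ch_2^{\beta}(F)$ is positive precisely for $\beta<p_F$ on the range $\beta\le\mu_L(F)$, this yields $c<p_F<c+r$: the point $(0,p_F)$ lies in the \emph{open} interior of $W$. If one had $p_E=p_F$, then $\ch_2^{p_F}(E)=0=\ch_2^{p_F}(F)$ would place $(0,p_F)$ on the numerical wall $\nu_{\alpha,\beta}(E)=\nu_{\alpha,\beta}(F)$, which is the full semicircle $W$ by \autoref{structurewalls}.\eqref{item:allactual} --- contradicting that $(0,p_F)$ is interior; the same applies to $Q$. Combined with the nesting $p_Q\leq p_F\leq p_E$ you already derived, this gives $p_Q<p_F<p_E$, both of your intervals are nonempty, and the sign argument closes the proof. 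In short, the missing step is real as written, but it is a two-line computation rather than the delicate analysis you anticipate.
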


\begin{proof}
If the semistability assumption on $F$ is fulfilled, $F$ is $\sigma_{0,\beta}$-semistable for every $\beta<\mu_L(F)$ and thus $\rf^0_{F,L}(-\beta)$ is simply defined according to the sign of the tilt slope $\nu_{0,\beta}(F)$.

Conversely, assume that the function $\rf^0_{F,L}$ is trivial.
If $F$ is not $\sigma_{\alpha,\beta}$-semistable for every $\alpha>0$ and $\beta<\mu_L(F)$, there exists an actual wall $W$ (intersecting $H_F$ at its top point) along which $F$ destabilizes.
Let $0\to E\to F\to Q\to 0$ define this wall. 

Then, for every $\beta\in(p_F,p_E)$ $E$ is a subobject of $F$ in $\Coh^\beta(X)$ with $\nu_{0,\beta}(E)>0$, which gives $\rf^0_{F,L}(x)>0$ for $x\in(-p_E,-p_F)$.
This contradicts the triviality of $\rf^0_{F,L}$, since for $-\mu_L(F)<x<-p_F$ we have $\rf^1_{F,L}(x)>0$ due to $\ch_2^{-x}(F)<0$ and the relation $\rf^0_{F,L}(x)-\rf^1_{F,L}(x)=\ch_2^{-x}(F)$.
\end{proof}

\begin{ex}\label{extrivial}
Let us give some examples of trivial Chern degree function $\rf^0_{F,L}$.
\begin{enumerate}[{\rm (1)}]
    
    \item\label{enum:odisc} Gieseker semistable sheaves with $\odisc(F)=0$.
    These are the only examples of Gieseker semistable sheaves where the function $\rf^0_{F,L}$ is trivial and of class $\cC^1$ at their critical point $-p_F$, according to \autoref{critical} (see also \autoref{mainCritical}).

    \item Gieseker semistable sheaves with $\Delta(F)+C_L(L\cdot\ch_1(F))=0$, where $\Delta=(\ch_1)^2-2\ch_0\cdot\ch_2$ and $C_L$ is the constant of \cite[Lemma~3.3]{BMS}.
    These objects are more general than the objects considered in \eqref{enum:odisc}.
    For example, for abelian surfaces, where one can choose $C_L$ to be zero, the objects in \eqref{enum:odisc} only recover semihomogeneous vector bundles with determinant proportional to $L$, while here we are considering all semihomogeneous vector bundles.
    
    \item\label{enum:idealpoint} The ideal sheaf $\mathcal{I}_p$ of a point $p$ on a principally polarized abelian surface is well-known to have a trivial $h^0$ function, according to the analysis in \cite[section~8]{JP}.
    Conversely, as observed in \cite[section~4.1]{Meachan} $\mathcal{I}_p$ has no actual wall for $\beta<0$.
    
    \item\label{enum:AbelJacobi} Let $i:C\hookrightarrow X$ be an Abel-Jacobi embedding of a smooth curve $C$ of genus 2 inside its (principally polarized) Jacobian $X=JC$, and let $F=i_*M$ for a line bundle $M$ of odd degree on $C$.
    
    \noindent In this case $F$ is torsion, but the same analysis works as in the torsion-free case thanks to \cite{bayerli}.
    Then, as explained in \cite[Example~4.3]{JP} the function $h^0_{F,L}$ is trivial; according to \autoref{trivial}, it follows that $F$ is semistable on the whole $(\alpha,\beta)$-plane.
\end{enumerate}
\end{ex}

\subsection{Chern degree functions of semistable sheaves of low discriminant}\label{subsec:lowDisc}
There are other situations in which $\rf^0_{F,L}$, even if not trivial, can be explicitly described.
To this end we consider the \emph{minimal discriminant}, defined as the positive generator $m$ of the ideal of $\bZ$ generated by $\{\odisc(v)\mid v\in\Lambda, \odisc(v)>0\}$.

Note that the quantity $m$ essentially depends on the intersection pairing of $\NS(X)$.
For instance, if $\NS(X)=\bZ\cdot L$ (or more generally, if $L^2|L\cdot M$ for every $M\in\NS(X)$) then $m$ is a multiple of $L^2$.
Of course, in full generality one can only assert $m\in\bZ_{\geq1}$.


Let us assume now that $\odisc(F)=m$. 
Then either the function $\rf^0_{F,L}$ is trivial (as happens in \autoref{extrivial}.\eqref{enum:idealpoint}-\eqref{enum:AbelJacobi}), or $F$ destabilizes along an actual wall $W$ defined by an exact sequence $0\to E\to F\to Q\to 0$ with $\odisc(E)=0=\odisc(Q)$:

\begin{figure}[ht]
\definecolor{ffffff}{rgb}{1,1,1}
\definecolor{sexdts}{rgb}{0.1803921568627451,0.49019607843137253,0.19607843137254902}
\definecolor{wrwrwr}{rgb}{0.3803921568627451,0.3803921568627451,0.3803921568627451}
\begin{tikzpicture}[line cap=round,line join=round,>=triangle 45,x=2.20cm,y=2.20cm]
\draw [line width=1pt,color=red] plot[domain=0:3.141592653589793,variable=\t]({cos(\t r)},{sin(\t r)});
\draw [line width=1pt,color=wrwrwr] (-2,0) -- (2,0);
\draw[line width=1pt,dash pattern=on 4pt off 3pt,color=sexdts, smooth,samples=100,domain=0:10] 
plot[domain=-1:0.509461,variable=\t]({\t},{\t+1});
\draw[line width=1pt,dash pattern=on 4pt off 3pt,color=sexdts, smooth,samples=100,domain=0:10] 
plot[domain=0:0.71,variable=\t]({(-exp(\t)-exp(-\t))/2+1.118034},{(exp(\t)-exp(-\t))});

\draw[line width=1pt,dash pattern=on 4pt off 3pt,color=sexdts, smooth,samples=100,domain=0:10] 
plot[domain=-0.509461:1,variable=\t]({\t},{1-\t});

\begin{footnotesize}
\draw[color=red] (0.85,.75) node {$W$};
\draw[color=sexdts] (0.68,1.5) node {$\hyp{Q}$};
\draw[color=sexdts] (0.05,1.5) node {$\hyp{F}$};
\draw[color=sexdts] (-0.68,1.5) node {$\hyp{E}$};
\draw[color=wrwrwr] (-1,-0.15) node {$p_Q$};
\draw[color=wrwrwr] (1,-0.15) node {$p_E$};
\draw[color=wrwrwr] (0.14,-0.15) node {$p_F$};
\end{footnotesize}
\end{tikzpicture}
\caption{}
\label{fig:treem}
\end{figure}

In the latter case, $E$ and $Q$ can only be destabilized at their vertical walls $\beta=p_E$ and $\beta=p_Q$, so it follows that $E$ and $Q$ are the $\sigma_{0,\beta}$-HN factors of $F$ for all $\beta\in(p_Q,p_E)$.
Clearly $F$ is $\sigma_{0,\beta}$-semistable for $\beta\leq p_Q$ and $p_E\leq\beta<\mu_L(F)$, so whenever nontrivial the function $\rf^0_{F,L}$ reads
\[
\rf^0_{F,L}(x)=\left\{
 \begin{array}{c l}
 0 & x\leq -p_E\\
 \ch_2^{-x}(E) & -p_E\leq x\leq-p_Q\\
 \ch_2^{-x}(F) & x\geq-p_Q\\
 \end{array}
 \right.
\]
and, according to the description of \autoref{critical}, the function is $\cC^1$ at $-p_E$ and $-p_Q$.

\vspace{.4cm}

\begin{ex}
Assume that $(X,L)$ is a $(1,2)$-polarized abelian surface with $\NS(X)=\bZ\cdot L$.
Under these assumptions the linear system $|L|$ has exactly four base points, that are identified by the polarization map $\varphi_L:X\to\Pic^0(X)$.
In other words, for any point $p\in X$ there exists a unique $\alpha\in\Pic^0(X)$ such that $H^1(\cI_p\otimes L\otimes\alpha)\neq0$.

By Serre duality, there is a non-trivial extension $0\to L^{-1}\otimes\alpha^{-1}\to E\to\cI_p\to 0$,
which after rotation defines an actual wall for $\cI_p$. 
Hence, the cohomological rank function $h^0_{\cI_p,L}$ is
\[
h^0_{\cI_p,L}(x)=\left\{
 \begin{array}{c l}
 0 & x\leq \frac{1}{2}\\
 \chi_{E,L}(x)=4x^2-4x+1 & \frac{1}{2}\leq x\leq1\\
 \chi_{\cI_p,L}(x)=2x^2-1 & x\geq 1.\\
 \end{array}
 \right.
\]
\end{ex}

\vspace{.4cm}

Now assume $\odisc(F)=2m$.
If $\rf^0_{F,L}$ is not trivial, then $F$ destabilizes along a wall $W$.
We can choose a destabilizing sequence $0\to E\to F\to Q\to 0$ defining the HN filtration of $F$ in a small annulus just below $W$.
Then, we have to distinguish several possibilities.

If $\odisc(E)=\odisc(Q)=0$, $E$ and $Q$ are semistable for all the stability conditions in the interior of $W$.
Thus the function $\rf^0_{F,L}$ admits the same description as the one given in the case $\odisc(F)=m$, i.e.~ it has critical points $-p_E$ and $-p_Q$ (see for instance \autoref{case:n=2}).

The other possibility is that either $E$ or $Q$ has positive discriminant, say $\odisc(E)=m$ and $\odisc(Q)=0$.
Note that $Q$ is semistable in the whole interior of $W$.
If this is the case for $E$ as well, then  $\rf^0_{F,L}$ again has $-p_E$ and $-p_Q$ as its critical points (the function being not differentiable at $-p_E$).
Otherwise, $E$ destabilizes along a wall $W_E$ inside $W$, defined by a sequence $0\to E_1\to E\to E_2\to 0$ with $\odisc(E_1)=0=\odisc(E_2)$:

\begin{figure}[H]
\definecolor{ffffff}{rgb}{1,1,1}
\definecolor{sexdts}{rgb}{0.1803921568627451,0.49019607843137253,0.19607843137254902}
\definecolor{wrwrwr}{rgb}{0.3803921568627451,0.3803921568627451,0.3803921568627451}
\definecolor{rvwvcq}{rgb}{0.08235294117647059,0.396078431372549,0.7529411764705882}
\definecolor{dbwrru}{rgb}{0.8588235294117647,0.3803921568627451,0.0784313725490196}

\begin{tikzpicture}[line cap=round,line join=round,>=triangle 45,x=2.15cm,y=2.15cm]
\draw [line width=1pt,color=rvwvcq] plot[domain=0:3.141592653589793,variable=\t]({cos(\t r)},{sin(\t r)});
\draw [line width=1pt,color=wrwrwr] (-2,0) -- (2,0);
\draw[line width=1pt,dash pattern=on 4pt off 3pt,color=rvwvcq, smooth,samples=100,domain=0:10] 
plot[domain=-1:0.509461,variable=\t]({\t},{\t+1});
\draw[line width=1pt,dash pattern=on 4pt off 3pt,color=rvwvcq, smooth,samples=100,domain=0:10] 
plot[domain=0:0.71,variable=\t]({(-exp(\t)-exp(-\t))/2+1.118034},{(exp(\t)-exp(-\t))});
\draw[line width=1pt,dash pattern=on 4pt off 3pt,color=sexdts, smooth,samples=100,domain=0:10] 
plot[domain=0:1.2,variable=\t]({(-exp(\t)-exp(-\t))/2+1.41421356},{(exp(\t)-exp(-\t))/2});

\draw [line width=1pt,color=dbwrru] plot[domain=0:3.141592653589793,variable=\t]({0.521095305*cos(\t r)+0.2865875972},{0.521095305*sin(\t r)});

\draw[line width=1pt,dash pattern=on 4pt off 3pt,color=dbwrru, smooth,samples=100,domain=0:10] 
plot[domain=-0.2345077078:1.509461-0.2345077078,variable=\t]({\t},{\t+0.2345077078});

\draw[line width=1pt,dash pattern=on 4pt off 3pt,color=dbwrru, smooth,samples=100,domain=0:10] 
plot[domain=-1.509461+0.8076829022:0.8076829022,variable=\t]({\t},{-\t+0.8076829022});

\begin{footnotesize}
\draw[color=rvwvcq] (0.9,.7) node {$W$};
\draw[color=dbwrru] (-0.25,0.4) node {$W_E$};
\draw[color=rvwvcq] (0.52,1.6) node {$\hyp{Q}$};
\draw[color=rvwvcq] (-0.12,1.6) node {$\hyp{F}$};
\draw[color=dbwrru] (-0.72,1.6) node {$\hyp{E_1}$};
\draw[color=sexdts] (-0.4,1.6) node {$\hyp{E}$};
\draw[color=dbwrru] (1.3,1.6) node {$\hyp{E_2}$};
\draw[color=wrwrwr] (-1,-0.15) node {$p_Q$};
\draw[color=wrwrwr] (0.8076829022,-0.15) node {$p_{E_1}$};
\draw[color=wrwrwr] (-0.2345077078,-0.15) node {$p_{E_2}$};

\end{footnotesize}
\end{tikzpicture}
\caption{}
\label{tree2m}
\end{figure}

Clearly, both $E_1$ and $E_2$ are semistable in the whole interior of $W_E$.
With this information, it is easy to describe the HN filtrations of $F$ for all the  $\sigma_{0,\beta}$ with $\beta<\mu_L(F)$, and one obtains
\[
\rf^0_{F,L}(x)=\left\{
 \begin{array}{c l}
 0 & x\leq -p_{E_1}\\
 \ch_2^{-x}(E_1) & -p_{E_1}\leq x\leq-p_{E_2}\\
 \ch_2^{-x}(E) & -p_{E_2}\leq x\leq-p_Q\\
 \ch_2^{-x}(F) & x\geq-p_Q\\
 \end{array}
 \right.
\]
(with the function being differentiable at its three critical points).


More generally, one may try to apply this philosophy for an arbitrarily big $\odisc(F)$ as follows.
If $\rf^0_{F,L}$ is not trivial, then $F$ destabilizes along a wall $W$.
We keep track of its HN filtration
\[
0=F_0\hookrightarrow F_1\hookrightarrow\ldots\hookrightarrow F_{r-1}\hookrightarrow F_r=F
\]
for Bridgeland stability conditions in a (sufficiently small) annulus just below $W$, which necessarily satisfy $\odisc(F_1)+\odisc(F_2/F_1)+\ldots+\odisc(F/F_{r-1})<\odisc(F)$. 

Now, it is possible that some HN factors $F_i/F_{i-1}$ are not semistable in the whole region inside $W$, so they destabilize along a wall $W_i$.
For each such $F_i/F_{i-1}$, again we keep track of its HN filtration for stability conditions just below $W_i$
\[
0\hookrightarrow F_{i,1}/F_{i-1}\hookrightarrow\ldots\hookrightarrow F_{i,r_i-1}/F_{i-1}\hookrightarrow F_{i,r_i}/F_{i-1}=F_i/F_{i-1}
\]
which satisfies $\odisc(F_{i,1}/F_{i-1})+\odisc(F_{i,2}/F_{i,1})+\ldots+\odisc(F_{i,r_i}/F_{i,r_i-1})<\odisc(F_i/F_{i-1})$. 
Proceeding inductively, we finish in a finite number of steps thanks to the strict inequalities on discriminants.

The process yields a tree, in which the final vertices correspond to objects $G$ that are semistable in an open neighborhood of $(0,p_G)$ in the $(\alpha,\beta)$-plane.
By construction, we can consider a lexicographical order on the final vertices; 
\begin{defn}\label{defn:tree}
We say that the tree is \emph{well-ordered} if $G_1<G_2$ implies $p_{G_1}\geq p_{G_2}$ for any two final vertices $G_1$ and $G_2$.
\end{defn}

\begin{ex}\label{ex:tree}
Assume that $\odisc(F)=2m$.
In the stability situation of \autoref{tree2m} $F$ has the following tree, which is well-ordered since $p_{E_1}>p_{E_2}>p_Q$:

\begin{figure}[H]

\begin{tikzpicture}
      \tikzset{enclosed/.style={draw, circle, inner sep=0pt, minimum size=.15cm, fill=black}}

      \node[enclosed, label={left, yshift=0cm: $F$}] (F) at (0,0) {};
      \node[enclosed, label={above, xshift=0cm: $E$}] (E) at (1,0.75) {};
      \node[enclosed, label={right, yshift=0cm: $Q$}] (Q) at (1,-0.75) {};
      \node[enclosed, label={right, xshift=0cm: $E_1$}] (E1) at (2,1.25) {};
      \node[enclosed, label={right, yshift=0cm: $E_2$}] (E2) at (2,0.25) {};

      \draw (F) -- (E) node[midway, sloped, above] (edge1) {};
      \draw (E) -- (E1) node[midway, right] (edge2) {};
      \draw (E) -- (E2) node[midway, right] (edge3) {};
      \draw (F) -- (Q) node[midway, below] (edge4) {};
\end{tikzpicture}
\end{figure}
\end{ex}

If the tree of $F$ is well-ordered, it is not difficult to recover the Bridgeland limit filtration (and thus the $\sigma_{0,\beta}$-HN filtration) of $F$ at every $\beta<\mu_L(F)$.
The corresponding function $\rf^0_{F,L}$ is piecewise polynomial, and their critical points are precisely the points $-p_G$ for the final vertices $G$ of the tree.
Furthermore, using \autoref{critical} the non-differentiability of $\rf^0_{F,L}$ at the point $-p_G$ is characterized by the condition $\odisc(G)>0$.

Even if we do not expect every Gieseker semistable sheaf to have a well-ordered tree, in many concrete situations this is the case and thus the previous description of Chern degree functions applies.
The examples in the next subsection illustrate this phenomenon for principally polarized abelian surfaces.


\subsection{Finite subschemes on principally polarized abelian surfaces}\label{subsec:CRFGiesekerPPAS} 
In the sequel, $(X,L)$ will be a principally polarized complex abelian surface with $\NS(X)=\bZ\cdot L$.
Under these assumptions, the minimal discriminant is $m=4$ and $X$ is the Jacobian of a genus 2 curve $C$;
after embedding $C$ in $X$ by means of one of its Weierstrass points, we fix $C\in|L|$ as a symmetric theta divisor.

We want to compute $h^0_{\cI_T,L}$ for ideal sheaves $\mathcal{I}_T$ of 0-dimensional subschemes $T$ on $X$.
If $n=h^0(\cO_T)$ denotes the length of $T$, we have:  $\ch(\mathcal{I}_T)=(1,0,-n)$, $\chi_{\mathcal{I}_T,L}(x)=x^2-n$ (in particular, $p_{\cI_T}=-\sqrt{n}$) and $\odisc(\mathcal{I}_T)=4n$ (i.e.~ $\odisc(\mathcal{I}_T)$ is $n$ times the minimal discriminant).

In order to understand geometrically the destabilization of the ideal of a length two 0-dimensional subscheme $T$, we first need to control the translates of $C$ that contain $T$ \footnote{In the language of Jiang--Pareschi, this is the (scheme-theoretic) support of the sheaf $\varphi_L^*R^{2}\Phi_{\cP^{\vee}}((\mathcal{I}_T\otimes L)^{\vee})$, which controls the $h^0$-function in a right neighborhood of $x=1$.} via the following easy lemma.
\begin{lem}\label{gamma}
If $T\subset X$ is a 0-dimensional subscheme of length 2, then the locus $$\{\alpha\in\Pic^0(X):\;h^0(\mathcal{I}_T\otimes L\otimes\alpha)>0\}$$
parametrizing translates of $C$ containing $T$, with its natural scheme structure as support of the sheaf $R^{2}\Phi_{\cP^{\vee}}((\mathcal{I}_T\otimes L)^{\vee})$, is a 0-dimensional subscheme $\Gamma\subset\Pic^0(X)$ of length 2. 
\end{lem}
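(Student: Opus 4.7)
The plan is to realize $\Gamma$ as the ($0$-dimensional) degeneracy locus of a natural map of vector bundles on $\Pic^0(X)$, and compute its length by a short Chern-class argument. On $X\times\Pic^0(X)$ I would twist the restriction morphism $q_X^*L\twoheadrightarrow q_X^*(L\otimes\cO_T)$ by the Poincar\'e bundle $\cP$ and push forward. Since $L$ is a principal polarization, $h^0(L\otimes\alpha)=1$ and $h^{\geq 1}(L\otimes\alpha)=0$ for every $\alpha\in\Pic^0(X)$, so cohomology and base change give a line bundle $\cA:=p_{\Pic,*}(q_X^*L\otimes\cP)$ and a rank-two vector bundle $\cB:=p_{\Pic,*}(q_X^*(L\otimes\cO_T)\otimes\cP)$, together with a morphism $\phi:\cA\to\cB$ whose fibre at $\alpha$ is the restriction to $T$ of the unique section of $L\otimes\alpha$. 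Set-theoretically $\{\phi=0\}=\Gamma$, and the zero scheme of the associated section $\phi\in\Gamma(\cA^\vee\otimes\cB)$ agrees with the scheme structure coming from $R^2\Phi_{\cP^\vee}((\cI_T\otimes L)^\vee)$: this can be checked by applying $\Phi_{\cP^\vee}$ to the triangle $\cO_X\to\cI_T^\vee\to\cO_T[-1]$ (tensored by $L^{-1}$) and using that $L^{-1}$ is WIT$_2$ while $\cO_T\otimes L^{-1}$ is WIT$_0$.

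Next I would compute the Chern classes. The bundle $\cB$ admits a two-step filtration whose graded pieces are the line bundles $\cP|_{\{p\}\times\Pic^0(X)}$ attached to the points $p$ of the support of $T$; these restrictions are topologically trivial, so $c_1(\cB)=c_2(\cB)=0$. The line bundle $\cA=\FM(L)$ is the Fourier--Mukai transform of the principal polarization, and a short Grothendieck--Riemann--Roch computation on $X\times\Pic^0(X)$ (using the standard normalization $\int c_1(\cP)^4=24$) gives $\chi(\cA)=1$, hence $c_1(\cA)^2=2\chi(\cA)=2$ on the abelian surface $\Pic^0(X)$, in accordance with $L^2=2$. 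The top-Chern-class formula for a section of a rank-two bundle then yields
\[
\operatorname{length}\Gamma=c_2(\cA^\vee\otimes\cB)=c_2(\cB)-c_1(\cA)\,c_1(\cB)+c_1(\cA)^2=2,
\]
provided $\Gamma$ is genuinely $0$-dimensional.

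The $0$-dimensionality I would verify geometrically by transporting $\Gamma$ to $X$ via $\varphi_L$, so that $\alpha\in\Gamma$ becomes the condition $T\subset C-t$. For $T=\{p,q\}$ with $p\neq q$, this gives $\Gamma\cong(C-p)\cap(C-q)$, a proper intersection of two distinct irreducible translates of the theta divisor: they are distinct because the translation stabilizer of the ample divisor $C$ is trivial. For $T$ non-reduced, supported at $p$ and determined by a tangent direction $v\in T_pX\cong T_0X$, I would identify $\Gamma$ with the fibre at $[v]$ of the Gauss map $\gamma:C\to\bP(T_0X)\cong\bP^1$, which is finite (and of degree $2g-2=2$, matching the Chern-class count).

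I expect the main technical obstacle to be the clean identification of the scheme structure of $R^2\Phi_{\cP^\vee}((\cI_T\otimes L)^\vee)$ with the zero-scheme of $\phi$: the chain of WIT computations and the resulting long exact sequence on $\Pic^0(X)$ demand careful bookkeeping of Poincar\'e bundle conventions. Once this identification is pinned down, everything else reduces to the short Chern-class arithmetic above.
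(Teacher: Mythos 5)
Your proof is correct, but it takes a genuinely different route from the paper's. The authors dispose of the reduced case in one line (for $T=\set{p,q}$ the locus is, via $\varphi_L$, the proper intersection $(C-p)\cap(C-q)$ of two distinct translates of the theta divisor, of length $C^2=2$) and for the non-reduced case they simply invoke the infinitesimal description of the scheme structure of cohomological jump loci from \cite[Example~2.8]{GL}. You instead give a uniform global argument: realize $\Gamma$ as the zero scheme of a section of the rank-two bundle $\cA^\vee\otimes\cB$ on $\Pic^0(X)$ and compute its length as $c_2(\cA^\vee\otimes\cB)=c_1(\cA)^2=2$, the only geometric input being $0$-dimensionality (distinctness of the two translates when $T$ is reduced; finiteness of the degree-$2$ Gauss map of $C$ when it is not). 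This buys a single computation covering both cases without appealing to the derivative complex, at the cost of the Fourier--Mukai bookkeeping you flag: applying $\Phi_{\cP^\vee}$ to the $L^{-1}$-twist of $\cO_X\to\cI_T^\vee\to\sExt^2(\cO_T,\cO_X)[-1]$ exhibits $R^{2}\Phi_{\cP^{\vee}}((\mathcal{I}_T\otimes L)^{\vee})$ as the cokernel of a map from a rank-two bundle with trivial Chern classes to a line bundle with $c_1^2=2$, and locally both the annihilator of that cokernel and the ideal of your zero scheme are generated by the same two functions, so the two scheme structures agree and Thom--Porteous gives the same count. That identification is the one step you should write out in full; everything else in your outline is sound.
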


\begin{proof}
The reduced case is clear and the nonreduced one follows easily from \cite[Example~2.8]{GL}. \qedhere

\end{proof}

\begin{ex}[Case $n=2$]\label{case:n=2}
Let $T$ be a length two 0-dimensional subscheme.
Then the cohomological rank function is 
\[
h^0_{\mathcal{I}_T,L}(x)=\left\{
    \begin{array}{c l}
     0 & x \leq 1\\
     2(x-1)^2 & 1 \leq x \leq 2\\
     x^2-2 & x \geq 2.\\
    \end{array}
    \right.
\]

To illustrate the strategy outlined in the previous section, we consider the first possible wall $W$ for the Chern character $(1,0,-2)$ which has center $-\frac{3}{2}$ and radius $\frac{1}{2}$.
Indeed, it can be defined by the following combinations of Chern characters\footnote{One can use Schmidt's implementation \cite[Appendix]{schmidt} to find the wall candidates. 
}:
\[(1,-L,1)\hookrightarrow(1,0,-2)\twoheadrightarrow(0,L,-3), \;\;\;(2,-2L,2)\hookrightarrow(1,0,-2)\twoheadrightarrow(-1,2L,-4).\]
It is an actual wall for $\mathcal{I}_T$, and the first step in the HN filtration of $\mathcal{I}_T$ after crossing $W$ has Chern character $(2,-2L,2)$.
Let's see how to construct this subobject.

Let $\Gamma\subset\Pic^0(X)$ be the subscheme of \autoref{gamma}, and let $\pi:X\times \Pic^0(X)\longrightarrow X$ and $\sigma:X\times(-\Gamma)\longrightarrow X$ denote the first projection maps.
Then $E=\sigma_*(\pi^*(L^{-1})\otimes\cP_{|X\times(-\Gamma)})$ is a semihomogeneous vector bundle of rank 2 on $X$ with $\ch(E)=(2,-2L,2)$, coming with a natural epimorphism of sheaves $E \twoheadrightarrow \mathcal{I}_T$.
For instance, if $\Gamma$ is reduced (i.e.~ there are two distinct translates $C_1$, $C_2$ of $C$ containing $T$), then the short exact sequence attached to $E \twoheadrightarrow \mathcal{I}_T$ is nothing but the Koszul complex of the complete intersection $T=C_1\cap C_2$.

In any case, taking the short exact sequence of sheaves and rotating the triangle, we obtain a short exact sequence in $\Coh^{-\sqrt{2}}(X)$ defining the HN filtration of $\cI_T$ just below $W$.
Since both HN factors have discriminant 0, they are semistable in the whole interior of $W$ and our tree is well-ordered, and thus we obtain the claimed cohomological rank function for $\cI_T$ since $\chi_{E,L}(x)=2(x-1)^2$.
\end{ex}


For the cases $n\geq3$, we will use the following result in \cite{Meachan} describing the stability of $\mathcal{I}_T$ along the vertical line $\beta=-2$. 
From now on, a finite subscheme inside $X$ will be called \textit{collinear} if it is contained in a (single) translate of the symmetric theta divisor $C$.

\begin{lem}[{\cite[Lemma~3.3.6]{Meachan}}]\label{meachan}
Let $T\in\Hilb^n(X)$.
\begin{enumerate}[{\rm (1)}]
    \item If $n\neq5$: the object $\mathcal{I}_T$ is destabilized at the vertical line $\beta=-2$ if, and only if, $T$ contains a collinear subscheme of colength $m$, for some $0\leq m<\frac{n-2}{2}$.
    
    \noindent In such a case, the destabilizing subobject in $\Coh^{-2}(X)$ is $L^{-1}\otimes\mathcal{I}_{T'}\otimes\alpha$, for some $T'\in\Hilb^m(X)$ and $\alpha\in\Pic^0(X)$.
    
    \item If $n=5$: $\mathcal{I}_T$ can also be destabilized at $\beta=-2$ by $K$, where $K$ is a slope-stable locally free sheaf with $\ch(K)=(2,-3L,4)$.
    
    \noindent This destabilization takes place if, and only if, every subscheme of $T$ with length 4 is non-collinear and contains a unique collinear subscheme of length 3.
\end{enumerate}
\end{lem}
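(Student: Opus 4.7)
The plan is to classify all destabilizing subobjects $E \hookrightarrow \mathcal{I}_T$ of $\mathcal{I}_T$ in $\Coh^{-2}(X)$ along walls meeting the line $\beta = -2$, by first enumerating the candidate numerical classes $v(E)$ and then matching each candidate with actual geometric subobjects of $\mathcal{I}_T$.

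First, to determine the possible numerical classes, consider a destabilizing short exact sequence $0 \to E \to \mathcal{I}_T \to Q \to 0$ at a point $(\alpha_0, -2)$ with $\alpha_0 > 0$. The support property in the form of \autoref{structurewalls}.\eqref{item:Disc0nodest} gives $\odisc(E), \odisc(Q) \geq 0$ and $\odisc(E) + \odisc(Q) < \odisc(\mathcal{I}_T) = 4n$. Combined with $E, Q \in \Coh^{-2}(X)$, so that $L \cdot \ch_1^{-2}(E), L \cdot \ch_1^{-2}(Q) \geq 0$ with their sum equal to $L \cdot \ch_1^{-2}(\mathcal{I}_T) = 4$, and with $\NS(X) = \bZ \cdot L$ (so $L \cdot \ch_1$ takes values in $2\bZ$), only two families of numerical candidates appear: rank-$1$ destabilizers with $\ch(E) = (1, -L, 1-m)$ (the wall sits at $\alpha_0^2 = n-2-2m$, hence $0 \leq m < \tfrac{n-2}{2}$), and a single rank-$2$ candidate $\ch(E) = (2, -3L, 4)$, for which the numerical wall is at $\alpha_0^2 = \tfrac{n-4}{3}$ and the discriminant of $Q = \mathcal{I}_T/E$ turns out to be non-negative only when $n = 5$.

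Next, for each numerical candidate I would describe the corresponding $E$ geometrically. A semistable object $E$ with $\ch(E) = (1, -L, 1-m)$ and $\odisc(E) = 4m$ must be of the form $E \cong L^{-1} \otimes \mathcal{I}_{T'} \otimes \alpha$ for some $T' \in \Hilb^m(X)$ and $\alpha \in \Pic^0(X)$ (one recovers $T'$ as the degeneracy scheme of the section coming from $E$). A non-trivial morphism $E \to \mathcal{I}_T$ in $\Coh^{-2}(X)$ is in fact a sheaf inclusion, and after tensoring by $L \otimes \alpha^{-1}$ it becomes an inclusion $\mathcal{I}_{T'} \hookrightarrow \mathcal{I}_T \otimes L \otimes \alpha^{-1}$; the latter is equivalent to a section of $\mathcal{I}_T \otimes L \otimes \alpha^{-1}$ cutting out a translate of the theta divisor that contains all of $T$ except a length-$m$ subscheme $T'$, which is precisely the data of a collinear subscheme of $T$ of colength $m$.

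For the exceptional rank-$2$ case with $n = 5$, the main obstacle lies in matching the combinatorial hypothesis on $T$ with the stability of $K$. The idea is to realize any candidate $K$ as a non-trivial extension of two rank-$1$ objects of the form $L^{-1} \otimes \mathcal{I}_{T_i'} \otimes \alpha_i$, where the $T_i' \subset T$ are two distinct collinear length-$3$ subschemes provided by the hypothesis, and then to check via slope computations along walls strictly inside $\beta = -2$ that the failure of every length-$4$ subscheme of $T$ to be collinear (which would otherwise produce a destabilizing rank-$1$ subobject of $K$ of the type analyzed in the previous paragraph) forces $K$ to be slope-stable and locally free. Conversely, starting from such a $K$ one recovers the two length-$3$ collinear subschemes from its Harder--Narasimhan filtration across neighboring walls, yielding the stated equivalence.
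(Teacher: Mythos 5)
First, a point of comparison: the paper does not prove this statement at all --- it is quoted directly from Meachan's thesis as \cite[Lemma~3.3.6]{Meachan} --- so there is no internal proof to measure your attempt against. Judged on its own terms, your overall strategy (enumerate the numerical classes of potential destabilizers meeting the line $\beta=-2$ via $L\cdot\ch_1^{-2}(E)=2$, $\odisc(E),\odisc(Q)\geq 0$ and $\odisc(E)+\odisc(Q)<\odisc(\mathcal{I}_T)=4n$, then match each class with geometric subobjects) is the natural one, and your rank-one analysis is essentially correct: the wall location $\alpha_0^2=n-2-2m$, the resulting range $0\leq m<\tfrac{n-2}{2}$, and the identification of a rank-one semistable destabilizer with $L^{-1}\otimes\mathcal{I}_{T'}\otimes\alpha$ (hence with a translate of $C$ through a colength-$m$ subscheme of $T$) all check out. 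You do assert rather than prove that ranks $r\geq 3$ contribute no candidates; this is true and follows from a short computation with $\ch_1(E)=(1-2r)L$, but it should be carried out, as should the scheme-theoretic bookkeeping when $T$ is non-reduced.

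The genuine gap is in part (2). You propose to realize $K$ as a non-trivial extension of two rank-one objects $L^{-1}\otimes\mathcal{I}_{T_i'}\otimes\alpha_i$ with $T_i'\subset T$ collinear of length $3$. Each such object has Chern character $(1,-L,1-3)=(1,-L,-2)$, so any extension of two of them has Chern character $(2,-2L,-4)$, which is not $(2,-3L,4)=\ch(K)$. The proposed construction is therefore numerically impossible, and with it the entire argument for the equivalence in (2) collapses: neither the existence of a destabilizing map $K\to\mathcal{I}_T$ under the stated hypothesis on $T$, nor the converse recovery of the collinear length-$3$ subschemes from $K$, is established. Producing the correct $K$ (a slope-stable bundle with $\odisc(K)=4$) and tying its existence as a destabilizer to the precise combinatorial condition on length-$4$ subschemes is exactly the delicate content of Meachan's lemma, and your sketch does not offer a viable route to it.
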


\begin{ex}[Case $n=4$]\label{case:n=4}
If $T$ is a length four $0$-dimensional subscheme, we consider two subcases.

If no translate of $C$ contains $T$, then $\cI_T$
has trivial cohomological rank function:
\[
    h^0_{\mathcal{I}_T,L}(x)=\left\{
    \begin{array}{c l}
     0 & x \leq 2\\
     x^2-4 & x \geq 2.\\
    \end{array}
    \right.
\]
Indeed, by \autoref{meachan} $\mathcal{I}_T$ remains semistable along the vertical line $\beta=-2$.
Since $p_{\cI_T}=-2$, this means that $\mathcal{I}_T$ is semistable in the whole region $\beta<0$.

Now assume that $T$ is collinear.
Then, the cohomological rank function of $\cI_T$ is \[
    h^0_{\mathcal{I}_T,L}(x)=\left\{
    \begin{array}{c l}
     0 & x \leq 1\\
     (x-1)^2 & 1 \leq x \leq 2\\
     (x-1)^2+(x-2)^2 & 2 \leq x \leq 3\\
     x^2-4 & x \geq 3.\\
    \end{array}
    \right.
\]

To simplify the notation and illustrate the strategy, let us prove this expression when $T=\set{p,q,r,s}$ is a reduced subscheme contained in a translate $C_1$ of $C$.
Indeed, consider any two points of $T$: there is another translate of $C$ containing them, unless $C_1$ has the same tangent direction at these points. 
Since the Gauss map of $C_1$ has degree 2, it follows that (possibly after reordering the points of $T$) there are two translates of $C$ passing through $p$ and $q$ (resp.~through $r$ and $s$)  simultaneously; being $C_1$ one of them, we denote by $C_2$ (resp.~$C_3$) the other one.
We take also $\alpha,\beta,\gamma\in\Pic^0(X)$ such that $C_1\in|L\otimes\alpha|$, $C_2\in|L\otimes\beta|$ and $C_3\in|L\otimes\gamma|$.

The Chern character $(1,0,-4)$ has a unique possible wall $W$, of center $-\frac{5}{2}$ and radius $\frac{3}{2}$.
Since $T$ is collinear, we can destabilize $\mathcal{I}_T$ (as predicted by \autoref{meachan}) via the exact sequence
\[
0 \longrightarrow L^{-1}\otimes\alpha^{-1}\overset{\cdot s_1}{\longrightarrow}
 \mathcal{I}_T\longrightarrow \cO_{C_1}(-p-q-r-s)\longrightarrow 0
\] 
where $s_1\in H^0(L\otimes\alpha)$ defines $C_1$.
This sequence gives the HN filtration of $\cI_T$ just below $W$.

The subobject $E=L^{-1}\otimes\alpha^{-1}$ is everywhere semistable since $\odisc(E)=0$.
The quotient $Q=\cO_{C_1}(-p-q-r-s)$ has $\odisc(Q)=4$, and destabilizes along a wall inside $W$ defined by a sequence
\[
0 \longrightarrow L^{-2}\otimes\beta^{-1}\otimes\gamma^{-1}\overset{\cdot {s_2s_3}_{|C_1}}{\longrightarrow}
 \cO_{C_1}(-p-q-r-s)\longrightarrow (L^{-3}\otimes\alpha^{-1}\otimes\beta^{-1}\otimes\gamma^{-1})[1]\longrightarrow 0
\]
where $s_2$ and $s_3$ are sections defining $C_2$ and $C_3$.

Both $Q_1=L^{-2}\otimes\beta^{-1}\otimes\gamma^{-1}$ and $Q_2=(L^{-3}\otimes\alpha^{-1}\otimes\beta^{-1}\otimes\gamma^{-1})[1]$ have $\odisc=0$, so it is not necessary to study further destabilizations.
We obtain a well-ordered tree with final vertices $E$, $Q_1$ and $Q_2$, which gives the desired cohomological rank function, since $\chi_{E,L}(x)=(x-1)^2$ and $\chi_{E,L}(x)+\chi_{Q_1,L}(x)=(x-1)^2+(x-2)^2$.
\end{ex}

\begin{ex}[Case $n=3$]\label{case:n=3}
If $T$ is a length three 0-dimensional subscheme, we consider first the case where $T$ is collinear, where the cohomological rank function of $\cI_T$ is 
\[
    h^0_{\mathcal{I}_T,L}(x)=\left\{
    \begin{array}{c l}
     0 & x \leq 1\\
     (x-1)^2 & 1 \leq x \leq 2\\
     x^2-3 & x \geq 2.\\
    \end{array}
    \right.
\]
Indeed, if $C_1\in|L\otimes\alpha|$ is the translate of $C$ containing $T$, then $\mathcal{I}_T$ destabilizes along the wall $W$ defined by
$$
0 \longrightarrow L^{-1}\otimes\alpha^{-1}\overset{\cdot s}{\longrightarrow}
 \mathcal{I}_T\longrightarrow \cO_{C_1}(-T)\longrightarrow 0
$$ 
where the section $s\in H^0(L\otimes\alpha)$ defines $C_1$.
This was already predicted by \autoref{meachan}.

The subobject $L^{-1}\otimes\alpha^{-1}$ is obviously semistable everywhere inside $W$; the same happens for the quotient $\cO_{C_1}(-T)$, since it is a line bundle of odd degree on a genus 2 Abel-Jacobi curve (recall \autoref{extrivial}.\eqref{enum:AbelJacobi}).
This completes the tree and we obtain the desired cohomological rank function.

If $T$ is not collinear, then the cohomological rank function of $\cI_T$ is 
\[
    h^0_{\mathcal{I}_T,L}(x)=\left\{
    \begin{array}{c l}
     0 & x \leq \frac{3}{2}\\
     4x^2-12x+9 & \frac{3}{2} \leq x \leq 2\\
     x^2-3 & x \geq 2.\\
    \end{array}
    \right.
\]

Indeed, according to \autoref{meachan} $\mathcal{I}_T$ remains semistable along the whole line $\beta=-2$.
The next possible wall $W'$ has center $-\frac{7}{4}$ and radius $\frac{1}{4}$; let us describe the destabilization of $\cI_T$ along this wall, under the assumption that $T=\set{p,q,r}$ is reduced and every pair of points on $T$ is contained in two distinct translates of $C$.

Let $C_1=C+t_1\in|L\otimes\alpha|$ and $\widetilde{C_1}=C+\widetilde{t_1}\in|L\otimes\widetilde{\alpha}|$ be the two translates of $C$ passing through the points $p$ and $q$.
Among all the translates of $C$ passing through $r$, we take two curves $C_2=C+t_2$ and $\widetilde{C_2}=C+\widetilde{t_2}$ such that $t_2-\widetilde{t_2}=t_1-\widetilde{t_1}$.
This is possible because the subtraction morphism $C\times C\longrightarrow S$ is surjective (the curve $C$ being non-degenerate).

Writing $C_2\in|L\otimes\beta|$ and $\widetilde{C_2}\in|L\otimes\beta\otimes\widetilde{\alpha}\otimes\alpha^{-1}|$ (according to the condition $t_2-\widetilde{t_2}=t_1-\widetilde{t_1}$), it is easy to check that $h^0(\mathcal{I}_X\otimes
L^2\otimes\beta\otimes\widetilde{\alpha})\geq2$ and thus $h^1(\mathcal{I}_X\otimes L^2\otimes\beta\otimes\widetilde{\alpha})\geq1$.
By Serre duality, the last condition reads $\Ext^1(\mathcal{I}_X,
L^{-2}\otimes\beta^{-1}\otimes\widetilde{\alpha}^{-1})\neq0$.
A (rotated) nontrivial extension gives a short exact sequence, destabilizing $\cI_T$ along $W'$.

By repeating this process (starting with the two curves through $p$ and $r$, and the two curves through $q$ and $r$) and taking direct sum, it is possible to destabilize $\mathcal{I}_T$ via a short exact sequence $0\to E\to \cI_T\to Q\to 0$ corresponding to the HN filtration of $\cI_T$ just below $W'$.

Since $\ch(E)=(4,-6L,9)$ and $\ch(Q)=(-3,6L,-12)$, both $E$ and $Q$ have $\odisc=0$ and thus the construction of the tree has no more steps. 
\end{ex}

\begin{ex}[Case $n=5$]\label{case:n=5}
The Chern character $(1,0,-5)$ has three possible walls intersecting the vertical line $\beta=-2$, corresponding to the three special geometric situations described in \autoref{meachan}:
\begin{itemize}[\textbullet]
    \item A wall $W_1$ of center $-3$ and radius $2$.
    Destabilization of $\cI_T$ takes place along $W_1$ if and only if $T$ is collinear.
    
    \item A wall $W_2$ of center $-\frac{5}{2}$ and radius $\frac{1}{2}\sqrt{5}$.
    $\cI_T$ gets destabilized along $W_2$ if, and only if, $T$ itself is not collinear but contains a collinear subscheme of length 4.
    
    \item A wall $W_3$ of center $-\frac{7}{3}$ and radius $\frac{2}{3}$.
    $\cI_T$ destabilizes along $W_3$ if, and only if, every length 4 subscheme is non-collinear and contains a unique length 3 collinear subscheme.

\end{itemize}

In these three special geometric situations, the usual method constructs the well-ordered tree of $\cI_T$ and hence recovers $h^0_{\cI_T,L}$.
We give some details of the case where $\cI_T$ gets destabilized along the wall $W_2$, since it presents a remarkable peculiarity and illustrates a special situation in the characterization of critical points (see \autoref{critical}).

Assume for simplicity that $T=\set{p_1,p_2,p_3,p_4,p_5}$ is reduced.
If $Y=\set{p_1,p_2,p_3,p_4}$ and $C_1\in|L\otimes\alpha|$ is the translate of $C$ containing $Y$, then
\[
0 \longrightarrow \mathcal{I}_{p_5}\otimes L^{-1}\otimes\alpha^{-1}\overset{\cdot s_1}{\longrightarrow}
 \mathcal{I}_T\longrightarrow \cO_{C_1}(-p_1-p_2-p_3-p_4)\longrightarrow 0
\]
is the exact sequence destabilizing $\mathcal{I}_T$ along the wall $W_2$.
It is also the HN filtration of $\cI_T$ for stability conditions just below $W_2$.
Since both HN factors have $\odisc=4$, we have to study possible further destabilizations:

\begin{itemize}[\textbullet]
    \item $E=\mathcal{I}_{p_5}\otimes L^{-1}\otimes\alpha^{-1}$ is semistable everywhere inside $W_2$, according to \autoref{extrivial}.\eqref{enum:idealpoint}.
    
    \item The sheaf $Q=\cO_{C_1}(-p_1-p_2-p_3-p_4)$ destabilizes as described in the case of 4 collinear points (see \autoref{case:n=4}).
    This destabilization takes place along a wall $W_Q$ of center $-\frac{5}{2}$ and radius $\frac{1}{2}$.
    Both the subobject $Q_1$ and the quotient $Q_2$ inducing this wall have $\odisc=0$, so we are done.
    
\end{itemize}

\autoref{fig:treeideal5}  shows the position of  walls and hyperbolas in the $(\alpha,\beta)$-plane.

\begin{figure}[ht]
\definecolor{ffffff}{rgb}{1,1,1}
\definecolor{sexdts}{rgb}{0.1803921568627451,0.49019607843137253,0.19607843137254902}
\definecolor{wrwrwr}{rgb}{0.3803921568627451,0.3803921568627451,0.3803921568627451}
\definecolor{rvwvcq}{rgb}{0.08235294117647059,0.396078431372549,0.7529411764705882}
\definecolor{dbwrru}{rgb}{0.8588235294117647,0.3803921568627451,0.0784313725490196}

\begin{tikzpicture}[line cap=round,line join=round,>=triangle 45,x=2.35cm,y=2.35cm]

\draw [line width=1pt,color=wrwrwr] (-4.5,0) -- (-0.5,0);

\draw[line width=1pt,dash pattern=on 4pt off 3pt,color=dbwrru, smooth,samples=100,domain=0:10] 
plot[domain=0:0.71,variable=\t]({2.236068*(-exp(\t)-exp(-\t))/2},{2.236068*(exp(\t)-exp(-\t))/2});

\draw[line width=1pt,dash pattern=on 4pt off 3pt,color=dbwrru, smooth,samples=100,domain=0:10] 
plot[domain=0:1.33,variable=\t]({(-exp(\t)-exp(-\t))/2-1},{(exp(\t)-exp(-\t))/2});

\draw[line width=1pt,dash pattern=on 4pt off 3pt,color=sexdts, smooth,samples=100,domain=0:10] 
plot[domain=0:1.75,variable=\t]({-2.5},{\t});

\draw[line width=1pt,dash pattern=on 4pt off 3pt,color=rvwvcq, smooth,samples=100,domain=0:10] 
plot[domain=-3.75:-2,variable=\t]({\t},{-\t-2});

\draw[line width=1pt,dash pattern=on 4pt off 3pt,color=rvwvcq, smooth,samples=100,domain=0:10] 
plot[domain=-3:-1.25,variable=\t]({\t},{\t+3});

\draw [line width=1pt,color=rvwvcq] plot[domain=0:3.141592653589793,variable=\t]({0.5*cos(\t r)-2.5},{0.5*sin(\t r)});

\draw [line width=1pt,color=dbwrru] plot[domain=0:3.141592653589793,variable=\t]({2.236068*cos(\t r)/2-2.5},{2.236068*sin(\t r)/2});

\begin{footnotesize}
\draw[color=rvwvcq] (-2.95,0.42) node {$W_Q$};
\draw[color=dbwrru] (-3.5,0.77) node {$W_2$};
\draw[color=rvwvcq] (-1.07,1.8) node {$\hyp{Q_2}$};
\draw[color=sexdts] (-2.38,1.8) node {$\hyp{Q}$};
\draw[color=rvwvcq] (-3.9,1.8) node {$\hyp{Q_1}$};
\draw[color=dbwrru] (-3.1,1.81) node {$\hyp{E}$};
\draw[color=dbwrru] (-2.76,1.8) node {$\hyp{\cI_T}$};
\end{footnotesize}
\end{tikzpicture}
\caption{}
\label{fig:treeideal5}
\end{figure}

Again our tree (with final vertices $E,Q_1,Q_2$) is well-ordered, but in this case we have an overlap $p_E=p_{Q_1}=-2$.
In the terminology of \autoref{critical}, this implies that $x=2$ is a critical point where conditions \eqref{cond:a} and \eqref{cond:c} are simultaneously fulfilled.

The cohomological rank function $h^0_{\cI_T,L}$ can be recovered as
\[
    h^0_{\mathcal{I}_T,L}(x)=\left\{
    \begin{array}{c l}
     0 & x \leq 2\\
     \chi_{E,L}(x)+\chi_{Q_1,L}(x)=2(x-1)(x-2) & 2 \leq x \leq 3\\
     x^2-5 & x \geq 3.\\
    \end{array}
    \right.
\]

Finally, if $T$ satisfies none of the special geometric conditions, the next possible wall $W$ for $\cI_T$ has center $-\frac{9}{4}$ and radius $\frac{1}{4}$.
According to \cite[Theorem~10.2]{Maciocia2}, the locus
\[
\widetilde{\Gamma}=\{\alpha\in\Pic^0(X):\;h^0(\mathcal{I}_T\otimes L^2\otimes\alpha)>0\}
\]
(with its natural scheme structure as support of the sheaf $R^{2}\Phi_{\cP^{\vee}}((\mathcal{I}_T\otimes L^2)^{\vee})$) is a 0-dimensional subscheme of length 5.
In other words, among all the translated linear systems $|L^2\otimes\alpha|$ there exist exactly five curves (counted with multiplicities) that contain $T$.

Similarly to the case $n=2$, one can use these five curves to destabilize $\cI_T$ along $W$.
The tree of $\cI_T$ consists of this single step and the cohomological rank function reads
\[
    h^0_{\mathcal{I}_T,L}(x)=\left\{
    \begin{array}{c l}
     0 & x \leq 2\\
     5(x-2)^2 & 2 \leq x \leq \frac{5}{2}\\
     x^2-5 & x \geq \frac{5}{2}.\\
    \end{array}
    \right.
\]
\end{ex}

\bibliography{refer}

\begin{thebibliography}{BLMS17}

\bibitem[AB13]{Aaron-Daniele}
Daniele Arcara and Aaron Bertram.
\newblock Bridgeland-stable moduli spaces for {$K$}-trivial surfaces.
\newblock {\em J. Eur. Math. Soc. (JEMS)}, 15(1):1--38, 2013.
\newblock With an appendix by Max Lieblich.
\newblock arXiv:0708.2247.

\bibitem[Bay19]{BayerDef}
Arend Bayer.
\newblock A short proof of the deformation property of {B}ridgeland stability
  conditions.
\newblock {\em Math. Ann.}, 375(3-4):1597--1613, 2019.

\bibitem[BL17]{bayerli}
Arend Bayer and Chunyi Li.
\newblock Brill-{N}oether theory for curves on generic abelian surfaces.
\newblock {\em Pure Appl. Math. Q.}, 13:49--76, 2017.

\bibitem[BLMS17]{BLMS}
Arend Bayer, Mart{\'i} Lahoz, Emanuele Macr{\`i}, and Paolo Stellari.
\newblock Stability conditions on {K}uznetsov components, 2017.
\newblock To appear in {A}nn. {S}ci. {É}cole {N}orm. {S}up.
\newblock arXiv:1703.10839.

\bibitem[BM11]{BM:localP2}
Arend Bayer and Emanuele Macr{\`{\i}}.
\newblock The space of stability conditions on the local projective plane.
\newblock {\em Duke Math. J.}, 160(2):263--322, 2011.
\newblock arXiv:0912.0043.

\bibitem[BMS16]{BMS}
Arend Bayer, Emanuele Macr{\`i}, and Paolo Stellari.
\newblock The space of stability conditions on abelian threefolds, and on some
  {C}alabi-{Y}au threefolds.
\newblock {\em Invent. Math.}, pages 1--65, 2016.

\bibitem[BPS20a]{BPS:Clifford-Severi}
Miguel~\'{A}ngel Barja, Rita Pardini, and Lidia Stoppino.
\newblock Higher-dimensional {C}lifford-{S}everi equalities.
\newblock {\em Commun. Contemp. Math.}, 22(8):1950079, 15, 2020.

\bibitem[BPS20b]{BPS:LinearSystems}
Miguel~\'{A}ngel Barja, Rita Pardini, and Lidia Stoppino.
\newblock Linear systems on irregular varieties.
\newblock {\em J. Inst. Math. Jussieu}, 19(6):2087--2125, 2020.
\newblock arXiv:1606.03290.

\bibitem[Bri07]{Bridgeland:Stab}
Tom Bridgeland.
\newblock Stability conditions on triangulated categories.
\newblock {\em Ann. of Math. (2)}, 166(2):317--345, 2007.

\bibitem[Bri08]{Bridgeland:K3}
Tom Bridgeland.
\newblock Stability conditions on {$K3$} surfaces.
\newblock {\em Duke Math. J.}, 141(2):241--291, 2008.

\bibitem[Cau20]{Caucci}
Federico Caucci.
\newblock The basepoint-freeness threshold and syzygies of abelian varieties.
\newblock {\em Algebra Number Theory}, 14(4):947--960, 2020.

\bibitem[FLZ21]{FLZ}
Lie Fu, Chunyi Li, and Xiaolei Zhao.
\newblock Stability manifolds of varieties with finite {A}lbanese morphisms,
  2021.
\newblock arXiv:2103.07728.

\bibitem[GL11]{GL}
Martin~G. Gulbrandsen and Mart\'{\i} Lahoz.
\newblock Finite subschemes of abelian varieties and the {S}chottky problem.
\newblock {\em Ann. Inst. Fourier (Grenoble)}, 61(5):2039--2064 (2012), 2011.

\bibitem[HL10]{HuybrechtsLehn}
Daniel Huybrechts and Manfred Lehn.
\newblock {\em The geometry of moduli spaces of sheaves}.
\newblock Cambridge University Press, Cambridge, 2010.

\bibitem[HRS96]{HRS}
Dieter Happel, Idun Reiten, and Sverre~O. Smal\o.
\newblock Tilting in abelian categories and quasitilted algebras.
\newblock {\em Mem. Amer. Math. Soc.}, 120(575):viii+ 88, 1996.

\bibitem[Ito20]{Ito}
Atsushi Ito.
\newblock Higher syzygies on general polarized abelian varieties of type
  $(1,...,1,d)$, 2020.
\newblock arXiv:2011.09687.

\bibitem[Ito21]{Ito3}
Atsushi Ito.
\newblock M-regularity of $\mathbb{Q}$-twisted sheaves and its application to
  linear systems on abelian varieties, 2021.
\newblock To appear in Math. Nachr.
\newblock arXiv:2102.12175.

\bibitem[Jia21a]{Jiang:Syzygies}
Zhi Jiang.
\newblock Cohomological rank functions and syzygies of abelian varieties.
\newblock {\em {M}ath {Z}.}, 2021.
\newblock Appeared online.
\newblock arXiv:2010.10053.

\bibitem[Jia21b]{Jiang:Severi}
Zhi Jiang.
\newblock On {S}everi type inequalities.
\newblock {\em Math. Ann.}, 379(1-2):133--158, 2021.

\bibitem[JP20]{JP}
Zhi Jiang and Giuseppe Pareschi.
\newblock Cohomological rank functions on abelian varieties.
\newblock {\em Ann. Sci. \'{E}c. Norm. Sup\'{e}r. (4)}, 53(4):815--846, 2020.

\bibitem[Kob87]{kobayashi}
Shoshichi Kobayashi.
\newblock {\em Differential geometry of complex vector bundles}.
\newblock Iwanami Shoten Publishers and Princeton University Press, Princeton,
  1987.

\bibitem[Kos20]{Koseki}
Naoki Koseki.
\newblock On the {B}ogomolov-{G}ieseker inequality in positive characteristic,
  2020.
\newblock arXiv:2008.09800.

\bibitem[Lan16]{Langer}
Adrian Langer.
\newblock The {B}ogomolov-{M}iyaoka-{Y}au inequality for logarithmic surfaces
  in positive characteristic.
\newblock {\em Duke Math. J.}, 165(14):2737--2769, 2016.

\bibitem[Li19]{LiInv}
Chunyi Li.
\newblock On stability conditions for the quintic threefold.
\newblock {\em Invent. Math.}, 218(1):301--340, 2019.

\bibitem[Mac12]{Maciocia2}
Antony Maciocia.
\newblock A {F}ourier-{M}ukai approach to the enumerative geometry of
  principally polarized abelian surfaces.
\newblock {\em Math. Nachr.}, 285(16):1981--1998, 2012.

\bibitem[Mac14]{Maciocia}
Antony Maciocia.
\newblock Computing the walls associated to {B}ridgeland stability conditions
  on projective surfaces.
\newblock {\em Asian J. Math.}, 18(2):263--279, 2014.

\bibitem[Mea12]{Meachan}
Ciaran Meachan.
\newblock Moduli of {B}ridgeland-stable objects, 2012.
\newblock PhD Thesis, University of Edinburgh.

\bibitem[MS17]{MS}
Emanuele Macrì and Benjamin Schmidt.
\newblock Lectures on {B}ridgeland stability.
\newblock In {\em Moduli of curves}, volume~21 of {\em Lect. Notes Unione Mat.
  Ital.}, pages 139--211. Springer, Cham, 2017.

\bibitem[Muk78]{Muk}
Shigeru Mukai.
\newblock Semi-homogeneous vector bundles on an abelian variety.
\newblock {\em Kyoto J. Math.}, 18(2):239--272, 1978.

\bibitem[MW97]{MW}
Kenji Matsuki and Richard Wentworth.
\newblock Mumford-{T}haddeus principle on the moduli space of vector bundles on
  an algebraic surface.
\newblock {\em Internat. J. Math.}, 8(1):97--148, 1997.

\bibitem[Roj22]{Rojas}
Andrés Rojas.
\newblock The basepoint-freeness threshold of a very general abelian surface.
\newblock {\em Selecta Math. (N.S.)}, 28(2), 2022.

\bibitem[Sch20]{schmidt}
Benjamin Schmidt.
\newblock Bridgeland stability on threefolds: some wall crossings.
\newblock {\em J. Algebraic Geom.}, 29(2):247--283, 2020.

\bibitem[Yan89]{yang}
Jae-Hyun Yang.
\newblock Holomorphic vector bundles over complex tori.
\newblock {\em J. Korean Math. Soc.}, 26(1):117--142, 1989.

\end{thebibliography}
\bibliographystyle{alphaspecial}

\end{document}